\newcommand{\xyL}[1]{%
	\xydef@\xymatrixrowsep@{#1}
} % end of \xyL
\newcommand{\xyC}[1]{%
	\xydef@\xymatrixcolsep@{#1}
} % end of \xyC
\theoremstyle{plain}
\newtheorem{thm}{\protect\theoremname}[section]
\theoremstyle{remark}
\newtheorem{rem}[thm]{\protect\remarkname}
\newcommand\thmsname{Theorem}
\newcommand\nm@thmtype{thm}
\theoremstyle{plain}
\newenvironment{namedthm}[1]{
	\renewcommand\thmsname{#1}\renewcommand\nm@thmtype{namedtheorem}
	\begin{\nm@thmtype}
	}
	{\end{\nm@thmtype}
}
\theoremstyle{remark}
\newtheorem{remark}[thm]{Remark}
\theoremstyle{definition}
\newtheorem{example}[thm]{\protect\examplename}
\theoremstyle{definition}
\newtheorem{defn}[thm]{\protect\definitionname}
\theoremstyle{plain}
\newtheorem{prop}[thm]{\protect\propositionname}
\newtheorem{cor}[thm]{Corollary}
\theoremstyle{plain}
\newtheorem{lem}[thm]{\protect\lemmaname}
\theoremstyle{definition}
\numberwithin{equation}{section}
\providecommand{\definitionname}{Definition}
\providecommand{\examplename}{Example}
\providecommand{\lemmaname}{Lemma}
\providecommand{\propositionname}{Proposition}
\providecommand{\remarkname}{Remark}
\providecommand{\theoremname}{Theorem}
\address[dino.peran@pmfst.hr]{Dino Peran, University of Split, Faculty of Science, Department of Mathematics, Ru\dj era Bo\v skovi\' ca 33, 21000 Split, Croatia}
\begin{document}
	\global\long\def\acc#1#2{\accentset#1#2}
	
	\title[ ]{Normal forms of parabolic logarithmic transseries}

	\author[D.\ Peran]{D.\ Peran}
	
	\thanks{This research of D. Peran is fully supported by the Croatian Science Foundation (HRZZ) grant UIP-2017-05-1020. It is also supported by the Hubert-Curien `Cogito’ grant 2021/2022 \emph{Fractal and transserial approach to differential equations}.}
	\subjclass[2010]{34C20, 37C25, 47H10, 39B12, 46A19, 12J15}
	\keywords{formal normal forms, normalizations, logarithmic transseries, parabolic fixed point, residual invariant, fixed point theorems, fixed point theory}
	
	\begin{abstract}
		We give formal normal forms for parabolic logarithmic transseries $f=z+\cdots \, $, with respect to parabolic logarithmic normalizations. Normalizations are given algorithmically, using fixed point theorems, as limits of Picard's sequences in appropriate complete metric spaces, in contrast to transfinite \emph{term-by-term} eliminations described in former works. Furthermore, we give the explicit formula for the residual coefficient in the normal form and show that, in the larger logarithmic class, we can even eliminate the residual term from the normal form.
	\end{abstract}

	\maketitle
	\tableofcontents{}

	\section{Introduction}
			
		Transseries are generalizations of the standard power series. Shortly, transseries are formal sums of monomials obtained as formal products of powers, iterated exponentials and iterated logarithms (see \cite{Dries97}). Additionally, their sets of exponents (i.e., supports) are well-ordered sets, which is important for defining standard operations such as multiplication and composition of transseries. Transseries are nowdays an important tool for tackling problems in mathematics (see e.g. \cite{Ecalle92}, \cite{Ily84}) and physics (see \cite{abs19}). \\
		
		In this paper we consider the \emph{logarithmic transseries} which do not contain exponentials in their monomials. They are introduced in \cite{adh13} and called ``purely logarithmic transseries''. One of the reasons for this restriction comes from dynamics. The \emph{Dulac problem} of nonaccumulation of limit cycles on a hyperbolic or a semi-hyperbolic polycycle of an analytic planar vector field is closely related to the theory of logarithmic transseries through the first return map and its asymptotic expansion (see e.g. \cite{dulac}, \cite{Ecalle92}, \cite{Ily84}, \cite{Roussarie98}). The Dulac problem was solved by \emph{Ilyashenko} (see e.g. \cite{Ily84}, \cite{Ily91}) and \emph{Écalle} (see \cite{Ecalle92}) independently. Limit cycles can be detected as the fixed points of the \emph{first return map} (or \emph{Poincaré map}) of a polycycle. More precisely, it is shown in \cite{Ily91} that the first return maps of hyperbolic polycycles have logarithmic asymptotic expansions of the particular type, called the \emph{Dulac series} (see e.g. \cite[Section 24]{IlyYak08}). By \cite{Ily91} (or \cite{IlyYak08}) the first return maps of hyperbolic polycycles can be analytically extended to sufficiently large complex domains called the \emph{standard quadratic domains}. Therefore, by the \emph{Phragmen-Lindel\"of Theorem} (see e.g. \cite[Theorem 24.36]{IlyYak08}) the first return maps are uniquely determined by their asymptotic expansions (see e.g. \cite[Theorem 24.29]{IlyYak08}). It represents the part that was missing in the original Dulac's proof (see \cite{dulac}). This motivates the definition of \emph{Dulac germs} (or \emph{almost regular germs}) as analytic germs on standard quadratic domains which admit Dulac series as their asymptotic expansions uniformly on whole domains (see e.g. \cite[Definition 24.27]{IlyYak08}).
		
		Although Ilyashenko and Écalle solved the Dulac problem in a full generality (for hyperbolic and semi-hyperbolic polycycles), the semi-hyperbolic case is not well-understood yet. By \cite{Ily91}, unlike the hyperbolic case, it seems that the first return maps of semi-hyperbolic polycycles admit also iterated logarithms in their asymptotic expansions (see e.g. \cite{Ily91}). It seems that the most interesting polycycles are the ones with \emph{parabolic} (or \emph{tangent to the identity}) first return maps, while \emph{attractive} (\emph{hyperbolic} and \emph{strongly hyperbolic}) ones have trivial cyclicity (see \cite{Roussarie98}). \\
		
		Our long term plan is to classify analytically the first return maps of hyperbolic and semi-hyperbolic polycycles. An analytic normal form is, in some sense, the simplest representative of an analytic class.  
		
		Recently, there has been some progress in that direction. More precisely, in \cite{mr21}, \cite[Theorem B]{prrsDulac21}, \cite[Theorem C]{Peran21} the analytic normal forms of parabolic (resp. hyperbolic and strongly hyperbolic) Dulac germs are obtained. Note that these results provide the analytic classification of all Dulac germs and consequently, of all first return maps in the hyperbolic case. The first step in the proofs of all these results is obtaining formal normalizations and formal normal forms which are given in \cite{mrrz16} for transseries involving only powers and logarithms (i.e., without iterated logarithms).
		
		Motivated by the semi-hyperbolic case with iterated logarithms expected in the expansion, in \cite[Main Theorem]{prrs21} (resp. \cite[Theorem A]{Peran21}) the formal normal forms of hyperbolic (resp. strongly hyperbolic) logarithmic transseries (involving also iterated logarithms) are obtained.
		
		To complete the formal classification of logarithmic transseries, in this paper we obtain the formal classification of parabolic logarithmic transseries (see the Main Theorem). \\
		
		In this paper, to get the formal normal forms of parabolic logarithmic transseries, we use the similar fixed point techniques as in \cite{Peran21} and \cite{prrs21}. Consequently, normalizations are given as limits of Picards sequences in appropriate topologies. In \cite{prrsDulac21} and \cite{Peran21} this enabled the sufficient control to relate the formal and the analytic normalizations via the standard \emph{Poincaré asymptotic expansions}.
		
		However, in \cite{mrrz19} and \cite{mr21} it was shown that the formal normalizations of parabolic Dulac germs are, in general, more complicated logarithmic transseries than Dulac series. The main difference is that the supports of formal normalizations of parabolic Dulac germs are, in general, of order type strictly bigger than $\omega $. In order to well-define the transserial asymptotic expansion of such normalizations, the authors were forced to introduce the a particular \emph{summation rule} for partial sums at limit ordinal steps (i.e., a particular choice of germ related to a partial sum).
		
		The summation rule for parabolic Dulac germs is complicated, since it is based on the transfinite algorithm for their formal classification. We introduce here the fixed point technique motivated by a similar technique from \cite{prrs21}, which allows us a better control of the supports of formal normalizations, which, we belive, could simplify summation rules. In addition, we generalize \cite[Theorem A]{mrrz16} for parabolic logarithmic transseries in general, i.e., to those involving also the iterated logarithms.
		
		It is important to emphasize that in some point of the proof of our main result (see the Main Theorem) we use fixed point techniques to solve a particular type of nonlinear differential equations in the differential algebra of logarithmic transseries (see Proposition~\ref{Lemma3}) which is an interesting result hidden in the proof of the Main Theorem. \\
		
		The main result of this paper is given in the Main Theorem on page \pageref{TeoremA}. For a parabolic power series $f:=z+az^{n}+\cdots \, $, for $a\in \mathbb{C}\setminus \left\lbrace 0\right\rbrace $, $n\in \mathbb{N}_{\geq 2}$, it is known that its "shortest" formal normal form, with respect to power series normalizations, is $f_{c}:=z+az^{n}+cz^{2n-1}$, for a unique $c\in \mathbb{C}$ which we call the residual coefficient of $f$ (see e.g. \cite{CarlesonG93}). Furthermore, for a parabolic power-logarithmic transseries $f:=z+az^{\beta }\boldsymbol{\ell }^{n}+\cdots \, $, where $\boldsymbol{\ell }:=-\frac{1}{\log z}$, $a\in \mathbb{R}\setminus \left\lbrace 0\right\rbrace $, $n\in \mathbb{Z}$, $(\beta , n)>(1,0)$ lexicographically, it is shown in \cite[Theorem A]{mrrz16} that its formal normal form, with respect to the power-log normalizations, is $f_{c}:=z+az^{\beta }\boldsymbol{\ell }^{n}+cz^{2\beta -1}\boldsymbol{\ell }^{2n+1}$, for a unique $c\in \mathbb{R}$. In the fashion of these results, one would expect that formal normal forms, with respect to logarithmic normalizations, obtained in the Main Theorem can be chosen as finite series. However, it is shown in the Main Theorem that this is true in general only for parabolic logarithmic transseries $f$ such that the first term of $f-z$ has exponent of $z$ strictly bigger than $1$.
		
		Following \cite[Proposition 9.3]{mrrz19tubular}, in the Main Theorem we obtain an explicit formula for the residual coefficient of a parabolic logarithmic transseries. Furthermore, in Remark~\ref{Remark2} we prove that a parabolic logarithmic transseries can be \emph{embedded} in a formal vector field as the \emph{time-one} map. \\
		
		As opposed to (strongly) hyperbolic logarithmic transseries, if we restrict the depth of iterated logarithms, our normal form may change in parabolic case. Indeed, a standard parabolic power series $f$ with real coefficients can be formally normalized in the space of all power series to the normal form $f_{c}$ for a unique $c\in \mathbb{R}$ (which is not necessary $0$). If we allow parabolic changes of variables which involve logarithms, by \cite[Example 6.3]{mrrz16}, its formal normal form in the larger space is $f_{0}$ (i.e., $c=0$). That is, we can eliminate the residual coefficient in the larger space. In Corallary~\ref{Remark3} we generalize \cite[Example 6.3]{mrrz16} for logarithmic transseries of arbitrary depth of the iterated logarithm. \\
		
		Let us now explain briefly the structure of the paper. In Section~\ref{SectionNotionMThm} we recall the basic notions and we state the Main Theorem about the formal normalization of parabolic logarithmic transseries. Furthermore, Section~\ref{SectionPrereqForMThm} serves as a prerequisite for proving the Main Theorem in the following sections. The most important result stated in this section is the fixed point theorem from Proposition~\ref{KorBanach} that is originally proven in \cite[Proposition 3.2]{prrs21}. The proof of the Main Theorem is in Section~\ref{sec:proofA}. More precisely, the proof of the statement 1 of the Main Theorem is in Sections~\ref{SubsectionSketchMainThm}-\ref{sec:proofAb}, and the proofs of statements 2 and 3 of the Main Theorem are in Section~\ref{sec:ProofMinimality}. Finally, Section~\ref{sec:proofRemark} is dedicated to the proofs of Remark~\ref{Remark2} and Corollary~\ref{Remark3}.

	\section{Notation and the Main Theorem}\label{SectionNotionMThm}
		
		\subsection{Differential algebras $\mathfrak L$ and $\mathfrak L^{\infty }$}
			
			We repeat here shortly the notation and some facts introduced in e.g. \cite{prrs21} that we use in this paper. Let $\boldsymbol{\ell }_{1}:=-\frac{1}{\log z}$ and $\boldsymbol{\ell }_{k+1} :=\boldsymbol{\ell}_{k} \circ \boldsymbol{\ell } $, $k\in \mathbb{N}_{\geq 1}$, where $z$ is the \emph{formal variable}.
			
			By $\mathcal{L}_{k}$, $k\in \mathbb{N}$, we denote the set of all \emph{logarithmic transseries of depth} $k$:
			\begin{align}
				f & := \sum _{\left( \alpha ,n_{1},\ldots ,n_{k}\right) \in \mathbb{R}\times \mathbb{Z}^{k}} a_{\alpha ,n_{1},\ldots ,n_{k}} z^{\alpha }\boldsymbol{\ell }^{n_{1}}_{1}\cdots \boldsymbol{\ell }^{n_{k}}_{k} \, , \label{TransseriesDef}
			\end{align}
			where
			\begin{align*}
				\mathrm{Supp} (f) := \left\lbrace (\alpha , n_{1},\ldots ,n_{k}) \in \mathbb{R}\times \mathbb{Z}^{k} : a_{\alpha , n_{1},\ldots ,n_{k}} \neq 0 \right\rbrace ,
			\end{align*}
			is a well-ordered subset of $\mathbb{R} \times \mathbb{Z}^{k}$ with respect to the lexicographic order, such that $\min \, \mathrm{Supp}(f) > (0,0,\ldots ,0)_{k+1}$. If $\mathrm{Supp} (f)=\emptyset $, then we set $f:=0$ and call it the \emph{zero transseries}. We call $\mathrm{Supp} (f)$ the \emph{support of} $f$. 
			
			Note that $\mathcal{L}_{k}\subseteq \mathcal{L}_{k+1}$, for every $k\in \mathbb{N}$.
			
			Every
			\begin{align*}
				& a_{\alpha ,n_{1},\ldots ,n_{k}} z^{\alpha }\boldsymbol{\ell }^{n_{1}}_{1}\cdots \boldsymbol{\ell }^{n_{k}}_{k} , \quad a_{\alpha ,n_{1},\ldots ,n_{k}} \neq 0,
			\end{align*}
			in \eqref{TransseriesDef} is called the \emph{term} in $f$ of order $\left( \alpha ,n_{1},\ldots ,n_{k}\right) $. We denote by
			\begin{align*}
				\left[ f \right] _{\alpha ,n_{1},\ldots ,n_{k} } & := a_{\alpha ,n_{1},\ldots ,n_{k}}
			\end{align*}
			the \emph{coefficient} of the term $a_{\alpha ,n_{1},\ldots ,n_{k}}z^{\alpha }\boldsymbol{\ell }^{n_{1}}_{1}\cdots \boldsymbol{\ell }^{n_{k}}_{k} $. \\
			
			Additionally, if we allow the support $\mathrm{Supp}(f)$ of transseries $f$ in \eqref{TransseriesDef} to contain terms of order strictly smaller than $(0,0,\ldots ,0)_{k+1}$, we denote the set of all such $f$ by $\mathcal{L}_{k}^{\infty }$. \\
			
			We rewrite $f\in \mathcal{L}_{k}^{\infty }$, $k\in \mathbb{N}$, as:
			\begin{align*}
				f & :=\sum_{\alpha \in \mathbb{R}} z^{\alpha }R_{\alpha } ,
			\end{align*}
			where
			\begin{align*}
				R_{\alpha} & := \sum_{\left( \alpha ,n_{1},\ldots ,n_{k}\right) \in \mathbb{R}\times \mathbb{Z}^{k}} a_{\alpha ,n_{1},\ldots n_{k}}\boldsymbol{\ell }_{1}^{n_{1}}\cdots \boldsymbol{\ell}_{k}^{n_{k}} .
			\end{align*}
			By
			\begin{align*}
				\mathrm{Supp}_{z} (f) & := \left\lbrace \alpha \in \mathbb{R} : R_{\alpha } \neq 0 \right\rbrace ,
			\end{align*}
			we denote the well ordered set of all exponents of $z$ in $f$ that we call the \emph{support of} $f$ \emph{in} $z$. Every $z^{\alpha }R_{\alpha }$, for $R_{\alpha }\neq 0$, is called a \emph{block of order} $\alpha $ in $z$, or an $\alpha $-block.
			
			The \emph{order} of $f\in \mathcal{L}_{k}^{\infty }$, $k\in \mathbb{N}$, is defined as $\mathrm{ord}(f):=\min \mathrm{Supp}(f)$, if $\mathrm{Supp}(f) \neq \emptyset $. In case that $\mathrm{Supp}(f)=\emptyset $, we have $f=0$, so we put $\mathrm{ord}(f):=\infty $, where $\infty $ denotes an element that is lexicographically strictly bigger than any element of the set $\mathbb{R} \times \mathbb{Z}^{k}$.
			
			The term in $f\in \mathcal{L}_{k}^{\infty }$, $f\neq 0$, of order $\mathrm{ord}(f)$ is called the \emph{leading term} of $f$, and denoted by $\mathrm{Lt}(f)$.
			
			The \emph{order in} $z$ of $f\in \mathcal{L}_{k}^{\infty }$, $k\in \mathbb{N}$, is defined as $\mathrm{ord}_{z}(f):=\min \, \mathrm{Supp}_{z}(f)$, if $\mathrm{Supp}_{z}(f)\neq \emptyset $. If $\mathrm{Supp}_{z}(f)=\emptyset $, then $f=0$, so we put $\mathrm{ord}_{z}(f)=\infty $, where $\infty $ is the maximum of the set $\mathbb{R}\cup \left\lbrace \infty \right\rbrace $.
			
			The block in $f\in \mathcal{L}_{k}^{\infty }$, $f\neq 0$, $k\in \mathbb{N}$, of order $\mathrm{ord}_{z}(f)$ is called the \emph{leading block} of $f$ in $z$, and denoted by $\mathrm{Lb}_{z}(f)$. \\
			
			Using the Neumann Lemma (see \cite{Neumann49}, or \cite[Lemma 2.2]{mrrz16}), it is easy to see that the \emph{term-wise} multiplication is well defined on $\mathcal{L}_{k}^{\infty }$, $k\in \mathbb{N}$. Furthermore, it can be shown that $\mathcal{L}_{k}^{\infty }$ and $\mathcal{L}_{k}$, $k\in \mathbb{N}$, are differential algebras, with respect to the derivation $\frac{d}{dz}$.
			
			Put, as in \cite{prrs21},
			\begin{align*}
				& \mathfrak L^{\infty } :=\bigcup _{k\in \mathbb{N}}\mathcal{L}_{k}^{\infty } \quad \textrm{ and } \quad \mathfrak L :=\bigcup _{k\in \mathbb{N}}\mathcal{L}_{k} .
			\end{align*}
			It is easy to see that $\mathfrak L^{\infty }$ is a differential algebra and $\mathfrak L$ and $\mathcal{L}_{k}^{\infty }$, $\mathcal{L}_{k}$, $k\in \mathbb{N}$, are its subalgebras. We call $\mathfrak L$ the \emph{differential algebra of logarithmic transseries}, and $\mathcal{L}_{k}$, $k\in \mathbb{N}$, the \emph{differential algebra of logarithmic transseries of depth} $k$. \\
			
			Throughout the paper, we will use the following acronyms:
			\begin{enumerate}[1., font=\textup, topsep=0.4cm, itemsep=0.4cm, leftmargin=0.6cm]
				\item $f = g + \mathrm{h.o.b.}(z)$ (which means: \textit{higher order blocks in $z$}) if $\mathrm{ord}_{z}(f - g)$ is strictly bigger than the order in $z$ of any block in $g$.
				\item $f = g + \mathrm{h.o.t.}$ (which means: \textit{higher order terms}) if $\mathrm{ord}(f - g)$ is strictly bigger than the order of any term in $g$.
			\end{enumerate}
			To shorten the notation, for $k\in \mathbb{N}_{\geq 1}$, we use the following multi-indeces:
			\begin{enumerate}[1., font=\textup, topsep=0.4cm, itemsep=0.4cm, leftmargin=0.6cm]
				\item $\mathbf{n}:=(n_{1},\ldots ,n_{k}) \in \mathbb{R}^{k}$,
				\item $\mathbf{1}_{k}:=(1,\ldots ,1)_{k} \in \mathbb{R}^{k}$.
			\end{enumerate}
		
			\emph{The composition of transseries.} Since $\log \boldsymbol{\ell}_{k}=-\boldsymbol{\ell }_{k+1}^{-1}$, the differential algebra $\mathcal{L}_{k}$ is not closed for formal composition. Therefore, we consider the set $\mathcal{L}_{k}^{H}$, $k\in \mathbb{N}$, (see \cite{prrs21}, \cite{mrrz16}) as the set of all logarithmic transseries $f \in \mathcal{L}_{k}$ that do not contain logarithms in their leading terms, i.e., such that $f=\lambda z^{\alpha } + \mathrm{h.o.t.}$, for $\lambda , \alpha >0$.
			
			Let $f\in \mathcal{L}_{k}^{\infty }$ and $g \in \mathcal{L}_{k}^{H}$. The composition $f \circ g$ is defined standardly, using the \emph{Taylor Theorem} as:
			\begin{align*}
				f \circ g & :=f\left( \lambda z^{\alpha }\right) + \sum_{i\geq 1}\frac{f^{(i)}\left( \lambda z^{\alpha }\right) }{i!}g_{1}^{i} ,
			\end{align*}
			where $g:=\lambda z^{\alpha }+g_{1}$ and $\mathrm{ord}(g_{1}) > (\alpha , \mathbf{0}_{k})$. The above series converges in the \emph{weak topology} on $\mathcal{L}_{k}$. For more details, see \cite[Proposition 3.3]{prrs21}.
			
			Denote by $\mathcal{L}_{k}^{0}\subseteq \mathcal{L}_{k}$ the set of all \emph{parabolic} (or \emph{tangent to the identity}) logarithmic transseries $f=z+\mathrm{h.o.t.}$ in $\mathcal{L}_{k}$, $k\in \mathbb{N}$. Now, put
			\begin{align*}
				& \mathfrak L^{H} :=\bigcup _{k\in \mathbb{N}}\mathcal{L}_{k}^{H} \quad \textrm{ and } \quad \mathfrak L^{0} :=\bigcup _{k\in \mathbb{N}}\mathcal{L}_{k}^{0} .
			\end{align*}
			Similarly as in \cite[Section 7]{Dries97} it can be shown that $\mathfrak L^{H}$ is a group for composition, and $\mathfrak L^{0}$, $\mathcal{L}_{k}^{H}$, $\mathcal{L}_{k}^{0}$, $k\in \mathbb{N}$, are its subgroups. \\
			
			In the Main Theorem we find the "shortest" normal form in $\mathcal{L}_{k}^{0}$ for a parabolic logarithmic transseries $f\in \mathcal{L}_{k}^{0}$, where $k\in \mathbb{N}$ is minimal such that $f\in \mathcal{L}_{k}^{0}$. In Corollary~\ref{Remark3} we obtain the "shortest" normal form of $f\in \mathfrak L^{0}$ in the larger group of all parabolic logarithmic transseries $\mathfrak L^{0}$.

			\subsection{The Main Theorem}\label{SectionMain}
			
			Let $f \in \mathcal{L}_{k}^{0}$, $k\in \mathbb{N}$. Let $\left( \beta ,\mathbf{n}\right) := \mathrm{ord}(f- \mathrm{id}) > (1,\mathbf{0}_{k})$. Similarly as in the case of parabolic analytic diffeomorphisms at zero (see e.g. \cite{CarlesonG93}, \cite{Milnor06}), or logarithmic transseries of depth $1$ (see \cite{mrrz16}), the monomial (term) in $f$ of order
			\begin{align*}
				& \left( 2\beta -1,2\mathbf{n}+\mathbf{1}_{k}\right) 
			\end{align*}
			is called the \emph{residual monomial (term)} of $f$ and denoted by
			\begin{align*}
				\mathrm{Res}(f) & := z^{2\beta -1}\boldsymbol{\ell }_{1}^{2n_{1}+1}\ldots \boldsymbol{\ell }_{k}^{2n_{k}+1} .
			\end{align*}
			Moreover, the coefficient of the residual term will be called the \emph{residual coefficient}. The block $z^{2\beta -1}R_{2\beta -1}$ of $f$ of order $2\beta -1$ (in $z$) is similarly called the \emph{residual block} of $f$.
			
			Note that the residual term of $f$ depends on the chosen $k\in \mathbb{N}$ such that $f\in \mathcal{L}_{k}^{0}$. That is, it depends on the depth in logarithms of the group of the parabolic changes of variables.
			
			The Main Theorem below is a generalization of Theorem A in \cite{mrrz16} from the group $\mathcal{L}_{1}^{0}$ to $\mathcal{L}_{k}^{0}$, $k\in \mathbb{N}$ (i.e., to the group of parabolic logarithmic transseries with iterated logarithms of the maximal depth $k$). \\
			
			For $1\leq v\leq k$ and $\mathbf{n} \in \left\lbrace 0\right\rbrace ^{v-1}\times \mathbb{N}_{\geq 1} \times \mathbb{Z} ^{k-v}$, we define
			\begin{align}
				\mathbf{n'} & := \mathbf{n} + (\mathbf{1}_{v}, \mathbf{0}_{k-v}) . \label{NCrtica}
			\end{align}
			\begin{example}\hfill 
				\begin{enumerate}[1., font=\textup, topsep=0.4cm, itemsep=0.4cm, leftmargin=0.6cm]
					\item If $\mathbf{n} = (0,\ldots , 0, 2,1,\ldots ,1)$, then $\mathbf{n'} = (1,\ldots ,1,3,1,\ldots ,1)$.
					\item If $\mathbf{n}=(3,2,\ldots 2)$, then $\mathbf{n'}=(4,2,\ldots ,2)$.
				\end{enumerate}
			\end{example}
			
			\begin{namedthm}{Main Theorem}\label{TeoremA}
				Let $f \in \mathcal{L}_{k}^{0}$, $k\in \mathbb{N}$, be a parabolic logarithmic transseries. Let $(\beta , \mathbf{n}) := \mathrm{ord}(f- \mathrm{id}) > (1,\mathbf{0}_{k})$ and let $\mathbf{n}'$ be defined as in \eqref{NCrtica}. For
				\begin{align*}
					f & := \mathrm{id} +\displaystyle \sum_{\mathbf{n} \leq \mathbf{m}\leq \mathbf{n'}}a_{\mathbf{m}}z^{\beta }\boldsymbol{\ell}_{1}^{m_{1}}\cdots \boldsymbol{\ell}_{k}^{m_{k}} + \mathrm{h.o.t.},
				\end{align*}
				where $\mathbf{m}:=(m_{1},\ldots ,m_{k})$, put:
				$$
					L :=\left\lbrace \begin{array}{ll}
					a_{\mathbf{n}}\boldsymbol{\ell}_{1}^{n_{1}}\cdots \boldsymbol{\ell}_{k}^{n_{k}} , & \textrm{if } \beta >1, \\[0.5cm]
					\displaystyle \sum_{\mathbf{n} \leq \mathbf{m}\leq \mathbf{n'}}a_{\mathbf{m}}\boldsymbol{\ell}_{1}^{m_{1}}\cdots \boldsymbol{\ell}_{k}^{m_{k}} , & \textrm{if } \beta =1,
					\end{array} \right.
				$$
				and let
				\begin{align}
					f_{c} &:=\mathrm{id}+z^{\beta }L+c\mathrm{Res}(f) , \quad c\in \mathbb{R} . \label{DefnOfFc}
				\end{align}
				\begin{enumerate}[1., font=\textup, topsep=0.4cm, itemsep=0.4cm, leftmargin=0.6cm]
					\item There exists $c\in \mathbb{R} $, such that the \emph{conjugacy equation}:
					\begin{align}
						\varphi  \circ f \circ \varphi ^{-1} = f_{c} \label{ConjEq}
					\end{align}
					has a solution $\varphi  \in \mathcal{L}_{k}^{0}$.
					\item Real number $c$ is unique such that \eqref{ConjEq} has a solution in $\mathcal{L}_{k}^{0}$. Moreover, the \emph{residual coefficient} $c$ is explicitely given by:
					\begin{align}
						& c = \left[ \frac{a_{\mathbf{n}}^{2}}{f-\mathrm{id}} \right] _{-1,\mathbf{1}_{k}} - \left[ \frac{a_{\mathbf{n}}^{2}}{zL} \right] _{-1,\mathbf{1}_{k}} = \left[ a_{\mathbf{n}}^{2}\int \frac{dz}{f-\mathrm{id}}\right] _{\mathbf{0}_{k+1},-1} - \left[ \frac{a_{\mathbf{n}}^{2}}{zL}\right] _{-1,\mathbf{1}_{k}} , \label{EqRes}
					\end{align}
					if $\beta =1$, and
					\begin{align}
						& c=\left[ \frac{a_{\mathbf{n}}^{2}}{f-\mathrm{id}}\right] _{-1,\mathbf{1}_{k}}=\left[ a_{\mathbf{n}}^{2}\int \frac{dz}{f-\mathrm{id}}\right] _{\mathbf{0}_{k+1},-1} , \label{EqRes2}
					\end{align}
					if $\beta >1$.
					\item The \emph{normal form} $f_{c}$ is minimal\footnote{In the sense that it cannot be further reduced by changes of variables from $\mathcal L_{k}^{0}$. Moreover, the coefficients of $f_{c}$ cannot be changed by changes of variables from $\mathcal{L}_{k}^{0}$.} in $\mathcal L_{k}^{0}$.
				\end{enumerate}
			\end{namedthm}
		
			\begin{remark}[The embedding in the formal flow]\label{Remark2}
				Let $f$ and $c$ be as in the Main Theorem. Put:
				$$
					X_{c} := \left\lbrace \begin{array}{ll}
						\frac{a_{\mathbf{n}}z^{\beta }\boldsymbol{\ell }_{1}^{n_{1}}\cdots \boldsymbol{\ell }_{k}^{n_{k}}}{1+\frac{a_{\mathbf{n}}}{2}\left( z^{\beta }\boldsymbol{\ell }_{1}^{n_{1}}\cdots \boldsymbol{\ell }_{k}^{n_{k}}\right) ' - \frac{c}{a_{\mathbf{n}}} z^{\beta -1}\boldsymbol{\ell }_{1}^{n_{1}+1}\cdots \boldsymbol{\ell }_{k}^{n_{k}+1}} \frac{d}{dz} , & \textrm{if } \beta >1, \\[0.5cm]
						\frac{z\log (1+L)}{1- \frac{c\mathrm{Res}(f)}{z\log (1+ L)} + \frac{\boldsymbol{\ell }_{1}^{2}\frac{d}{d\boldsymbol{\ell }_{1}}(L)}{2(1+ L)}} \frac{d}{dz} , & \textrm{if } \beta =1.
					\end{array} \right.
				$$
				There exists a parabolic logarithmic transseries $\psi \in \mathcal{L}_{k}^{0}$ that reduces $f$ to a normal form given as the \emph{time-one map} of the formal vector field $X_{c}$. That is, such that
				\begin{align*}
					\psi \circ f \circ \psi ^{-1} & =\exp (X_{c}).\mathrm{id} .
				\end{align*}
				This proves that $f$ can be formally embedded in a formal flow.
			\end{remark}
		
			\begin{remark}\label{Remark1}\hfill
				\begin{enumerate}[1., font=\textup, topsep=0.4cm, itemsep=0.4cm, leftmargin=0.6cm]
					\item (Varying $k$ in the Main Theorem) Note that the Main Theorem and Remark~\ref{Remark2} hold for every $k\in \mathbb{N}$, $k\geq k_{0}$, where $k_{0}\in \mathbb{N}$ is minimal such that $f\in \mathcal{L}_{k_{0}}^{0}$. Note that the initial part $\mathrm{id}+zL$ of $f_{c}$ remains the same by varying the parametar $k\geq k_{0}$, as opposed to the residual coefficient $c$. In the proof of Corollary~\ref{Remark3} in Section~\ref{sec:proofRemark} we show that there are only two in general distinct values of a residual coefficient $c$. One is obtained by normalization in $\mathcal{L}_{k_{0}}^{0}$ and the other by normalization in any larger group $\mathcal{L}_{k}^{0}$, for $k>k_{0}$. 
					\item (The formal class of logarithmic transseries) In the case $\beta >1$, the quadruple $(\beta , \mathbf{n} , a_{\mathbf{n}} , c)$ determines the formal class of the parabolic logarithmic transseries $f\in \mathcal{L}_{k}^{0}$, $k\in \mathbb{N}$.
					
					In the case $\beta =1$, the formal class of $f\in \mathcal{L}_{k}^{0}$, $k\in \mathbb{N}$, is determined by (possibly infinite) sequence $(1 , \mathbf{n} , (a_{\mathbf{m}})_{\mathbf{n}\leq \mathbf{m} \leq \mathbf{n}'} , c)$.
					\item (Non-uniqueness of a normalization) The normalizing change of variables $\psi $ from Remark~\ref{Remark2} is not unique, since every precomposition of $\psi $ with a \emph{time-}$t$ map, $t\in \mathbb{R}$, of the vector field $X_{c}$ from Remark~\ref{Remark2} also reduces $f$ to the time-one map of $X_{c}$. Since the time-one map of the formal vector field $X_{c}$ and the minimal normal form $f_{c}$ defined in \eqref{DefnOfFc} belong to the same formal class, it implies that the solution $\varphi $ of the conjugacy equation \eqref{ConjEq} is not unique.
					\item Note that \eqref{EqRes} is a generalization of the formula for the residual coefficient for parabolic logarithmic transseries in $\mathcal{L}_{1}^{0}$ obtained in \cite[Proposition 9.3]{mrrz19tubular}, to the differential algebra $\mathcal{L}_{k}$, $k\in \mathbb{N}$.
				\end{enumerate}
			\end{remark}
		
			\begin{cor}[The normal form in the larger differential algebra $\mathfrak L$]\label{Remark3}
				Let $f\in \mathfrak L^{0}$ and let $k\in \mathbb{N}$ be minimal such that $f\in \mathcal{L}_{k}^{0}$. By a parabolic change of variables $\varphi $ in the larger differential algebra $\mathfrak L^{0}$, a parabolic logarithmic transseries $f\in \mathcal{L}_{k}^{0}$, $k\in \mathbb{N}$, can be further reduced to
				\begin{align*}
					f_{0} & := \mathrm{id} + zL ,
				\end{align*}
				for $L$ as defined in the Main Theorem.
				
				Moreover, every normalization $\varphi $ belongs to $\mathcal{L}_{k+1}^{0}$ and $f_{0}$ is minimal\footnote{In the sense that it cannot be further reduced in $\mathfrak L^{0}$, nor can the coefficients be changed.} in $\mathfrak L^{0}$.
			\end{cor}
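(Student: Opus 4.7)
The key observation is that enlarging the ambient differential algebra from $\mathcal{L}_{k}^{0}$ to $\mathcal{L}_{k+1}^{0}$ lengthens the residual monomial by an extra factor of $\boldsymbol{\ell}_{k+1}$ (the multi-index $\mathbf{n}$ gains a trailing $0$, contributing $\boldsymbol{\ell}_{k+1}^{2\cdot 0+1}$); the explicit formulas of the Main Theorem will then force the new residual coefficient to vanish automatically, since $f$ itself has no $\boldsymbol{\ell}_{k+1}$-dependence.

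First I would regard $f \in \mathcal{L}_{k}^{0}$ as an element of $\mathcal{L}_{k+1}^{0}$ and invoke the Main Theorem in this larger space. A direct bookkeeping check confirms that the initial part $L$ of the normal form coincides with the one defined in $\mathcal{L}_{k}^{0}$: for $\beta > 1$ it is still the single monomial $a_{\mathbf{n}}\boldsymbol{\ell}_{1}^{n_{1}}\cdots \boldsymbol{\ell}_{k}^{n_{k}}$, while for $\beta = 1$ the lexicographic interval $[\mathbf{n}, \mathbf{n}']$ in $\mathbb{Z}^{k+1}$ has $0$ as its last coordinate at both endpoints, so only multi-indices with $m_{k+1} = 0$ contribute. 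Next I would compute the new residual coefficient $c'$ using formulas \eqref{EqRes}--\eqref{EqRes2} evaluated at $(-1, \mathbf{1}_{k+1})$: since $f - \mathrm{id}$ and $L$ lie in $\mathcal{L}_{k}$, neither $\frac{a_{\mathbf{n}}^{2}}{f - \mathrm{id}}$ nor $\frac{a_{\mathbf{n}}^{2}}{zL}$ involves $\boldsymbol{\ell}_{k+1}$, and both bracketed coefficients vanish. Thus $c' = 0$, and the Main Theorem in $\mathcal{L}_{k+1}^{0}$ produces $\varphi \in \mathcal{L}_{k+1}^{0}$ with $\varphi \circ f \circ \varphi^{-1} = f_{0}$.

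For the minimality of $f_{0}$ in $\mathfrak{L}^{0}$, suppose $\chi \in \mathcal{L}_{m}^{0}$ with $m \geq k+1$ conjugates $f_{0}$ to some $\tilde f_{0}$. Applying the Main Theorem to $f_{0}$ in $\mathcal{L}_{m}^{0}$, the same vanishing-coefficient argument forces the residual coefficient of its normal form to be $0$, so $f_{0}$ is its own normal form in $\mathcal{L}_{m}^{0}$. Uniqueness (statement 2) and minimality (statement 3) of the Main Theorem then forbid $\chi$ from changing the coefficients of $L$ or introducing any residual term; hence $\tilde f_{0} = f_{0}$.

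Finally, for the claim that every normalization $\psi \in \mathfrak{L}^{0}$ lies in $\mathcal{L}_{k+1}^{0}$: the product $\psi \circ \varphi^{-1}$ commutes with $f_{0}$ and so belongs to the centralizer of $f_{0}$ in $\mathfrak{L}^{0}$. By Remark~\ref{Remark2} with $c = 0$, the formal vector field $X_{0}$ belongs to $\mathcal{L}_{k}$, and its time-$t$ flow $\{\exp(tX_{0}).\mathrm{id} : t \in \mathbb{R}\}$ lies in $\mathcal{L}_{k}^{0}$ and inside this centralizer. The main obstacle I anticipate is showing this inclusion is an equality: I would analyze the commutation equation $\eta \circ f_{0} = f_{0} \circ \eta$ for an arbitrary centralizing $\eta$ block-by-block in $z$, and use the non-resonant structure of $f_{0} - \mathrm{id}$ together with the fixed-point machinery of Proposition~\ref{KorBanach} to match $\eta$ with a time-$t$ map of $X_{0}$. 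This yields $\psi \circ \varphi^{-1} \in \mathcal{L}_{k}^{0}$ and hence $\psi \in \mathcal{L}_{k+1}^{0}$.
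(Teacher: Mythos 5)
Your main argument is exactly the paper's: view $f$ as an element of $\mathcal{L}_{k+1}^{0}$, note that $L$ is unchanged (only multi-indices with $m_{k+1}=0$ can occur, simply because $f\in\mathcal{L}_{k}$ carries no $\boldsymbol{\ell}_{k+1}$ -- that, rather than the shape of the lexicographic interval, is the real reason), and then evaluate \eqref{EqRes}--\eqref{EqRes2} at $(-1,\mathbf{1}_{k+1})$. Since $\mathcal{L}_{k}^{\infty}$ is a field, $\frac{a_{\mathbf{n}}^{2}}{f-\mathrm{id}}$ and $\frac{a_{\mathbf{n}}^{2}}{zL}$ lie in $\mathcal{L}_{k}^{\infty}$ and hence have no term containing $\boldsymbol{\ell}_{k+1}$, so the residual coefficient vanishes for every ambient depth $m\geq k+1$. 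Your minimality argument, via uniqueness of $c$ and Proposition~\ref{LeadingBlockLemma}, likewise matches the paper.

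The one place you diverge is the clause ``every normalization $\varphi$ belongs to $\mathcal{L}_{k+1}^{0}$''. The paper reads this weakly: having shown $c=0$ for every $m\geq k+1$, it only concludes that it \emph{suffices} to take a normalization in $\mathcal{L}_{k+1}^{0}$, and proves nothing about arbitrary normalizations. You instead attack the literal strong statement through a centralizer analysis of $f_{0}$, and you explicitly leave the key step (that the centralizer is exhausted by the time-$t$ maps of $X_{0}$) unproved. As written, that part of your argument is a sketch with an acknowledged gap, not a proof; but since the paper's own proof does not establish the strong reading either, your proposal is not behind the published argument on the claims the paper actually proves.
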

		
			The proof of the Main Theorem is in Section~\ref{sec:proofA} and the proofs of Remark~\ref{Remark2} and Corollary~\ref{Remark3} are in Section~\ref{sec:proofRemark}.
			
			\begin{example}[Application of the Main Theorem and Corollary~\ref{Remark3}]\hfill
				\begin{enumerate}[1., font=\textup, topsep=0.4cm, itemsep=0.4cm, leftmargin=0.6cm]
					\item Let $f:=z+a_{1}z\boldsymbol{\ell }_{1}+ a_{2}z\boldsymbol{\ell }_{1}^{2}+a_{3}z\boldsymbol{\ell }_{1}^{3}+\mathrm{h.o.t.}\in \mathcal{L}_{1}$, for $a_{1},a_{2},a_{3}\in \mathbb{R}\setminus \left\lbrace 0\right\rbrace $. Here, $L=a_{1}\boldsymbol{\ell }_{1}+a_{2}\boldsymbol{\ell }_{1}^{2}$ and $\mathrm{Res}(f)=z\boldsymbol{\ell }_{1}^{3}$. By \eqref{EqRes}, it follows that:
					\begin{align*}
						& c:=\left[ \frac{a_{1}^{2}}{f-\mathrm{id}}\right] _{-1,1}-\left[ \frac{a_{1}^{2}}{zL}\right] _{-1,1} =-a_{3}+\frac{a_{2}^{2}}{a_{1}} - \frac{a_{2}^{2}}{a_{1}}=-a_{3} .
					\end{align*}
					Therefore, by the Main Theorem, it follows that
					\begin{align*}
						f_{c} & =\mathrm{id}+a_{1}z\boldsymbol{\ell }_{1}+a_{2}z\boldsymbol{\ell}_{1}^{2}+ \left( -a_{3}+\frac{a_{2}^{2}}{a_{1}} -\left( \frac{a_{2}}{a_{1}}\right) ^{2}\right) z\boldsymbol{\ell }_{1}^{3}
					\end{align*}
					is the normal form in $\mathcal{L}_{1}$ of the logarithmic transseries $f$. However, if we allow normalizations $\varphi $ from the larger group $\mathcal{L}_{2}^{0}$, we get just $f_{0}=\mathrm{id}+a_{1}z\boldsymbol{\ell }_{1}+a_{2}z\boldsymbol{\ell}_{1}^{2}$.
					\item Let $f:=z+a_{1}z^{2}\boldsymbol{\ell }_{2}+a_{2}z^{2}\boldsymbol{\ell }_{1}+a_{3}z^{3}\boldsymbol{\ell }_{1}\boldsymbol{\ell }_{2}^{2}+z^{4}+\mathrm{h.o.t.}\in \mathcal{L}_{2}$, for $a_{1},a_{2},a_{3}\in \mathbb{R}\setminus \left\lbrace 0\right\rbrace $. Note that $L=a_{1}\boldsymbol{\ell }_{2}$ and $\mathrm{Res}(f)=z^{3}\boldsymbol{\ell }_{1}\boldsymbol{\ell }_{2}^{3}$. By \eqref{EqRes2}, it follows that $c=\left[ \frac{a_{1}^{2}}{f-\mathrm{id}}\right] _{-1,\mathbf{1}_{2}}=0$. Therefore, by the Main Theorem, it follows that $f_{c}=\mathrm{id}+a_{1}z^{2}\boldsymbol{\ell }_{2}$ is the normal form in $\mathcal{L}_{2}$ of the logarithmic transseries $f$.
				\end{enumerate}
			\end{example}

		\section{Preliminaries to the proof of the Main Theorem}\label{SectionPrereqForMThm}
		
			This section contains prerequisites for the proof of the Main Theorem, Remark~\ref{Remark2} and Corollary~\ref{Remark3}. In final Subsection~\ref{SubsectionFixedPointThms} we state a fixed point theorem, originally stated in \cite[Proposition 3.2]{prrs21}, which is the main ingredient of the proof of the Main Theorem.

			\subsection{Metric on differential algebra $\mathfrak L^{\infty }$}\label{subsec:topology}
				
				As in \cite{prrs21}, we define the \emph{power-metric} $d_{z}$ on differential algebra $\mathfrak L^{\infty }$ by:
				$$
					d_{z}(f , g) :=\left\lbrace \begin{array}{ll}
						2^{-\mathrm{ord}_{z}(f-g)}, & f \neq g, \\ [1mm]
						0, & f = g .
					\end{array} \right.  
				$$
				We denote by $\mathcal{T}_{d_{z}}$ the induced \emph{power-metric topology}, which is standardly called the \emph{valuation topology} (see e.g. \cite{Dries97}) or the \emph{formal topology} in \cite{mrrz16}. \\
				
				Let $W\subseteq \mathbb{R}\times \mathbb{Z}^{k}$, $k\in \mathbb{N}$, and let $\mathcal L_{k}^{W}$ be the set of all $f\in \mathcal{L}_{k}^{\infty }$ such that $\mathrm{Supp} (f)\subseteq W$. Note that $\mathcal{L}_{k}^{W}$ is a linear subspace of $\mathcal{L}_{k}^{\infty }$. By \cite[Proposition 3.6]{prrs21}, it follows that $(\mathcal{L}_{k}^{W},d_{z})$ is complete, for every $W\subseteq \mathbb{R}\times \mathbb{Z}^{k}$ and $k\in \mathbb{N}$. In particular, $\mathcal{L}_{k}^{\infty }$ and $\mathcal{L}_{k}$, $k\in \mathbb{N}$, are complete.

			\subsection{Differential algebras of blocks $\mathcal{B}_{m}$}\label{SubsectionSpaces}
			
			In this subsection we recall the \emph{differential algebras of blocks} from \cite[Subsection 3.4]{prrs21}.
			
			Let $k\in \mathbb{N}_{\geq 1}$ be fixed. For $1\leq m\leq k$, we define $\mathcal{B}_{m}\subseteq \mathcal{L}_{k}^{\infty }$ to be the set of all $K\in \mathcal{L}_{k}^{\infty }$ of the form
			\begin{align*}
				K & := \sum_{\left( n_{m},\ldots ,n_{k}\right) \in \mathbb{Z}^{k-m+1}} a_{n_{m},\ldots ,n_{k}}\boldsymbol{\ell}_{m}^{n_{m}}\cdots \boldsymbol{\ell}_{k}^{n_{k}} .
			\end{align*}
			We call such $K$ a \emph{block of level} $m$. It is easy to see that $\mathcal{B}_{m}$, $m\in \mathbb{N}_{\geq 1}$, is a linear subspace that depends on the \emph{ambient space} $\mathcal{L}_{k}$, $k\in \mathbb{N}_{\geq 1}$. In the sequel we will write $\mathcal{B}_{m}\subseteq \mathcal{L}_{k}^{\infty }$ to emphasize which ambient space is considered. \\
			
			We define the \emph{order} of $K\in \mathcal{B}_{m}\setminus \left\lbrace 0\right\rbrace $ \emph{in} $\boldsymbol{\ell}_{m}$ as the minimal exponent of $\boldsymbol{\ell }_{m}$ in the block $K$, and denote it by $\mathrm{ord}_{\boldsymbol{\ell }_{m}}(K)$. If $K=0$, then we put $\mathrm{ord}_{\boldsymbol{\ell }_{m}}(K):=\infty $, where $\infty $ is the maximum of the set $\mathbb{Z}\cup \left\lbrace \infty \right\rbrace $. Moreover, for $K\neq 0$ and $K:=\sum _{n=n_{0}}^{+\infty }\boldsymbol{\ell }_{m}^{n}K_{n}\in \mathcal{B}_{m}$, $K_{n_{0}} \neq 0$, we call $K_{n_{0}}$ the \emph{leading block in} $\boldsymbol{\ell }_{m}$ of $K$ and denote it by $\mathrm{Lb}_{\boldsymbol{\ell }_{m}}(K)$. \\
			
			The function $d_{m}:\mathcal{B}_{m} \times \mathcal{B}_{m} \to \mathbb{R} $, such that:
			$$
				d_{m} \big( K_{1}, K_{2} \big) := \left\lbrace \begin{array}{ll}
					2^{-\mathrm{ord}_{\boldsymbol{\ell}_{m}}(K_{1}-K_{2})} , & K_{1} \neq K_{2}, \\
					0, & K_{1}=K_{1} ,
				\end{array} \right. 
			$$
			is a metric on $\mathcal{B}_{m}$, for $1\leq m\leq k$. It is easy to see that $\mathcal{B}_{m}$ is complete with respect to the metric $d_{m}$, and that $\mathcal{B}_{m+1} \subseteq \mathcal{B}_{m}$, for every $1\leq m\leq k-1$.
			
			Let
			\begin{align*}
				\mathcal{B}_{m}^{+} & :=\left\lbrace R \in \mathcal{B}_{m}\subseteq \mathcal{L}_{k}^{\infty } : \mathrm{ord}_{\boldsymbol{\ell}_{m}}(R)\geq 1 \right\rbrace ,
			\end{align*}
			and
			\begin{align*}
				\mathcal{B}_{\geq m}^{+} & :=\left\lbrace R \in \mathcal{B}_{m}\subseteq \mathcal{L}_{k}^{\infty } : \mathrm{ord}(R) > \mathbf{0}_{k+1} \right\rbrace ,
			\end{align*}
			for $1\leq m\leq k$. The spaces $\mathcal{B}_{\geq m}^{+}$ and $\mathcal{B}_{m}^{+}$ are complete, for $1\leq m\leq k$. Moreover, note that $\mathcal{B}_{m}^{+}\subseteq \mathcal{B}_{\geq m}^{+}\subseteq \mathcal{L}_{k}$, for $1\leq m\leq k$.
			
			The space $\mathcal{B}_{\geq m}^{+}$, $1\leq m\leq k$, can be decomposed as a direct sum of its complete subspaces:
			\begin{align*}
				\mathcal{B}_{\geq m}^{+} &= \mathcal{B}_{m}^{+} \oplus \cdots \oplus \mathcal{B}_{k-1}^{+} \oplus \mathcal{B}_{k}^{+} .
			\end{align*}
			\begin{example}
				Let $R:=5\boldsymbol{\ell}_{2}\boldsymbol{\ell}_{3}^{4}+\boldsymbol{\ell}_{2}^{3}\boldsymbol{\ell}_{3}^{-1} + \boldsymbol{\ell }_{1}^{2}\boldsymbol{\ell}_{2} \in \mathcal{B}_{\geq 1}^{+}\subseteq \mathcal{L}_{3}$. We have a decomposition $R=R_{1}+R_{2}+R_{3}$, where $R_{1}:= \boldsymbol{\ell }_{1}^{2}\boldsymbol{\ell}_{2} \in \mathcal{B}_{1}^{+}$, $R_{2}:=5\boldsymbol{\ell}_{2}\boldsymbol{\ell}_{3}^{4}+\boldsymbol{\ell}_{2}^{3}\boldsymbol{\ell}_{3}^{-1} \in \mathcal{B}_{2}^{+}$, and $R_{3}:=0$.
			\end{example}
			We define $D_{m}:\mathcal{B}_{m} \to \mathcal{B}_{m}$, $1\leq m\leq k$, such that
			\begin{align*}
				D_{m} & :=\boldsymbol{\ell}_{m}^{2} \cdot \frac{d}{d\boldsymbol{\ell}_{m}} .
			\end{align*}
			$D_{m}$ is a derivation and a $\frac{1}{2}$-contraction on the space $\left( \mathcal{B}_{m},d_{m}\right) $. The contractibility of the derivation $D_{m}$ is the main reason why we use $D_{m}$ on $\mathcal{B}_{m}$ instead of the usual derivation $\frac{d}{dz}$. Therefore, $\left( \mathcal{B}_{m},D_{m} \right) $ is a differential algebra and $\mathcal{B}_{m}^{+}, \mathcal{B}_{\geq m}^{+}$ are its subalgebras. Note that
			\begin{align*}
				& D_{m}\left( \mathcal{B}_{m+n}\right) \subseteq \mathcal{B}_{m}^{+} ,
			\end{align*}
			for every $0\leq m\leq k-1$ and $1\leq n\leq k-m$. \\
			
			For $R ,T \in \mathcal{B}_{m}$, $1\leq m\leq k$, we write:
			\begin{align*}
				R & = T + \mathrm{h.o.b.} \left( \boldsymbol{\ell}_{m} \right) ,
			\end{align*}
			(which means: \textit{higher order blocks in} $\boldsymbol{\ell}_{m}$) if $\mathrm{ord}_{\boldsymbol{\ell}_{m}}(R  - T)$ is strictly bigger than the order in $\boldsymbol{\ell}_{m}$ of any term in $T $. \\
			
			Note finally that there are two different objects named \emph{block}. We call $z^{\beta }R_{\beta }$, $\beta \geq 1$, a \emph{block of order} $\beta $. But also $R_{\beta }$, as an element of the space $\mathcal{B}_{1}$, is called a \emph{block of level} $1$.

			\subsection{A list of useful identities}\label{ListIdentities}
			In this subsection we state some important identites used for computing in differential algebras $\mathcal{L}_{k}$, $\mathcal{B}_{1}, \mathcal{B}_{1}^{+}$ and $\mathcal{B}_{\geq 1}^{+}$, which we use in the proof of the Main Theorem, see \cite[Subsection 3.6]{prrs21}. Proving these identities is elementary, so we state them without proofs.
			\begin{enumerate}[\(\bullet\), font=\textup, topsep=0.4cm, itemsep=0.4cm, leftmargin=0.6cm]
				\item For $K\in \mathcal{B}_{1}$ and $\alpha \neq 1$,
				\begin{align}
					(z^{\alpha }K)'&=z^{\alpha -1}(\alpha K+D_{1}(K)) , \nonumber \\
					(z^{\alpha }K)''&=z^{\alpha -2}(\alpha (\alpha -1) K+ \mathcal{C}_{2}(K)) , \nonumber \\
					(z^{\alpha }K)^{(i)}&=z^{\alpha -i}(\alpha (\alpha -1) \cdots (\alpha -i+1)K+\mathcal{C}_{i}(K)) , \,  i\in \mathbb{N}_{\geq 3}, \label{IdentDer1}
				\end{align}
				where $\mathcal{C}_{i}:\mathcal{B}_{1}\to \mathcal{B}_{1}$, $i\in \mathbb{N}_{\geq 1}$, are $\frac{1}{2}$-contractions on the space $(\mathcal{B}_{1},d_{1})$.
				
				\item For $K\in \mathcal{B}_{1}$ and $Q\in \mathcal{B}_{\geq 1}^{+}$,
				\begin{align}
					\sum_{i\geq 1}\frac{(z^{\alpha }K)^{(i)}}{i!} (zQ) ^{i} &=  z^{\alpha } \cdot \Big(K\cdot \sum_{i\geq 1}{\alpha \choose i} Q^{i} + \mathcal{C}(Q,K) \Big) , \label{Identitet1}
				\end{align}
				where $\mathcal{C}(Q,\cdot ):\mathcal{B}_{1}\to \mathcal{B}_{1}$ is a linear $\frac{1}{2^{1+\mathrm{ord}_{\boldsymbol{\ell }_{1}}(Q)}}$-contraction, for each $Q\in \mathcal{B}_{\geq 1}^{+}$, and $\mathcal{C}(\cdot ,K):\mathcal{B}_{\geq 1}^{+}\to \mathcal{B}_{1}$ is $\frac{1}{2^{1+\mathrm{ord}_{\boldsymbol{\ell }_{1}}(K)}}$-Lipschitz map (see Definition~\ref{DefinitionHomothetyLipschitz}), for each $K\in \mathcal{B}_{1}$, with respect to metric $d_{1}$.
				
				\item \begin{align}
					D_{m}(\boldsymbol{\ell }_{m}^{n}L_{m+1}) & = \boldsymbol{\ell }_{m}^{n+1}(nL_{m+1}+D_{m+1}(L_{m+1})) , \label{EqDer1}
				\end{align}
				where $L_{m+1}\in \mathcal{B}_{\geq m+1}^{+}$, $1\leq m\leq k-1$, and $n\in \mathbb{Z}$.
				
				\item \begin{align}
					D_{m}(K) & = \boldsymbol{\ell }_{m}\cdots \boldsymbol{\ell }_{m+n-1}D_{m+n}(K) , \label{EqDer2}
				\end{align}
				for $K\in \mathcal{B}_{m+n}$, $1\leq m\leq k-1$ and $1\leq n\leq k-m$.
				
				\item For $K\in \mathcal{B}_{\geq 1}^{+}$,
				\begin{align}
					(zK)'&=K+D_{1}(K) , \nonumber \\
					(zK)''&=z^{-1}(D_{1}(K)+ \mathcal{C}_{2}(K)) , \nonumber \\
					(zK)^{(i)}&=z^{-(i-1)}((-1)^{i}(i-2)!D_{1}(K)+\mathcal{C}_{i}(K)) , \,  i\in \mathbb{N}_{\geq 3}, \label{FormulaDerivationFirstBlock}
				\end{align}
				where $\mathcal{C}_{i}:\mathcal{B}_{\geq 1}^{+}\to \mathcal{B}_{\geq 1}^{+}$, $i\in \mathbb{N}_{\geq 2}$, are $\frac{1}{4}$-contractions on the space $(\mathcal{B}_{\geq 1}^{+},d_{1})$.
				
				\item For $K,Q\in \mathcal{B}_{\geq 1}^{+}$,
				\begin{align}
					\sum_{i\geq 1}\frac{(zK)^{(i)}}{i!} (zQ) ^{i} &=  z \cdot \Big(K\cdot Q+D_{1}(K)\cdot \Big( Q+\sum_{i\geq 2}\frac{(-1)^{i}}{(i-1)i}Q^{i} \Big) + \mathcal{C}(Q,K) \Big) \nonumber \\
					& = z\cdot \left( K\cdot Q+D_{1}(K)\cdot (1+Q)\log (1+Q) + \mathcal{C}(Q,K) \right) , \label{Identitet2}
				\end{align}
				where $\mathcal{C}(Q,\cdot ):\mathcal{B}_{\geq 1}^{+}\to \mathcal{B}_{\geq 1}^{+}$ is a linear $\frac{1}{2^{1+\mathrm{ord}_{\boldsymbol{\ell }_{1}}(Q)}}$-contraction, for each $Q\in \mathcal{B}_{\geq 1}^{+}$, and $\mathcal{C}(\cdot ,K):\mathcal{B}_{\geq 1}^{+}\to \mathcal{B}_{\geq 1}^{+}$ is a $\frac{1}{2^{2+\mathrm{ord}_{\boldsymbol{\ell }_{1}}(K)}}$-contraction, for each $K\in \mathcal{B}_{\geq 1}^{+}$, with respect to metric $d_{1}$.
			\end{enumerate}

			\subsection{Fixed point theorem}\label{SubsectionFixedPointThms}
			In the proof of the Main Theorem we use Proposition~\ref{KorBanach} below, which gives a fixed point theorem motivated by the \emph{Krasnoselskii Fixed Point Theorem} (see e.g. \cite{XiangGeorgiev16}), and which follows directly by the Banach Fixed Point Theorem. Proposition~\ref{KorBanach} is proven (very elementary) in \cite[Proposition 3.2]{prrs21}. We first recall some basic notions.
			\begin{defn}[Homothety]\label{DefinitionHomothetyLipschitz}\cite[Definition 3.1]{prrs21}
				Let $\lambda,\mu>0$ and let $(X,d)$, $(Y,\rho)$ be two metric
				spaces. 
					
				A map $\mathcal{T}:X\to Y$ such that 
				\[
					\rho\left(\mathcal{T}(x),\mathcal{T}(y)\right)=\lambda d(x,y),\quad\forall x,y\in X,
				\]
				is called a \emph{$\lambda$-homothety}.
					
				A map $\mathcal{S}:X\longrightarrow Y$ such that
				\[
					\rho\left(\mathcal{S}\left(x\right),\mathcal{S}\left(y\right)\right)\leq\mu d\left(x,y\right),\quad\forall x,y\in X,
				\]
				is called a \emph{$\mu$-Lipschitz map} and $\mu $ is called a \emph{Lipschitz coefficient} of $\mathcal{S}$.
				
				We say that $\mu $ is the \emph{minimal Lipschitz coefficient} of $\mathcal{S}$ if $\mu $ is the smallest Lipschitz coefficient of $\mathcal{S}$.
				
				In particular, if $\mu <1$, we call $\mathcal{S}$ a $\mu $\emph{-contraction}.
			\end{defn}
		
			Note that every $\lambda $-homothety is injective, and, if $\lambda =1$, it is an isometry.
			
			\begin{prop}[A fixed point theorem]\cite[Proposition 3.2]{prrs21}\label{KorBanach}
				Let $X$, $Y$ be two metric spaces and $X$ complete. Let $\mathcal{S},\,\mathcal{T}:X\to Y$, such that:
				\begin{enumerate}[1., font=\textup, topsep=0.4cm, itemsep=0.4cm, leftmargin=0.6cm]
					\item $\mathcal{S}$ is a $\mu $-Lipschitz map,
					\item $\mathcal{T}$ is a $\lambda $-homothety,
					\item $\mu < \lambda$,
					\item $\mathcal S(X)\subseteq \mathcal T(X)$. 
				\end{enumerate}
				Then there exists a unique point $x\in X$ such that $\mathcal{T}(x)=\mathcal{S}(x)$.
				
				Moreover, $x$ is the limit of a \emph{Picard sequence} $\left( \left( \mathcal{T}^{-1}\circ \mathcal{S}\right) ^{\circ n}(z)\right) _{n}$, for any \emph{initial point} $z\in X$.
			\end{prop}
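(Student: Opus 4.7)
The proposition is a direct corollary of the classical Banach Fixed Point Theorem, and the plan is to set things up so that $\mathcal{T}^{-1}\circ \mathcal{S}$ is a well-defined contraction on $X$. First I would observe that since $\mathcal{T}$ is a $\lambda$-homothety with $\lambda>0$, it is injective (distinct points in $X$ have positive distance, hence distinct images). Consequently $\mathcal{T}$ admits an inverse $\mathcal{T}^{-1}:\mathcal{T}(X)\to X$, and the homothety identity shows immediately that $\mathcal{T}^{-1}$ is a $\frac{1}{\lambda}$-homothety on its domain.

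Next, the inclusion $\mathcal{S}(X)\subseteq \mathcal{T}(X)$ lets me form the composition $\mathcal{F}:=\mathcal{T}^{-1}\circ \mathcal{S}:X\to X$. For any $x,y\in X$,
\begin{align*}
d\bigl(\mathcal{F}(x),\mathcal{F}(y)\bigr) & = \tfrac{1}{\lambda}\,\rho\bigl(\mathcal{S}(x),\mathcal{S}(y)\bigr) \leq \tfrac{\mu}{\lambda}\,d(x,y),
\end{align*}
and hypothesis~3 gives $\tfrac{\mu}{\lambda}<1$, so $\mathcal{F}$ is a contraction on the complete metric space $(X,d)$. The Banach Fixed Point Theorem then delivers a unique $x\in X$ with $\mathcal{F}(x)=x$; applying $\mathcal{T}$ to both sides gives $\mathcal{S}(x)=\mathcal{T}(x)$, and the injectivity of $\mathcal{T}$ shows that the solutions of $\mathcal{S}(x)=\mathcal{T}(x)$ in $X$ correspond bijectively to the fixed points of $\mathcal{F}$, which proves uniqueness. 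The last clause, that the Picard iterates $\bigl(\mathcal{F}^{\circ n}(z)\bigr)_n$ converge to $x$ for every initial $z\in X$, is exactly the convergence assertion of the Banach Fixed Point Theorem applied to $\mathcal{F}$.

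There is no genuine obstacle here; the only things to be careful about are that $\mathcal{T}^{-1}$ is only defined on $\mathcal{T}(X)$ (which is why hypothesis~4 is needed in order to compose), and that the homothety hypothesis — rather than mere Lipschitzness — on $\mathcal{T}$ is precisely what upgrades $\mathcal{T}^{-1}$ to a controlled map with Lipschitz constant $\tfrac{1}{\lambda}$, making the contraction estimate above possible.
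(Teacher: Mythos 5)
Your proof is correct and is exactly the argument the paper has in mind: the paper states that Proposition~\ref{KorBanach} ``follows directly by the Banach Fixed Point Theorem'' and defers the (elementary) details to \cite[Proposition 3.2]{prrs21}, which proceeds precisely by inverting the homothety $\mathcal{T}$ on its image and showing $\mathcal{T}^{-1}\circ\mathcal{S}$ is a $\frac{\mu}{\lambda}$-contraction. Nothing to add.
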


		\section{Proof of the Main Theorem}\label{sec:proofA}
		
			\subsection{Sketch of the proof of statement 1 of the Main Theorem}\label{SubsectionSketchMainThm}
				
				Let $f,g\in \mathcal{L}_{k}^{0}$, $k\in \mathbb{N}$, be parabolic logarithmic transseries such that the following conjugacy equation is satisfied:
				\begin{align}
					\varphi  \circ f \circ \varphi ^{-1} & = g, \label{JednadzbaPrva}
				\end{align}
				for some $\varphi \in \mathcal{L}_{k}^{0}$. In particular, we are interested in $g$ to be the "shortest" normal form for $f$, but we do not know the normal form in advance. Therefore, we consider an arbitrary $g \in \mathcal{L}_{k}^{0}$ and we expect that the normal form will be revealed while solving the equation \eqref{JednadzbaPrva}. In order to apply the fixed point theorem from Proposition~\ref{KorBanach}, we transform equation \eqref{JednadzbaPrva} to an equivalent \emph{fixed point equation}.
				
				\subsubsection{Transformation of the conjugacy equation to a fixed point equation}
				
				\begin{lem}[Transformation of the conjugacy equation to a fixed point equation]\label{Lemma2}
					Let $f,g\in \mathcal{L}_{k}^{0}$, $k\in \mathbb{N}$, such that $f=\mathrm{id}+z^{\beta }R_{\beta } + \mathrm{h.o.b.}(z) $ and $g=\mathrm{id}+z^{\alpha }S_{\alpha }+\mathrm{h.o.b.}(z)$, for $\alpha , \beta \geq 1$, and $S_{\alpha }, R_{\beta }\in \mathcal{B}_{1}\subseteq \mathcal{L}_{k}^{\infty }$. A logarithmic transseries $\varphi \in \mathcal{L}_{k}^{0}$, $\varphi=\mathrm{id}+\varepsilon $, $\mathrm{ord}(\varepsilon)>(1,\mathbf{0}_{k})$, satisfies the \emph{conjugacy equation}
					$$
					\varphi  \circ f \circ \varphi ^{-1} = g
					$$
					if and only if the logarithmic transseries $\varepsilon \in \mathcal{L}_{k}$ satisfies the \emph{fixed point equation}:
					\begin{align}
						\mathcal{S}_{f}(\varepsilon ) & = \mathcal{T}_{f}(\varepsilon ), \label{FixedPointEq}
					\end{align}
					where the operators $\mathcal{S}_{f} , \mathcal{T}_{f}: \mathcal{L}_{k} \to \mathcal{L}_{k}$ are defined as:
					\begin{align}
						\mathcal{S}_{f}(\varepsilon ) & := \psi  - \mu  - \varepsilon ' \cdot \mu _{1} + \psi _{1}' \cdot \varepsilon  - \sum_{i\geq 2}\frac{\varepsilon  ^{(i)}}{i!}\mu ^{i} + \sum_{i\geq 2}\frac{\psi ^{(i)}}{i!}\varepsilon  ^{i} , \label{DefS}
					\end{align}
					and
					\begin{align}
						\mathcal{T}_{f}(\varepsilon ) & := \varepsilon ' \cdot z^{\beta }R_{\beta } - (z^{\alpha }S_{\alpha })' \cdot \varepsilon  , \label{Tdefined}
					\end{align}
					for $\varepsilon  \in \mathcal{L}_{k}$. Here,
					\begin{align}
						\mu :=f- \mathrm{id}, \quad & \mu _{1}:=f - \mathrm{id} - z^{\beta }R_{\beta } , \nonumber \\
						\psi :=g-\mathrm{id} , \quad & \psi _{1}:=g-\mathrm{id} -z^{\alpha }S_{\alpha } . \label{NotationLem} 
					\end{align}
				\end{lem}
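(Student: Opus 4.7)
My plan is to view this lemma as a direct computation obtained by composing out the conjugacy equation via Taylor expansion, then separating a distinguished ``linear'' part (which will become $\mathcal{T}_f$) from all the remaining terms (which will form $\mathcal{S}_f$). The choice of what to put in $\mathcal{T}_f$ versus $\mathcal{S}_f$ is dictated by the intended application of Proposition~\ref{KorBanach}: the leading-order contributions $\varepsilon' z^{\beta}R_{\beta}$ and $(z^{\alpha}S_{\alpha})'\varepsilon$ should play the role of the homothety, and everything else should be pushed into the Lipschitz (in fact contractive) operator.

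First, I would rewrite $\varphi \circ f \circ \varphi^{-1} = g$ in the equivalent form
\begin{align*}
\varphi \circ f = g \circ \varphi ,
\end{align*}
which is legitimate since $\varphi \in \mathcal{L}_k^0$ is parabolic and hence invertible for composition in $\mathcal{L}_k^{H}$. Substituting $\varphi = \mathrm{id} + \varepsilon$, $f = \mathrm{id} + \mu$, $g = \mathrm{id} + \psi$, the left-hand side expands via the Taylor formula for composition (recalled in the paper before the statement, cf.\ \cite[Proposition~3.3]{prrs21}) as
\begin{align*}
\varphi \circ f = f + \varepsilon \circ f = \mathrm{id} + \mu + \varepsilon + \varepsilon'\mu + \sum_{i\geq 2}\frac{\varepsilon^{(i)}}{i!}\mu^{i},
\end{align*}
and the right-hand side similarly as
\begin{align*}
g \circ \varphi = \mathrm{id} + \varepsilon + \psi + \psi'\varepsilon + \sum_{i\geq 2}\frac{\psi^{(i)}}{i!}\varepsilon^{i}.
\end{align*}
Both Taylor series converge in the weak topology $\mathcal{T}_{d_z}$ because $\mathrm{ord}(\mu), \mathrm{ord}(\varepsilon) > (1,\mathbf{0}_k)$, so the supports of successive terms move strictly to the right in $z$.

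Cancelling $\mathrm{id} + \varepsilon$ on both sides gives the equivalent identity
\begin{align*}
\mu + \varepsilon'\mu + \sum_{i\geq 2}\frac{\varepsilon^{(i)}}{i!}\mu^{i} = \psi + \psi'\varepsilon + \sum_{i\geq 2}\frac{\psi^{(i)}}{i!}\varepsilon^{i}.
\end{align*}
Now I split off the leading blocks using the decompositions $\mu = z^{\beta}R_{\beta} + \mu_1$ and $\psi = z^{\alpha}S_{\alpha} + \psi_1$ from \eqref{NotationLem}. Writing $\varepsilon'\mu = \varepsilon' z^{\beta}R_{\beta} + \varepsilon'\mu_1$ and $\psi'\varepsilon = (z^{\alpha}S_{\alpha})'\varepsilon + \psi_1'\varepsilon$, and moving $\varepsilon' z^{\beta}R_{\beta}$ and $(z^{\alpha}S_{\alpha})'\varepsilon$ to one side while pushing everything else to the other, I obtain exactly
\begin{align*}
\varepsilon'\cdot z^{\beta}R_{\beta} - (z^{\alpha}S_{\alpha})'\cdot \varepsilon = \psi - \mu - \varepsilon'\cdot \mu_1 + \psi_1'\cdot \varepsilon - \sum_{i\geq 2}\frac{\varepsilon^{(i)}}{i!}\mu^{i} + \sum_{i\geq 2}\frac{\psi^{(i)}}{i!}\varepsilon^{i},
\end{align*}
which is the desired fixed point equation $\mathcal{T}_f(\varepsilon) = \mathcal{S}_f(\varepsilon)$ with $\mathcal{S}_f, \mathcal{T}_f$ as in \eqref{DefS}, \eqref{Tdefined}. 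Each rearrangement step is reversible, so the equivalence claimed in the lemma follows.

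The only subtle points in writing this out carefully are (i) verifying that $\mathcal{S}_f(\varepsilon), \mathcal{T}_f(\varepsilon) \in \mathcal{L}_k$ whenever $\varepsilon \in \mathcal{L}_k$, which amounts to a well-orderedness check on the union of supports and follows from the Neumann Lemma together with $\mathrm{ord}(\varepsilon), \mathrm{ord}(\mu) > (1,\mathbf{0}_k)$; and (ii) justifying the Taylor expansions of $\varepsilon \circ f$ and $g \circ \varphi$ as convergent series in the weak topology, which is the content of \cite[Proposition~3.3]{prrs21} applied to $f, \varphi \in \mathcal{L}_k^{H}$. These are bookkeeping rather than a genuine obstacle; the whole proof is essentially a careful Taylor expansion and a reshuffling of terms.
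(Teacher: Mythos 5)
Your proposal is correct and follows essentially the same route as the paper: expand $\varphi\circ f$ and $g\circ\varphi$ by the Taylor Theorem, cancel $\mathrm{id}+\varepsilon$, split off the leading blocks $z^{\beta}R_{\beta}$ and $z^{\alpha}S_{\alpha}$ from $\mu$ and $\psi$, and rearrange reversibly into $\mathcal{T}_f(\varepsilon)=\mathcal{S}_f(\varepsilon)$. The well-definedness and convergence remarks you add are consistent with what the paper tacitly assumes via \cite[Proposition 3.3]{prrs21}.
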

				\begin{proof}
					Conjugacy equation \eqref{JednadzbaPrva} is equivalent to the equation
					\begin{align}
						\varphi  \circ f - g \circ \varphi  & =0. \label{Equation1}
					\end{align}
					Recall the notation from \eqref{NotationLem}. From the equation \eqref{Equation1}, using the Taylor Theorem (see \cite[Proposition 3.3]{prrs21}), we get that:
					\begin{align}
						0 &= \varphi \circ f- g \circ \varphi  \nonumber \\
						&= \left( z+\varepsilon \right) \circ \left( z+\mu \right) - (z+\psi )\circ \left( z+ \varepsilon\right) \nonumber \\
						&= z+\mu  + \varepsilon +\sum_{i\geq 1}\frac{\varepsilon  ^{(i)}}{i!}\mu ^{i}-\left( z+\varepsilon \right) - \psi  - \sum_{i\geq 1}\frac{\psi ^{(i)}}{i!}\varepsilon  ^{i} \nonumber \\
						&= \sum_{i\geq 1}\frac{\varepsilon  ^{(i)}}{i!}\mu ^{i} +\mu - \psi  - \sum_{i\geq 1}\frac{\psi ^{(i)}}{i!}\varepsilon  ^{i} \nonumber \\
						&= \varepsilon ' \cdot \mu  - \psi ' \cdot \varepsilon  + \sum_{i\geq 2}\frac{\varepsilon  ^{(i)}}{i!}\mu ^{i} + \mu  - \psi  - \sum_{i\geq 2}\frac{\psi ^{(i)}}{i!}\varepsilon  ^{i} \nonumber \\
						&= \varepsilon ' \cdot z^{\beta }R_{\beta } - \left( z^{\alpha }S_{\alpha } \right) ' \cdot \varepsilon + \mu  - \psi  + \varepsilon ' \cdot \mu _{1} - \psi _{1}' \cdot \varepsilon + \sum_{i\geq 2}\frac{\varepsilon  ^{(i)}}{i!}\mu ^{i} - \sum_{i\geq 2}\frac{\psi ^{(i)}}{i!}\varepsilon  ^{i} . \label{Equation2} 
					\end{align}
					Equation \eqref{Equation2} is equivalent to the following equation:
					\begin{align}
						\varepsilon ' \cdot z^{\beta }R_{\beta } - \left( z^{\alpha }S_{\alpha } \right) ' \cdot \varepsilon  & = \psi  - \mu  - \varepsilon ' \cdot \mu _{1} + \psi _{1}' \cdot \varepsilon  - \sum_{i\geq 2}\frac{\varepsilon  ^{(i)}}{i!}\mu ^{i} + \sum_{i\geq 2}\frac{\psi ^{(i)}}{i!}\varepsilon  ^{i} , \label{EquationLema2} 
					\end{align}
					i.e., $\mathcal{T}_{f}(\varepsilon ) = \mathcal{S}_{f}(\varepsilon )$.
				\end{proof}
			
				\begin{remark}
					In Lemma~\ref{Lemma2} we transformed conjugacy equation \eqref{JednadzbaPrva} into the equivalent equation $\mathcal{T}_{f}(\varepsilon ) = \mathcal{S}_{f}(\varepsilon )$, where $\mathcal{T}_{f}$ is a linear operator which is a \emph{block-wise} analogue to the \emph{Lie bracket operator} used in \cite{mrrz16} for \emph{term-wise} reduction of $f$ to $g$.
				\end{remark}
				
				Let $\mathcal{L}_{k}^{\delta }$, for $\delta \geq 1$, $k\in \mathbb{N}$, be the set of all logarithmic transseries $f\in \mathcal{L}_{k}$ such that $\mathrm{ord}_{z}(f)\geq \delta $ and $\mathrm{ord}(f)>(1,\mathbf{0}_{k})$. By the discussion in Subsection~\ref{subsec:topology}, it follows that $(\mathcal{L}_{k}^{\delta },d_{z})$ is a complete metric space. \\
				
				To be able to apply a fixed point theorem in Proposition~\ref{Lema3} below, we analyse the Lipschitz property of the operator $\mathcal{S}_{f}$ in equation \eqref{FixedPointEq}. We first state Lemma~\ref{Lema2} that will be used in the proof of Proposition~\ref{Lema3}. The proof is in the Appendix.
				
				\begin{lem}\label{Lema2}
					Let $\varphi  \in \mathcal{L}_{k}^{\alpha }$, for $\alpha \geq 1$ and $k\in \mathbb{N}$. Let the operator $\mathcal{S}:\mathcal{L}_{k}^{\delta } \to \mathcal{L}_{k}^{\delta }$, $\delta \geq 1$, be defined as:
					\begin{align}
						\mathcal{S}\left( \varepsilon \right) & :=\sum _{i\geq 2}\varphi ^{(i)} \cdot \varepsilon ^{i} , \label{LemmaDefinitionS}
					\end{align}
					for $\varepsilon  \in \mathcal{L}_{k}^{\delta }$. The operator $\mathcal{S}$ is $\frac{1}{2^{\delta + \alpha -2}}$-Lipschitz on the space $(\mathcal{L}_{k}^{\delta },d_{z})$.
					
					Moreover, $\frac{1}{2^{\delta + \alpha -2}}$ is the minimal Lipschitz coefficient of $\mathcal{S}$. 
				\end{lem}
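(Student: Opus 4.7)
The plan is to prove the Lipschitz estimate term by term in the defining series of $\mathcal{S}$, exploiting the non-Archimedean multiplicativity of the metric $d_z$. Given $\varepsilon_1, \varepsilon_2 \in \mathcal{L}_k^\delta$, I would start from the algebraic identity
\begin{align*}
\mathcal{S}(\varepsilon_1) - \mathcal{S}(\varepsilon_2) &= \sum_{i\geq 2}\varphi^{(i)}\bigl(\varepsilon_1^{i}-\varepsilon_2^{i}\bigr) = (\varepsilon_1-\varepsilon_2)\sum_{i\geq 2}\varphi^{(i)}\sum_{j=0}^{i-1}\varepsilon_1^{j}\varepsilon_2^{i-1-j},
\end{align*}
and estimate $\mathrm{ord}_z$ summand by summand. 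The two elementary ingredients are: (i) $\varphi\in\mathcal{L}_k^{\alpha}$ yields $\mathrm{ord}_z(\varphi)\geq\alpha$, and because $\frac{d}{dz}$ decreases $\mathrm{ord}_z$ by at most one per application, $\mathrm{ord}_z(\varphi^{(i)})\geq\alpha-i$ (this follows, e.g., from the identity \eqref{IdentDer1} applied blockwise); (ii) each $\varepsilon_m\in\mathcal{L}_k^\delta$ has $\mathrm{ord}_z(\varepsilon_m)\geq\delta$, so the $(i-1)$-fold product $\varepsilon_1^{j}\varepsilon_2^{i-1-j}$ has $\mathrm{ord}_z\geq (i-1)\delta$. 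Combining, the $i$-th summand has
\begin{align*}
\mathrm{ord}_z \geq (\alpha-i)+(i-1)\delta+\mathrm{ord}_z(\varepsilon_1-\varepsilon_2)=(\alpha-\delta)+i(\delta-1)+\mathrm{ord}_z(\varepsilon_1-\varepsilon_2).
\end{align*}

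Since $\delta\geq 1$, the affine map $i\mapsto(\alpha-\delta)+i(\delta-1)$ is non-decreasing on $i\geq 2$, so its minimum on this range equals $\alpha+\delta-2$, attained at $i=2$. Taking this infimum over the whole series yields
\begin{align*}
\mathrm{ord}_z\bigl(\mathcal{S}(\varepsilon_1)-\mathcal{S}(\varepsilon_2)\bigr) \geq \alpha+\delta-2+\mathrm{ord}_z(\varepsilon_1-\varepsilon_2),
\end{align*}
which by the definition of $d_z$ translates to $d_z(\mathcal{S}(\varepsilon_1),\mathcal{S}(\varepsilon_2))\leq 2^{-(\alpha+\delta-2)}d_z(\varepsilon_1,\varepsilon_2)$, as claimed. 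Setting $\varepsilon_2=0$ in the same estimate also verifies $\mathrm{ord}_z(\mathcal{S}(\varepsilon))\geq\alpha+2\delta-2\geq\delta$, so $\mathcal{S}$ indeed maps $\mathcal{L}_k^\delta$ to itself.

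For the minimality, I would exhibit an explicit pair $\varphi, \varepsilon_1, \varepsilon_2$ saturating the bound. Choose $\varphi$ with $\mathrm{ord}_z(\varphi)=\alpha$ exactly and such that $\mathrm{ord}_z(\varphi'')=\alpha-2$ with nonzero leading block in $\mathcal{B}_1$ (always possible; e.g.\ $\varphi=z^\alpha$ for $\alpha>1$), and set $\varepsilon_2=0$, $\varepsilon_1=z^\delta K$ for a suitable $K\in\mathcal{B}_1^+$. The $i=2$ contribution $\varphi''\cdot z^{2\delta}K^2$ has $\mathrm{ord}_z$ precisely $\alpha+\delta-2+\mathrm{ord}_z(\varepsilon_1-\varepsilon_2)$. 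When $\delta>1$ every $i\geq 3$ term has strictly larger $\mathrm{ord}_z$ (by $(i-2)(\delta-1)>0$), so the leading $z$-block survives and equality is immediate. The main obstacle is the boundary case $\delta=1$, where every $i\geq 2$ contributes to the same $z^\alpha$-block and one must ensure that the resulting infinite sum in $\mathcal{B}_1$ does not cancel. I would handle this by picking $K$ so that the powers $K^i$ lie in strictly increasing $\mathrm{ord}_{\boldsymbol{\ell}_1}$-strata (e.g.\ $K=\boldsymbol{\ell}_1$), so that the leading block of $\sum_{i\geq 2}\mathrm{Lb}_z(\varphi^{(i)})K^i$ in $\mathcal{B}_1$ equals the nonzero $i=2$ block, reducing minimality to a single nonvanishing check entirely analogous to the computations in Subsection~\ref{ListIdentities}.
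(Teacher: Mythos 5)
Your proposal is correct and follows essentially the same route as the paper: the same factorization $\varepsilon_1^i-\varepsilon_2^i=(\varepsilon_1-\varepsilon_2)\sum_{j=0}^{i-1}\varepsilon_1^j\varepsilon_2^{i-1-j}$ together with the order bookkeeping $\mathrm{ord}_z(\varphi^{(i)})\geq\alpha-i$ and $\mathrm{ord}_z(\varepsilon_1^j\varepsilon_2^{i-1-j})\geq(i-1)\delta$, minimized at $i=2$. Your treatment of minimality is in fact more careful than the paper's one-line assertion (which simply takes any $\varepsilon$ with $\mathrm{ord}_z(\varepsilon)=\delta$ and claims equality), since you explicitly address the possible cancellation among the $i\geq 2$ contributions to the same $z$-block when $\delta=1$.
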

				
				\begin{prop}[Properties of operator $\mathcal{S}_{f}$]\label{Lema3}
					Suppose that $f,g\in \mathcal{L}_{k}^{0}$, $k\in \mathbb{N}$, such that $f=\mathrm{id}+z^{\beta }R_{\beta } + \mathrm{h.o.b.}(z) $ and $g=\mathrm{id}+z^{\alpha }S_{\alpha }+\mathrm{h.o.b.}(z)$, for $\alpha , \beta \geq 1$, and \eqref{NotationLem}. Let $\delta \geq 1$ and put:
					\begin{align}
						\gamma & := \min \left\lbrace \mathrm{ord}_{z}(f- \mathrm{id} - z^{\beta }R_{\beta } ) , \mathrm{ord}_{z}(g-\mathrm{id} - z^{\alpha }S_{\alpha} )\right\rbrace , \nonumber \\
						\rho (\delta)  & := \min \left\lbrace \gamma -1 , 2(\beta -1) , \delta + \alpha -2\right\rbrace . \label{Bound}
					\end{align}
					Let $\mathcal{S}_{f}:\mathcal{L}_{k}^{1} \to \mathcal{L}_{k}^{1}$ be the operator defined as in \eqref{DefS}.
					\begin{enumerate}[1., font=\textup, topsep=0.4cm, itemsep=0.4cm, leftmargin=0.6cm]
						\item The operator $\mathcal{S}_{f}$ is $\frac{1}{2^{\rho (\delta)}}$-Lipschitz\footnote{In the sense that $\frac{1}{2^{\rho (\delta)}}$ is the minimal Lipschitz coefficient of $\mathcal{S}_{f}$.} on the space $\mathcal{L}_{k}^{\delta }$, for every $\delta \geq 1$.
						\item If $z^{\alpha }S_{\alpha }=z^{\beta }R_{\beta }$ and $\delta = \gamma - \beta +1$, then $\mathcal{S}_{f}(\mathcal{L}_{k}^{\delta }) \subseteq \mathcal{L}_{k}^{\gamma }$ and \\
						$\rho (\delta ) = \min \left\lbrace \gamma -1,2(\beta -1)\right\rbrace $.
					\end{enumerate}
				\end{prop}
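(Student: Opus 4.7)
\smallskip

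\textbf{Plan for the proof of Proposition \ref{Lema3}.}

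My approach is to split $\mathcal{S}_f$ into five additive pieces and analyse each one individually on $(\mathcal{L}_k^{\delta},d_z)$. Writing $\mathcal{S}_f(\varepsilon)=T_0+T_1(\varepsilon)+T_2(\varepsilon)+T_3(\varepsilon)+T_4(\varepsilon)$, where $T_0:=\psi-\mu$ (constant in $\varepsilon$), $T_1(\varepsilon):=-\varepsilon'\cdot\mu_1$, $T_2(\varepsilon):=\psi_1'\cdot\varepsilon$, $T_3(\varepsilon):=-\sum_{i\geq 2}\tfrac{\varepsilon^{(i)}}{i!}\mu^i$, and $T_4(\varepsilon):=\sum_{i\geq 2}\tfrac{\psi^{(i)}}{i!}\varepsilon^i$, the key point is that $d_z$ is an ultrametric (since $\mathrm{ord}_z$ satisfies the strong valuation inequality for sums), so a finite sum of Lipschitz maps is Lipschitz with coefficient equal to the maximum of the individual coefficients, provided no cancellation occurs in the dominant piece.

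For the three lower-order pieces I would argue directly from the product rule for $\mathrm{ord}_z$. Writing $\varepsilon_0:=\varepsilon_1-\varepsilon_2\in\mathcal{L}_k^{\delta}$, one has $\mathrm{ord}_z(\varepsilon_0^{(j)})=\mathrm{ord}_z(\varepsilon_0)-j$ (by the derivative identities \eqref{IdentDer1}, since the $\alpha$-exponent stays $\neq 0$), hence: $T_1$ contributes coefficient $2^{-(\gamma-1)}$ (from $\mathrm{ord}_z(\mu_1)\geq\gamma$ and one lost derivative); $T_2$ contributes $2^{-(\gamma-1)}$ (from $\mathrm{ord}_z(\psi_1')\geq\gamma-1$); and $T_3$ is dominated by its $i=2$ term, giving $2^{-2(\beta-1)}$ (from $\mathrm{ord}_z(\mu^i)=i\beta$ against $i$ lost derivatives, which is minimised at $i=2$ since $\beta\geq 1$). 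For $T_4$ I invoke Lemma \ref{Lema2} applied to $\varphi=\psi\in\mathcal{L}_k^{\alpha}$, which yields the coefficient $2^{-(\delta+\alpha-2)}$. Taking the maximum gives $2^{-\rho(\delta)}$ as required.

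For the minimality assertion I would produce test points realising each component of the minimum in $\rho(\delta)$: take $\varepsilon_1=z^{\delta}$ and $\varepsilon_2=0$ (or a small perturbation keeping us in $\mathcal{L}_k^{\delta}$), and inspect the leading block in $z$ of $\mathcal{S}_f(\varepsilon_1)-\mathcal{S}_f(\varepsilon_2)$. Case analysis on which of $\gamma-1$, $2(\beta-1)$, $\delta+\alpha-2$ attains the minimum identifies the surviving dominant term ($T_1$/$T_2$, $T_3$, or $T_4$ respectively), whose leading block is non-zero by the exact formulas in \eqref{IdentDer1}--\eqref{FormulaDerivationFirstBlock}; crucially, in each case no cancellation with the other (strictly higher-$\mathrm{ord}_z$) terms can occur, so the bound is saturated.

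For statement 2, when $z^{\alpha}S_{\alpha}=z^{\beta}R_{\beta}$ we have $\alpha=\beta$ and $\psi-\mu=\psi_1-\mu_1$ has $\mathrm{ord}_z\geq\gamma$, so $T_0\in\mathcal{L}_k^{\gamma}$. Substituting $\delta=\gamma-\beta+1$ into the order estimates from step one gives $\mathrm{ord}_z(T_1(\varepsilon)),\mathrm{ord}_z(T_2(\varepsilon))\geq\gamma+\delta-1\geq\gamma$, $\mathrm{ord}_z(T_3(\varepsilon))\geq\delta+2\beta-2=\gamma+\beta-1\geq\gamma$, and $\mathrm{ord}_z(T_4(\varepsilon))\geq(\alpha-2)+2\delta=2\gamma-\beta\geq\gamma$. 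Thus $\mathcal{S}_f(\mathcal{L}_k^{\delta})\subseteq\mathcal{L}_k^{\gamma}$. Moreover $\delta+\alpha-2=\gamma-1$, which collapses the three-term minimum defining $\rho(\delta)$ to $\min\{\gamma-1,2(\beta-1)\}$.

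The main obstacle will be the minimality claim in part 1: one needs to exhibit $\varepsilon_1,\varepsilon_2$ that actually realise the worst bound, while verifying that no unexpected cancellation occurs between the different $T_j$'s at the critical $z$-order. The remaining work (Lipschitz upper bounds and the invariance of $\mathcal{L}_k^{\gamma}$ in part 2) is essentially a bookkeeping exercise with the identities of Subsection \ref{ListIdentities} and Lemma \ref{Lema2}.
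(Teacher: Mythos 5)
Your proposal is correct and follows essentially the same route as the paper: decompose $\mathcal{S}_{f}$ into its constant part, the linear parts $-\varepsilon'\mu_{1}+\psi_{1}'\varepsilon$ and $-\sum_{i\geq 2}\frac{\varepsilon^{(i)}}{i!}\mu^{i}$ (bounded by direct order estimates using \eqref{NotationLem}), and the nonlinear part handled by Lemma~\ref{Lema2}, then obtain statement 2 by substituting $\delta=\gamma-\beta+1$ into these estimates. The minimality of the Lipschitz coefficient, which you rightly flag as the delicate point, is merely asserted in the paper, so your sketched test-point argument is at least as detailed as the original.
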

				\begin{proof}
					1. Let $\delta \geq 1$ and $\varepsilon \in \mathcal{L}_{k}^{\delta }$. By \eqref{NotationLem} and \eqref{Bound}, it follows that \\
					$\mathrm{ord}_{z}\left( \mu _{1}\right) , \mathrm{ord}_{z}\left( \psi _{1}\right) \geq \gamma $. For the linear parts of $\mathcal{S}_{f}$, from \eqref{NotationLem} and \eqref{Bound}, we have the following bounds:
					\begin{align}
						& \mathrm{ord}_{z}(- \varepsilon ' \mu _{1} + \psi _{1}' \varepsilon ) = \mathrm{ord}_{z}(\varepsilon )+\gamma -1, \nonumber \\
						& \mathrm{ord}_{z} \Big(-\sum_{i\geq 2}\frac{\varepsilon  ^{(i)}}{i!}\mu ^{i}\Big) = \mathrm{ord}_{z}(\varepsilon )+2(\beta -1). \label{Ocjena1} 
					\end{align}
					For the nonlinear part of $\mathcal{S}_{f}$ we use Lemma~\ref{Lema2}. By \eqref{DefS}, Lemma~\ref{Lema2}, \eqref{Bound}, and \eqref{Ocjena1} we conclude that $\mathcal{S}_{f}$ is $\frac{1}{2^{\rho (\delta)}}$-Lipschitz and $\frac{1}{2^{\rho (\delta)}}$ is the minimal Lipschitz coefficient of $\mathcal{S}_{f}$. \\
					
					2. From $z^{\alpha}S_{\alpha}=z^{\beta }R_{\beta }$ and $\delta = \gamma - (\beta -1)$, we conclude that $\delta + \alpha -2=\gamma -1$. Consequently, it follows that
					\begin{align*}
						\rho (\delta ) &= \min \left\lbrace \gamma -1,2(\beta -1)\right\rbrace .
					\end{align*}
					Note that $\rho (\delta ) + \delta \geq \gamma $. This and $\mathrm{ord}_{z}(\psi - \mu ) \geq \gamma $ implies that $\mathcal{S}_{f}(\mathcal{L}_{k}^{\delta }) \subseteq \mathcal{L}_{k}^{\gamma }$.
				\end{proof}
				
				\subsubsection{A necessary condition for solvability of conjugacy equation in $\mathfrak{L}^{0}$}
				
				In the following proposition we give a necessary condition on $g$, such that conjugacy equation \eqref{JednadzbaPrva} is solvable in $\mathfrak{L}^{0}$.
				
				\begin{prop}\label{LeadingBlockLemma}
					Let $f,g\in \mathcal{L}_{k}^{0}$, $k\in \mathbb{N}$, be such that $f=\mathrm{id}+z^{\beta }R_{\beta } + \mathrm{h.o.b.}(z) $, and $g=\mathrm{id}+z^{\alpha }S_{\alpha }+\mathrm{h.o.b.}(z)$, where $\alpha , \beta \geq 1$. Let $(\beta ,\mathbf{n}):=\mathrm{ord}(f- \mathrm{id})$ and let $\mathbf{n}'$ be as in \eqref{NCrtica}. Let $\varphi  \in \mathfrak L^{0}$, such that $\varphi  \circ f \circ \varphi ^{-1} = g$. Then:
					\begin{enumerate}[1., font=\textup, topsep=0.4cm, itemsep=0.4cm, leftmargin=0.6cm]
						\item $\mathrm{Lt}(f- \mathrm{id}) = \mathrm{Lt}(g-\mathrm{id})$.
						\item If $\beta =1$, then $[f]_{1,\mathbf{m}} =[g]_{1,\mathbf{m}}$, for every $\mathbf{n} \leq \mathbf{m} \leq \mathbf{n}'$.
					\end{enumerate}
				\end{prop}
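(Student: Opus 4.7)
The plan is to work with the fixed-point reformulation from Lemma~\ref{Lemma2}: setting $\varepsilon:=\varphi-\mathrm{id}$, $\mu:=f-\mathrm{id}$, and $\psi:=g-\mathrm{id}$, the condition $\varphi\circ f\circ\varphi^{-1}=g$ is equivalent to
\[
\psi-\mu=\sum_{i\geq 1}\frac{\varepsilon^{(i)}}{i!}\mu^{i}-\sum_{i\geq 1}\frac{\psi^{(i)}}{i!}\varepsilon^{i},
\]
as can be read off from the computation in the proof of Lemma~\ref{Lemma2}. All the work then lies in a careful estimation of $\mathrm{ord}$ of each term on the right-hand side in the ambient differential algebra $\mathcal{L}_{k'}$, where $\varphi\in\mathcal{L}_{k'}^{0}$ for some $k'\geq k$.

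For statement~1, the key elementary observation is that whenever $h\in\mathfrak L^{\infty}$ satisfies $\mathrm{ord}(h)>(0,\mathbf{0}_{k'})$, one has $\mathrm{ord}(h')=\mathrm{ord}(h)-(1,\mathbf{0}_{k'})$: in the leading term of $h$ the factor $z^{\gamma}$ differentiates to $\gamma z^{\gamma-1}$ while the $\boldsymbol{\ell}_{j}$-factors yield only strictly higher log-orders. Since $\mathrm{ord}(\varepsilon),\mathrm{ord}(\mu),\mathrm{ord}(\psi)>(1,\mathbf{0}_{k'})$, iterating this identity gives $\mathrm{ord}\bigl(\varepsilon^{(i)}\mu^{i}\bigr)=\mathrm{ord}(\varepsilon)+i\bigl(\mathrm{ord}(\mu)-(1,\mathbf{0}_{k'})\bigr)>\mathrm{ord}(\mu)$ and $\mathrm{ord}\bigl(\psi^{(i)}\varepsilon^{i}\bigr)>\mathrm{ord}(\psi)$ for every $i\geq 1$. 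Consequently $\mathrm{ord}(\psi-\mu)>\min\{\mathrm{ord}(\mu),\mathrm{ord}(\psi)\}$. Were either the orders or the leading coefficients of $\mu$ and $\psi$ different, the left-hand side would have order equal to $\min\{\mathrm{ord}(\mu),\mathrm{ord}(\psi)\}$, contradicting the strict inequality. Hence $\mathrm{Lt}(f-\mathrm{id})=\mathrm{Lt}(g-\mathrm{id})$, so in particular $\alpha=\beta$ and the leading coefficients agree.

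For statement~2 I start from statement~1, which provides $\mathrm{ord}(\mu)=\mathrm{ord}(\psi)=(1,\mathbf{n})$ with common leading coefficient $a_{\mathbf{n}}$, and aim to prove $\mathrm{ord}(\Delta)>\mathbf{n}'$ for $\Delta:=R_{1}-S_{1}\in\mathcal{B}_{1}$. Extracting the $z^{1}$-block of the displayed equation gives a linear equation for $\Delta$. If $\mathrm{ord}_{z}(\varepsilon)>1$, each summand on the right has $\mathrm{ord}_{z}>1$, so the $z^{1}$-block forces $\Delta=0$ outright. The substantive case is $\mathrm{ord}_{z}(\varepsilon)=1$, where $\varepsilon=zK+\mathrm{h.o.b.}(z)$ with $K\in\mathcal{B}_{\geq 1}^{+}$. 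Using $(zK)'=K+D_{1}(K)$ from \eqref{Identitet2} together with commutativity of multiplication, the $z^{1}$-contribution of $-\varepsilon'\mu+\psi'\varepsilon$ collapses to $-K\Delta+\bigl(D_{1}(S_{1})K-D_{1}(K)R_{1}\bigr)$, while \eqref{FormulaDerivationFirstBlock} expresses the $z^{1}$-contributions of the higher Taylor terms $\varepsilon^{(i)}\mu^{i}$ and $\psi^{(i)}\varepsilon^{i}$ for $i\geq 2$ through $D_{1}(K)$ times powers of $R_{1}$. The main obstacle, and the point where I expect to spend real effort, is showing that this collective right-hand side has log-order strictly above $\mathbf{n}'$: this requires combining the Leibniz rule for $D_{1}$ with the precise definition $\mathbf{n}'=\mathbf{n}+(\mathbf{1}_{v},\mathbf{0}_{k-v})$, which by \eqref{EqDer1} is exactly the leading log-order shift produced by $D_{1}$ on $\boldsymbol{\ell}^{\mathbf{n}}$. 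A short bootstrap then upgrades the consequence $\mathrm{ord}(\Delta)>\mathbf{n}$ of statement~1 to the desired $\mathrm{ord}(\Delta)>\mathbf{n}'$.
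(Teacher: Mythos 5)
Your argument is correct and follows essentially the same route as the paper: both statements are proved by Taylor-expanding the conjugacy equation and comparing orders, and your statement~2 reduces to exactly the estimate the paper uses in \eqref{orderT}--\eqref{LemmaNecessaryEq3} (that $R_{1}D_{1}(K)-KD_{1}(S_{1})$ and the higher Taylor contributions have log-order above $\mathbf{n}'$), the only cosmetic difference being that the paper argues by contradiction with a minimal index $\mathbf{m}$ while you solve for $\Delta$ against the unit $1+K$. One small caveat: the identity $\mathrm{ord}(h')=\mathrm{ord}(h)-(1,\mathbf{0}_{k'})$ needs the leading $z$-exponent of $h$ to be nonzero, not merely $\mathrm{ord}(h)>(0,\mathbf{0}_{k'})$ (e.g.\ $h=\boldsymbol{\ell}_{1}$ fails), and for $i\geq 2$ the stated equality for $\mathrm{ord}\bigl(\varepsilon^{(i)}\mu^{i}\bigr)$ should be an inequality $\geq$; neither issue affects your conclusion, since every series involved has leading $z$-exponent at least $1$ and only the lower bound on the order is used.
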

				\begin{proof}
					1. Suppose that $\mathrm{Lt}(f- \mathrm{id}) \neq \mathrm{Lt}(g-\mathrm{id})$. The conjugacy equation \eqref{JednadzbaPrva} is equivalent with the fixed point equation \eqref{FixedPointEq}. From \eqref{DefS} and \eqref{Tdefined} we conclude that
					\begin{align}
						\mathrm{ord}(\mathcal{S}_{f}(\varepsilon )) & = \min \left\lbrace \mathrm{ord}(f- \mathrm{id}),\mathrm{ord}(g-\mathrm{id} )\right\rbrace \label{OrdS}
					\end{align}
					and
					\begin{align}
						\mathrm{ord}(\mathcal{T}_{f}(\varepsilon )) & > \min \left\lbrace \mathrm{ord}(f- \mathrm{id}),\mathrm{ord}(g-\mathrm{id} )\right\rbrace . \label{OrdT} 
					\end{align}
					Now, \eqref{OrdS} and \eqref{OrdT} are in contradiction with $\mathrm{ord}(\mathcal{T}_{f}(\varepsilon ))=\mathrm{ord}(\mathcal{S}_{f}(\varepsilon ))$. This implies that $\mathrm{Lt}(f- \mathrm{id}) = \mathrm{Lt}(g-\mathrm{id})$. \\
					
					2. Let $\beta =1$ and $(1, \mathbf{n}):=\mathrm{ord}(f- \mathrm{id})$. From statement 1, it follows that $\mathrm{Lt}(f- \mathrm{id})=\mathrm{Lt}(g-\mathrm{id})$, which implies that $\alpha = \beta =1$ and $[f]_{1,\mathbf{n}}=[g]_{1,\mathbf{n}} $. Suppose that there exists $\mathbf{m}\in \mathbb{Z}^{k}$ such that $\mathbf{n} < \mathbf{m} \leq \mathbf{n}'$ and
					\begin{align}
						[f]_{1,\mathbf{m}} & \neq [g]_{1,\mathbf{m}} . \label{LemmaNecessaryEq1}
					\end{align}
					Since $\mathrm{Supp}(f)$ and $\mathrm{Supp}(g)$ are well ordered subsets of $\mathbb{R} _{>0} \times \mathbb{Z} ^{k}$, we can suppose that $\mathbf{m}$ is minimal such that \eqref{LemmaNecessaryEq1} holds. Now, suppose that there exists a solution $\varphi  \in \mathcal{L}_{k}^{0}$ of the conjugacy equation \eqref{JednadzbaPrva}. By Lemma~\ref{Lemma2}, the conjugacy equation is equivalent to the fixed point equation $\mathcal{T}_{f}(\varepsilon ) = \mathcal{S}_{f}(\varepsilon )$, for $\varepsilon := \varphi -\mathrm{id}$, where $\mathcal{S}_{f}$ and $\mathcal{T}_{f}$ are given in \eqref{DefS} and \eqref{Tdefined}, respectively. From \eqref{DefS}, it follows that $\mathrm{ord}(\mathcal{S}_{f}(\varepsilon )) = (1,\mathbf{m})$. So,
					\begin{align}
						\mathrm{ord}(\mathcal{T}_{f}(\varepsilon))=(1,\mathbf{m}) , \label{ordTcontradiction}
					\end{align}
					which implies that $\mathrm{ord}_{z}(\varepsilon )=1$. Let $\mathrm{Lb}_{z}(\varepsilon):=zT$. Since $\alpha =\beta =1$, let us use the notation $R:=R_{\beta }$ and $S:=S_{\alpha }$. From \eqref{Tdefined} and \eqref{FormulaDerivationFirstBlock}, it follows that:
					\begin{align}
						\mathcal{T}_{f}(\varepsilon) &= \varepsilon ' \cdot zR - (zS)' \cdot \varepsilon  \nonumber \\
						&= (zT) ' \cdot zR - (zS)' \cdot zT + \mathrm{h.o.b.}(z) \nonumber \\
						&= (T+D_{1}(T))\cdot zR - (S+D_{1}(S))\cdot zT + \mathrm{h.o.b.}(z) \nonumber \\
						&= zT\cdot (R-S)+(zR\cdot D_{1}(T)-zT\cdot D_{1}(S)) + \mathrm{h.o.b.}(z) . \label{orderT}
					\end{align}
					Since $\mathrm{Lt}(f-\mathrm{id})=\mathrm{Lt}(g-\mathrm{id})$, it follows that $\mathrm{ord}(zR)=\mathrm{ord}(zS)=(1,\mathbf{n})$. Hence,
					\begin{align}
						\mathrm{ord}(zR\cdot D_{1}(T)-zT\cdot D_{1}(S)) & >(1,\mathbf{n}') . \label{LemmaNecessaryEq2}
					\end{align}
					Now, from the minimality of $\mathbf{m}$ and the fact that $\mathrm{ord}(zT) > (1,\mathbf{0}_{k})$, it follows that
					\begin{align}
						\mathrm{ord}(zT\cdot (R-S)) & > (1,\mathbf{m}) . \label{LemmaNecessaryEq3}
					\end{align}
					Now, by \eqref{orderT}, \eqref{LemmaNecessaryEq2} and \eqref{LemmaNecessaryEq3}, it follows that $\mathrm{ord}(\mathcal{T}_{f}(\varepsilon )) > (1,\mathbf{m})$, which is a contradiction with \eqref{ordTcontradiction}. 
				\end{proof}
			
				\begin{rem}\hfill
					\begin{enumerate}[1., font=\textup, topsep=0.4cm, itemsep=0.4cm, leftmargin=0.6cm]
						\item Let $f,g\in \mathfrak L^{0}$ and let $L$ be as defined in the Main Theorem. Let $k\in \mathbb{N}$ be such that $f,g\in \mathcal{L}_{k}$. If conjugacy equation $\varphi \circ f\circ \varphi ^{-1}=g$ is solvable in $\mathfrak L^{0}$, by Proposition~\ref{LeadingBlockLemma}, it follows that $g=\mathrm{id}+zL+\mathrm{h.o.t.}$ This indicates that we cannot eliminate block $zL$ from the normal form.
						\item We pose the following question: Can we eliminate the remainder of the terms in the normal form by changes of variables from $\mathcal{L}_{k}^{0}$, where $k\in \mathbb{N}$ is minimal such that $f\in \mathcal{L}_{k}^{0}$ ?
						
						From the theory of standard power series we know that the normal form of a parabolic power series $f:=\mathrm{id}+az^{n}+\mathrm{h.o.t.}$, $a\neq 0$, $n\geq 2$, is $f_{c}:=\mathrm{id}+az^{n}+c\mathrm{Res}(f)$, for some $c\in \mathbb{R}$ (see e.g. \cite{CarlesonG93}, \cite{IlyYak08}, \cite{Loray98}, \cite{Loray05}). Here, $L=az^{n}$. Furthermore, from \cite[Theorem A]{mrrz16} it follows that the normal form of $f:=\mathrm{id}+az^{\beta }\boldsymbol{\ell }_{1}^{n}+\mathrm{h.o.t.}$, $a\neq 0$, $(\beta ,n)>(1,0)$, in the differential algebra $\mathcal{L}_{1}$ is $f_{c}:=\mathrm{id}+zL+c\mathrm{Res}(f)$, for some $c\in \mathbb{R}$. This is an indication that the normal form $f_{c}$ will be as defined in \eqref{DefnOfFc}.
						
						Furthermore, in Corollary~\ref{Remark3} we prove that $c=0$, if we consider changes of variables from a larger group $\mathcal{L}_{m}^{0}$, $m>k$.
						\item In general, the block $L$ is not finite even if we consider changes of variables in the larger group $\mathfrak L^{0}$. Transfinite $L$ may appear for parabolic logarithmic transseries $f\in \mathcal{L}_{k}^{0}$, $k\geq 2$, such that $\mathrm{ord}_{z}(f-\mathrm{id})=1$. This is the main difference when generalizing \cite[Theorem A]{mrrz16} from the differential algebra $\mathcal{L}_{1}$ to $\mathcal{L}_{k}$, for $k\in \mathbb{N}$.
					\end{enumerate}
				\end{rem}
			
					Let $f\in \mathcal{L}_{k}$, $k\in \mathbb{N}$, be such that $f=\mathrm{id}+z^{\beta }R_{\beta } + \mathrm{h.o.b.}(z) $, for $\beta \geq 1$ and $R_{\beta }\in \mathcal{B}_{1}\subseteq \mathcal{L}_{k}^{\infty }$. In Lemma~\ref{Lemma2} we have transformed conjugacy equation \eqref{JednadzbaPrva} to fixed point equation \eqref{FixedPointEq}. In order to apply the fixed point theorem from Proposition~\ref{KorBanach} to solve equation \eqref{FixedPointEq}, we have to find an appropriate complete space in which $\mathcal{T}_{f}$ is a $\lambda $-homothety and $\mathcal{S}_{f}$ is a $\mu $-Lipschitz map, such that $\mu < \lambda $. We distinguish two cases:
					
					\begin{enumerate}[(i), font=\textup, topsep=0.4cm, itemsep=0.4cm, leftmargin=0.6cm]
						\item If $\beta >1$. By Proposition~\ref{LeadingBlockLemma}, in order to conjugate $f$ to $g:=\mathrm{id}+z^{\alpha }S_{\alpha }+\mathrm{h.o.b.}(z)$, $\alpha \geq 1$, the necessary condition is $\mathrm{ord}_{z}(f-\mathrm{id})=\mathrm{ord}_{z}(g-\mathrm{id})$, that is, $\alpha = \beta $. Therefore, by \eqref{Tdefined}, the operator $\mathcal{T}_{f}$ is a $\frac{1}{2^{\beta -1}}$-homothety on the set of all logarithmic transseries in $\mathcal{L}_{k}^{1}$ that are not in its kernel. The operator $\mathcal{S}_{f}$, defined in \eqref{DefS}, is, by Proposition~\ref{Lema3}, a $\frac{1}{2^{\rho (\delta )}}$-Lipschitz map on the space $\mathcal{L}_{k}^{\delta }$, for $\delta \geq 1$. Note that $\frac{1}{2^{\rho (\delta )}} < \frac{1}{2^{\beta -1}}$ if and only if $\delta >1$. Therefore, in order to apply the fixed point theorem from Proposition~\ref{KorBanach}, we restrict ourselves to space $\mathcal{L}_{k}^{\delta }$, for $\delta >1$.
						
						Additionally, we have to exclude from $\mathcal{L}_{k}^{\delta }$ elements of the kernel of $\mathcal{T}_{f}$, which can be easily done only when $z^{\beta }R_{\beta }=\mathrm{Lt}(f-\mathrm{id})$ and $z^{\alpha }S_{\alpha }=\mathrm{Lt}(f-\mathrm{id})$. This is the reason why we first \emph{prenormalize} logarithmic transseries $f$, and then apply the fixed point theorem from Proposition~\ref{KorBanach} in case $\beta >1$. For the precise proof, see Subsection~\ref{sec:proofAa}.
						\item $\beta =1$. By Proposition~\ref{Lema3}, the operator $\mathcal{S}_{f}$ is an $1$-Lipschitz map on the space $(\mathcal{L}_{k}^{\delta },d_{z})$, for every $\delta \geq 1$, where $1$ is its minimal Lipschitz coefficient. On the other hand, $\mathcal{T}_{f}$ is $1$-homothety on the set of all logarithmic transseries in $\mathcal{L}_{k}^{\delta }$ which are not in its kernel, for every $\delta \geq 1$. Consequently, we cannot apply the fixed point theorem from Proposition~\ref{KorBanach} directly because the assumption $\mu < \lambda $ is not satisfied. Therefore, in the case $\beta =1$, we modify the operators $\mathcal{T}_{f}$ and $\mathcal{S}_{f}$. 
					\end{enumerate}
					
					That is why we split the proof of the Main Theorem in two different cases:
					\begin{enumerate}[(a), font=\textup, topsep=0.4cm, itemsep=0.4cm, leftmargin=0.6cm]
						\item $\beta = \mathrm{ord}_{z}(f- \mathrm{id}) >1$,
						\item $\beta = \mathrm{ord}_{z}(f- \mathrm{id}) =1$.
					\end{enumerate}
					The respective proofs of cases (a) and (b) are given in Subsections~\ref{sec:proofAa} and~\ref{sec:proofAb} respectively. For a visual representation of the sketch of the proof of statement 1 see Figure~\ref{FigureSketchProof}.
					
					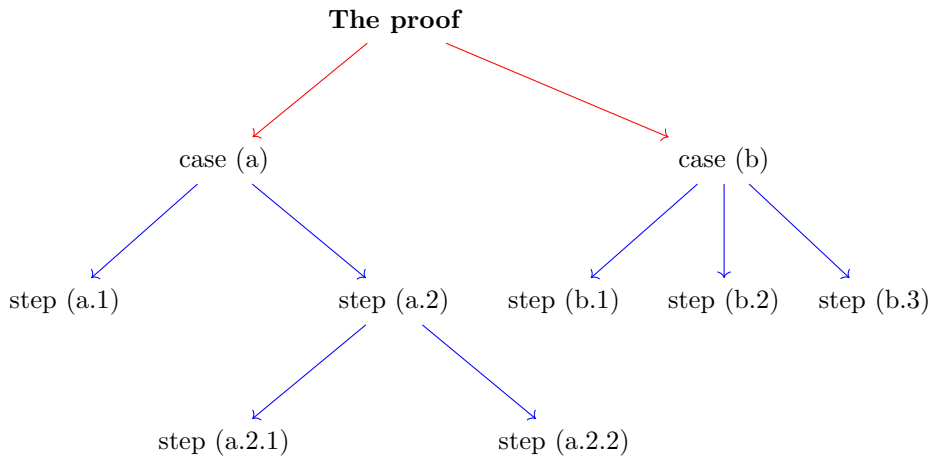
\begin{figure}[!h]
						$$
							\begin{tikzcd}[row sep=huge, column sep=tiny]
								& & \textbf{The proof} \arrow[ld,red,->] \arrow[rrd,red,->] & & & \\
								& \textrm{case (a)} \arrow[ld,blue,->] \arrow[rd,blue,->] & & & \textrm{case (b)} \arrow[d,blue,->] \arrow[ld,blue,->] \arrow[rd,blue,->] & \\
								\textrm{step (a.1)} & & \textrm{step (a.2)} \arrow[ld,blue,->] \arrow[rd,blue,->] & \textrm{step (b.1)} & \textrm{step (b.2)} & \textrm{step (b.3)} \\
								& \textrm{step (a.2.1)} & & \textrm{step (a.2.2)} & &
							\end{tikzcd}
						$$
						\caption{Diagram of the proof of the Main Theorem.}\label{FigureSketchProof}
					\end{figure}

				\subsection{Proof of case $\mathrm{ord}_{z}(f- \mathrm{id}) >1$ of statement 1}\label{sec:proofAa}
				
				\subsubsection{Sketch of the proof of case $\mathrm{ord}_{z}(f- \mathrm{id}) >1$ of statement 1}
				
				Here we give the sketch of the proof. The proof, after introducing all needed proposition and lemmas, is at the end of the subsection.
				
				Let $f\in \mathcal{L}_{k}$, $k\in \mathbb{N}$, be such that $f:=\mathrm{id}+z^{\beta }R_{\beta } + \mathrm{h.o.b.}(z) $, for $\beta >1$ and $R_{\beta }\in \mathcal{B}_{1}\setminus \left\lbrace 0\right\rbrace \subseteq \mathcal{L}_{k}^{\infty }$. Let $L$ be as defined in the Main Theorem (for $\beta >1$) and let $g\in \mathcal{L}_{k}^{0}$ be such that $g:=\mathrm{id}+z^{\beta}L+\mathrm{h.o.t.}$ Let $S_{\beta }$ be the whole $\beta $-block of $g$. That is, $g=\mathrm{id}+z^{\beta }S_{\beta }+\mathrm{h.o.b.}(z)$.
				
				By Lemma~\ref{Lemma2}, it follows that a parabolic logarithmic transseries $\varphi=\mathrm{id}+\varepsilon \in \mathcal{L}_{k}^{0}$, $\mathrm{ord}(\varepsilon)>(1,\mathbf{0}_{k})$, satisfies the conjugacy equation
				\begin{align}
					\varphi  \circ f \circ \varphi ^{-1} = g \label{ConjEquation1}
				\end{align}
				if and only if the logarithmic transseries $\varepsilon \in \mathcal{L}_{k}$ satisfies the \emph{fixed point equation}
				\begin{align*}
					\mathcal{S}_{f}(\varepsilon ) & = \mathcal{T}_{f}(\varepsilon ) ,
				\end{align*}
				where $\mathcal{S}_{f},\mathcal{T}_{f} : \mathcal{L}_{k} \to \mathcal{L}_{k}$ are given in \eqref{DefS} and \eqref{Tdefined} respectively.
				
				In general, $z^{\beta }S_{\beta } \neq z^{\beta }R_{\beta }$, so we first eliminate terms in the leading block of $f-\mathrm{id}-zL$, for $L$ defined in the Main Theorem. By \eqref{Tdefined},
				\begin{align}
					\mathrm{ord}_{z}(\mathcal{T}_{f}(\varepsilon )) & \geq \mathrm{ord}_{z}(\varepsilon )+ \beta -1 . \label{CoeffHomothecy}
				\end{align}
				By \eqref{DefS}, $\mathrm{ord}_{z}(\mathcal{S}_{f}(\varepsilon )) = \beta $. From $\mathcal{T}_{f}(\varepsilon ) = \mathcal{S}_{f}(\varepsilon )$, it necessarily follows that $\mathrm{ord}_{z}(\varepsilon )+ \beta -1= \beta $, so $\mathrm{ord}_{z}(\varepsilon ) = 1$.
					
				By Proposition~\ref{Lema3}, $\mathcal{S}_{f}$ is $\frac{1}{2^{\beta -1}}$-Lipschitz on the space $\mathcal{L}_{k}^{1}$. By \eqref{CoeffHomothecy}, the coefficient of $\mathcal{T}_{f}$ is $\frac{1}{2^{\beta -1}}$, which is the same as the minimal Lipschitz coefficient of $\mathcal{S}_{f}$ on the space $\mathcal{L}_{k}^{1}$. Therefore, we cannot apply the fixed point theorem from Proposition~\ref{KorBanach} directly, and we change operators $\mathcal{S}_{f}$ and $\mathcal{T}_{f}$ in our fixed point equation. \\
					
				To conclude, in order to eliminate every term in the leading block of $f - \mathrm{id}$ except the leading term (that cannot be eliminated), we put $g := \mathrm{id}+\mathrm{Lt}(f- \mathrm{id})+\mathrm{h.o.b.}(z)$. If the leading block of $f- \mathrm{id}$ contains at least two different terms, by the above discussion, we cannot solve the fixed point equation using the fixed point theorem from Proposition~\ref{KorBanach}. Therefore, we proceed in two steps (see Figure~\ref{FigureSketchProof}): \\
				
				\begin{itemize}
					\item[\emph{Step (a.1)}] We obtain a parabolic conjugacy $\varphi_{1}=\mathrm{id}+zT\in \mathcal{L}_{k}^{0}$, $T \in \mathcal{B}_{\geq 1}^{+}\subseteq \mathcal{L}_{k}$, as a solution of a \emph{prenormalization equation}:
					\begin{align}
						\varphi _{1}\circ f \circ \varphi _{1}^{-1}=\mathrm{id} +az^{\beta }\boldsymbol{\ell }_{1}^{n_{1}}\cdots \boldsymbol{\ell}_{k}^{n_{k}}+\mathrm{h.o.b.}(z) , \label{Eq3}
					\end{align}
					where $\mathrm{Lt}(f-\mathrm{id})=az^{\beta }\boldsymbol{\ell }_{1}^{n_{1}}\cdots \boldsymbol{\ell }_{k}^{n_{k}}$, $\beta >1$, $a\neq 0$. That is, we eliminate every term in the leading block of $f-\mathrm{id}$ except its leading term. Furthermore, such $T \in \mathcal{B}_{\geq 1}^{+}\subseteq \mathcal{L}_{k}$ is unique. The proof of step $(a.1)$ is in Subsection~\ref{subsubsection:proofA1}. \\
						
					\item[\emph{Step (a.2)}] We obtain a parabolic conjugacy $\varphi _{2}\in \mathcal{L}_{k}^{0}$, $\mathrm{ord}_{z}(\varphi_{1}-\mathrm{id})>1$ (not unique), as a solution of the \emph{normalization equation}:
					\begin{align}
						\varphi _{2}\circ ( \varphi _{1}\circ f \circ \varphi _{1}^{-1} ) \circ \varphi _{2}^{-1}=\mathrm{id} +az^{\beta }\boldsymbol{\ell }_{1}^{n_{1}}\cdots \boldsymbol{\ell}_{k}^{n_{k}}+c\mathrm{Res}(f) , \label{Eq4}
					\end{align}
					for a unique choice of the residual coefficient $c\in \mathbb{R} $. That is, we show that we can eliminate every term in $\varphi _{1}\circ f \circ \varphi _{1}^{-1} -\mathrm{id}-az^{\beta }\boldsymbol{\ell }_{1}^{n_{1}}\cdots \boldsymbol{\ell}_{k}^{n_{k}}$ except the residual term (which may be changed).
					
					Once prenormalized, we apply the fixed point theorem from Proposition~\ref{KorBanach} directly in order to obtain $\varphi _{2}$, but only after the formal invariant $c$ is choosen appropriately. So, we split the proof of the step $(a.2)$ into two steps: eliminations up to the residual term and after the residual term. The proof of step $(a.2)$ is in Subsection~\ref{subsubsection:proofA2}.
				\end{itemize}
			
				\noindent Finally, $\varphi :=\varphi _{2} \circ \varphi_{1}$ is a solution of the conjugacy equation \eqref{ConjEquation1}, for $g:=f_{c}$.

				\subsubsection{Step (a.1): Solving the prenormalization equation}\label{subsubsection:proofA1}
				
				We first transform the prenormalization equation \eqref{Eq3} to an appropriate fixed point equation, in order to apply the fixed point theorem from Proposition~\ref{KorBanach}.
				
				\begin{lem}[Fixed point equation for the prenormalization]\label{Prop1}
					Let $f\in \mathcal{L}_{k}$, $k\in \mathbb{N}_{\geq 1}$, be such that $f:=\mathrm{id}+z^{\beta }R_{\beta }+ \mathrm{h.o.b.}(z)$, $\beta >1$, $R_{\beta }\in \mathcal{B}_{1}\setminus \left\lbrace 0\right\rbrace \subseteq \mathcal{L}_{k}^{\infty }$, and let $az^{\beta }\boldsymbol{\ell}_{1}^{n_{1}}\cdots \boldsymbol{\ell }_{k}^{n_{k}}$, $a\neq 0$, be the leading term of $f- \mathrm{id}$. Let $\mathcal{T}_{1},\mathcal{S}_{1}:\mathcal{B}_{\geq 1}^{+} \to \mathcal{B}_{\geq 1}^{+} $ be the operators defined by:
					\begin{align}
						\mathcal{T}_{1}(Q) & :=  \displaystyle \Big( 1- \frac{\beta a\boldsymbol{\ell}_{1}^{n_{1}}\cdots \boldsymbol{\ell}_{k}^{n_{k}}}{R_{\beta }}\Big) \cdot Q - \frac{a\boldsymbol{\ell}_{1}^{n_{1}}\cdots \boldsymbol{\ell}_{k}^{n_{k}}}{R_{\beta }}\cdot \sum _{i\geq 2} {\beta \choose i}Q^{i} , \label{EqT} \\
						\mathcal{S}_{1}(Q) & := \frac{\mathcal{C}(Q)}{R_{\beta }} - D_{1}(Q) + \frac{a\boldsymbol{\ell}_{1}^{n_{1}}\cdots \boldsymbol{\ell}_{k}^{n_{k}}}{R_{\beta }} -1\, , \label{Eq2} 
					\end{align}
					$Q \in \mathcal{B}_{\geq 1}^{+}\subseteq \mathcal{L}_{k}$, where $\mathcal{C}$ is a suitable $\frac{1}{2^{1+n_{1}}}$-Lipschitz map on the space $\big( \mathcal{B}_{\geq 1}^{+} , d_{1}\big) $. Then $\varphi _{1} = \mathrm{id} + zQ$ is a solution of \emph{the prenormalization equation} \eqref{Eq3} if and only if $$\mathcal{T}_{1}(Q)= \mathcal{S}_{1}(Q) .$$
				\end{lem}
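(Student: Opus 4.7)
The plan is to use Lemma~\ref{Lemma2} with the ansatz $\varepsilon := zQ$ and target
$$g := \mathrm{id} + z^{\beta} S_{\beta} + \mathrm{h.o.b.}(z), \qquad S_{\beta} := a\boldsymbol{\ell}_{1}^{n_{1}}\cdots \boldsymbol{\ell}_{k}^{n_{k}}.$$
The lemma translates conjugacy into $\mathcal{T}_{f}(zQ) = \mathcal{S}_{f}(zQ)$, and since the prenormalization equation \eqref{Eq3} constrains only the $\beta$-block of $\varphi_{1}\circ f\circ \varphi_{1}^{-1}$, what remains is to compare the $\beta$-blocks in $z$ of $\mathcal{T}_{f}(zQ)$ and $\mathcal{S}_{f}(zQ)$ using the identities of Subsection~\ref{ListIdentities}.

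For $\mathcal{T}_{f}(zQ) = (zQ)'\cdot z^{\beta}R_{\beta} - (z^{\beta}S_{\beta})'\cdot zQ$, identities \eqref{FormulaDerivationFirstBlock} and \eqref{IdentDer1} yield $(zQ)' = Q + D_{1}(Q)$ and $(z^{\beta}S_{\beta})' = z^{\beta-1}(\beta S_{\beta} + D_{1}(S_{\beta}))$, producing the $\beta$-block
$$z^{\beta}\bigl[(R_{\beta} - \beta S_{\beta})Q + R_{\beta} D_{1}(Q) - D_{1}(S_{\beta})Q\bigr].$$
For $\mathcal{S}_{f}(zQ)$ one checks that most summands do not reach the $\beta$-block: the linear perturbation $-\varepsilon'\mu_{1} + \psi_{1}'\varepsilon$ has $\mathrm{ord}_{z} > \beta$ because $\mu_{1}, \psi_{1}$ do; the nonlinear sum $-\sum_{i\geq 2}\frac{(zQ)^{(i)}}{i!}\mu^{i}$ has, by \eqref{FormulaDerivationFirstBlock}, each summand of $\mathrm{ord}_{z} = i(\beta-1)+1 > \beta$ since $\beta > 1$; and the $\psi_{1}$-contribution to $\sum_{i\geq 2}\frac{\psi^{(i)}}{i!}(zQ)^{i}$ likewise has $\mathrm{ord}_{z} > \beta$. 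Only the $(z^{\beta}S_{\beta})$-contribution to the last sum survives, and by identity \eqref{Identitet1}, after subtracting the already-accounted $i=1$ term, it equals
$$z^{\beta}\Bigl[S_{\beta}\sum_{i\geq 2}\binom{\beta}{i}Q^{i} + \mathcal{C}(Q, S_{\beta}) - D_{1}(S_{\beta})Q\Bigr].$$
Combined with the $z^{\beta}(S_{\beta} - R_{\beta})$ coming from $\psi - \mu$, this gives the full $\beta$-block of $\mathcal{S}_{f}(zQ)$.

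Equating the two $\beta$-blocks, the $D_{1}(S_{\beta})Q$ summands cancel; dividing by $R_{\beta}$ (invertible, as its leading coefficient $a$ is nonzero) and rearranging produces exactly $\mathcal{T}_{1}(Q) = \mathcal{S}_{1}(Q)$ with $\mathcal{C}(Q) := \mathcal{C}(Q, S_{\beta})$. The asserted Lipschitz coefficient $1/2^{1+n_{1}}$ of $\mathcal{C}$ on $\bigl(\mathcal{B}_{\geq 1}^{+}, d_{1}\bigr)$ is inherited directly from \eqref{Identitet1} upon noting $\mathrm{ord}_{\boldsymbol{\ell}_{1}}(S_{\beta}) = n_{1}$. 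The main obstacle is essentially bookkeeping: isolating which pieces of $\mathcal{S}_{f}$ and $\mathcal{T}_{f}$ contribute to the $\beta$-block, and performing the cancellation that eliminates $D_{1}(S_{\beta})Q$. Once that case analysis is in place, the identities of Subsection~\ref{ListIdentities} reduce the rest to straightforward algebra.
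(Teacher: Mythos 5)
Your proof is correct and follows essentially the same route as the paper's: both reduce the prenormalization equation to the vanishing of the leading ($\beta$-)block of $\varphi_1\circ f-g\circ\varphi_1$ (equivalently, equality of the $\beta$-blocks of $\mathcal T_f(zQ)$ and $\mathcal S_f(zQ)$ from Lemma~\ref{Lemma2}), apply identities \eqref{Identitet1} and \eqref{FormulaDerivationFirstBlock}, and divide by $R_{\beta}$, with $\mathcal C(Q):=\mathcal C(Q,L)$ inheriting the $\frac{1}{2^{1+n_{1}}}$-Lipschitz bound from \eqref{Identitet1}. The only cosmetic difference is that you carry out the block-isolation bookkeeping explicitly on the operators $\mathcal S_f,\mathcal T_f$, whereas the paper writes the leading-block equation \eqref{Jjjed} directly.
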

				\begin{proof}
					Since $z^{\beta }R_{\beta }=az^{\beta }\boldsymbol{\ell}_{1}^{n_{1}}\cdots \boldsymbol{\ell }_{k}^{n_{k}}+\mathrm{h.o.t.}$, it follows that
					\begin{align*}
						\mathrm{ord} \left( \frac{a\boldsymbol{\ell}_{1}^{n_{1}}\cdots \boldsymbol{\ell}_{k}^{n_{k}}}{R_{\beta }} -1\right) & > \mathbf{0}_{k+1} .
					\end{align*}
					Therefore, the operator $\mathcal{S}_{1}$ is well-defined on $\mathcal{B}_{\geq 1}^{+}\subseteq \mathcal{L}_{k}$.
					
					The equation \eqref{Eq3} with $g := z+az^{\beta }\boldsymbol{\ell }_{1}^{n_{1}}\cdots \boldsymbol{\ell}_{k}^{n_{k}}+\mathrm{h.o.b.}(z)$ can be equivalently written as:
					\begin{align*}
						& \varphi  \circ f - g \circ \varphi  = 0 ,
					\end{align*}
					This implies that
					\begin{align}
						\mathrm{Lb}_{z}\big( \varphi \circ f-g \circ \varphi  \big) & =0 . \label{Eq1}
					\end{align}
					Let $L:=a\boldsymbol{\ell }_{1}^{n_{1}}\cdots \boldsymbol{\ell}_{k}^{n_{k}}$. From \eqref{EquationLema2} we get that \eqref{Eq1} is equivalent with the equation:
					\begin{align}
						(zQ) '\cdot z^{\beta }R_{\beta } - \sum_{i\geq 1}\frac{( z^{\beta }L )^{(i)}}{i!}(zQ)^{i} +z^{\beta }\cdot ( R_{\beta } -L) =0. \label{Jjjed}
					\end{align}
					Dividing equation \eqref{Jjjed} by $z^{\beta }$ and using identities \eqref{Identitet1} and \eqref{FormulaDerivationFirstBlock}, we get that:
					\begin{align}
						(Q + D_{1}(Q)) \cdot R_{\beta } - L \cdot \sum_{i\geq 1}{\beta \choose i}Q^{i} - \mathcal{C}(Q) + R_{\beta } -L & = 0, \label{FixedPointLemmaEq1}
					\end{align}
					where $\mathcal{C}:=\mathcal{C}(L, \cdot ):\mathcal{B}_{1}\to \mathcal{B}_{1}$ is a $\frac{1}{2^{1+n_{1}}}$-Lipschitz map from \eqref{Identitet1}. We divide equation \eqref{FixedPointLemmaEq1} by $R_{\beta }$ and we get the following equation:
					\begin{align*}
						\Big( 1- \frac{\beta L}{R_{\beta }}\Big) \cdot Q - \frac{L}{R_{\beta }}\cdot \sum _{i\geq 2}{\beta \choose i}Q^{i} & = \frac{\mathcal{C}(Q)}{R_{\beta }} - D_{1}(Q)+ \frac{L}{R_{\beta }} -1 ,
					\end{align*}
					i.e., $\mathcal{T}_{1}(Q)= \mathcal{S}_{1}(Q)$.
				\end{proof}
			
				\begin{remark}[Minimality of the prenormalization]
					By Proposition~\ref{LeadingBlockLemma}, it follows that $\mathrm{Lt}(f- \mathrm{id})=\mathrm{Lt}(\varphi _{1}\circ f \circ \varphi _{1}^{-1}-\mathrm{id})$. Therefore, the prenormalization $\varphi _{1}\circ f \circ \varphi _{1}^{-1} = \mathrm{id}+az^{\beta }\boldsymbol{\ell}_{1}^{n_{1}}\cdots \boldsymbol{\ell}_{k}^{n_{k}} + \mathrm{h.o.b.}(z)$ is minimal\footnote{In a sense that the leading term of $\varphi _{1}\circ f\circ \varphi _{1}^{-1}-\mathrm{id}$ cannot be eliminated, nor its coefficients changed.}.
				\end{remark}
			
				In Proposition~\ref{Prop2} below, we prove that the operators $\mathcal{T}_{1}$ and $\mathcal{S}_{1}$ defined in \eqref{EqT} and \eqref{Eq2} satisfy assumptions of the fixed point theorem from Proposition~\ref{KorBanach}.
				
				\begin{prop}[Properties of operators $\mathcal{S}_{1}$ and $\mathcal{T}_{1}$]\label{Prop2}
					Let $f\in \mathcal{L}_{k}$, $k\in \mathbb{N}_{\geq 1}$, be such that $f:=\mathrm{id}+z^{\beta }R_{\beta }+ \mathrm{h.o.b.}(z)$, $\beta >1$, $R_{\beta }\in \mathcal{B}_{1}\setminus \left\lbrace 0\right\rbrace \subseteq \mathcal{L}_{k}^{\infty }$, and let $az^{\beta }\boldsymbol{\ell}_{1}^{n_{1}}\cdots \boldsymbol{\ell}_{k}^{n_{k}}$, $a\neq 0$, be the leading term of $f- \mathrm{id}$. Let $\mathcal{T}_{1},\mathcal{S}_{1}:\mathcal{B}_{\geq 1}^{+} \to \mathcal{B}_{\geq 1}^{+} $ be the operators defined in \eqref{EqT} and \eqref{Eq2}, respectively. Then
					\begin{enumerate}[1., font=\textup, topsep=0.4cm, itemsep=0.4cm, leftmargin=0.6cm]
						\item $\mathcal{S}_{1}$ is a $\frac{1}{2}$-contraction on the space $\big( \mathcal{B}_{\geq 1}^{+} , d_{1}\big) $.
						\item $\mathcal{T}_{1}$ is an isometry on the space $\big( \mathcal{B}_{\geq 1}^{+} , d_{1}\big) $.
						\item $\mathcal{T}_{1}$ is a surjection.
					\end{enumerate}
				\end{prop}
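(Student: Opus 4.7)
My plan is to bound $\mathcal{S}_1$ summand-by-summand for (1), factor the difference $\mathcal{T}_1(Q_1)-\mathcal{T}_1(Q_2)$ to expose a multiplicative factor with nonzero leading constant for (2), and invert $\mathcal{T}_1$ by transfinite recursion on well-ordered supports for (3). For statement 1 specifically, I bound the $d_1$-Lipschitz constant of each summand of the formula \eqref{Eq2} defining $\mathcal{S}_1$. The summand $\frac{a\boldsymbol{\ell}_1^{n_1}\cdots\boldsymbol{\ell}_k^{n_k}}{R_\beta}-1$ is constant in $Q$. The derivation $D_1$ is a $\frac{1}{2}$-contraction on $(\mathcal{B}_1,d_1)$ by its construction in Subsection~\ref{SubsectionSpaces}. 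For $Q\mapsto\frac{\mathcal{C}(Q)}{R_\beta}$, I combine the $\frac{1}{2^{1+n_1}}$-Lipschitz property of $\mathcal{C}$ (supplied by Lemma~\ref{Prop1} via \eqref{Identitet1}) with the observation $\mathrm{ord}_{\boldsymbol{\ell}_1}(R_\beta)=n_1$: division by $R_\beta$ shifts $\mathrm{ord}_{\boldsymbol{\ell}_1}$ down by exactly $n_1$, so the composition is $\frac{1}{2}$-Lipschitz. The non-Archimedean property of $d_1$ upgrades the triangle inequality to a maximum rule, giving that $\mathcal{S}_1$ itself is $\frac{1}{2}$-Lipschitz.

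For statement 2, using the binomial identity $\sum_{i\geq 1}\binom{\beta}{i}Q^i=(1+Q)^\beta-1$ I first rewrite $\mathcal{T}_1(Q)=Q-\frac{a\boldsymbol{\ell}_1^{n_1}\cdots\boldsymbol{\ell}_k^{n_k}}{R_\beta}\bigl((1+Q)^\beta-1\bigr)$, and then factor
\begin{equation*}
\mathcal{T}_1(Q_1)-\mathcal{T}_1(Q_2)=(Q_1-Q_2)\cdot M(Q_1,Q_2),
\end{equation*}
where $M(Q_1,Q_2):=c_1-\frac{a\boldsymbol{\ell}_1^{n_1}\cdots\boldsymbol{\ell}_k^{n_k}}{R_\beta}\sum_{i\geq 2}\binom{\beta}{i}\sum_{j=0}^{i-1}Q_1^{i-1-j}Q_2^j$ and $c_1:=1-\beta\frac{a\boldsymbol{\ell}_1^{n_1}\cdots\boldsymbol{\ell}_k^{n_k}}{R_\beta}$. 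Writing $\frac{a\boldsymbol{\ell}_1^{n_1}\cdots\boldsymbol{\ell}_k^{n_k}}{R_\beta}=1+R'$ with $R'\in\mathcal{B}_{\geq 1}^{+}$, and noting that each $Q_1^{i-1-j}Q_2^j$ with $i\geq 2$ lies in $\mathcal{B}_{\geq 1}^{+}$ (as $\mathcal{B}_{\geq 1}^{+}$ is closed under multiplication), I obtain $M=(1-\beta)+M'$ for some $M'\in\mathcal{B}_{\geq 1}^{+}$. Since $\beta>1$, the constant $1-\beta$ is nonzero, so $\mathrm{ord}_{\boldsymbol{\ell}_1}(M)=0$; multiplication by $M$ preserves $\mathrm{ord}_{\boldsymbol{\ell}_1}$ and thus $d_1(\mathcal{T}_1(Q_1),\mathcal{T}_1(Q_2))=d_1(Q_1,Q_2)$.

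For statement 3, given $P\in\mathcal{B}_{\geq 1}^{+}$, I rewrite $\mathcal{T}_1(Q)=P$ equivalently as
\begin{equation*}
Q=c_1^{-1}\Bigl(P+\frac{a\boldsymbol{\ell}_1^{n_1}\cdots\boldsymbol{\ell}_k^{n_k}}{R_\beta}\sum_{i\geq 2}\binom{\beta}{i}Q^i\Bigr),
\end{equation*}
where $c_1^{-1}\in\mathcal{B}_1$ exists since the leading constant of $c_1$ is $1-\beta\neq 0$. I then construct $Q$ order-by-order on its (prospectively well-ordered) support: at each order $\omega$ the coefficient $[Q]_\omega$ is uniquely determined by $[P]_\omega$ and by coefficients $[Q]_{\omega'}$ with $\omega'<\omega$, because every $Q^i$ with $i\geq 2$ and $Q\in\mathcal{B}_{\geq 1}^{+}$ (hence of strictly positive order) has its $\omega$-coefficient expressible as a finite sum of products $[Q]_{\omega_1}\cdots[Q]_{\omega_i}$ with $\omega_1+\cdots+\omega_i=\omega$ and each $\omega_j<\omega$. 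Well-orderedness of $\mathrm{Supp}(Q)$ then follows from the Neumann lemma applied to the supports of $P$, $c_1^{-1}$, $\frac{a\boldsymbol{\ell}_1^{n_1}\cdots\boldsymbol{\ell}_k^{n_k}}{R_\beta}$, and the finite products $Q^i$.

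The main obstacle is precisely statement 3: with respect to $d_1$ alone, the operator $\mathcal{U}_P$ defined by the right-hand side above is merely $1$-Lipschitz, because elements of $\mathcal{B}_{\geq 1}^{+}$ may have $\mathrm{ord}_{\boldsymbol{\ell}_1}=0$, so Proposition~\ref{KorBanach} cannot be invoked directly on $\mathcal{U}_P$ in $(\mathcal{B}_{\geq 1}^{+},d_1)$. The key is to exploit the strictly finer well-ordered-support structure by a transfinite recursion (equivalently, a Picard iteration with respect to the full lex order on $\mathbb{Z}^k$), in which each $Q^i$ with $Q\in\mathcal{B}_{\geq 1}^{+}$ and $i\geq 2$ is of order strictly greater than $Q$.
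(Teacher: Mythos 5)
Your proof is correct. Statements 1 and 2 follow the paper's own route: the summand-by-summand bound for $\mathcal{S}_{1}$ (using that division by $R_{\beta }$ shifts $\mathrm{ord}_{\boldsymbol{\ell }_{1}}$ by exactly $n_{1}=\mathrm{ord}_{\boldsymbol{\ell }_{1}}(R_{\beta })$, cancelling the $\tfrac{1}{2^{1+n_{1}}}$ coefficient of $\mathcal{C}$ down to $\tfrac12$) is exactly the paper's argument, and your explicit factorization $\mathcal{T}_{1}(Q_{1})-\mathcal{T}_{1}(Q_{2})=(Q_{1}-Q_{2})\cdot M(Q_{1},Q_{2})$ with $M=(1-\beta )+M'$, $M'\in \mathcal{B}_{\geq 1}^{+}$, is a welcome spelling-out of the paper's terser appeal to the contraction Lemma~\ref{Lema1}; it isolates precisely where $\beta >1$ is used. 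Where you genuinely diverge is statement 3. The paper proves surjectivity via Lemma~\ref{Lem4}: it decomposes $\mathcal{B}_{\geq 1}^{+}=\mathcal{B}_{1}^{+}\oplus \cdots \oplus \mathcal{B}_{k}^{+}$ and solves the equation level by level, applying the fixed point theorem of Proposition~\ref{KorBanach} on each $(\mathcal{B}_{m}^{+},d_{m})$, where the operator $Q\mapsto R_{2}\cdot h(Q)$ genuinely is a $\tfrac12$-contraction (Lemma~\ref{Lem5}); this circumvents exactly the obstruction you identify, that $d_{1}$ is too coarse on $\mathcal{B}_{\geq 1}^{+}$. You instead solve $Q=\mathcal{U}_{P}(Q)$ coefficientwise by transfinite recursion on the lexicographic order of $\mathbb{Z}^{k}$, using that $[Q^{i}]_{\omega }$, $i\geq 2$, depends only on $[Q]_{\omega '}$ with $\omega '<\omega $. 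This is more elementary (no block decomposition, no cascade of metrics) and also yields uniqueness as a byproduct, though uniqueness is not needed since injectivity already follows from statement 2. The one point to tighten is the well-orderedness step: as written, invoking Neumann's lemma on "the finite products $Q^{i}$" is circular, since $\mathrm{Supp}(Q)$ is what is being constructed. The standard repair is to fix in advance the sub-semigroup $W$ of $(\mathbb{Z}^{k}_{>\mathbf{0}},+)$ generated by $\mathrm{Supp}(P)$ together with the (shifted) supports of $c_{1}^{-1}$ and $a\boldsymbol{\ell }_{1}^{n_{1}}\cdots \boldsymbol{\ell }_{k}^{n_{k}}/R_{\beta }$, note that $W$ is well ordered with finite decomposition numbers by Neumann's lemma, and run the recursion over $W$; the resulting $Q$ then lies in $\mathcal{B}_{\geq 1}^{+}$. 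What the paper's route buys is uniformity with the rest of its fixed-point machinery (the same Lemmas~\ref{Lem4} and~\ref{Lem5} are reused elsewhere); what yours buys is a shorter, self-contained argument for this particular proposition.
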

				
				First we state Lemma~\ref{Lem4} which is used in the proof of Proposition~\ref{Prop2}. In its proof we need Lemma~\ref{Lem5} also stated below.
			
				\begin{lem}\label{Lem4}
					Let $M \in \mathcal{B}_{\geq 1}^{+}\subseteq \mathcal{L}_{k}$, $k\in \mathbb{N}_{\geq 1}$, and let $R_{1}, R_{2} \in \mathcal{B}_{1}\setminus \left\lbrace 0\right\rbrace \subseteq \mathcal{L}_{k}^{\infty }$ such that $\mathrm{ord}(R_{1}), \mathrm{ord}(R_{2}) = \mathbf{0}_{k+1}$, i.e., $R_{1} $ and $R_{2} $ have non-zero constants as the leading terms. Let $h \in x^{2}\mathbb{R} \left[ \left[ x\right] \right] $ be a formal power series with real coefficints in the variable $x$. Then there exists a unique $Q \in \mathcal{B}_{\geq 1}^{+}\subseteq \mathcal{L}_{k}$, such that
					\begin{align}
						R_{1} \cdot Q +R_{2} \cdot h(Q) & = M . \label{EquationDif}
					\end{align}
				\end{lem}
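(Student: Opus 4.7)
The plan is to recast the equation in fixed point form
\[
Q = \Phi(Q) := R_1^{-1}\bigl(M - R_2\, h(Q)\bigr)
\]
and solve it by Picard iteration in the valuation (weak) topology on $\mathcal{B}_{\geq 1}^+$. Since $R_1$ has a nonzero constant as leading term, writing $R_1 = c_1(1+\widetilde R_1)$ with $\mathrm{ord}(\widetilde R_1)>\mathbf{0}_{k+1}$, the Neumann series $R_1^{-1}=c_1^{-1}\sum_{j\geq 0}(-\widetilde R_1)^j$ converges and gives a formal inverse in $\mathcal{B}_1$ with leading constant $c_1^{-1}$. For $Q\in\mathcal{B}_{\geq 1}^+$ the series $h(Q)=\sum_{i\geq 2}h_i Q^i$ converges with $\mathrm{ord}(h(Q))\geq 2\,\mathrm{ord}(Q)>\mathbf{0}_{k+1}$; combined with $M\in\mathcal{B}_{\geq 1}^+$ and invariance of orders under multiplication by the units $R_2,R_1^{-1}$, this shows $\Phi$ is well-defined from $\mathcal{B}_{\geq 1}^+$ into itself.

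\textbf{Existence.} Take $Q^{(0)}:=0$ and iterate $Q^{(n+1)}:=\Phi(Q^{(n)})$. Using the factorization
\[
h(A)-h(B) = (A-B)\,G(A,B), \qquad G(A,B):=\sum_{i\geq 2} h_i\bigl(A^{i-1}+A^{i-2}B+\cdots+B^{i-1}\bigr),
\]
I obtain $Q^{(n+1)}-Q^{(n)} = -R_1^{-1}R_2 \cdot (Q^{(n)}-Q^{(n-1)}) \cdot G(Q^{(n)},Q^{(n-1)})$. Each summand of $G(Q^{(n)},Q^{(n-1)})$ contains at least one factor from $\{Q^{(n)},Q^{(n-1)}\}\subseteq\mathcal{B}_{\geq 1}^+$ and so has strictly positive lex-order, whence
\[
\mathrm{ord}(Q^{(n+1)}-Q^{(n)}) > \mathrm{ord}(Q^{(n)}-Q^{(n-1)})
\]
in the lex order on $\mathbb{R}\times\mathbb{Z}^k$. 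By the Neumann lemma the union of the well-ordered supports of the $Q^{(n)}$ remains well-ordered and the coefficients stabilize termwise, so the Picard sequence converges in the valuation topology to some $Q\in\mathcal{B}_{\geq 1}^+$ with $\Phi(Q)=Q$, i.e.\ a solution of \eqref{EquationDif}.

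\textbf{Uniqueness and main obstacle.} If $Q_1,Q_2\in\mathcal{B}_{\geq 1}^+$ are two solutions, subtraction yields $\bigl(R_1 + R_2\,G(Q_1,Q_2)\bigr)(Q_1-Q_2)=0$; since $R_2\,G(Q_1,Q_2)$ has strictly positive lex-order while $R_1$ has nonzero constant leading term $c_1$, the bracket is a unit in $\mathcal{B}_1$ and hence $Q_1=Q_2$. The subtle point is that the metric $d_1$ alone does not make $\Phi$ a strict contraction on $\mathcal{B}_{\geq 1}^+$, because elements such as pure $\boldsymbol{\ell}_2$-terms have $\mathrm{ord}_{\boldsymbol{\ell}_1}=0$, and Proposition~\ref{KorBanach} cannot be invoked verbatim. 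The remedy is to justify convergence of the Picard iterates via the strict lex-increase of $\mathrm{ord}$ combined with well-orderedness of supports, in place of a quantitative $d_1$-contraction estimate.
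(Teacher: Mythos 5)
Your uniqueness argument is correct and in fact cleaner than the paper's: since $Q_1,Q_2$ have strictly positive order, $R_2\,G(Q_1,Q_2)$ has order $>\mathbf{0}_{k+1}$, so $R_1+R_2\,G(Q_1,Q_2)$ is a unit in $\mathcal{L}_k^\infty$ and $Q_1=Q_2$ follows. The existence part, however, has a genuine gap exactly at the point you flag as the ``main obstacle.'' Your proposed remedy --- that strict increase of $\mathrm{ord}(Q^{(n+1)}-Q^{(n)})$ in the lexicographic order on $\{0\}\times\mathbb{Z}^k$, together with well-orderedness of each support, forces convergence of the Picard iterates --- is not a valid implication. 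A strictly increasing sequence of orders in $\mathbb{Z}^k$ (lex) need not eventually exceed a given multi-index: if $\mathrm{ord}(M)=(0,0,1,\mathbf{0}_{k-2})$, say, then the gain per iteration can be confined to the $\boldsymbol{\ell}_2$-coordinate, and the orders $\mathrm{ord}(Q^{(n+1)}-Q^{(n)})\geq(0,0,n,\mathbf{0}_{k-2})$ never pass $(0,1,j,\mathbf{0}_{k-2})$ for any $j$; hence the coefficient at such an index is not forced to stabilize. (Compare $\Delta_n:=\boldsymbol{\ell}_2^{\,n+1}+\tfrac{1}{n+1}\boldsymbol{\ell}_1\boldsymbol{\ell}_2^{-1}$: the orders increase strictly in lex order, yet the partial sums do not converge coefficientwise.) Likewise, the Neumann lemma controls supports of products and powers of finitely many well-based series; it does not assert that the union of the supports of an infinite Picard sequence is well-ordered, and indeed a countable union of well-ordered subsets of $\mathbb{Z}^k$ need not be well-ordered. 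So neither coefficientwise convergence nor well-basedness of the limit is established by what you wrote.

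The missing idea is precisely the one the paper uses: decompose $\mathcal{B}_{\geq 1}^{+}=\mathcal{B}_{1}^{+}\oplus\cdots\oplus\mathcal{B}_{k}^{+}$ (and correspondingly $M$, $R_1$, $R_2$), and solve the equation level by level, starting from the deepest level $k$ and descending. At a fixed level $m$ the relevant operator $Q\mapsto M-R_2\,h(Q)$ \emph{is} a quantitative $\tfrac12$-contraction for the metric $d_m$ (this is Lemma~\ref{Lema1}), while $Q\mapsto R_1\cdot Q$ is a surjective isometry, so Proposition~\ref{KorBanach} applies in the complete space $(\mathcal{B}_m^{+},d_m)$; the cross-level interactions you would otherwise have to control by hand are absorbed into the Taylor-expanded equation \eqref{Eq14} solved at the next step. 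If you want to keep a single global fixed-point argument instead, you would need a genuinely finer convergence criterion for well-based families (a Higman/van der Hoeven--type ``Noetherian operator'' argument), not the lex-order monotonicity you invoke.
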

			
				Lemma~\ref{Lem4} is a generalization of Lemma~\ref{Lem5} below. We prove Lemma~\ref{Lem4} inductively using Lemma~\ref{Lem5} for the inductive step.
				
				\begin{lem}\label{Lem5}
					Let $k\in \mathbb{N}_{\geq 1}$, $1\leq m\leq k$ and $M \in \mathcal{B}_{m}^{+}\subseteq \mathcal{L}_{k}$. Let $R_{1} , R_{2} \in \mathcal{B}_{m} \setminus \left\lbrace 0\right\rbrace \subseteq \mathcal{L}_{k}^{\infty }$ be such that $\mathrm{ord}(R_{1}) , \mathrm{ord}(R_{2})= \mathbf{0}_{k+1}$, i.e., $R_{1} $ and $R_{2} $ have non-zero constants as the leading terms. Let $h \in x^{2} \cdot \mathcal{B}_{\geq m}^{+}\left[ \left[ x\right] \right] $ be a formal power series with coefficients in $\mathcal{B}_{\geq m}^{+}\subseteq \mathcal{L}_{k}$ in the variable $x$. Then there exists a unique solution $Q \in \mathcal{B}_{m} ^{+}\subseteq \mathcal{L}_{k}$ of the equation
					\begin{eqnarray}
						R_{1} \cdot Q + R_{2} \cdot h(Q)= M . \label{Eq10}
					\end{eqnarray}
				\end{lem}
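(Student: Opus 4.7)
The plan is to recast equation \eqref{Eq10} as a fixed point equation on the complete metric space $(\mathcal{B}_m^+, d_m)$ and apply the Banach fixed point theorem. Since $R_1 \in \mathcal{B}_m$ has a nonzero real constant $r_1$ as its leading term, I would first construct its inverse $R_1^{-1} \in \mathcal{B}_m$ by writing $R_1 = r_1(1 + \tilde{R}_1)$ with $\mathrm{ord}(\tilde{R}_1) > \mathbf{0}_{k+1}$ and setting
\[
R_1^{-1} := r_1^{-1}\sum_{j \geq 0}(-1)^j \tilde{R}_1^j,
\]
whose convergence in the power-metric topology follows from the Neumann Lemma. In particular, $R_1^{-1}$ again has a nonzero constant leading term, so $\mathrm{ord}_{\boldsymbol{\ell}_m}(R_1^{-1}) = 0$. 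Equation \eqref{Eq10} then becomes the fixed point equation $Q = \mathcal{F}(Q)$ for the operator
\[
\mathcal{F}(Q) := R_1^{-1}\bigl(M - R_2 \cdot h(Q)\bigr).
\]

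Next I would verify that $\mathcal{F}$ maps $\mathcal{B}_m^+$ into itself. Writing $h(x) = \sum_{i \geq 2} h_i x^i$ with $h_i \in \mathcal{B}_{\geq m}^+$, any $Q \in \mathcal{B}_m^+$ satisfies $\mathrm{ord}_{\boldsymbol{\ell}_m}(Q) \geq 1$, so $\mathrm{ord}_{\boldsymbol{\ell}_m}(h_i Q^i) \geq i \to \infty$. This yields convergence of $h(Q)$ in $(\mathcal{B}_m, d_m)$ together with the bound $\mathrm{ord}_{\boldsymbol{\ell}_m}(h(Q)) \geq 2$. Since $R_1^{-1}$ and $R_2$ both have order zero in $\boldsymbol{\ell}_m$ and $M \in \mathcal{B}_m^+$ has $\mathrm{ord}_{\boldsymbol{\ell}_m}(M) \geq 1$, I conclude $\mathrm{ord}_{\boldsymbol{\ell}_m}(\mathcal{F}(Q)) \geq 1$, so $\mathcal{F}(Q) \in \mathcal{B}_m^+$.

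The crucial step is the contraction estimate. Using the factorization $Q_1^i - Q_2^i = (Q_1 - Q_2)\sum_{j=0}^{i-1} Q_1^j Q_2^{i-1-j}$, for $Q_1, Q_2 \in \mathcal{B}_m^+$ I obtain
\[
\mathcal{F}(Q_1) - \mathcal{F}(Q_2) = -R_1^{-1}\,R_2\,(Q_1 - Q_2)\sum_{i \geq 2} h_i \sum_{j=0}^{i-1} Q_1^j Q_2^{i-1-j}.
\]
Since $Q_1, Q_2 \in \mathcal{B}_m^+$, every product $Q_1^j Q_2^{i-1-j}$ has $\mathrm{ord}_{\boldsymbol{\ell}_m} \geq i - 1 \geq 1$, so the inner double sum has $\mathrm{ord}_{\boldsymbol{\ell}_m} \geq 1$; multiplication by $R_1^{-1}R_2$ (which has order zero in $\boldsymbol{\ell}_m$) preserves this bound, giving
\[
\mathrm{ord}_{\boldsymbol{\ell}_m}\bigl(\mathcal{F}(Q_1) - \mathcal{F}(Q_2)\bigr) \geq \mathrm{ord}_{\boldsymbol{\ell}_m}(Q_1 - Q_2) + 1,
\]
which translates to $d_m(\mathcal{F}(Q_1), \mathcal{F}(Q_2)) \leq \frac{1}{2}\,d_m(Q_1, Q_2)$. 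The Banach fixed point theorem on the complete space $(\mathcal{B}_m^+, d_m)$ then delivers the unique fixed point $Q \in \mathcal{B}_m^+$, which is the unique solution of \eqref{Eq10}.

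The only subtlety I anticipate is checking that all infinite operations involved (the geometric series for $R_1^{-1}$, the series $h(Q)$, and the re-summation appearing in the contraction bound) produce well-ordered supports and hence genuine elements of $\mathcal{B}_m$; this is routine via the Neumann Lemma thanks to the strict positivity of $\mathrm{ord}_{\boldsymbol{\ell}_m}$ at each step.
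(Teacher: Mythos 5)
Your proof is correct and is essentially the paper's argument in a lightly repackaged form: the paper writes \eqref{Eq10} as $\mathcal{T}(Q)=\mathcal{S}(Q)$ with $\mathcal{T}(Q):=R_{1}\cdot Q$ an isometric surjection of $(\mathcal{B}_{m}^{+},d_{m})$ and $\mathcal{S}(Q):=M-R_{2}\cdot h(Q)$ a $\tfrac12$-contraction (Lemma~\ref{Lema1}), and then invokes Proposition~\ref{KorBanach}, whose conclusion is precisely the Banach fixed point theorem applied to $\mathcal{T}^{-1}\circ\mathcal{S}$ --- which is your operator $\mathcal{F}(Q)=R_{1}^{-1}(M-R_{2}\cdot h(Q))$. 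Your contraction estimate via the factorization of $Q_{1}^{i}-Q_{2}^{i}$ reproduces the content of Lemma~\ref{Lema1}, and your closing remark on well-orderedness of supports (Neumann Lemma) covers the only delicate point.
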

			
				\noindent The proof of Lemma~\ref{Lem5} is in the Appendix.
				
				\begin{proof}[Proof of Lemma~\ref{Lem4}]
					\emph{Existence.} We have the following decomposition:
					\begin{align*}
						M & = M_{1} + \cdots + M_{k} ,
					\end{align*}
					where $M_{i} \in \mathcal{B}_{i}^{+}$, for $1\leq i \leq k$. Similarly, we have the following decompositions:
					\begin{align*}
						R_{1} & = R_{1,1}+ \cdots + R_{1,k}
					\end{align*}
					and
					\begin{align*}
						R_{2} & = R_{2,1} + \cdots + R_{2,k} \, ,
					\end{align*}
					where $R_{1,m},R_{2,m}\in \mathcal{B}_{m}^{+}\subseteq \mathcal{L}_{k}$, for $1\leq m\leq k-1$, and $R_{1,k},R_{2,k}\in \mathcal{B}_{k}\subseteq \mathcal{L}_{k}^{\infty }$ such that $\mathrm{ord}(R_{1,k})=\mathrm{ord}(R_{2,k})=\mathbf{0}_{k+1}$. Without loss of generality assume that $M_{k} \neq 0$. If $M_{k}=0$, then replace $k$ with the biggest $m$ such that $1\leq m < k$, $M_{m} \neq 0$, and replace $R_{i,k}$ with $R_{i,m}\in \mathcal{B}_{m}\subseteq \mathcal{L}_{k}^{\infty }$ such that $\mathrm{ord}(R_{i,m})=\mathbf{0}_{k+1}$, for $i=1,2$.
					
					We use the decompositions above to solve the equation \eqref{EquationDif} inductively. Using Lemma~\ref{Lem5}, in each inductive step we solve an appropriate equation obtained from equation \eqref{EquationDif} by the projection on the space $\mathcal{B}_{i}^{+}$, $i=1, \ldots ,k$. \\
					
					\noindent \emph{1-step.} By Lemma~\ref{Lem5}, there exists a unique solution $Q_{k} \in \mathcal{B}_{k}^{+}\subseteq \mathcal{L}_{k}$ of the equation:
					\begin{align}
						R_{1,k} \cdot Q_{k} + R_{2,k} \cdot h (Q_{k} )= M_{k} \, . \label{EqDecompQ}
					\end{align}
					
					\noindent \emph{2-step.} Let us now consider the equation:
					\begin{align*}
						& (R_{1,k} + R_{1,k-1}) \cdot (Q_{k} + Q_{k-1} )  + (R_{2,k} + R_{2,k-1})\cdot  h (Q_{k} + Q_{k-1}) & \\
						&= M_{k} + M_{k-1}
					\end{align*}
					in the variable $Q_{k-1} \in \mathcal{B}_{k-1}^{+}$. By the Taylor Theorem
					\begin{align}
						h(Q_{k} + Q_{k-1} )= h (Q_{k}) + \sum _{i\geq 1}\frac{ h ^{(i)} (Q_{k} )}{i!}Q_{k-1}^{i} . \label{Eq13}
					\end{align}
					Using \eqref{EqDecompQ} and \eqref{Eq13}, we get the equation
					{\small \begin{align}
						& (R_{1,k} + R_{1,k-1} + (R_{2,k} + R_{2,k-1})\cdot h ' (Q_{k}) )\cdot Q_{k-1} + (R_{2,k} + R_{2,k-1})\cdot \sum _{i\geq 2}\frac{ h ^{(i)}(Q_{k} )}{i!}Q_{k-1}^{i} \nonumber \\
						&= M_{k-1} - R_{1,k-1} \cdot Q_{k} - R_{2,k-1}\cdot h(Q_{k}). \label{Eq14}
					\end{align} 
					}Since $R_{1,k}$ has non-zero constant as the leading term, $R_{1,k-1},R_{2,k-1}\in \mathcal{B}_{k-1}^{+}\subseteq \mathcal{L}_{k}$, and $R_{2,k}\cdot h ' (Q_{k}) \in \mathcal{B}_{k}^{+}\subseteq \mathcal{L}_{k}$, it follows that $R_{1,k} + R_{1,k-1} + (R_{2,k} + R_{2,k-1})\cdot h ' (Q_{k} )\neq 0$. By Lemma~\ref{Lem5}, there exists a unique solution $Q_{k-1} \in \mathcal{B}_{k-1}^{+}\subseteq \mathcal{L}_{k}$ of the equation \eqref{Eq14}.
				
					\noindent \emph{Steps 3,}$\ldots $\emph{,k.} Proceeding inductively, there exists a solution $Q := Q_{1} + \cdots + Q_{k}\in \mathcal{B}_{\geq 1}^{+}\subseteq \mathcal{L}_{k}$ of equation \eqref{EquationDif}. \\
				
					\emph{Uniqueness.} Let us now prove the uniqueness of $Q$ in $\mathcal{B}_{\geq 1}^{+}\subseteq \mathcal{L}_{k}$. Every solution $S\in \mathcal{B}_{\geq 1}^{+}\subseteq \mathcal{L}_{k}$ of the equation \eqref{EquationDif} can be decomposed as $S=S_{1}+\cdots +S_{k}$, for $S_{i}\in \mathcal{B}_{i}^{+}\subseteq \mathcal{L}_{k}$ satisfying the same equation in the inductive step as $Q_{i}$ does, for every $i=1,\ldots ,k$. Since $Q_{i}$ is a unique solution in $\mathcal{B}_{i}^{+}\subseteq \mathcal{L}_{k}$ of the appropriate equation in the inductive step, it follows that $S_{i}=Q_{i}$, for every $i=1,\ldots ,k$, and therefore, $S=Q$.  
				\end{proof}
				
				\begin{proof}[Proof of Proposition~\ref{Prop2}]
					1. Recall that $\mathcal{C}$ in \eqref{Eq2} is a $\frac{1}{2^{1+n_{1}}}$-Lipschitz map. Since $\mathrm{ord}(R_{\beta })=(0,n_{1},\ldots ,n_{k})$, it follows that
					\begin{align*}
						\mathrm{ord}_{\boldsymbol{\ell }_{1}}\Big( \frac{\mathcal{C}(Q)}{R_{\beta }} \Big) & \geq \mathrm{ord}_{\boldsymbol{\ell }_{1}}(Q)+1 .
					\end{align*}
					Since $D_{1}$ is a linear $\frac{1}{2}$-contraction, from \eqref{Eq2} it follows that $\mathcal{S}_{1}$ is a $\frac{1}{2}$-contraction on the space $(\mathcal{B}_{\geq 1}^{+},d_{1})$. \\
					
					2. From \eqref{EqT}, since $\beta >1$, and by Lemma~\ref{Lema1}, it follows that
					$$
						\mathrm{ord}_{\boldsymbol{\ell }_{1}} \big( \mathcal{T}_{1}(Q_{1})-\mathcal{T}_{1}(Q_{2})\big) = \mathrm{ord}_{\boldsymbol{\ell }_{1}}(Q_{1}-Q_{2}),
					$$
					for every $Q_{1},Q_{2}\in \mathcal{B}_{\geq 1}^{+}$. Therefore, $\mathcal{T}_{1}$ is an isometry on $(\mathcal{B}_{\geq 1}^{+},d_{1})$. \\
					
					3. Recall that $\beta = \mathrm{ord}_{z}(f- \mathrm{id})$. Take $h\in x^{2}\mathbb{R} \left[ \left[ x\right] \right] $ such that
					\begin{align*}
						h & := \sum _{i\geq 2} {\beta \choose i}x^{i}.
					\end{align*}
					Rewrite \eqref{EqT} as:
					\begin{align*}
						\mathcal{T}_{1}(Q) & = \Big( 1- \frac{\beta a\boldsymbol{\ell }_{1}^{n_{1}}\cdots \boldsymbol{\ell }_{k}^{n_{k}}}{R_{\beta }}\Big) \cdot Q - \frac{a\boldsymbol{\ell }_{1}^{n_{1}}\cdots \boldsymbol{\ell }_{k}^{n_{k}}}{R_{\beta }} \cdot h(Q) , \quad Q \in \mathcal{B}_{\geq 1}^{+}\subseteq \mathcal{L}_{k} .
					\end{align*}
					Since $az^{\beta }\boldsymbol{\ell}_{1}^{n_{1}}\cdots \boldsymbol{\ell}_{k}^{n_{k}}$ is the leading term of $z^{\beta }R_{\beta }$, and $\beta >1$, it follows that $\mathrm{ord}\Big( 1- \frac{\beta a\boldsymbol{\ell }_{1}^{n_{1}}\cdots \boldsymbol{\ell }_{k}^{n_{k}}}{R_{\beta }} \Big) , \mathrm{ord}\Big( \frac{a\boldsymbol{\ell }_{1}^{n_{1}}\cdots \boldsymbol{\ell }_{k}^{n_{k}}}{R_{\beta }} \Big) = \mathbf{0}_{k+1}$, which is the order of a constant term. The surjectivity of $\mathcal{T}_{1}$ now follows by Lemma~\ref{Lem4}.
				\end{proof}
			
				\begin{proof}[Proof of step $(a.1)$ of statement 1]
					By Lemma~\ref{Prop1}, we transform the prenormalization equation
					\begin{align}
						\varphi _{1}^{-1} \circ f \circ \varphi _{1} = \mathrm{id} + az^{\beta }\boldsymbol{\ell }_{1}^{n_{1}}\cdots \boldsymbol{\ell}_{k}^{n_{k}} + \mathrm{h.o.b.}(z) \, , \label{Equation10}
					\end{align}
					for $\varphi _{1}:=\mathrm{id}+ zQ$, $Q\in \mathcal{B}_{\geq 1}^{+}\subseteq \mathcal{L}_{k}$, to the equivalent fixed point equation $\mathcal{T}_{1}(Q)= \mathcal{S}_{1}(Q)$, where $az^{\beta }\ell _{1}^{n_{1}}\cdots \boldsymbol{\ell }_{k}^{n_{k}}=\mathrm{Lt}(f-\mathrm{id})$, for $a\neq 0$. By Proposition~\ref{Prop2} and the fixed point theorem from Proposition~\ref{KorBanach}, there exists a unique $T \in \mathcal{B}_{\geq 1}^{+}\subseteq \mathcal{L}_{k}$, such that $\mathcal{T}_{1}(T)= \mathcal{S}_{1}(T)$. Therefore, $\varphi _{1}:=\mathrm{id}+zT$ is a unique solution of the prenormalization equation \eqref{Equation10} up to higher order blocks in $z$. 
				\end{proof}
			
				\vspace{0.2cm}

				\subsubsection{Step (a.2): Solving the normalization equation}\label{subsubsection:proofA2}
				
				For simplicity, we denote the prenormalized logarithmic transseries $\varphi _{1}^{-1} \circ f \circ \varphi _{1} = \mathrm{id} + az^{\beta }\boldsymbol{\ell }_{1}^{n_{1}}\cdots \boldsymbol{\ell}_{k}^{n_{k}} + \mathrm{h.o.b.}(z)$, $\beta >1$, $a\neq 0$, again by $f$. By Proposition~\ref{LeadingBlockLemma}, it follows that $\mathrm{Res}(\varphi \circ f\circ \varphi ^{-1})=\mathrm{Res}(f)$.
				
				In this step, we eliminate all terms in $f- \mathrm{id}-az^{\beta }\boldsymbol{\ell }_{1}^{n_{1}}\cdots \boldsymbol{\ell}_{k}^{n_{k}}$ except the residual term. \\
				
				Let $f = \mathrm{id}+az^{\beta }\boldsymbol{\ell}_{1}^{n_{1}}\cdots \boldsymbol{\ell}_{k}^{n_{k}} + \mathrm{h.o.b.}(z)$, and let $g \in \mathcal{L}_{k}^{0}$. Consider the conjugacy equation $\varphi \circ f \circ \varphi ^{-1}=g$, for $\varphi \in \mathcal{L}_{k}^{0}$. By Lemma~\ref{Lemma2}, it is equivalent to the fixed point equation $\mathcal{T}_{f}(\varepsilon) = \mathcal{S}_{f}(\varepsilon )$, $\varepsilon  := \varphi - \mathrm{id}\in \mathcal{L}_{k}$, where $\mathcal{S}_{f}$ and $\mathcal{T}_{f}$ are explicitely given in \eqref{DefS} and \eqref{Tdefined}. Since $f$ is already prenormalized, we consider $g\in \mathcal{L}_{k}^{0}$, such that $\mathrm{Lb}_{z}(g-\mathrm{id})=az^{\beta }\boldsymbol{\ell}_{1}^{n_{1}}\cdots \boldsymbol{\ell}_{k}^{n_{k}} $.
				
				Notice that $\mathcal{T}_{f}$ is now a generalization of the \emph{Lie bracket operator} defined in \cite[Section 3]{mrrz16}:
				\begin{align}
					\mathcal{T}_{f}(\varepsilon ) = \varepsilon ' \cdot az^{\beta }\boldsymbol{\ell}_{1}^{n_{1}}\cdots \boldsymbol{\ell}_{k}^{n_{k}} - (az^{\beta }\boldsymbol{\ell}_{1}^{n_{1}}\cdots \boldsymbol{\ell}_{k}^{n_{k}})' \cdot \varepsilon  , \quad \varepsilon  \in \mathcal{L}_{k}^{1} . \label{EquationT}
				\end{align}
				Let us denote by
				\begin{align*}
					& \mathcal{L}_{k}^{\delta ,1}, \quad \delta \geq 1,
				\end{align*}
				a subspace of $\mathcal{L}_{k}^{\delta }$ which consists of all logarithmic transseries in $\mathcal{L}_{k}^{\delta }$ that do not contain a term of order $(\beta , n_{1},\ldots ,n_{k})=(\beta , \mathbf{n})$. Moreover, we denote by
				\begin{align*}
					& \mathcal{L}_{k}^{\delta ,2}, \quad  \delta \geq 1,
				\end{align*}
				the subspace of $\mathcal{L}_{k}^{\delta }$ which consists of all logarithmic transseries in $\mathcal{L}_{k}^{\delta }$ that do not contain a term of order $\mathrm{ord}(\mathrm{Res}(f))=(2\beta -1, 2\mathbf{n}+\mathbf{1}_{k})$. The reason for this restriction is explained in Remark~\ref{Remark} below.
				
				\begin{remark}[Injectivity of the operator $\mathcal{T}_{f}$]\label{Remark}
					Let $f\in \mathcal{L}_{k}$, $k\in \mathbb{N}$, such that $f=\mathrm{id}+az^{\beta }\boldsymbol{\ell }_{1}^{n_{1}}\cdots \boldsymbol{\ell }_{k}^{n_{k}}+\mathrm{h.o.t.}$, for $\beta >1$ and $a\neq 0$. It is easy to see that $\mathcal{T}_{f}$, given by \eqref{Tdefined}, is a linear operator with the kernel
					\begin{align*}
						\ker (\mathcal{T}_{f}) &= \left\lbrace Cz^{\beta }\pmb{\ell}_{1}^{n_{1}}\cdots \pmb{\ell}_{k}^{n_{k}} : C\in \mathbb{R} \right\rbrace .
					\end{align*}
					Indeed, suppose that $\mathcal{T}_{f}(\varepsilon) = 0$, i.e.
					\begin{align*}
						& \varepsilon ' \cdot az^{\beta }\boldsymbol{\ell}_{1}^{n_{1}}\cdots \boldsymbol{\ell}_{k}^{n_{k}} - (az^{\beta }\boldsymbol{\ell}_{1}^{n_{1}}\cdots \boldsymbol{\ell}_{k}^{n_{k}})' \cdot \varepsilon  = 0, \\
						& \varepsilon ' \cdot az^{\beta }\boldsymbol{\ell}_{1}^{n_{1}}\cdots \boldsymbol{\ell}_{k}^{n_{k}} = (az^{\beta }\boldsymbol{\ell}_{1}^{n_{1}}\cdots \boldsymbol{\ell}_{k}^{n_{k}})' \cdot \varepsilon  , \\
						& \frac{\varepsilon '}{\varepsilon } = \frac{(z^{\beta }\boldsymbol{\ell}_{1}^{n_{1}}\cdots \boldsymbol{\ell}_{k}^{n_{k}})'}{z^{\beta }\boldsymbol{\ell}_{1}^{n_{1}}\cdots \boldsymbol{\ell}_{k}^{n_{k}} } , \\
						& \left( \log \varepsilon \right) ' = \big( \log (z^{\beta }\boldsymbol{\ell}_{1}^{n_{1}}\cdots \boldsymbol{\ell}_{k}^{n_{k}}) \big) ', \\
						& \varepsilon = Cz^{\beta }\boldsymbol{\ell}_{1}^{n_{1}}\cdots \boldsymbol{\ell}_{k}^{n_{k}} , \quad C\in \mathbb{R} . 
					\end{align*}
					Therefore,
					\begin{enumerate}[1., font=\textup, topsep=0.4cm, itemsep=0.4cm, leftmargin=0.6cm]
						\item the operator $\mathcal{T}_{f}$ is not a homothety on any subspace of $\mathcal{L}_{k}^{1}$ that has nonempty intersection with $\ker (\mathcal{T}_{f}) \setminus \left\lbrace 0\right\rbrace $,
						\item the restriction $\mathcal{T}_{f}: \mathcal{L}_{k}^{\delta ,1} \to \mathcal{L}_{k}^{\delta ,1}$ is injective, because $\mathcal{L}_{k}^{\delta ,1}$ does not contain terms in $\ker (\mathcal{T}_{f})\setminus \left\lbrace 0\right\rbrace $, for every $\delta \geq 1$. Moreover, $\mathcal{L}_{k}^{\delta ,1}$ is a complete subspace of $\mathcal{L}_{k}$ (see \cite[Proposition 3.6]{prrs21}).
					\end{enumerate} 
				\end{remark}
			
				In Proposition~\ref{Lema3} we proved contractibility of the operator $\mathcal{S}_{f}$ on $\mathcal{L}_{k}^{\delta }$, for $\delta >1$. For the sake of applying the fixed point theorem from Proposition~\ref{KorBanach}, in Proposition~\ref{PropositionT} below we prove that the operator $\mathcal{T}_{f}$ is a linear $\frac{1}{2^{\beta -1}}$-homothety on the space $\mathcal{L}_{k}^{\delta ,1}$, for every $\delta \geq 1$.
			
				\begin{prop}[Properties of the operator $\mathcal{T}_{f}$]\label{PropositionT}
					Let $f\in \mathcal{L}_{k}$, $k\in \mathbb{N}$, be such that $f:=\mathrm{id}+az^{\beta }\boldsymbol{\ell}_{1}^{n_{1}}\cdots \boldsymbol{\ell}_{k}^{n_{k}}+\mathrm{h.o.b.}(z)$, $\beta >1$, $a\neq 0$, and $g\in \mathcal{L}_{k}^{0}$ such that $\mathrm{Lb}_{z}(g-\mathrm{id})=az^{\beta }\boldsymbol{\ell}_{1}^{n_{1}}\cdots \boldsymbol{\ell}_{k}^{n_{k}}$. Let $\delta \geq 1$ and the operator $\mathcal{T}_{f}:\mathcal{L}_{k}^{\delta ,1} \to \mathcal{L}_{k}^{\delta ,1}$ be as in \eqref{EquationT}. Then:
					\begin{enumerate}[1., font=\textup, topsep=0.4cm, itemsep=0.4cm, leftmargin=0.6cm]
						\item $\mathcal{T}_{f}$ is a linear $\frac{1}{2^{\beta -1}}$-homothety on the space $\mathcal{L}_{k}^{\delta ,1}$, for every $\delta \geq 1$.
						\item $\mathcal{T}_{f}\big( \mathcal{L}_{k}^{\delta ,1}\big) = \mathcal{L}_{k}^{\delta + \beta -1,2}$.
					\end{enumerate}
				\end{prop}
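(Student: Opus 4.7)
The plan is to decompose any $\varepsilon\in\mathcal{L}_k^{\delta,1}$ into $z$-blocks, $\varepsilon=\sum_\alpha z^\alpha K_\alpha$ with $K_\alpha\in\mathcal{B}_1$, and compute $\mathcal{T}_f(\varepsilon)$ block by block using the Leibniz-type identities from Subsection~\ref{ListIdentities}. Writing $\mathbf{L}:=\boldsymbol{\ell}_1^{n_1}\cdots\boldsymbol{\ell}_k^{n_k}$ and combining $(z^\alpha K_\alpha)'=z^{\alpha-1}(\alpha K_\alpha+D_1(K_\alpha))$ with the quotient rule $\mathbf{L}D_1(K_\alpha)-K_\alpha D_1(\mathbf{L})=\mathbf{L}^2 D_1(K_\alpha/\mathbf{L})$, I obtain
\begin{equation*}
\mathcal{T}_f(\varepsilon)=a\sum_\alpha z^{\alpha+\beta-1}\bigl[(\alpha-\beta)K_\alpha\mathbf{L}+\mathbf{L}^2 D_1(K_\alpha/\mathbf{L})\bigr].
\end{equation*}
Linearity of $\mathcal{T}_f$ is then immediate, and statement 1 reduces, by linearity, to showing that $\mathrm{ord}_z(\mathcal{T}_f(\varepsilon))=\mathrm{ord}_z(\varepsilon)+\beta-1$ for every nonzero $\varepsilon\in\mathcal{L}_k^{\delta,1}$; applying this to $\varepsilon_1-\varepsilon_2\in\mathcal{L}_k^{\delta,1}$ then yields the homothety identity $d_z(\mathcal{T}_f(\varepsilon_1),\mathcal{T}_f(\varepsilon_2))=\frac{1}{2^{\beta-1}}d_z(\varepsilon_1,\varepsilon_2)$.

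To verify the order identity, put $\alpha:=\mathrm{ord}_z(\varepsilon)$ and analyse the leading $z^{\alpha+\beta-1}$-block. If $\alpha\neq\beta$, identity \eqref{EqDer1} tells me that $D_1$ strictly raises the $\boldsymbol{\ell}_1$-order, so $\mathbf{L}^2 D_1(K_\alpha/\mathbf{L})$ has higher $\boldsymbol{\ell}_1$-order than $(\alpha-\beta)K_\alpha\mathbf{L}$ and cannot cancel its nonzero leading term. If $\alpha=\beta$, the first summand vanishes and I must show $D_1(K_\beta/\mathbf{L})\neq 0$; the hypothesis $\varepsilon\in\mathcal{L}_k^{\delta,1}$ forbids a $(\beta,\mathbf{n})$-term in $\varepsilon$, whence $K_\beta/\mathbf{L}$ has no constant term. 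Combined with the auxiliary fact $\ker D_1=\mathbb{R}$ on $\mathcal{B}_1$, this forces $D_1(K_\beta/\mathbf{L})\neq 0$. I prove $\ker D_1=\mathbb{R}$ by induction on $k$: writing $K=\sum_n\boldsymbol{\ell}_1^n K_n$ with $K_n\in\mathcal{B}_2$, identity \eqref{EqDer1} gives $D_1(K)=\sum_n\boldsymbol{\ell}_1^{n+1}(nK_n+D_2(K_n))$, and $D_2(K_n)=-nK_n$ is impossible for $K_n\neq 0$ and $n\neq 0$ (by contractibility of $D_2$), while the $n=0$ case reduces to $\ker D_2=\mathbb{R}$ by the inductive hypothesis.

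For statement 2, the inclusion $\mathcal{T}_f(\mathcal{L}_k^{\delta,1})\subseteq\mathcal{L}_k^{\delta+\beta-1,2}$ requires that no $\boldsymbol{\ell}^{\mathbf{1}_k}$-term ever appears in $\mathrm{Im}(D_1)$. Iterating \eqref{EqDer1} produces the finite expansion $D_1(\boldsymbol{\ell}^{\mathbf{m}})=\sum_{j=1}^{k}m_j\,\boldsymbol{\ell}_1^{m_1+1}\cdots\boldsymbol{\ell}_j^{m_j+1}\boldsymbol{\ell}_{j+1}^{m_{j+1}}\cdots\boldsymbol{\ell}_k^{m_k}$; matching any summand with $\boldsymbol{\ell}^{\mathbf{1}_k}$ forces $m_j=0$, killing its coefficient, so by linearity $\boldsymbol{\ell}^{\mathbf{1}_k}\notin\mathrm{Im}(D_1)$. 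The reverse inclusion is proved block by block: given $\eta=\sum_{\alpha'}z^{\alpha'}L_{\alpha'}\in\mathcal{L}_k^{\delta+\beta-1,2}$, setting $\alpha:=\alpha'-\beta+1$ and $\tilde K_\alpha:=K_\alpha/\mathbf{L}$ turns the block equation into $((\alpha-\beta)+D_1)\tilde K_\alpha=L_{\alpha'}/(a\mathbf{L}^2)$. For $\alpha\neq\beta$, the Neumann series $\sum_{i\geq 0}(-1)^i(\alpha-\beta)^{-i-1}D_1^i$ gives a formal inverse which converges in $d_1$ because $D_1$ strictly raises $\boldsymbol{\ell}_1$-order. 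For $\alpha=\beta$, solvability of $D_1(\tilde K_\beta)=L_{2\beta-1}/(a\mathbf{L}^2)$ follows from the image characterisation $\mathrm{Im}(D_1)=\{N\in\mathcal{B}_1:N\text{ has no }\boldsymbol{\ell}^{\mathbf{1}_k}\text{-term}\}$, established by the same block-by-block induction; the freedom in $\ker D_1=\mathbb{R}$ lets me pick $\tilde K_\beta$ without a constant term, placing $\varepsilon$ in $\mathcal{L}_k^{\delta,1}$.

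The main obstacle is the structural analysis of $D_1$ on the multi-graded block algebra $\mathcal{B}_1$: neither $\ker D_1=\mathbb{R}$ nor the absence of $\boldsymbol{\ell}^{\mathbf{1}_k}$-terms in $\mathrm{Im}(D_1)$ is visible on the surface, and both require a careful induction on the depth $k$ combined with the $\boldsymbol{\ell}_1$-grading of $\mathcal{B}_1$ and the contractibility of the lower-level derivations $D_m$.
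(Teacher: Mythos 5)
Your proof is correct and, at its core, runs parallel to the paper's: the block formula $\mathcal{T}_{f}(z^{\alpha}K_{\alpha})=az^{\alpha+\beta-1}\bigl[(\alpha-\beta)K_{\alpha}\mathbf{L}+\mathbf{L}^{2}D_{1}(K_{\alpha}/\mathbf{L})\bigr]$ is exactly the paper's computation \eqref{Jednn}--\eqref{Equation11} in disguise, and your case split $\alpha\neq\beta$ versus $\alpha=\beta$ reproduces the paper's kernel analysis (Remark~\ref{Remark}) and residual-term exclusion. Where you genuinely diverge is in how you establish injectivity on $\mathcal{L}_{k}^{\delta,1}$ and surjectivity onto $\mathcal{L}_{k}^{\delta+\beta-1,2}$: the paper solves the first-order linear ODE $\mathcal{T}_{f}(\varepsilon)=g$ globally by an integrating factor, obtaining \eqref{BitnaJednadzba}, and then invokes the imported integration lemma (Lemma~\ref{Lemma1}, i.e.\ \cite[Lemma 4.4]{prrs21}) to see that the only obstruction is the monomial $z^{-1}\boldsymbol{\ell}_{1}\cdots\boldsymbol{\ell}_{k}$; you instead conjugate each block to the operator $(\alpha-\beta)+D_{1}$, invert the non-resonant blocks by a Neumann series, and reduce the resonant block to the structural facts $\ker D_{1}=\mathbb{R}$ and $\boldsymbol{\ell}^{\mathbf{1}_{k}}\notin\mathrm{Im}(D_{1})$ (with everything else in the image). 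Note that these two facts about $D_{1}$ are precisely equivalent to Lemma~\ref{Lemma1} applied to $z^{0}$- and $z^{-1}$-blocks, since $(z^{0}K)'=z^{-1}D_{1}(K)$, so you could cite that lemma and skip your depth induction entirely; if you do carry out the induction for the surjectivity half of the image characterisation, be aware that a Neumann series in the $\boldsymbol{\ell}_{1}$-grading alone does not converge there (all summands of $D_{1}(\boldsymbol{\ell}^{\mathbf{m}})$ share the same $\boldsymbol{\ell}_{1}$-exponent $m_{1}+1$), so the induction really must descend through the levels $\boldsymbol{\ell}_{2},\ldots,\boldsymbol{\ell}_{k}$ as you indicate. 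Finally, like the paper, you deduce $\varepsilon\in\mathcal{L}_{k}^{\delta}$ from $\mathrm{ord}_{z}(\varepsilon)\geq\delta$ alone; for $\delta=1$ the definition of $\mathcal{L}_{k}^{\delta}$ also demands $\mathrm{ord}(\varepsilon)>(1,\mathbf{0}_{k})$, which is not automatic for the resonant preimage, but this edge case never occurs in the applications (where $\delta>1$) and is shared with the paper's own argument, so it is not a defect specific to your route.
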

			
				We repeat here without proof Lemma~4.4 from \cite{prrs21} that is used in the proof of Proposition~\ref{PropositionT} below.
				
				\begin{lem}\cite[Lemma 4.4]{prrs21}\label{Lemma1}\hfill
					\begin{enumerate}[1., font=\textup, topsep=0.4cm, itemsep=0.4cm, leftmargin=0.6cm]
						\item $\int z^{-1}\boldsymbol{\ell}_{1}\cdots \boldsymbol{\ell}_{k}\ dz= -\boldsymbol{\ell}_{k+1}^{-1},\ k\in\mathbb N$,
						\item For $(\delta ,\mathbf{m})\in\mathbb R\times\mathbb Z^k$, $k\in \mathbb{N}$ and $(\delta,\mathbf{m})\neq (-1,\mathbf{1}_{k})$, 
						\begin{align*}
							& \int z^{\delta}\boldsymbol{\ell}_{1}^{m_{1}}\cdots \boldsymbol{\ell}_{k}^{m_{k}}\, dz\in\mathcal{L}_{k}^\infty .
						\end{align*}
					\end{enumerate}
				\end{lem}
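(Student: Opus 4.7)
The plan is to establish part (1) by a direct induction, then to prove part (2) by a Banach-style fixed-point argument when $\delta\neq -1$ and to reduce the case $\delta=-1$ (with $\mathbf m\neq \mathbf 1_k$) to this one via a change of variables, inducting on the depth $k$.

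For part (1), the recursive relation $\boldsymbol{\ell}_{k+1} = -1/\log\boldsymbol{\ell}_k$ together with the chain rule yields $\boldsymbol{\ell}_{k+1}' = (\boldsymbol{\ell}_k'/\boldsymbol{\ell}_k)\boldsymbol{\ell}_{k+1}^{2}$. Starting from the base $\boldsymbol{\ell}_1' = \boldsymbol{\ell}_1^{2}/z$ and inducting on $k$, one obtains $\boldsymbol{\ell}_{k+1}' = z^{-1}\boldsymbol{\ell}_1\cdots\boldsymbol{\ell}_k\,\boldsymbol{\ell}_{k+1}^{2}$, so $\frac{d}{dz}(-\boldsymbol{\ell}_{k+1}^{-1}) = \boldsymbol{\ell}_{k+1}^{-2}\boldsymbol{\ell}_{k+1}' = z^{-1}\boldsymbol{\ell}_1\cdots\boldsymbol{\ell}_k$, as claimed.

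For part (2) with $\delta\neq -1$, I would look for an antiderivative of the form $v = z^{\delta+1}R$ with $R\in\mathcal{B}_1\subseteq\mathcal{L}_k^{\infty}$. By identity~\eqref{IdentDer1}, $v' = z^{\delta}\bigl((\delta+1)R + D_1(R)\bigr)$, so the problem reduces to finding $R\in\mathcal{B}_1$ with $(\delta+1)R + D_1(R) = \boldsymbol{\ell}_1^{m_1}\cdots\boldsymbol{\ell}_k^{m_k}$, equivalently the fixed-point equation
\begin{equation*}
R \;=\; \tfrac{1}{\delta+1}\bigl(\boldsymbol{\ell}_1^{m_1}\cdots\boldsymbol{\ell}_k^{m_k} - D_1(R)\bigr).
\end{equation*}
Since $\delta+1\neq 0$ and $D_1$ is a $\tfrac12$-contraction on $(\mathcal{B}_1, d_1)$ (as recorded in Subsection~\ref{SubsectionSpaces}), the right-hand side defines a $\tfrac12$-contraction on $\mathcal{B}_1$; the Banach Fixed Point Theorem (equivalently Proposition~\ref{KorBanach}) delivers a unique $R\in\mathcal{B}_1$, and $v = z^{\delta+1}R \in \mathcal{L}_k^{\infty}$.

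For $\delta = -1$ with $\mathbf m\neq \mathbf 1_k$, I would induct on $k$. Let $j_0$ be the minimal index with $m_{j_0}\neq 1$. Applying part (1) at level $j_0-1$ yields $z^{-1}\boldsymbol{\ell}_1\cdots\boldsymbol{\ell}_{j_0-1}\,dz = \boldsymbol{\ell}_{j_0}^{-2}\,d\boldsymbol{\ell}_{j_0}$ (with the empty-product convention when $j_0 = 1$), and the iteration identity $\boldsymbol{\ell}_{j_0+i}(z) = \boldsymbol{\ell}_i(\boldsymbol{\ell}_{j_0}(z))$ lets the substitution $w := \boldsymbol{\ell}_{j_0}(z)$ rewrite the integral as
\begin{equation*}
\int w^{m_{j_0}-2}\,\boldsymbol{\ell}_1(w)^{m_{j_0+1}}\cdots\boldsymbol{\ell}_{k-j_0}(w)^{m_k}\,dw.
\end{equation*}
This is an integral of the type handled in (2) but at strictly smaller depth $k-j_0$ and with $w$-exponent $m_{j_0}-2\neq -1$, so the previous paragraph's argument produces an antiderivative in $\mathcal{L}_{k-j_0}^{\infty}$ (in $w$), which pulls back under $w\mapsto \boldsymbol{\ell}_{j_0}(z)$ to an element of $\mathcal{L}_k^{\infty}$. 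The main obstacle is verifying that this formal substitution is compatible with the derivation and with the valuation topology on $\mathcal L^{\infty}$ — that is, that differentiating-under-composition behaves as expected and that the resonant index $\mathbf 1_k$ (which genuinely forces $\boldsymbol{\ell}_{k+1}$ into the primitive) is never reintroduced — so that the depth reduction truly yields the desired antiderivative; this is a careful but routine application of the composition/Taylor machinery recalled in Section~\ref{SectionNotionMThm}.
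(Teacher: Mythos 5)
Your proof is correct. Note that the paper itself gives no argument for this lemma — it is quoted verbatim from \cite[Lemma 4.4]{prrs21} "without proof" — so there is nothing to compare line by line; but your construction is the standard one and all three steps check out: the inductive differentiation $\boldsymbol{\ell}_{k+1}'=z^{-1}\boldsymbol{\ell}_1\cdots\boldsymbol{\ell}_k\boldsymbol{\ell}_{k+1}^{2}$ for part (1); the ansatz $v=z^{\delta+1}R$ reducing part (2) with $\delta\neq -1$ to the affine equation $(\delta+1)R+D_1(R)=\boldsymbol{\ell}_1^{m_1}\cdots\boldsymbol{\ell}_k^{m_k}$, whose right-hand-side map is indeed a $\tfrac12$-contraction on the complete space $(\mathcal{B}_1,d_1)$ because $D_1$ raises $\mathrm{ord}_{\boldsymbol{\ell}_1}$ by at least one (the fixed point is just the convergent series $\sum_{i\geq 0}(-1)^i(\delta+1)^{-i-1}D_1^{i}(\boldsymbol{\ell}_1^{m_1}\cdots\boldsymbol{\ell}_k^{m_k})$); and the substitution $w=\boldsymbol{\ell}_{j_0}$ for the resonant case $\delta=-1$, $\mathbf m\neq\mathbf 1_k$, which is a harmless reindexing of monomials $\boldsymbol{\ell}_{j_0+i}\mapsto\boldsymbol{\ell}_i(w)$ compatible with the derivation since $\boldsymbol{\ell}_{j_0}^{-2}\,d\boldsymbol{\ell}_{j_0}=z^{-1}\boldsymbol{\ell}_1\cdots\boldsymbol{\ell}_{j_0-1}\,dz$ by part (1). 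Two trivial remarks: since the integrand is a single monomial there are no support/convergence issues to worry about beyond what the fixed-point theorem already handles; and when $\delta=0$ you should invoke the first line of \eqref{FormulaDerivationFirstBlock} (or just the direct computation $(zK)'=K+D_1(K)$) rather than \eqref{IdentDer1}, which is stated only for $\alpha\neq 1$ — the equation $R+D_1(R)=M$ is still uniquely solvable by the same contraction argument.
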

			
				\begin{proof}[Proof of Proposition~\ref{PropositionT}]
					1. Let $\delta \geq 1$, $\varepsilon  \in \mathcal{L}_{k}^{\delta ,1}$, $\varepsilon  \neq 0$. Put $\varepsilon  := z^{\alpha }T_{\alpha} + \mathrm{h.o.b.}(z)$, for $\alpha \geq \delta $ and $T_{\alpha }\in \mathcal{B}_{1}\subseteq \mathcal{L}_{k}^{\infty }$. From \eqref{EquationT}, by linearity of $\mathcal{T}_{f}$, we get:
					\begin{align}
						\mathcal{T}_{f}(\varepsilon) = \mathcal{T}_{f}(z^{\alpha }T_{\alpha }) + \mathrm{h.o.b.}(z). \label{Jednn}
					\end{align}
					From \eqref{Jednn} and the fact that $z^{\alpha }T_{\alpha }$ does not contain a term of order $(\beta ,\mathbf{n})$, it follows that:
					\begin{align*}
						& \mathrm{ord}_{z}\left( \mathcal{T}_{f}(\varepsilon )\right) = \mathrm{ord}_{z}(z^{\alpha }T_{\alpha }) + \beta -1 = \mathrm{ord}_{z}(\varepsilon) + \beta -1 .
					\end{align*}
					This implies that:
					\begin{align}
						d_{z}\left( 0,\mathcal{T}_{f}(\varepsilon)\right) = \frac{1}{2^{\beta -1}}d_{z}(0,\varepsilon ) . \label{Homotet}
					\end{align}
					By linearity of $\mathcal{T}_{f}$ and \eqref{Homotet}, we conclude that $\mathcal{T}_{f}$ is a $\frac{1}{2^{\beta -1}}$-homothety. \\
					
					2. Let $\delta \geq 1$. First, we prove that $\mathcal{L}_{k}^{\delta + \beta -1,2} \subseteq \mathcal{T}_{f}(\mathcal{L}_{k}^{\delta , 1})$. Let $g \in \mathcal{L}_{k}^{\delta + \beta -1,2}$ be arbitrary. Consider the equation $\mathcal{T}_{f}(\varepsilon ) = g$, i.e.,
					\begin{align*}
						\varepsilon ' - \frac{(az^{\beta}\boldsymbol{\ell}_{1}^{n_{1}}\cdots \boldsymbol{\ell}_{k}^{n_{k}})' }{az^{\beta}\boldsymbol{\ell}_{1}^{n_{1}}\cdots \boldsymbol{\ell}_{k}^{n_{k}} }\cdot \varepsilon  & = \frac{g}{az^{\beta}\boldsymbol{\ell}_{1}^{n_{1}}\cdots \boldsymbol{\ell}_{k}^{n_{k}}} .
					\end{align*}
					This is a linear ordinary differential equation of order one. Its solutions are given by:
					{\small \begin{align}
						\varepsilon _{C} &= \exp \left( \int \frac{(az^{\beta}\boldsymbol{\ell}_{1}^{n_{1}}\cdots \boldsymbol{\ell}_{k}^{n_{k}})' }{az^{\beta}\boldsymbol{\ell}_{1}^{n_{1}}\cdots \boldsymbol{\ell}_{k}^{n_{k}} } dz \right) \Bigg( C+\int \frac{g}{az^{\beta}\boldsymbol{\ell}_{1}^{n_{1}}\cdots \boldsymbol{\ell}_{k}^{n_{k}}} \exp \left( - \int \frac{(az^{\beta}\boldsymbol{\ell}_{1}^{n_{1}}\cdots \boldsymbol{\ell}_{k}^{n_{k}})' }{az^{\beta}\boldsymbol{\ell}_{1}^{n_{1}}\cdots \boldsymbol{\ell}_{k}^{n_{k}}} dz \right) dz \Bigg) \nonumber \\
						&= az^{\beta}\boldsymbol{\ell}_{1}^{n_{1}}\cdots \boldsymbol{\ell}_{k}^{n_{k}} \cdot \Big( C+\int \frac{g}{(az^{\beta}\boldsymbol{\ell}_{1}^{n_{1}}\cdots \boldsymbol{\ell}_{k}^{n_{k}})^{2}} dz \Big) , \label{BitnaJednadzba}
					\end{align}
					}for $C\in \mathbb{R} $. Now we choose $C\in \mathbb{R} $, such that $\varepsilon_{C}$ does not contain a term of order $(\beta , \mathbf{n})$ and put $\varepsilon := \varepsilon_{C}$.
				
					The integration above is the usual integration \textit{monomial by monomial}. Since $g \in \mathcal{L}_{k}^{\delta + \beta -1,2}$, then $g$ does not contain a term of order $(2\beta -1,2\mathbf{n}+\mathbf{1}_{k})$. Therefore, the logarithmic transseries $\frac{g}{(z^{\beta }\boldsymbol{\ell}_{1}^{n_{1}}\cdots \boldsymbol{\ell}_{k}^{n_{k}})^{2}} $ does not contain a term of order $(-1 , \mathbf{1}_{k})$. By Lemma~\ref{Lemma1}, we conclude that
					\begin{align*}
						& \int \frac{g}{(az^{\beta}\boldsymbol{\ell}_{1}^{n_{1}}\cdots \boldsymbol{\ell}_{k}^{n_{k}})^{2}} dz 
					\end{align*}
					belongs to $\mathcal{L}_{k}^{\infty }$. By the formal integration we get that:
					\begin{align*}
						\mathrm{ord}_{z} \Big( \int \frac{g}{(az^{\beta}\boldsymbol{\ell}_{1}^{n_{1}}\cdots \boldsymbol{\ell}_{k}^{n_{k}})^{2}} dz \Big) & =  \mathrm{ord}_{z}(g)-2\beta +1 ,
					\end{align*}
					which implies, by \eqref{BitnaJednadzba}, that:
					\begin{align*}
						\mathrm{ord}_{z} (\varepsilon ) & = \mathrm{ord}_{z}(g)- \beta +1 .
					\end{align*}
					Since $g \in \mathcal{L}_{k}^{\delta + \beta -1 , 2}$, it follows that $\varepsilon  \in \mathcal{L}_{k}^{\delta }$. Recall that $\varepsilon $ is choosen such that $\varepsilon $ does not contain a term of order $(\beta , \mathbf{n})$, which implies that $\varepsilon  \in \mathcal{L}_{k}^{\delta ,1}$.
					
					It is left to prove that $\mathcal{T}_{f}(\mathcal{L}_{k}^{\delta ,1}) \subseteq \mathcal{L}_{k}^{\delta + \beta -1,2}$. For an arbitrary term $bz^{\alpha }\boldsymbol{\ell}_{1}^{m_{1}}\cdots \boldsymbol{\ell}_{k}^{m_{k}} \in \mathcal{L}_{k}^{\delta ,1}$, $b\neq 0$, we get that:
					\begin{align}
						& \mathcal{T}_{f}(bz^{\alpha }\boldsymbol{\ell}_{1}^{m_{1}}\cdots \boldsymbol{\ell}_{k}^{m_{k}} ) \nonumber \\
						&= az^{\beta }\boldsymbol{\ell}_{1}^{n_{1}}\cdots \boldsymbol{\ell}_{k}^{n_{k}} \cdot (bz^{\alpha }\boldsymbol{\ell}_{1}^{m_{1}}\cdots \boldsymbol{\ell}_{k}^{m_{k}} )' - (az^{\beta }\boldsymbol{\ell}_{1}^{n_{1}}\cdots \boldsymbol{\ell}_{k}^{n_{k}})' \cdot bz^{\alpha }\boldsymbol{\ell}_{1}^{m_{1}}\cdots \boldsymbol{\ell}_{k}^{m_{k}} \nonumber \\
						& = abz^{\alpha + \beta -1} \cdot ((\alpha - \beta )\boldsymbol{\ell}_{1}^{n_{1}+m_{1}}\cdots \boldsymbol{\ell}_{k}^{n_{k}+m_{k}} + (m_{1}-n_{1})\boldsymbol{\ell}_{1}^{n_{1}+m_{1}+1}\boldsymbol{\ell}_{2}^{n_{2}+m_{2}}\cdots \boldsymbol{\ell}_{k}^{n_{k}+m_{k}}+\cdots \nonumber \\
						& \cdots + (m_{k}-n_{k})\boldsymbol{\ell}_{1}^{n_{1}+m_{1}+1}\cdots \boldsymbol{\ell}_{k}^{n_{k}+m_{k}+1}) . \label{Equation11}
					\end{align}
					Since $z^{\alpha }\boldsymbol{\ell}_{1}^{m_{1}}\cdots \boldsymbol{\ell}_{k}^{m_{k}}\neq z^{\beta }\boldsymbol{\ell}_{1}^{n_{1}}\cdots \boldsymbol{\ell}_{k}^{n_{k}}$, from \eqref{Equation11} it follows that a residual term is not in the image of $\mathcal{T}_{f}$. In particular, $\mathcal{T}_{f}(bz^{\alpha }\boldsymbol{\ell}_{1}^{m_{1}}\cdots \boldsymbol{\ell}_{k}^{m_{k}}) \in \mathcal{L}^{\delta + \beta -1,2}$, for any term $bz^{\alpha }\boldsymbol{\ell}_{1}^{m_{1}}\cdots \boldsymbol{\ell}_{k}^{m_{k}} \in \mathcal{L}_{k}^{\delta ,1}$, $b\neq 0$. Since $\mathcal{T}_{f}$ is linear, we conclude that $\mathcal{T}_{f}(\mathcal{L}_{k}^{\delta ,1})\subseteq \mathcal{L}_{k}^{\delta + \beta -1,2}$.
				\end{proof}
				
				Unlike $\mathcal{T}_{f}(\mathcal{L}_{k}^{\delta ,1})$ described in Proposition~\ref{PropositionT}, the logarithmic transseries in the image $\mathcal{S}_{f}(\mathcal{L}_{k}^{\delta ,1})$ in general contain residual terms (i.e., terms of order $(2\beta -1,2\mathbf{n}+\mathbf{1}_{k})$). Therefore we cannot directly apply Proposition~\ref{KorBanach} and we split the proof of step $(a.2)$ into additional two substeps (see Figure~\ref{FigureSketchProof}): \\
				
					\begin{itemize}
						\item[(a.2.1)] Using Proposition~\ref{KorBanach}, we obtain $c\in \mathbb{R}$ and a parabolic logarithmic transseries $\varphi _{2,1} \in \mathcal{L}_{k}^{0}$ such that
						\begin{align}
							\varphi _{2,1} \circ f \circ \varphi _{2,1}^{-1} = \mathrm{id} + az^{\beta }\boldsymbol{\ell }_{1}^{n_{1}}\cdots \boldsymbol{\ell}_{k}^{n_{k}} + c\mathrm{Res}(f) + \mathrm{h.o.b.}(z) , \label{EqConj1}
						\end{align}
						which means that we eliminate all blocks of $f$ between the first and the residual block and all terms in the residual block except maybe the residual term. \\
						
						\item[(a.2.2)] Using Proposition~\ref{KorBanach}, we obtain a parabolic logarithmic transseries $\varphi _{2,2} \in \mathcal{L}_{k}^{0}$ such that
						\begin{align}
							\varphi _{2,2}\circ (\varphi _{2,1} \circ f \circ \varphi _{2,1}^{-1}) \circ \varphi _{2,2}^{-1} = \mathrm{id} + az^{\beta }\boldsymbol{\ell }_{1}^{n_{1}}\cdots \boldsymbol{\ell}_{k}^{n_{k}} + c\mathrm{Res}(f) , \label{EqConj3}
						\end{align}
						which means that we eliminate all blocks in $\varphi _{2,1} \circ f \circ \varphi _{2,1}^{-1} $ after the residual block (i.e., the $2\beta -1$-block). \\
					\end{itemize}
				
					Finally, $\varphi _{2}:=\varphi _{2,2} \circ \varphi _{2,1} $ is a solution of the normalization equation $\varphi _{2}\circ f\circ \varphi _{2}^{-1}=\mathrm{id}+az^{\beta }\boldsymbol{\ell }_{1}^{n_{1}}\cdots \boldsymbol{\ell }_{k}^{n_{k}}+c\mathrm{Res}(f)$. \\
					
					In the proofs of steps $(a.2.1)$ and $(a.2.2)$, we again transform equations \eqref{EqConj1} and \eqref{EqConj3} to the appropriate fixed point equations and then use the fixed point theorem from Proposition~\ref{KorBanach} to prove the existence of solutions.
					
					\begin{proof}[Proof of step (a.2.1).]
						Let $f\in \mathcal{L}_{k}$, $k\in \mathbb{N}$, such that $f := \mathrm{id}+ az^{\beta }\boldsymbol{\ell}_{1}^{n_{1}}\cdots \boldsymbol{\ell}_{k}^{n_{k}} + \mathrm{h.o.b.}(z)$, $\beta >1$, $a\neq 0$, be prenormalized. We prove that there exists $c\in \mathbb{R} $ and $\varphi _{2,1} \in \mathcal{L}_{k}^{0}$, such that:
						\begin{align*}
							\varphi _{2,1} \circ f \circ \varphi _{2,1}^{-1} & = \mathrm{id} +az^{\beta }\boldsymbol{\ell}_{1}^{n_{1}}\cdots \boldsymbol{\ell}_{k}^{n_{k}}+c\mathrm{Res}(f) + \mathrm{h.o.b.}(z).
						\end{align*}
						Put
						\begin{align*}
							g & := \mathrm{id} + az^{\beta }\boldsymbol{\ell }_{1}^{n_{1}}\cdots \boldsymbol{\ell}_{k}^{n_{k}} .
						\end{align*}
						Suppose for a moment that we want to solve the conjugacy equation $\varphi \circ f\circ \varphi^{-1}=g$, which is, by Proposition~\ref{Lema3}, equivalent to the fixed point equation $\mathcal{T}_{f}(\varepsilon ) = \mathcal{S}_{f}(\varepsilon )$, for $\varepsilon :=\varphi -\mathrm{id}$, $\varepsilon  \in \mathcal{L}_{k}^{1}$, such that $\mathrm{ord}(\varepsilon) > (1,\mathbf{0}_{k})$. The operators $\mathcal{S}_{f}$ and $\mathcal{T}_{f}$ here are defined in \eqref{DefS} and \eqref{Tdefined}, respectively. Put
						\begin{align*}
							\gamma & :=\mathrm{ord}_{z}(f- \mathrm{id}-az^{\beta }\boldsymbol{\ell }_{1}^{n_{1}}\cdots \boldsymbol{\ell}_{k}^{n_{k}} ) .
						\end{align*}
						Notice that $\gamma > \beta >1$. If the fixed point equation $\mathcal{T}_{f}(\varepsilon ) = \mathcal{S}_{f}(\varepsilon )$ is solvable, comparing the orders of the left and the right-hand side of the fixed point equation $\mathcal{T}_{f}(\varepsilon ) = \mathcal{S}_{f}(\varepsilon )$, since $\mathrm{ord}_{z}(\mathcal{S}_{f}(\varepsilon )) = \gamma $, it necessarily follows that $\mathrm{ord}_{z}(\mathcal{T}_{f}(\varepsilon )) = \gamma $. By Proposition~\ref{PropositionT}, operator $\mathcal{T}_{f}$ is a linear $\frac{1}{2^{\beta -1}}$-homothety on the space $\mathcal{L}_{k}^{1,1}$. To conclude, if $\varepsilon$ is a solution of the fixed point equation, it necessarily follows that $\mathrm{ord}_{z}(\varepsilon )=\gamma - (\beta -1)>1$. Therefore, take:
						\begin{align*}
							\delta & := \gamma - (\beta -1) .
						\end{align*}
						By Proposition~\ref{PropositionT} and Proposition~\ref{Lema3}, the operator $\mathcal{T}_{f}$ is a linear $\frac{1}{2^{\beta -1}}$-homothety on the space $\mathcal{L}_{k}^{\delta , 1}$ and operator $\mathcal{S}_{f}$ is a $\frac{1}{2^{\rho }}$-contraction on the space $\mathcal{L}_{k}^{\delta ,1}$, where $\rho := \min \left\lbrace \gamma - 1,2(\beta -1)\right\rbrace $. By Proposition~\ref{Lema3}, it follows that $\mathcal{S}_{f}(\mathcal{L}_{k}^{\delta , 1}) \subseteq \mathcal{L}_{k}^{\gamma }$, and, by Proposition~\ref{PropositionT}, $\mathcal{T}_{f}(\mathcal{L}_{k}^{\delta , 1})=\mathcal{L}_{k}^{\gamma , 2}$. Notice that, in general, $\mathcal{S}_{f}(\mathcal{L}_{k}^{\delta , 1}) \nsubseteq \mathcal{L}_{k}^{\gamma , 2}$. So, in order to apply the fixed point theorem from Proposition~\ref{KorBanach}, we compose the operator $\mathcal{S}_{f}$ with the projection operator $\left[ \, \cdot \, \right] : \mathcal{L}_{k}^{\gamma } \to \mathcal{L}_{k}^{\gamma ,2}$, where $[ h]$ is the transseries obtained from $h$ by removing its term of order $(2\beta -1,2\mathbf{n}+\mathbf{1}_{k})$, $h \in \mathcal{L}_{k}^{\gamma }$. So, instead of the fixed point equation $\mathcal{T}_{f}(\varepsilon) = \mathcal{S}_{f}(\varepsilon)$, we solve the modified fixed point equation $\mathcal{T}_{f}(\varepsilon) = \left[ \mathcal{S}_{f} (\varepsilon) \right] $, $\varepsilon  \in \mathcal{L}_{k}^{\delta ,1}$. Note that the projection operator $\left[ \, \cdot \, \right] $ is $1$-Lipschitz on $\mathcal{L}_{k}^{\gamma }$, i.e.
						\begin{align}
							d_{z}\left( \left[ g_{1}\right] , \left[ g_{2} \right] \right) & \leq d_{z}\left( g_{1} , g_{2} \right) , \quad g_{1} , g_{2} \in \mathcal{L}_{k}^{\gamma } . \label{Dva}
						\end{align}
						We define the operator $\left[ \mathcal{S}_{f} \right] :\mathcal{L}_{k}^{\delta , 1} \to \mathcal{L}_{k}^{\gamma , 2}$, as a composition $\left[ \mathcal{S}_{f}\right] =\left[ \, \cdot \, \right] \circ \mathcal{S}_{f}$. Using \eqref{Dva} and Proposition~\ref{Lema3}, we conclude that the operator $\left[ \mathcal{S}_{f}\right] $ is a $\frac{1}{2^{\rho }}$-contraction on $\mathcal{L}_{k}^{\delta ,1}$. Furthermore, $\left[ \mathcal{S}_{f}\right] (\mathcal{L}_{k}^{\delta , 1})\subseteq \mathcal{L}_{k}^{\gamma , 2}$, which is, by Proposition~\ref{PropositionT}, equal to $\mathcal{T}_{f}(\mathcal{L}_{k}^{\delta ,1})$. By definition, $\rho > \beta -1$, which implies that $\frac{1}{2^{\rho }} < \frac{1}{2^{\beta -1}}$. By the fixed point theorem from Proposition~\ref{KorBanach}, there exists a unique $\varepsilon  \in \mathcal{L}_{k}^{\delta , 1}$, such that $\mathcal{T}_{f}(\varepsilon ) =\left[ \mathcal{S}_{f}\right] (\varepsilon ) $ (see Figure~\ref{Figure1}).
						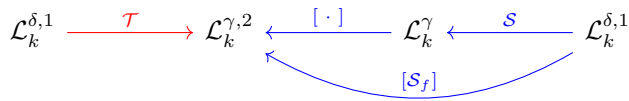
\begin{figure}[!h]
							$$
								\begin{tikzcd}[row sep=tiny,column sep=huge]
									\mathcal{L}_{k}^{\delta , 1} \arrow[r,red,"\mathcal{T}",->] & \mathcal{L}_{k}^{\gamma , 2} & \mathcal{L}_{k}^{\gamma } \arrow[l,blue,swap,"\textrm{[ } \cdot \textrm{ ]}",->] & \mathcal{L}_{k}^{\delta , 1} \arrow[l,blue,swap,"\mathcal{S}",->] \arrow[ll,blue,bend left=30,swap,"\textrm{[}\mathcal{S}_{f} \textrm{]}",->] 
								\end{tikzcd}
							$$
							\caption{Relation between the operators $\left[ \mathcal{S}_{f}\right] $ and $\mathcal{T}_{f}$.}\label{Figure1}
						\end{figure}
						
						\vspace{4mm}
						
						Note that $\left[ \mathcal{T}_{f}(\varepsilon)\right] = \mathcal{T}_{f}(\varepsilon )$, because $\mathcal{T}_{f}(\mathcal{L}_{k}^{\delta , 1}) = \mathcal{L}_{k}^{\gamma , 2}$. Therefore, solving the equation $\mathcal{T}_{f}(\varepsilon )=\left[ \mathcal{S}_{f}\right] (\varepsilon )$ in $\mathcal{L}_{k}^{\delta ,1}$ is equivalent to solving the equation:
						\begin{align*}
							\left[ \mathcal{T}_{f}(\varepsilon )\right] & = \left[ \mathcal{S}_{f} (\varepsilon )\right] .
						\end{align*}
						Operator $\left[ \, \cdot \, \right] $ is linear, so the above equation is equivalent to
						\begin{align*}
							\left[ \mathcal{T}_{f}(\varepsilon ) - \mathcal{S}_{f}(\varepsilon )\right] & = 0 ,
						\end{align*}
						which is equivalent to the existence of $c\in \mathbb{R} $, such that
						\begin{align}
							\mathcal{T}_{f}(\varepsilon ) - \mathcal{S}_{f}(\varepsilon ) = c\mathrm{Res}(f) . \label{Jenadzbica}
						\end{align}
						Finally, put $\varphi _{2,1} := \mathrm{id} + \varepsilon $. As in \eqref{Equation2}, we get:
						\begin{align}
							\mathcal{T}_{f}(\varepsilon ) - \mathcal{S}_{f}(\varepsilon ) = \varphi _{2,1} \circ f - g \circ \varphi _{2,1} . \label{Jednakost}
						\end{align}
						From \eqref{Jenadzbica} and \eqref{Jednakost}, it follows that there exists $c\in \mathbb{R}$ such that:
						\begin{align}
							\varphi _{2,1} \circ f - g \circ \varphi _{2,1} = c\mathrm{Res}(f) . \label{Jednakost1} 
						\end{align}
						Now, by transforming \eqref{Jednakost1}, since $\mathrm{ord}_{z}(\varepsilon )>1$, by the Taylor Theorem (\cite[Proposition 3.3]{prrs21}), we get that:
						\begin{align*}
							\varphi _{2,1} \circ f \circ \varphi _{2,1}^{-1} &= g + c\mathrm{Res}(f) \circ \varphi _{2,1}^{-1} \\
							&= \mathrm{id} +az^{\beta }\boldsymbol{\ell}_{1}^{n_{1}}\cdots \boldsymbol{\ell}_{k}^{n_{k}} + c\mathrm{Res}(f) + \mathrm{h.o.b.}(z) .
						\end{align*}
						Thus, we have eliminated all blocks in $f$ of order (in $z$) between $\beta $ and $2\beta -1$, and all terms in the residual block, except the residual term.
					\end{proof}
				
					\begin{proof}[Proof of step (a.2.2).]
						Let $f$, $\varphi_{2,1}$ and $a,c\in \mathbb{R} $ be as in step $(a.2.1)$ above. We prove that there exists $\varphi _{2,2} \in \mathcal{L}_{k}^{0}$, such that:
						\begin{align*}
							\varphi _{2,2} \circ (\varphi _{2,1} \circ f \circ \varphi _{2,1}^{-1}) \circ \varphi _{2,2}^{-1} = \mathrm{id} +az^{\beta }\boldsymbol{\ell}_{1}^{n_{1}}\cdots \boldsymbol{\ell}_{k}^{n_{k}}+c\mathrm{Res}(f) .
						\end{align*}
						Put:
						\begin{align*}
							h & :=\varphi _{2,1} \circ f \circ \varphi _{2,1}^{-1} , \\
							g & := \mathrm{id} +az^{\beta }\boldsymbol{\ell}_{1}^{n_{1}}\cdots \boldsymbol{\ell}_{k}^{n_{k}} + c\mathrm{Res}(f) , \\
							\gamma & := \mathrm{ord}_{z}(h-g) , \\
							\delta & := \gamma - (\beta -1) .  
						\end{align*}
						The conjugacy equation
						\begin{align}
							\varphi_{2,2}\circ h \circ \varphi_{2,2}^{-1}=g , \label{EquationConj3}
						\end{align}
						is, by Lemma~\ref{Lemma2}, equivalent to the fixed point equation $\mathcal{S}_{f}(\varepsilon ) = \mathcal{T}_{f}(\varepsilon )$, where $\varepsilon  :=\varphi _{2,2}- \mathrm{id}$. As in the proof of step $(a.2.1)$, if $\varepsilon $ is a solution of the equation $\mathcal{S}_{f}(\varepsilon ) = \mathcal{T}_{f}(\varepsilon )$, we conclude that $\mathrm{ord}_{z}(\mathcal{T}_{f}(\varepsilon ))= \gamma $. Therefore, if $\varepsilon $ is a solution, necessarily $\mathrm{ord}_{z}(\varepsilon )= \delta $. Since $\gamma > 2\beta -1$ and $\delta > \beta $, it follows that $\mathcal{L}_{k}^{\delta } =\mathcal{L}_{k}^{\delta ,1} $ and $\mathcal{L}_{k}^{\gamma }=\mathcal{L}_{k}^{\gamma , 2}$. By Proposition~\ref{PropositionT}, the operator $\mathcal{T}_{f}:\mathcal{L}_{k}^{\delta } \to \mathcal{L}_{k}^{\gamma }$ is a $\frac{1}{2^{\beta -1}}$-homothety and $\mathcal{T}_{f}(\mathcal{L}_{k}^{\delta }) = \mathcal{L}_{k}^{\gamma }$. By Proposition~\ref{Lema3}, the operator $\mathcal{S}_{f}:\mathcal{L}_{k}^{\delta } \to \mathcal{L}_{k}^{\gamma }$ is a $\frac{1}{2^{\rho }}$-contraction, with $\rho := \min \left\lbrace \gamma -1, 2(\beta -1)\right\rbrace $. Since $\mathcal{S}_{f}(\mathcal{L}_{k}^{\delta }) \subseteq \mathcal{L}_{k}^{\gamma } = \mathcal{T}_{f}(\mathcal{L}_{k}^{\delta })$ and $\frac{1}{2^{\rho }} < \frac{1}{2^{\beta -1}}$, by the fixed point theorem from Proposition~\ref{KorBanach}, it follows that there exists a unique $\varepsilon  \in \mathcal{L}_{k}^{\delta }$, such that $\mathcal{T}_{f}(\varepsilon) = \mathcal{S}_{f}(\varepsilon)$. Now, $\varphi _{2,2}:= \mathrm{id} + \varepsilon $ is a solution of the conjugacy equation \eqref{EquationConj3}.
					\end{proof}

					\subsection{Proof of case $\mathrm{ord}_{z}(f- \mathrm{id}) =1$ of statement 1}\label{sec:proofAb}
					
					Let $f\in \mathcal{L}_{k}^{0}$, $k\in \mathbb{N}_{\geq 1}$, such that $f=\mathrm{id}+zR+\mu $, $R\in \mathcal{B}_{\geq 1}^{+}\setminus \left\lbrace 0\right\rbrace \subseteq \mathcal{L}_{k}$, $\mu \in \mathcal{L}_{k}^{\alpha }$, $\alpha :=\mathrm{ord}_{z}(f-\mathrm{id}-zR)>1$. Write $(1,\mathbf{n}) := \mathrm{ord} \, (f-\mathrm{id})$, and $\mathbf{n}=(\mathbf{0}_{m-1},n_{m},\ldots ,n_{k})$.
					
					That means that the first term of $f-\mathrm{id}$ is equal to $a_{1,\mathbf{n}}z\boldsymbol{\ell }_{m}^{n_{m}}\cdots \boldsymbol{\ell }_{k}^{n_{k}}$, where $a_{1,\mathbf{n}} \neq 0$, $n_{m}\geq 1$, for some $1\leq m\leq k$. Note that the residual term here, as defined in Section~\ref{SectionMain}, is also of order $1$ in $z$.
					
					We consider a general conjugacy equation:
					\begin{align}
						\varphi \circ f\circ \varphi ^{-1} & =g, \quad \varphi \in \mathcal{L}_{k}^{0}, \label{ConjEquationCaseB}
					\end{align}
					where $g:=\mathrm{id}+zT+\mathrm{h.o.b.}(z)$, $T\in \mathcal{B}_{\geq 1}^{+}\subseteq \mathcal{L}_{k}$. By Subsection~\ref{SubsubsectionDifference}, we cannot apply Proposition~\ref{KorBanach} directly to the fixed point equation obtained from the conjugacy equation as described in Lemma~\ref{Lemma2}. Since $\mathrm{ord}_{z}(f-\mathrm{id})=1$, we have to transform the conjugacy equation $\varphi \circ f\circ \varphi ^{-1}=g$, $\varphi \in \mathcal{L}_{k}^{0}$, in order to apply Proposition~\ref{KorBanach}. By Proposition~\ref{LeadingBlockLemma}, which gives a necessary condition for solvability of conjugacy equation \eqref{ConjEquationCaseB}, it follows that necessarily $\left[ f\right] _{1,\mathbf{m}}=\left[ g\right] _{1,\mathbf{m}}$, $\mathbf{n}\leq \mathbf{m}\leq \mathbf{n}'$, where $\mathbf{n}'$ is as defined in \eqref{NCrtica}. Therefore, let $L$ be a formal sum (possibly infinite) of all terms in $R$ of order $(0,\mathbf{m})$, for $\mathbf{n}\leq \mathbf{m}\leq \mathbf{n}'$. Note that $L$ is a formal sum of all terms in $R$ that cannot be eliminated, nor changed. \\
					
					Let $\mathcal{L}_{k}^{<(\gamma , n_{1},\ldots ,n_{m})}\subseteq \mathcal{L}_{k}$, for $(\gamma , n_{1},\ldots ,n_{m})\in \mathbb{R}_{>0}\times \mathbb{Z} ^{m}$, $1\leq m\leq k$, be the set of all transseries $h \in \mathcal{L}_{k}$, such that every term in $h$ is of strictly smaller order than $(\gamma , n_{1},\ldots ,n_{m})$ in the first $m+1$ variables $z, \boldsymbol{\ell }_{1},\ldots , \boldsymbol{\ell }_{m}$. 
					
					Let
					\begin{align*}
						\mathcal{P}_{<(\gamma , n_{1},\ldots ,n_{m})} : \mathcal{L}_{k} \to \mathcal{L}_{k}^{<(\gamma , n_{1},\ldots ,n_{m})}
					\end{align*}
					be the projection operator from space $\mathcal{L}_{k}$ to the space $\mathcal{L}_{k}^{<(\gamma , n_{1},\ldots ,n_{m})}$. Similarly we define spaces $\mathcal{L}_{k}^{\leq (\gamma , n_{1},\ldots ,n_{m})}$, $\mathcal{L}_{k}^{>(\gamma , n_{1},\ldots ,n_{m})}$, $\mathcal{L}_{k}^{\geq (\gamma , n_{1},\ldots ,n_{m})}\subseteq \mathcal{L}_{k}$ and projection operators:
					\begin{align*}
						\mathcal{P}_{\leq (\gamma , n_{1},\ldots ,n_{m})} : \mathcal{L}_{k} \to \mathcal{L}_{k}^{\leq (\gamma , n_{1},\ldots ,n_{m})} , \\
						\mathcal{P}_{>(\gamma , n_{1},\ldots ,n_{m})} : \mathcal{L}_{k} \to \mathcal{L}_{k}^{>(\gamma , n_{1},\ldots ,n_{m})} , \\
						\mathcal{P}_{\geq (\gamma , n_{1},\ldots ,n_{m})} : \mathcal{L}_{k} \to \mathcal{L}_{k}^{\geq (\gamma , n_{1},\ldots ,n_{m})} .
					\end{align*}
					
					As in case $(a)$, Proposition~\ref{KorBanach} cannot be applied directly. Therefore, we proceed in three steps $(b.1)-(b.3)$ described below (see Figure~\ref{FigureSketchProof}). \\
					
					Let $f\in \mathcal{L}_{k}^{0}$, $k\in \mathbb{N}_{\geq 1}$, such that $f=\mathrm{id}+zL+\mathrm{h.o.t.}$, for $L$ defined in the Main Theorem. Suppose that $(1,\mathbf{n}):=\mathrm{ord}(f-\mathrm{id})$ and $\mathbf{n}=(\mathbf{0}_{m-1},n_{m},\ldots ,n_{k})$. \\
					
					\begin{itemize}
						\item[\emph{Step (b.1)}] We eliminate all the terms in the leading block of $f-\mathrm{id}$ which are not in $zL$ and are of order less than or equal to $(\mathbf{1}_{m},n_{m}+1)$ in the first $m+1$ variables $z, \boldsymbol{\ell }_{1}, \ldots , \boldsymbol{\ell }_{m}$. That is, we find a solution (not unique) $\varphi _{1} \in \mathcal{L}_{k}^{0}$ of the equation
						\begin{align}
							\mathcal{P}_{\leq (\mathbf{1}_{m},n_{m}+1)}(\varphi _{1}\circ f\circ \varphi _{1}^{-1}) &= \mathrm{id}+ zL . \label{FirstConj}
						\end{align}
						Note that we can skip this step if $m=k$ because in that case $\mathcal{P}_{\leq (\mathbf{1}_{k},n_{k}+1)}(f)=z+zL$.
						
						Moreover, we prove in Remark~\ref{RemCaseB1} that $\varphi _{1}$ is a unique solution of \eqref{FirstConj} if we impose the \emph{canonical form} $\varphi _{1}=\mathrm{id}+zS$, $S\in \mathcal{B}_{\geq m+1}^{+}\subseteq \mathcal{L}_{k}$. \\
						
						\item[\emph{Step (b.2)}] Put $r:=\mathrm{ord}(\mathrm{Res}(f))=(\mathbf{1}_{m},2n_{m}+1,\ldots ,2n_{k}+1)$. We eliminate all the terms in the leading block of $\varphi _{1}\circ f\circ \varphi _{1}^{-1}-\mathrm{id}$ which are not in $zL$ up to the residual term. That is, we find a solution (not unique) $\varphi _{2}\in \mathcal{L}_{k}^{0}$ of the equation
						\begin{align}
							\mathcal{P}_{<r}(\varphi _{2}\circ (\varphi _{1}\circ f\circ \varphi _{1}^{-1})\circ \varphi _{2}^{-1}) &= \mathrm{id}+ zL . \label{SecondConj}
						\end{align}
						By \eqref{SecondConj}, it follows that there exists $c\in \mathbb{R}$ (which is unique and given explicitely in Subsection~\ref{sec:ProofMinimality}) such that:
						\begin{align*}
							\varphi _{2}\circ (\varphi _{1}\circ f\circ \varphi _{1}^{-1})\circ \varphi _{2}^{-1} &= \mathrm{id}+ zL +c\mathrm{Res} \, (f)+\mathrm{h.o.t.}
						\end{align*}
						
						Moreover, we prove in Remark~\ref{RemCaseB2} that $\varphi _{2}$ is a unique solution of \eqref{SecondConj} if we impose the \emph{canonical form} $\varphi _{2}=\mathrm{id}+zS$, $S\in \widetilde{\mathcal{B}}$, where $\widetilde{\mathcal{B}}$ is the set of all logarithmic transseries in $\mathcal{B}_{m}^{+}\subseteq \mathcal{L}_{k}$ which contain only terms of strictly smaller order than $(\mathbf{0}_{m},n_{m},\ldots ,n_{k})=(0,\mathbf{n})$. \\
						
						\item[\emph{Step (b.3)}] Let $c\in \mathbb{R}$ be the coefficient of the residual term in $\varphi _{2}\circ \varphi _{1}\circ f\circ \varphi _{1}^{-1}\circ \varphi _{2}^{-1}$. We eliminate all the terms in $\varphi _{2}\circ (\varphi _{1}\circ f\circ \varphi _{1}^{-1})\circ \varphi _{2}^{-1}) - \mathrm{id}+ zL$ except the residual term. That is, we find a solution (not unique) $\varphi _{3}\in \mathcal{L}_{k}^{0}$ of the equation
						\begin{align}
							\varphi _{3}\circ (\varphi _{2}\circ (\varphi _{1}\circ f\circ \varphi _{1}^{-1})\circ \varphi _{2}^{-1}) \circ \varphi _{3}^{-1} &= \mathrm{id}+ zL + c\mathrm{Res}(f) . \label{ThirdConj}
						\end{align}
						
						Moreover, we prove that $\varphi _{3}$ is a unique solution of \eqref{ThirdConj} if we impose the \emph{canonical form} $\varphi _{3}=\mathrm{id}+zS+\varepsilon $, where $S\in \mathcal{B}_{\geq 1}^{+}\subseteq \mathcal{L}_{k}$, $\mathrm{ord} \, (zS) > \mathrm{ord} \, (\mathrm{Res} \, (f))$, and $\varepsilon \in \mathcal{L}_{k}$, $\mathrm{ord}_{z} \, (\varepsilon ) \geq \mathrm{ord}_{z} \, (f_{1}-\mathrm{Lb}_{z}(f_{1}))$, where $f_{1}:=\varphi _{2}\circ (\varphi _{1}\circ f\circ \varphi _{1}^{-1})\circ \varphi _{2}^{-1}$. \\
					\end{itemize}
					
					The general idea in all steps is to transform the conjugacy equation, using the Taylor Theorem (see \cite[Proposition 3.3]{prrs21}), to an equivalent fixed point equation or differential equation that is then solved using the fixed point theorem from Proposition~\ref{KorBanach}. The crucial point of this approach is solving various nonlinear differential equations on differential algebras of blocks, using fixed point method described in Propositions~\ref{Prop3},~\ref{Prop4} and~\ref{Lemma4}. \\
					
					We illustrate steps (b.1)-(b.3) on the following example.
					\begin{example}
						Let $f\in \mathcal{L}_{3}$ be given by:
						\begin{align*}
							f &=z+\overbrace{z\boldsymbol{\ell }_{2}\Big( \sum _{i=-2}^{+\infty }\boldsymbol{\ell }_{3}^{i}\Big) +z\boldsymbol{\ell }_{1}+z\boldsymbol{\ell }_{1}\boldsymbol{\ell }_{2}^{2}\boldsymbol{\ell }_{3}^{-2}}^{zL} \\
							&+\underbrace{z\boldsymbol{\ell }_{1}\boldsymbol{\ell }_{2}^{2}(\boldsymbol{\ell }_{3}^{-1}+1+\boldsymbol{\ell }_{3}^{6}+\boldsymbol{\ell }_{3}^{14})}_{\textrm{elimination of this part in step }(b.1)} +\underbrace{z\boldsymbol{\ell }_{1}\boldsymbol{\ell }_{2}^{3}\boldsymbol{\ell }_{3}^{-3}}_{\textrm{residual term}}+z^{3}\boldsymbol{\ell }_{2}^{-5}+\mathrm{h.o.t.}
						\end{align*}
						Note that $\mathbf{n}=(0,1,-2)$ and $m=2$. Therefore, by \eqref{NCrtica}, it follows that $\mathbf{n}'=(1,2,-2)$ and
						\begin{align*}
							L=\boldsymbol{\ell }_{2}\sum _{i=-2}^{+\infty }\boldsymbol{\ell }_{3}^{i}+\boldsymbol{\ell }_{1}+\boldsymbol{\ell }_{1}\boldsymbol{\ell }_{2}^{2}\boldsymbol{\ell }_{3}^{-2} .
						\end{align*}
						Furthermore, $\mathrm{Res}(f)=z\boldsymbol{\ell }_{1}\boldsymbol{\ell }_{2}^{3}\boldsymbol{\ell }_{3}^{-3}$. In step (b.1) we find a solution $\varphi _{1}\in \mathcal{L}_{3}^{0}$ of the equation \eqref{FirstConj}.
						
						Now,
						\begin{align*}
							\varphi _{1}\circ f\circ \varphi _{1}^{-1} &=z+zL+\underbrace{\sum _{u_{3}=v_{3}}^{-4}a_{1,1,3,u_{3}}z\boldsymbol{\ell }_{1}\boldsymbol{\ell }_{2}^{3}\boldsymbol{\ell }_{3}^{u_{3}}}_{\textrm{elimination of this part in step }(b.2)}+b\mathrm{Res}(f)+\mathrm{h.o.t.}, 
						\end{align*}
						for some $v_{3}\leq -4$. Some of these coefficients $a_{1,1,3,u_{3}}$ and $b$ can be equal to zero. \\
						Note that $r=\mathrm{ord}(\mathrm{Res}(f))=(1,1,3,-3)$. Now, we apply step (b.2) and obtain $\varphi _{2}\in \mathcal{L}_{3}^{0}$ and $c\in \mathbb{R}$ such that \eqref{SecondConj} holds, i.e.
						\begin{align*}
							\varphi _{2}\circ (\varphi _{1}\circ f\circ \varphi_{1}^{-1})\circ \varphi _{2}^{-1} = z+zL+c\mathrm{Res}(f)+\underbrace{\mathrm{h.o.t.}}_{\textrm{elimination }(b.3)} .
						\end{align*}
					
						Finally, we apply step (b.3) to eliminate $\mathrm{h.o.t}$ by $\varphi _{3}\in \mathcal{L}_{3}^{0}$, not changing any terms in $\mathrm{id}+zL+c\mathrm{Res}(f)$.
					\end{example}

					\subsubsection{Proof of step (b.1)}
					
					The following Lemma~\ref{CaseBLemma7} and Proposition~\ref{Prop3} are the main parts of the proof of step (b.1). As explained above, if $m=k$ we skip this step. Therefore, we assume here that $m\leq k-1$ and $k\geq 2$.
					
					\begin{lem}[Transforming equation \eqref{FirstConj} to a differential equation]\label{CaseBLemma7}
						Let $f\in \mathcal{L}_{k}^{0}$, $k\in \mathbb{N}_{\geq 1}$, such that $f=\mathrm{id}+zR+\mathrm{h.o.b.}(z)$, $R\in \mathcal{B}_{\geq 1}^{+}\subseteq \mathcal{L}_{k}$. Let $R\neq 0$ and $(1,\mathbf{n}):=\mathrm{ord}(f-\mathrm{id})$, where $\mathbf{n}=(\mathbf{0}_{m-1},n_{m},\ldots ,n_{k})$, $n_{m}\in \mathbb{N}_{\geq 1}$, $1\leq m\leq k-1$. Let $L$ be defined as in the Main Theorem and  suppose that $L_{m+1},T_{m+1}\in \mathcal{B}_{m+1}\setminus \left\lbrace 0\right\rbrace $\footnote{Note that $L_{m+1}\neq 0$ since $n_{m}\neq 0$. Without the loss of generality, we assume that $T_{m+1} \neq 0$. Otherwise step $(b.1)$ is omitted.}, such that:
						\begin{align*}
							& L =\boldsymbol{\ell }_{m}^{n_{m}}L_{m+1} + \mathrm{h.o.t.} , \\
							& R-L =\boldsymbol{\ell }_{1}\cdots \boldsymbol{\ell }_{m-1}\boldsymbol{\ell }_{m}^{n_{m}+1}T_{m+1} + \mathrm{h.o.t.}
						\end{align*}
						Logarithmic transseries $\varphi \in \mathcal{L}_{k}$ is a solution of the equation:
						\begin{align}
							\mathcal{P}_{\leq (\mathbf{1}_{m},n_{m}+1)}(\varphi \circ f\circ \varphi ^{-1}) &= \mathrm{id}+ zL \label{LemaB1Equation}
						\end{align}
						if and only if $S_{m+1}$ satisfies the differential equation:
						{\small \begin{align}
							& L_{m+1}\cdot D_{m+1}(S_{m+1})-(n_{m}L_{m+1}+D_{m+1}(L_{m+1})) \cdot (1+S_{m+1})\cdot \log (1+S_{m+1}) \nonumber \\
							& + T_{m+1}\cdot S_{m+1} = -T_{m+1} . \label{EquationLemaB1}
						\end{align}
						}Here, we write $\varphi =\mathrm{id}+zS+\varepsilon $, $S\in \mathcal{B}_{\geq 1}^{+}\subseteq \mathcal{L}_{k}$, $\varepsilon \in \mathcal{L}_{k}$ such that $\mathrm{ord}_{z} \, (\varepsilon )>1$, and we decompose $S=S_{m+1}+S_{m}+\cdots +S_{1}$, for $S_{i}\in \mathcal{B}_{i}^{+}\subseteq \mathcal{L}_{k}$, $1\leq i\leq m$, and $S_{m+1}\in \mathcal{B}_{\geq m+1}^{+}\subseteq \mathcal{L}_{k}$.
					\end{lem}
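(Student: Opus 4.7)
The plan is to parallel the transformation from the proof of Lemma~\ref{Lemma2}, but applied with the target $g:=\mathrm{id}+zL$ and $\psi:=zL$. The projection equation \eqref{LemaB1Equation} is then equivalent to $\mathcal{P}_{\leq (\mathbf{1}_m,n_m+1)}\bigl(\varphi \circ f - g\circ \varphi\bigr)=0$. Writing $\varepsilon := \varphi-\mathrm{id}=zS+\varepsilon_1$ with $\mathrm{ord}_z(\varepsilon_1)>1$ and $\mu:=f-\mathrm{id}=zR+\mathrm{h.o.b.}(z)$, every contribution of $\varepsilon_1$ and of the higher $z$-blocks of $\mu$ has $z$-order strictly greater than $1$ and so vanishes under the projection. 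It therefore suffices to analyse the leading $z$-block of $\varphi\circ f-g\circ\varphi$. Applying identity \eqref{Identitet2} to the pairs $(K,Q)=(S,R)$ and $(K,Q)=(L,S)$ to collapse the two Taylor sums, this leading $z$-block simplifies to
\begin{align*}
z\,\Bigl[(R-L)(1+S) + D_1(S)(1+R)\log(1+R) - D_1(L)(1+S)\log(1+S) + \mathcal{C}(R,S) - \mathcal{C}(S,L)\Bigr].
\end{align*}

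The next step is to extract the coefficient (viewed as an element of $\mathcal{B}_{m+1}$) of the monomial $\boldsymbol{\ell}_1\cdots \boldsymbol{\ell}_{m-1}\boldsymbol{\ell}_m^{n_m+1}$, which is the highest $(\boldsymbol{\ell}_1,\ldots,\boldsymbol{\ell}_m)$-order retained by the projection. Using the hypotheses on $L$ and $R-L$, together with \eqref{EqDer1}--\eqref{EqDer2} to compute
\begin{align*}
D_1(L) &= \boldsymbol{\ell}_1\cdots \boldsymbol{\ell}_{m-1}\boldsymbol{\ell}_m^{n_m+1}\bigl(n_m L_{m+1}+D_{m+1}(L_{m+1})\bigr)+\mathrm{h.o.t.}, \\
D_1(S_{m+1}) &= \boldsymbol{\ell}_1\cdots \boldsymbol{\ell}_m\,D_{m+1}(S_{m+1}),
\end{align*}
one checks that the lower-level blocks $S_1,\ldots,S_m$ each acquire an extra factor $\boldsymbol{\ell}_j^{\geq 1}$ for some $j\leq m$ once differentiated by $D_1$, which pushes their contributions strictly beyond this top order; while the Lipschitz remainders $\mathcal{C}(R,S)$ and $\mathcal{C}(S,L)$ have $\boldsymbol{\ell}_1$-order $\geq 2$ by the estimates attached to \eqref{Identitet2}, so they cannot reach a coefficient with $\boldsymbol{\ell}_1$-exponent equal to $1$. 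What survives is
\begin{align*}
L_{m+1}\cdot D_{m+1}(S_{m+1}) - \bigl(n_m L_{m+1}+D_{m+1}(L_{m+1})\bigr)(1+S_{m+1})\log(1+S_{m+1}) + T_{m+1}(1+S_{m+1}) = 0,
\end{align*}
and expanding $T_{m+1}(1+S_{m+1})$ and transposing the constant term $T_{m+1}$ to the right-hand side reproduces exactly \eqref{EquationLemaB1}.

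The main technical obstacle is the careful order-bookkeeping in $(\boldsymbol{\ell}_1,\ldots,\boldsymbol{\ell}_m)$ showing that only the $\mathcal{B}_{\geq m+1}^{+}$-component $S_{m+1}$ appears at the top order $(\mathbf{1}_{m-1},n_m+1)$, and that neither the lower blocks $S_1,\ldots,S_m$ nor the Lipschitz contractions $\mathcal{C}(R,S),\mathcal{C}(S,L)$ pollute this coefficient. This is the triangular structure that decouples the equation for $S_{m+1}$ from those governing $S_1,\ldots,S_m$, and it is precisely what reduces the projection equation \eqref{LemaB1Equation} to the single nonlinear differential equation \eqref{EquationLemaB1} on the differential algebra $\mathcal{B}_{m+1}$.
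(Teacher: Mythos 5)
Your proposal is correct and follows essentially the same route as the paper: reduce \eqref{LemaB1Equation} to $\mathcal{P}_{\leq (\mathbf{1}_{m},n_{m}+1)}(\varphi\circ f-g\circ\varphi)=0$ via the Taylor Theorem, collapse the two composition sums with identity \eqref{Identitet2}, and then use \eqref{EqDer1}--\eqref{EqDer2} together with the leading-order data on $L$ and $R-L$ to see that only $S_{m+1}$ survives at the extremal order $\boldsymbol{\ell}_{1}\cdots\boldsymbol{\ell}_{m-1}\boldsymbol{\ell}_{m}^{n_{m}+1}$, yielding \eqref{EquationLemaB1}. The order bookkeeping you describe (lower blocks $S_{1},\ldots,S_{m}$ and the contraction remainders being pushed past the projection threshold) is exactly the content of the paper's equations \eqref{LemaB1Identities2} and \eqref{LemaB1Equationlast}.
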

					
					\begin{proof}
						The conjugacy equation \eqref{LemaB1Equation} with $\varphi \in \mathcal{L}_{k}^{0}$ is equivalent to the equation:
						\begin{align}
							\mathcal{P}_{\leq (\mathbf{1}_{m},n_{m}+1)}(\varphi \circ f) & =\mathcal{P}_{\leq (\mathbf{1}_{m},n_{m}+1)}\big( (\mathrm{id}+zL)\circ \varphi \big) , \quad \varphi \in \mathcal{L}_{k}^{0} . \label{LemaB1Equation2}
						\end{align}
						Put $\mu :=f-(\mathrm{id}+zR)$. Then $\mathrm{ord}_{z} \, (\mu ) >1$. Put $\varphi =\mathrm{id}+zS+\varepsilon $, for $S\in \mathcal{B}_{\geq 1}^{+}$, and $\varepsilon \in \mathcal{L}_{k}$ such that $\mathrm{ord}_{z} \, (\varepsilon )>1$. By the Taylor Theorem (see \cite[Proposition 3.3]{prrs21}) we get:
						{\small \begin{align}
								\varphi \circ f & = \mathrm{id}+zR+\mu + \sum _{i\geq 1}\frac{(zS+\varepsilon )^{(i)}(\mathrm{id}+zR)}{i!}\mu ^{i} \nonumber \\
								& + zS+\varepsilon+\sum _{i\geq 1}\frac{(zS)^{(i)}}{i!}(zR)^{i} + \sum _{i\geq 1}\frac{\varepsilon ^{(i)}}{i!}(zR)^{i} \nonumber \\
								(\mathrm{id} +zL)\circ \varphi & = \mathrm{id}+zS+\varepsilon+zL+\sum _{i\geq 1}\frac{(zL)^{(i)}(\mathrm{id}+zS)}{i!}\varepsilon^{i}+\sum _{i\geq 1}\frac{(zL)^{(i)}}{i!}(zS)^{i}, \label{LemCaseBEq1}
						\end{align}
						}Analyzing orders and applying operator $\mathcal{P}_{\leq (\mathbf{1}_{m},n_{m}+1)}$ on \eqref{LemCaseBEq1}, we get that the equation \eqref{LemaB1Equation2} is equivalent to the equation:
						\begin{align}
							\mathcal{P}_{\leq (\mathbf{1}_{m},n_{m}+1)}\Big( \sum _{i\geq 1}\frac{(zS)^{(i)}}{i!}(zR)^{i} - \sum _{i\geq 1}\frac{(zL)^{(i)}}{i!}(zS)^{i} + z(R-L) \Big) &= 0 . \label{LemCaseBEq2}
						\end{align}
						By \eqref{Identitet2}, it follows that:
						\begin{align}
							\sum _{i\geq 1}\frac{(zL)^{(i)}}{i!}(zS)^{i} &= z\big( L\cdot S+D_{1}(L)\cdot (1+S)\cdot \log (1+S) + \mathcal{C}_{1}(S) \big) , \nonumber \\
							\sum _{i\geq 1}\frac{(zS)^{(i)}}{i!}(zR)^{i} &= z\big( S\cdot R+D_{1}(S)\cdot (1+R)\cdot \log (1+R) + \mathcal{K}_{1}(S) \big) , \label{LemaB1Identities}
						\end{align}
						for $\frac{1}{2^{2+\mathrm{ord}_{\boldsymbol{\ell }_{1}}(R)}}$-contractions $\mathcal{C}_{1},\mathcal{K}_{1}:(\mathcal{B}_{\geq 1}^{+},d_{1})\to (\mathcal{B}_{\geq 1}^{+},d_{1})$. Note that
						\begin{align}
							& \mathcal{P}_{\leq (\mathbf{1}_{m},n_{m}+1)}(\mathcal{K}_{1}(S))=\mathcal{P}_{\leq (\mathbf{1}_{m},n_{m}+1)}(\mathcal{C}_{1}(S))=0, \label{LemaB1Identities2}
						\end{align}
						for each $S\in \mathcal{B}_{\geq 1}^{+}\subseteq \mathcal{L}_{k}$. Applying $\mathcal{P}_{\leq (\mathbf{1}_{m},n_{m}+1)}$ to the equation \eqref{LemCaseBEq2} and using \eqref{LemaB1Identities} and \eqref{LemaB1Identities2}, after dividing by $z$, we get the following equivalent equation:
						\begin{align}
							& \mathcal{P}_{\leq (0,\mathbf{1}_{m-1},n_{m}+1)}\Big( S\cdot (R-L) + (R-L)+(1+R)\cdot \log (1+R)\cdot D_{1}(S) \Big) \nonumber \\
							& + \mathcal{P}_{\leq (0,\mathbf{1}_{m-1},n_{m}+1)}\Big( -(1+S)\cdot \log (1+S) \cdot D_{1}(L) \Big) = 0. \label{LemaB1IdentityEq}
						\end{align}
						Since we eliminated $z$ from the equation \eqref{LemCaseBEq2}, we use the projection operator $\mathcal{P}_{\leq (0,\mathbf{1}_{m-1},n_{m}+1)}$ instead of $\mathcal{P}_{\leq (\mathbf{1}_{m},n_{m}+1)}$ in the equation \eqref{LemaB1IdentityEq}.
						
						Recall decomposition $S=S_{m+1}+S_{m}+\cdots +S_{1}$. By \eqref{EqDer1} and \eqref{EqDer2} we have:
						\begin{align}
							& D_{1}(S_{m+1}) =\boldsymbol{\ell }_{1}\cdots \boldsymbol{\ell }_{m-1}D_{m}(S_{m+1}) , \nonumber \\
							& D_{m}(S_{m+1}) =\boldsymbol{\ell }_{m}D_{m+1}(S_{m+1}) , \label{EqLemmaOne}
						\end{align}
						$\mathrm{Lb}_{\boldsymbol{\ell }_{m}}(R)=\mathrm{Lb}_{\boldsymbol{\ell }_{m}}(L)=\boldsymbol{\ell }_{m}^{n_{m}}L_{m+1}$ and $\mathrm{Lb}_{\boldsymbol{\ell }_{m}}\big( \frac{R-L}{\boldsymbol{\ell }_{1}\cdots \boldsymbol{\ell }_{m-1}} \big) =\boldsymbol{\ell }_{m}^{n_{m}+1}T_{m+1}$, analyzing the orders of the terms, we get:
						{\small \begin{align}
								& \mathcal{P}_{\leq (0,\mathbf{1}_{m-1},n_{m}+1)}\Big( (1+R)\cdot \log (1+R)\cdot D_{1}(S)-(1+S)\cdot \log (1+S) \cdot D_{1}(L) + S\cdot (R-L) + (R-L)\Big) \nonumber \\
								& = \boldsymbol{\ell }_{1}\cdots \boldsymbol{\ell }_{m-1}\Big( \boldsymbol{\ell }_{m}^{n_{m}}L_{m+1}\cdot D_{m}(S_{m+1})-(1+S_{m+1})\cdot \log (1+S_{m+1})\cdot D_{m}(\boldsymbol{\ell }_{m}^{n_{m}}L_{m+1}) \Big) \nonumber \\
								& + \boldsymbol{\ell }_{1}\cdots \boldsymbol{\ell }_{m-1}\Big( S_{m+1}\cdot \boldsymbol{\ell }_{m}^{n_{m}+1}T_{m+1} + \boldsymbol{\ell }_{m}^{n_{m}+1}T_{m+1} \Big) . \label{LemaB1Equationlast}
						\end{align}
						}Dividing by $\boldsymbol{\ell }_{1}\cdots \boldsymbol{\ell }_{m-1}$ and using \eqref{LemaB1Equationlast}, equation \eqref{LemaB1IdentityEq} is equivalent to the equation:
						\begin{align}
							& \boldsymbol{\ell }_{m}^{n_{m}}L_{m+1}\cdot D_{m}(S_{m+1})-(1+S_{m+1})\cdot \log (1+S_{m+1})\cdot D_{m}(\boldsymbol{\ell }_{m}^{n_{m}}L_{m+1}) \nonumber \\
							& + S_{m+1}\cdot \boldsymbol{\ell }_{m}^{n_{m}+1}T_{m+1} = -\boldsymbol{\ell }_{m}^{n_{m}+1}T_{m+1} . \label{EqLeFixed}
						\end{align}
						Using \eqref{EqDer1} and \eqref{EqLemmaOne}, we get that equation \eqref{EqLeFixed} is equivalent to the equation:
						\begin{align*}
							& L_{m+1}\cdot D_{m+1}(S_{m+1})-(1+S_{m+1})\cdot \log (1+S_{m+1})\cdot (n_{m}L_{m+1}+D_{m+1}(L_{m+1})) \\
							& + S_{m+1}\cdot T_{m+1} = -T_{m+1} .
						\end{align*}
					\end{proof}
				
					\begin{prop}[A solution of a differential equation in $\mathcal{B}_{\geq m+1}^{+}$]\label{Prop3}
						Let $N,T\in \mathcal{B}_{m+1}\subseteq \mathcal{L}_{k}^{\infty }$, $k\in \mathbb{N}_{\geq 1}$, such that $N\neq 0$ and $\mathrm{ord}(T)> \mathrm{ord}(N)$. Let $n \in \mathbb{N}_{\geq 1}$ and let $h\in x^{2}\mathbb{R}\left[ \left[ x\right] \right] $ be a power series in the variable $x$, with real coefficients, such that $h(0)=h'(0)=0$. Then there exists a unique solution $S\in \mathcal{B}_{\geq m+1}^{+}\subseteq \mathcal{L}_{k}$ of the equation:
						\begin{align}
							N\cdot D_{m+1}(S)-(nN+D_{m+1}(N))\cdot (S+h(S)) + T\cdot S &= T . \label{PropEq1}
						\end{align}	
					\end{prop}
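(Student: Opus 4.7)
The plan is to reformulate \eqref{PropEq1} as a fixed point equation $\mathcal{T}(S)=\mathcal{S}(S)$ and apply Proposition~\ref{KorBanach} on an appropriate complete subspace of $\mathcal{B}_{\geq m+1}^{+}$. Gather the terms of \eqref{PropEq1} so that the dominant linear contribution in $S$ is isolated, rewriting it as
\begin{align*}
\bigl(nN+D_{m+1}(N)-T\bigr)\cdot S \;=\; T \;-\; N\cdot D_{m+1}(S) \;+\; \bigl(nN+D_{m+1}(N)\bigr)\cdot h(S),
\end{align*}
and set $\mathcal{T}(S):=\bigl(nN+D_{m+1}(N)-T\bigr)\cdot S$ together with $\mathcal{S}(S):=T-N\cdot D_{m+1}(S)+\bigl(nN+D_{m+1}(N)\bigr)\cdot h(S)$. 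Since $\mathrm{ord}(T)>\mathrm{ord}(N)$ and $n\geq 1$, the coefficient $A:=nN+D_{m+1}(N)-T$ is a nonzero element of $\mathcal{B}_{m+1}$, whose leading block in $\boldsymbol{\ell}_{m+1}$ is $n\cdot \mathrm{Lb}_{\boldsymbol{\ell}_{m+1}}(N)$ in the generic case. Hence $\mathcal{T}$ is multiplication by a fixed nonzero element and, in the metric $d_{m+1}$, a $\frac{1}{2^{\nu_N}}$-homothety, where $\nu_N:=\mathrm{ord}_{\boldsymbol{\ell}_{m+1}}(N)$.

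Next, I would estimate the Lipschitz coefficient of $\mathcal{S}$ on the complete subspace $X:=\{S\in \mathcal{B}_{m+1}^{+}: \mathrm{ord}_{\boldsymbol{\ell}_{m+1}}(S)\geq s_{0}\}$, with $s_{0}$ dictated by $\mathrm{ord}_{\boldsymbol{\ell}_{m+1}}(T)-\nu_N$. The term $-N\cdot D_{m+1}$ is $\frac{1}{2^{\nu_N+1}}$-Lipschitz, since $D_{m+1}$ is a $\frac{1}{2}$-contraction and multiplication by $N$ is a $\frac{1}{2^{\nu_N}}$-homothety. Because $h(x)\in x^{2}\mathbb{R}[[x]]$ and every $S\in X$ has strictly positive $\boldsymbol{\ell}_{m+1}$-order, the nonlinear map $\bigl(nN+D_{m+1}(N)\bigr)\cdot h(\,\cdot\,)$ is Lipschitz with a coefficient strictly smaller than $\frac{1}{2^{\nu_N+1}}$. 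Thus $\mathcal{S}$ is $\frac{1}{2^{\nu_N+1}}$-Lipschitz, which is strictly smaller than the homothety ratio of $\mathcal{T}$. The inclusion $\mathcal{S}(X)\subseteq \mathcal{T}(X)$ reduces to the solvability of $A\cdot R=G$ for $R\in \mathcal{B}_{m+1}$, which follows from Lemma~\ref{Lem4} after factoring the leading monomial $\boldsymbol{\ell}_{m+1}^{\nu_N}$ out of $A$. Proposition~\ref{KorBanach} then delivers a unique fixed point.

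To obtain $S$ in the whole space $\mathcal{B}_{\geq m+1}^{+}$ (on which $d_{m+1}$ is insensitive to the components in $\mathcal{B}_{m+2}^{+}\oplus \cdots \oplus \mathcal{B}_{k}^{+}$), I would proceed inductively through the direct sum decomposition $\mathcal{B}_{\geq m+1}^{+}=\mathcal{B}_{m+1}^{+}\oplus \mathcal{B}_{m+2}^{+}\oplus \cdots \oplus \mathcal{B}_{k}^{+}$ in the manner of the proof of Lemma~\ref{Lem4}: start on $\mathcal{B}_{k}^{+}$, where $D_{m+1}$ acts either trivially (if $m+1<k$) or by a $\frac{1}{2}$-contraction in the single-level metric $d_{k}$, and solve the reduced equation by the fixed point argument above or by Lemma~\ref{Lem5}; then lift to $\mathcal{B}_{k-1}^{+}\oplus \mathcal{B}_{k}^{+}$ by Taylor-expanding $h$ around the previously found component and reapplying Proposition~\ref{KorBanach}; continue up to level $m+1$. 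Uniqueness of $S\in \mathcal{B}_{\geq m+1}^{+}$ is automatic from the uniqueness at each inductive stage.

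The main obstacle is the \emph{resonant subcase} $\mathrm{ord}_{\boldsymbol{\ell}_{m+1}}(T)=\nu_N$ with $\mathrm{Lb}_{\boldsymbol{\ell}_{m+1}}(T)=n\cdot \mathrm{Lb}_{\boldsymbol{\ell}_{m+1}}(N)$: there the $\boldsymbol{\ell}_{m+1}$-order of $A$ jumps to $\nu_N+1$ (or higher), matching the Lipschitz coefficient of $\mathcal{S}$ and destroying the strict inequality required by Proposition~\ref{KorBanach}. I would resolve this by iteratively enlarging $\mathcal{T}$ so as to absorb successive leading blocks of $\mathcal{S}$ into it (the analogue of the refinement carried out in step $(a.2)$), until $A$ once again dominates strictly; equivalently, one may first perform a preliminary substitution $S=-T/(nN)+\widetilde S$ to cancel the resonant leading behaviour and then apply the fixed point theorem to the equation satisfied by $\widetilde S$.
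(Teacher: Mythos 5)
Your overall strategy (recast \eqref{PropEq1} as $\mathcal{T}(S)=\mathcal{S}(S)$ and invoke Proposition~\ref{KorBanach}) is the paper's strategy, but your particular splitting of the equation leaves a genuine gap. The solution must be sought in $\mathcal{B}_{\geq m+1}^{+}$, which contains elements of $\boldsymbol{\ell}_{m+1}$-order $0$ (e.g.\ $\boldsymbol{\ell}_{m+2}$), and this case really occurs: take $N=\boldsymbol{\ell}_{m+1}$ and $T=\boldsymbol{\ell}_{m+1}\boldsymbol{\ell}_{m+2}$, which satisfy $\mathrm{ord}(T)>\mathrm{ord}(N)$; then the solution behaves like $-\boldsymbol{\ell}_{m+2}/n+\cdots$ and lies outside your space $X\subseteq\mathcal{B}_{m+1}^{+}$. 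On the full space the term $\bigl(nN+D_{m+1}(N)\bigr)\cdot h(\,\cdot\,)$, which you put on the contraction side, is only $\tfrac{1}{2^{\nu_N+\min_i\mathrm{ord}_{\boldsymbol{\ell}_{m+1}}(S_i)}}$-Lipschitz, and when $\mathrm{ord}_{\boldsymbol{\ell}_{m+1}}(S_i)=0$ this equals the homothety ratio $\tfrac{1}{2^{\nu_N}}$ of your $\mathcal{T}$, so the strict inequality $\mu<\lambda$ in Proposition~\ref{KorBanach} fails. Your proposed rescue by induction over $\mathcal{B}_{m+1}^{+}\oplus\cdots\oplus\mathcal{B}_{k}^{+}$ is not worked out and starts from a false premise: $D_{m+1}$ does not act trivially on $\mathcal{B}_{k}^{+}$ when $m+1<k$ (by \eqref{EqDer2} it maps $\mathcal{B}_{k}$ into $\boldsymbol{\ell}_{m+1}\cdots\boldsymbol{\ell}_{k-1}\mathcal{B}_{k}\subseteq\mathcal{B}_{m+1}^{+}$), so the levelwise projections of the equation do not decouple the way you suggest. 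Finally, the ``resonant subcase'' you spend the last paragraph on cannot occur: $\mathrm{Lb}_{\boldsymbol{\ell}_{m+1}}(T)=n\,\mathrm{Lb}_{\boldsymbol{\ell}_{m+1}}(N)$ would force $\mathrm{Lt}(T)=n\,\mathrm{Lt}(N)$ and hence $\mathrm{ord}(T)=\mathrm{ord}(N)$, contradicting the hypothesis; that effort is misdirected while the real difficulty above is left open.

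The paper avoids all of this with one move: divide \eqref{PropEq1} by $N$ and keep the nonlinearity on the homothety side, setting $\mathcal{T}_{m+1}(S):=\bigl(\tfrac{T}{N}-n\bigr)S-n\,h(S)$ and $\mathcal{S}_{m+1}(S):=(S+h(S))\cdot\tfrac{D_{m+1}(N)}{N}-D_{m+1}(S)+\tfrac{T}{N}$. Since $\mathrm{ord}(T/N)>\mathbf{0}_{k+1}$ and $h(S_1)-h(S_2)=(S_1-S_2)\cdot(\text{something of positive full order})$, one gets $\mathcal{T}_{m+1}(S_1)-\mathcal{T}_{m+1}(S_2)=(S_1-S_2)\cdot(-n+\mathrm{h.o.t.})$, so $\mathcal{T}_{m+1}$ is an isometry on all of $(\mathcal{B}_{\geq m+1}^{+},d_{m+1})$, with surjectivity from (the argument of) Lemma~\ref{Lem4}; meanwhile $\mathcal{S}_{m+1}$ is a $\tfrac12$-contraction because $\mathrm{ord}_{\boldsymbol{\ell}_{m+1}}(D_{m+1}(N)/N)\geq 1$ and $D_{m+1}$ raises $\boldsymbol{\ell}_{m+1}$-order by at least $1$. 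No restriction of the space, no decomposition, and no case distinction are needed. If you want to keep your splitting, you must at minimum prove your inclusion $\mathcal{S}(X)\subseteq\mathcal{T}(X)$ and the strict Lipschitz gap on a space large enough to contain the solution, which your current setup does not do.
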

					\begin{proof}
						Dividing by $N\neq 0$ both sides of equation \eqref{PropEq1} and by regrouping, we get the equivalent equation:
						\begin{align*}
							\Big( \frac{T}{N}-n \Big) S-n\cdot h(S) &= (S+h(S))\cdot \frac{D_{m+1}(N)}{N}-D_{m+1}(S) + \frac{T}{N} .
						\end{align*}
						Let $\mathcal{T}_{m+1},\mathcal{S}_{m+1}:\mathcal{B}_{\geq m+1}^{+}\to \mathcal{B}_{\geq m+1}^{+}$ be operators defined by:
						\begin{align}
							\mathcal{S}_{m+1}(S) & := (S+h(S))\cdot \frac{D_{m+1}(N)}{N}-D_{m+1}(S) + \frac{T}{N} , \nonumber \\
							\mathcal{T}_{m+1}(S) & := \Big( \frac{T}{N}-n \Big) S-n\cdot h(S) , \quad S\in \mathcal{B}_{\geq m+1}^{+}\subseteq \mathcal{L}_{k} . \nonumber 
						\end{align}
						Let $S_{1},S_{2}\in \mathcal{B}_{\geq m+1}^{+}\subseteq \mathcal{L}_{k}$ be arbitrary such that $S_{1}\neq S_{2}$. Since $\mathrm{ord}(T)>\mathrm{ord}(N)$, it follows that $\mathrm{ord}\left( \frac{T}{N} \right) > \mathbf{0}_{k+1}$. Note that
						\begin{align*}
							\mathcal{T}_{m+1}(S_{1}) - \mathcal{T}_{m+1}(S_{2}) & = \Big( \frac{T}{N}-n \Big) (S_{1}-S_{2})-n\cdot (S_{1}-S_{2})\cdot \sum _{i\geq 2}\Big( \sum _{j=0}^{i-1}S_{1}^{i-j}\cdot S_{2}^{j} \Big) \\
							& = -n\mathrm{Lt}(S_{1}-S_{2})+\mathrm{h.o.t.}
						\end{align*}
						Therefore, $\mathrm{ord}_{\boldsymbol{\ell }_{m+1}}(\mathcal{T}_{m+1}(S_{1}-S_{2}))=\mathrm{ord}_{\boldsymbol{\ell }_{m+1}}(S_{1}-S_{2})$. This implies that $\mathcal{T}_{m+1}$ is an isometry on the space $(\mathcal{B}_{\geq m+1}^{+},d_{m+1})$. Similarly as in Lemma~\ref{Lem4}, it can be proven that $\mathcal{T}_{m+1}$ is a surjection. Since
						\begin{align*}
							& \mathrm{ord}_{\boldsymbol{\ell }_{m+1}}\Big( \frac{D_{m+1}(N)}{N}\Big) \geq 1 , \\
							& \mathrm{ord}_{\boldsymbol{\ell }_{m+1}}(D_{m+1}(S)) \geq \mathrm{ord}_{\boldsymbol{\ell }_{m+1}}(S)+1 ,
						\end{align*}
						it follows that $\mathcal{S}_{m+1}$ is a $\frac{1}{2}$-contraction on the space $(\mathcal{B}_{\geq m+1}^{+},d_{m+1})$. By the fixed point theorem from Proposition~\ref{KorBanach}, it follows that there exists a unique solution $S\in \mathcal{B}_{\geq m+1}^{+}\subseteq \mathcal{L}_{k}$ of the equation $\mathcal{T}_{m+1}(S)=\mathcal{S}_{m+1}(S)$, i.e., equation \eqref{PropEq1}.	
					\end{proof}
				
					\begin{proof}[Proof of step (b.1)]
						Suppose that $f\in \mathcal{L}_{k}^{0}$, $k\in \mathbb{N}_{\geq 1}$, such that $f=\mathrm{id}+zR+\mathrm{h.o.b.}(z)$, for $R\in \mathcal{B}_{\geq 1}^{+}\subseteq \mathcal{L}_{k}$. We assume here that $R\neq 0$, since otherwise, it follows that $\mathrm{ord}_{z}(f-\mathrm{id}) >1$ and we can apply case $(a)$ instead. Let $\mathrm{ord} \, (f-\mathrm{id}) = (1,\mathbf{0}_{m-1},n_{m},\ldots ,n_{k})$, $n_{m}\geq 1$, $1\leq m\leq k-1$. Let $L$ be as defined in the Main Theorem. In this step we eliminate all terms in $f-\mathrm{id}$ except the terms in $zL$, up to the order $(\mathbf{1}_{m},n_{m}+1)$ in the first $m+1$ variables $z,\boldsymbol{\ell }_{1},\ldots ,\boldsymbol{\ell }_{m}$. Note that, if $m=k$, all such terms are in $zL$. In this case we have nothing to eliminate and we skip case (b.1).
						
						Suppose that $m<k$. By Lemma~\ref{CaseBLemma7} we transform equation \eqref{FirstConj} to differential equation \eqref{EquationLemaB1}. Let us decompose $L$ and $R-L$ as in Lemma~\ref{CaseBLemma7}. If $T_{m+1}=0$, then we have nothing to eliminate and we skip case (b.1). Suppose that $T_{m+1}\neq 0$. Now, put $T:=-T_{m+1}$, $N:=L_{m+1}$ and $h:=(1+x)\log (1+x)-x$. Since $N\neq 0$ and $\mathrm{ord}(T)>\mathrm{ord}(N)$, by Proposition~\ref{Prop3}, there exists a unique solution $S_{m+1}\in \mathcal{B}_{\geq m+1}^{+}\subseteq \mathcal{L}_{k}$ of equation \eqref{EquationLemaB1}. Now, we put $\varphi _{1}:=\mathrm{id}+zS_{m+1}$ and, by Lemma~\ref{CaseBLemma7}, $\varphi _{1}\in \mathcal{L}_{k}^{0}$ is a solution of conjugacy equation \eqref{FirstConj}.  
					\end{proof}
				
					\begin{remark}[Non-uniqueness of the conjugacy $\varphi _{1}$]\label{RemCaseB1}
						Let $\varphi _{1}:=\mathrm{id}+zS+\varepsilon $, $\varepsilon \in \mathcal{L}_{k}$, $\mathrm{ord}_{z} \, (\varepsilon ) >1$, and  $S\in \mathcal{B}_{\geq 1}^{+}\subseteq \mathcal{L}_{k}$, be a solution of conjugacy equation \eqref{FirstConj} for a logarithmic transseries $f\in \mathcal{L}_{k}^{0}$, $k\in \mathbb{N}_{\geq 1}$. Now, put $\mathrm{ord} \, (f - \mathrm{id}):=(1,\mathbf{0}_{m-1},n_{m},\ldots ,n_{k})$, for $n_{m}\geq 1$ and $1\leq m\leq k-1$. We decompose $S$ as $S=S_{1}+\cdots +S_{m}+S_{m+1}$, for $S_{i}\in \mathcal{B}_{i}^{+}\subseteq \mathcal{L}_{k}$, $1\leq i\leq m$, and $S_{m+1}\in \mathcal{B}_{\geq m+1}^{+}\subseteq \mathcal{L}_{k}$. By Lemma~\ref{CaseBLemma7}, $\varphi _{1}$ is a solution of conjugacy equation \eqref{FirstConj} if and only if $S_{m+1}$ is a solution of equation \eqref{EquationLemaB1}. Therefore, we can choose an arbitrary $S_{i}\in \mathcal{B}_{i}^{+}$, $1\leq i\leq m$, and $\varepsilon \in \mathcal{L}_{k}$, $\mathrm{ord}_{z}(\varepsilon ) >1$, such that $\varphi _{1}$ is still a solution of conjugacy equation \eqref{FirstConj}. Although $\varphi _{1}$ is obviously not a unique solution of conjugacy equation \eqref{FirstConj}, if we request the \emph{canonical form} of $\varphi _{1}$, i.e., $\varphi _{1}:=\mathrm{id}+zS_{m+1}$, where $S_{m+1}\in \mathcal{B}_{\geq m+1}^{+}\subseteq \mathcal{L}_{k}$, then, by Lemma~\ref{CaseBLemma7} and Proposition~\ref{Prop3}, it follows that $S_{m+1}\in \mathcal{B}_{\geq m+1}^{+}\subseteq \mathcal{L}_{k}$ is the unique solution of equation \eqref{EquationLemaB1}. Consequently, $\varphi _{1}=\mathrm{id}+zS_{m+1}$ is a unique \emph{canonical} solution of conjugacy equation \eqref{FirstConj}.
					\end{remark}

					\subsubsection{Proof of step (b.2)}
					
					For simplicity of notation we suppose that step $(b.1)$ has already been applied on $f\in \mathcal{L}_{k}^{0}$, for $k\in \mathbb{N}_{\geq 1}$, $\mathrm{ord} \, (f-\mathrm{id})=(1,\mathbf{0}_{m-1},n_{m},\ldots ,n_{k})$, $n_{m}\geq 1$, i.e., that $\mathcal{P}_{\leq (\mathbf{1}_{m},n_{m}+1)}(f)=\mathrm{id}+zL$, where $L$ is as defined in the Main Theorem.
					
					We first transform the conjugacy equation
					\begin{align}
						\mathcal{P}_{<r}(\varphi \circ f\circ \varphi ^{-1}) & = \mathrm{id} + zL, \quad \varphi \in \mathcal{L}_{k}^{0} , \label{SecondConj2}
					\end{align}
					to the differential equation in Lemma~\ref{CaseBLemma6}. Then we apply Proposition~\ref{Prop4} and Proposition~\ref{KorBanach} to solve the mentioned differential equation.
					
					\begin{lem}[Transforming equation \eqref{SecondConj2} into a differential equation]\label{CaseBLemma6}
						Let $f\in \mathcal{L}_{k}^{0}$, $k\in \mathbb{N}_{\geq 1}$, such that $f=\mathrm{id}+zR+\mathrm{h.o.b.}(z)$, for $R\in \mathcal{B}_{\geq 1}^{+}\subseteq \mathcal{L}_{k}$. Put $(1,\mathbf{n}):=\mathrm{ord}(f-\mathrm{id})$ and $\mathbf{n}:=(\mathbf{0}_{m-1},n_{m},\ldots ,n_{k})$, $n_{m}\geq 1$, for $1\leq m\leq k$. Let $L$ be as defined in the Main Theorem. Suppose that $z(R-L)$ contains at least one term of order strictly smaller than $\mathrm{ord} \, (\mathrm{Res} \, (f))$\footnote{Otherwise we skip case $(b.2)$.}. Put\footnote{Note that $\mathrm{ord}(R-L)>(\mathbf{1}_{m},n_{m}+1,n_{m+1},\ldots ,n_{k})$.}
						\begin{align}
							& L=L_{m}+\cdots +L_{1} , \nonumber \\
							& R-L=\boldsymbol{\ell }_{1}\cdots \boldsymbol{\ell }_{m}T_{m}+\boldsymbol{\ell }_{1}\cdots \boldsymbol{\ell }_{m-1}T_{m-1}+\cdots +\boldsymbol{\ell }_{1}T_{1} , \label{EquationLemaDecomposition}
						\end{align}
						where $L_{i},T_{i}\in \mathcal{B}_{i}^{+}\subseteq \mathcal{L}_{k}$, for $i=1,\ldots ,m$. Put $r:=\mathrm{ord}(\mathrm{Res}(f))=(1,2\mathbf{n}+\mathbf{1}_{k})$ and $r_{0}:=(\mathbf{0}_{m},2n_{m}+1,\ldots ,2n_{k}+1)$.
						
						Here, we put $\varphi :=\mathrm{id}+zS+\varepsilon $, for $\varepsilon \in \mathcal{L}_{k}$ such that $\mathrm{ord}_{z} \, (\varepsilon ) >1$, $S\in \mathcal{B}_{\geq 1}^{+}\subseteq \mathcal{L}_{k}$ and decompose $S$ as $S=S_{m}+\cdots +S_{1}$, for $S_{i}\in \mathcal{B}_{i}^{+}\subseteq \mathcal{L}_{k}$, $1\leq i\leq m-1$, and $S_{m}\in \mathcal{B}_{\geq m}^{+}\subseteq \mathcal{L}_{k}$. The logarithmic transseries $\varphi \in \mathcal{L}_{k}^{0}$ is a solution of the equation:
						\begin{align}
							\mathcal{P}_{<r}(\varphi \circ f\circ \varphi ^{-1}) = \mathrm{id}+ zL \nonumber
						\end{align}
						if and only if $S_{m}$ belongs to $\mathcal{B}_{m}^{+}\setminus \left\lbrace 0\right\rbrace \subseteq \mathcal{L}_{k}$ and $S_{m}$ satisfies the equation:
						{\small \begin{align}
							& \mathcal{P}_{<r_{0}}\Big( (1+L_{m})\log (1+L_{m})D_{m}(S_{m}) - D_{m}(L_{m})\cdot (1+S_{m})\log (1+S_{m})+\boldsymbol{\ell }_{m}T_{m}S_{m}-\boldsymbol{\ell }_{m}T_{m} \Big) \nonumber \\
							& = \mathcal{P}_{<r_{0}}(\mathcal{C}(S_{m})) , \label{B2LemaEquation}
						\end{align}
						}where $\mathcal{C}:\mathcal{B}_{\geq 1}^{+}\to \mathcal{B}_{\geq 1}^{+}$ is a $\frac{1}{2^{2+\mathrm{ord}_{\boldsymbol{\ell }_{1}} \, (R)}}$-contraction on the space $(\mathcal{B}_{\geq 1}^{+},d_{1})$.
					\end{lem}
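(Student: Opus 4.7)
The strategy is to mirror the proof of Lemma~\ref{CaseBLemma7}, but with the sharper truncation $\mathcal{P}_{<r}$. First I would rewrite the equation as
\begin{align*}
\mathcal{P}_{<r}\bigl(\varphi\circ f-(\mathrm{id}+zL)\circ\varphi\bigr)=0
\end{align*}
and expand both compositions via the Taylor Theorem (see \cite[Proposition 3.3]{prrs21}), writing $f=\mathrm{id}+zR+\mu$ with $\mathrm{ord}_z(\mu)>1$ and $\varphi=\mathrm{id}+zS+\varepsilon$ with $\mathrm{ord}_z(\varepsilon)>1$. Since $r$ has first component $1$ in the $z$-variable, every monomial with $\mathrm{ord}_z>1$ automatically has order lexicographically $>r$, so every occurrence of $\mu$ or $\varepsilon$ in the Taylor expansion is annihilated by $\mathcal{P}_{<r}$. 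What survives is
\begin{align*}
\mathcal{P}_{<r}\Bigl(z(R-L)+\sum_{i\geq 1}\tfrac{(zS)^{(i)}}{i!}(zR)^i-\sum_{i\geq 1}\tfrac{(zL)^{(i)}}{i!}(zS)^i\Bigr)=0.
\end{align*}

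Next I apply identity~\eqref{Identitet2} to both sums, divide by $z$, and replace $\mathcal{P}_{<r}$ by $\mathcal{P}_{<r_0}$. Using $SR-LS=S(R-L)$, this produces
\begin{align*}
\mathcal{P}_{<r_0}\Bigl((R-L)+S(R-L)+D_1(S)(1+R)\log(1+R)-D_1(L)(1+S)\log(1+S)\Bigr)=\mathcal{P}_{<r_0}\bigl(\mathcal{C}_0(S)\bigr),
\end{align*}
where $\mathcal{C}_0$ collects the contractions furnished by~\eqref{Identitet2} and is a $\frac{1}{2^{2+\mathrm{ord}_{\boldsymbol{\ell}_1}(R)}}$-contraction on $(\mathcal{B}_{\geq 1}^{+},d_1)$. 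I then plug in the decompositions~\eqref{EquationLemaDecomposition} and $S=S_m+\cdots+S_1$ and argue level by level: by~\eqref{EqDer1}--\eqref{EqDer2}, every contribution of $S_j$ with $j<m$ carries a factor of the form $\boldsymbol{\ell}_1\cdots\boldsymbol{\ell}_{j-1}$ with exponent in $\boldsymbol{\ell}_j$ at least one greater than that produced by $L_j$ or $T_j$, which pushes the total order past $r_0$ and forces the $S_j$-contributions to vanish under $\mathcal{P}_{<r_0}$. Only the $S_m$-contribution remains, and replacing $D_1(S_m)$, $D_1(L_m)$ by $\boldsymbol{\ell}_1\cdots\boldsymbol{\ell}_{m-1}D_m(\cdot)$ via~\eqref{EqDer2} and dividing through by $\boldsymbol{\ell}_1\cdots\boldsymbol{\ell}_{m-1}$ produces exactly the left-hand side of~\eqref{B2LemaEquation}.

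The remaining task is to express $(1+R)\log(1+R)-(1+L)\log(1+L)$, which also appears after the above replacements, as a series in the small increment $R-L$ (viewing $g(x)=(1+x)\log(1+x)$ as an analytic function) and to merge all such correction terms together with $\mathcal{C}_0$ into a single $\frac{1}{2^{2+\mathrm{ord}_{\boldsymbol{\ell}_1}(R)}}$-contraction $\mathcal{C}$ on $(\mathcal{B}_{\geq 1}^{+},d_1)$. The hypothesis that $z(R-L)$ contains a term of order strictly smaller than $r$ forces the constant piece $-\boldsymbol{\ell}_m T_m$ on the left of~\eqref{B2LemaEquation} to be nonzero, which in turn forces $S_m\neq 0$ in any solution. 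The main obstacle is the careful bookkeeping in the level-by-level argument: one must verify that every cross-term involving $S_j$ ($j<m$), $L_i$, $T_i$ genuinely lands at order $\geq r_0$ in the first $m+1$ variables, so that the projected equation depends on $S_m$ alone, and that the error terms repackaged into $\mathcal{C}$ do satisfy the stated contraction bound with respect to the metric $d_1$, exactly analogously to the analogous step in the proof of Lemma~\ref{CaseBLemma7}.
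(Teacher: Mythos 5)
Your overall route is the paper's: reduce to $\mathcal{P}_{<r}\big(z(R-L)+\sum_{i\geq 1}\tfrac{(zS)^{(i)}}{i!}(zR)^{i}-\sum_{i\geq 1}\tfrac{(zL)^{(i)}}{i!}(zS)^{i}\big)=0$ after discarding the $\mu$- and $\varepsilon$-contributions, apply \eqref{Identitet2}, isolate the level-$m$ part, and divide by $z\boldsymbol{\ell }_{1}\cdots \boldsymbol{\ell }_{m-1}$. Two points, however, are genuine gaps rather than bookkeeping. First, the lemma asserts not merely $S_{m}\neq 0$ but $S_{m}\in \mathcal{B}_{m}^{+}$, i.e.\ $\mathrm{ord}_{\boldsymbol{\ell }_{m}}(S_{m})\geq 1$, whereas a priori $S_{m}$ only lies in $\mathcal{B}_{\geq m}^{+}$. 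Your argument (nonvanishing of $\mathcal{P}_{<r_{0}}(\boldsymbol{\ell }_{m}T_{m})$ forces $S_{m}\neq 0$) does not exclude $\mathrm{ord}_{\boldsymbol{\ell }_{m}}(S_{m})=0$. The paper rules this out by a separate order comparison on \eqref{B2LemCaseBEq3}: if $S_{m}\in \mathcal{B}_{\geq m+1}^{+}$, the dominant surviving term of $(1+R)\log (1+R)D_{1}(S)-(1+S)\log (1+S)D_{1}(L)$ has order $\mathrm{ord}(R)+\mathrm{ord}(S)+(0,\mathbf{1}_{m},\mathbf{0}_{k-m})$, strictly smaller than the order of the right-hand side (which for $m>1$ is in fact $0$), a contradiction; the case $m=1$ needs its own comparison. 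Without this, the ``only if'' direction as stated is not established.

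Second, your plan to Taylor-expand $g(R)-g(L_{m})$ (with $g(x):=(1+x)\log (1+x)$) in the increment and ``merge'' the corrections into $\mathcal{C}$ does not yield the stated contraction constant when $m>1$: there $R-L_{m}$ contains $L_{m-1},\ldots ,L_{1}$, whose $\boldsymbol{\ell }_{1}$-order is $0$, so a term like $(g(R)-g(L_{m}))\cdot D_{1}(S_{m})$ is not a $\frac{1}{2^{2+\mathrm{ord}_{\boldsymbol{\ell }_{1}}(R)}}$-contraction in $S_{m}$ with respect to $d_{1}$. The correct mechanism, and the one the paper uses, is that these corrections are annihilated by $\mathcal{P}_{<r}$ because their order in $z,\boldsymbol{\ell }_{1},\ldots ,\boldsymbol{\ell }_{m}$ exceeds $r$; only the Taylor remainder $\mathcal{C}_{L}$ from \eqref{Identitet2} survives as the contraction $\mathcal{C}$ in \eqref{B2LemaEquation}. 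A smaller slip: after dividing by $z$ alone the projection is $\mathcal{P}_{<(0,2\mathbf{n}+\mathbf{1}_{k})}$, not $\mathcal{P}_{<r_{0}}$; the threshold $r_{0}$ is only correct after the further division by $\boldsymbol{\ell }_{1}\cdots \boldsymbol{\ell }_{m-1}$, so your intermediate display is mislabeled.
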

				
					\begin{proof}
						We first apply the similar procedure as in the proof of Lemma~\ref{CaseBLemma7}. By comparing orders and applying projection operator $\mathcal{P}_{<r}$, it is easy to see that the conjugacy equation $\mathcal{P}_{<r}( \varphi \circ f\circ \varphi ^{-1})=\mathrm{id}+zL$ is equivalent to the equation:
						\begin{align*}
							\mathcal{P}_{<r}(\varphi \circ f ) & =\mathcal{P}_{<r}\big( ( \mathrm{id}+zL)\circ \varphi \big) ,
						\end{align*}
						where $\varphi :=\mathrm{id}+zS+\varepsilon $, for $S\in \mathcal{B}_{\geq 1}^{+}\subseteq \mathcal{L}_{k}$ and $\varepsilon \in \mathcal{L}_{k}$, $\mathrm{ord}_{z} \, (\varepsilon ) >1$. Put $\mu :=f-(\mathrm{id}+zR)$. By the Taylor Theorem (see \cite[Proposition 3.3]{prrs21}) we get the equivalent equation:
						{\small \begin{align}
								& \mathcal{P}_{<r}\Big( zS+\varepsilon+\sum _{i\geq 1}\frac{(zS+\varepsilon )^{(i)}(\mathrm{id}+zR)}{i!}\mu ^{i}+\mathrm{id}+zR+\mu  + \sum _{i\geq 1}\frac{(zS)^{(i)}}{i!}(zR)^{i} + \sum _{i\geq 1}\frac{\varepsilon ^{(i)}}{i!}(zR)^{i} \Big) \nonumber \\
								&= \mathcal{P}_{<r}\Big( \mathrm{id}+zS+\varepsilon+zL+\sum _{i\geq 1}\frac{(zL)^{(i)}(\mathrm{id}+zS)}{i!}\varepsilon^{i}+\sum _{i\geq 1}\frac{(zL)^{(i)}}{i!}(zS)^{i} \Big) . \label{B2LemCaseBEq1}
						\end{align}
						}Subtracting and applying operator $\mathcal{P}_{<r}$ on \eqref{B2LemCaseBEq1}, analyzing the orders of terms, we get that the equation $\mathcal{P}_{<r}(\varphi \circ f\circ \varphi ^{-1})=\mathrm{id}+zL$ is equivalent to the equation:
						\begin{align}
							\mathcal{P}_{<r}\Big( \sum _{i\geq 1}\frac{(zS)^{(i)}}{i!}(zR)^{i} - \sum _{i\geq 1}\frac{(zL)^{(i)}}{i!}(zS)^{i} + z(R-L) \Big) &= 0 . \label{B2LemCaseBEq2}
						\end{align}
						By \eqref{Identitet2}, it follows that:
						\begin{align}
							\sum _{i\geq 1}\frac{(zL)^{(i)}}{i!}(zS)^{i} &= z\left( LS+D_{1}(L)(1+S)\log (1+S) + \mathcal{C}_{L}(S) \right) , \nonumber \\
							\sum _{i\geq 1}\frac{(zS)^{(i)}}{i!}(zR)^{i} &= z\left( SR+D_{1}(S)(1+R)\log (1+R) + \mathcal{K}_{R}(S) \right) , \label{EquationLemaProjectorIdenti}
						\end{align}
						for a $\frac{1}{2^{2+\mathrm{ord}_{\boldsymbol{\ell }_{1}}\, (R)}}$-contraction $\mathcal{C}_{L}:=\mathcal{C}(\cdot ,L):(\mathcal{B}_{\geq 1}^{+},d_{1})\to (\mathcal{B}_{\geq 1}^{+},d_{1})$ and a $\frac{1}{2^{2+2\cdot \mathrm{ord}_{\boldsymbol{\ell }_{1}} \, (R)}}$-contraction $\mathcal{K}_{R}:=\mathcal{C}(R,\cdot ):(\mathcal{B}_{\geq 1}^{+},d_{1})\to (\mathcal{B}_{\geq 1}^{+},d_{1})$. \\
						Since
						\begin{align*}
							\mathrm{ord}_{\boldsymbol{\ell }_{1}} (\mathcal{K}_{R}(S)) & \geq 2+2\cdot \mathrm{ord}_{\boldsymbol{\ell }_{1}} (R) ,
						\end{align*}
						it follows that $\mathcal{P}_{<r}(\mathcal{K}_{R}(S)) =0$. Consequently, by \eqref{B2LemCaseBEq2} and \eqref{EquationLemaProjectorIdenti}, we get that the equation $\mathcal{P}_{<r}(\varphi \circ f\circ \varphi ^{-1})=\mathrm{id}+zL$ is equivalent to the equation:
						{\small \begin{align}
							& \mathcal{P}_{<r}\Big( z\Big( S\cdot (R-L) + (R-L)+(1+R)\log (1+R)D_{1}(S)-(1+S)\log (1+S) D_{1}(L) \Big) \Big) \nonumber \\
							& = \mathcal{P}_{<r}(z\mathcal{C}_{L}(S)). \label{B2LemCaseBEq3}
						\end{align}
						}Now decompose $S=S_{m}+\cdots +S_{1}$, for $S_{i}\in \mathcal{B}_{i}^{+}\subseteq \mathcal{L}_{k}$, $1\leq i\leq m-1$, $S_{m}\in \mathcal{B}_{\geq m}^{+}\subseteq \mathcal{L}_{k}$, and $R-L$ as in \eqref{EquationLemaDecomposition}. It follows from \eqref{B2LemCaseBEq3} that $S_{m}\in \mathcal{B}_{m}^{+}\setminus \left\lbrace 0\right\rbrace \subseteq \mathcal{L}_{k}$.
					
						On the contrary, suppose that $S_{m} =0$, or $\mathrm{ord}_{\boldsymbol{\ell }_{m}}(S_{m}) =0$, i.e., $S_{m}\in \mathcal{B}_{\geq m+1}^{+}$.
					
						Suppose that $m=1$. Then $S=S_{1}$. Note that $\mathrm{ord}_{\boldsymbol{\ell }_{1}}(S\cdot (R-L)) \geq 2+\mathrm{ord}_{\boldsymbol{\ell }_{1}}(R)$ and $\mathrm{ord}_{\boldsymbol{\ell }_{1}}(\mathcal{C}_{L}(S)) \geq 2+\mathrm{ord}_{\boldsymbol{\ell }_{1}}(R)$. Since $S\in \mathcal{B}_{\geq 2}^{+}$ and $R\in \mathcal{B}_{1}^{+}$, it follows that:
						\begin{align*}
							\mathrm{ord}_{\boldsymbol{\ell }_{1}} \big( (1+R)\cdot \log (1+R)\cdot D_{1}(S)-(1+S)\cdot \log (1+S) \cdot D_{1}(L) \big) & = \mathrm{ord}_{\boldsymbol{\ell }_{1}} (R)+1 .
						\end{align*}
						Now we get that the order of the left-hand side of equation \eqref{B2LemCaseBEq3} is strictly smaller than the order of the right-hand side of the same equation, which is a contradiction.
						
						Suppose that $m>1$. It can be seen that $\mathrm{ord}(z\mathcal{C}_{L}(S))\geq r$, and therefore, the right-hand side of equation \eqref{B2LemCaseBEq3} is equal to zero. Note that $\mathrm{ord}\, (S\cdot (R-L)) > \mathrm{ord}\, (R) + \mathrm{ord} \, (S)+(0,\mathbf{1}_{m},\mathbf{0}_{k-m})$. Since $S_{m}\in \mathcal{B}_{\geq m+1}^{+}\subseteq \mathcal{L}_{k}$ and $L_{m}\in \mathcal{B}_{m}^{+}\subseteq \mathcal{L}_{k}$, it follows that:
						\begin{align*}
							& \mathrm{ord} \, \big( (1+R)\log (1+R)D_{1}(S)-(1+S)\log (1+S) D_{1}(L) \big) \\
							& = \mathrm{ord}\, (R)+\mathrm{ord} \, (S)+(0,\mathbf{1}_{m},\mathbf{0}_{k-m}) .
						\end{align*}
						Now, the order of the left-hand side of equation \eqref{B2LemCaseBEq3} is strictly smaller than the order of the right-hand side of the same equation, which is a contradiction. Thus, we proved that $S_{m}\in \mathcal{B}_{m}^{+}\setminus \left\lbrace 0\right\rbrace \subseteq \mathcal{L}_{k}$. \\
						
						We finish the proof considering separately cases $m=1$ and $m>1$. Suppose that $m=1$. Then we have $S=S_{1}$, $R-L=\boldsymbol{\ell }_{1}T_{1}$ and $L=L_{1}$. Therefore, in case $m=1$ equation \eqref{B2LemaEquation} follows by dividing equation \eqref{B2LemCaseBEq3} by $z$.
						
						Now, suppose that $m>1$. Note that $D_{1}(S_{i})=\boldsymbol{\ell }_{1}\cdots \boldsymbol{\ell }_{i-1}D_{i}(S_{i})$, for $2\leq i\leq m$, $(1+R)\log (1+R)=R+\sum _{i\geq 2}\frac{(-1)^{i}}{i(i-1)}R^{i}$ and $R=L_{m}+L_{m-1}+\cdots +L_{1}+\mathrm{h.o.t.}$ Note that $S_{m}\neq 0$ by above discussion, and $L_{m}\neq 0$. Analyzing orders of terms, it follows (for $m>1$) that:
						\begin{align}
							& \mathcal{P}_{<r}(z(1+R)\log (1+R)D_{1}(S)) \nonumber \\
							&=\mathcal{P}_{<r}(z(1+L_{m})\log (1+L_{m})D_{1}(S_{m})) \nonumber \\
							&=\mathcal{P}_{<r}(z\boldsymbol{\ell }_{1}\cdots \boldsymbol{\ell }_{m-1}(1+L_{m})\log (1+L_{m})D_{m}(S_{m})) . \label{LemaCaseB2Equat31}
						\end{align}
						By \eqref{EquationLemaDecomposition}, it follows that:
						\begin{align}
							\mathcal{P}_{<r}(z(R-L)) &=\mathcal{P}_{<r}(z\boldsymbol{\ell }_{1}\cdots \boldsymbol{\ell }_{m}T_{m}) . \label{LemaCaseB2Equat32}
						\end{align}
						Since $\mathrm{ord}_{\boldsymbol{\ell }_{1}}(\mathcal{C}_{L}(S)) \geq 2+\mathrm{ord}_{\boldsymbol{\ell }_{1}}(R) >1$, it follows that $\mathcal{P}_{<r}(z\mathcal{C}_{L}(S)) =0$, $S\in \mathcal{B}_{\geq 1}^{+}\subseteq \mathcal{L}_{k}$. Therefore:
						\begin{align}
							& \mathcal{P}_{<r}(z(1+S)\log (1+S)D_{1}(R) + \mathcal{C}_{L}(S)) \nonumber \\
							&=\mathcal{P}_{<r} \big( z(1+S_{m})\log (1+S_{m})D_{1}(L_{m}) \big) \nonumber \\
							&=\mathcal{P}_{<r}(z\boldsymbol{\ell }_{1}\cdots \boldsymbol{\ell }_{m-1}(1+S_{m})\log (1+S_{m})D_{m}(L_{m})) . \label{LemaCaseB2Equat33}
						\end{align}
						After dividing by $z\boldsymbol{\ell }_{1}\cdots \boldsymbol{\ell }_{m-1}$ and using \eqref{LemaCaseB2Equat31}, \eqref{LemaCaseB2Equat32} and \eqref{LemaCaseB2Equat33}, we get that equation \eqref{B2LemCaseBEq3} is equivalent to the equation \eqref{B2LemaEquation}. Therefore, the equation \eqref{B2LemaEquation} is equivalent to the equation $\mathcal{P}_{<r}(\varphi \circ f\circ \varphi ^{-1})=\mathrm{id}+zL$.
					\end{proof}
					
					\begin{prop}[Solution of a differential equation]\label{Prop4}
						Let $L\in \mathcal{B}_{m}^{+}\setminus \left\lbrace 0\right\rbrace \subseteq \mathcal{L}_{k}$, $\mathrm{ord}(L)=(\mathbf{0}_{m}, n_{m},\ldots ,n_{k})$, $n_{m}\geq 1$, and $r_{0}:=(\mathbf{0}_{m},2n_{m}+1,\ldots ,2n_{k}+1)$, $1\leq m\leq k$, $k\in \mathbb{N}_{\geq 1}$. Let $V\in \mathcal{B}_{m}^{+}\subseteq \mathcal{L}_{k}$, $\mathrm{ord}_{\boldsymbol{\ell }_{m}}(V)\geq  n_{m}+2$, $\mathrm{ord}(V)<r_{0}$. Furthermore, let $h\in x^{2}\mathbb{R}\left[ \left[ x\right] \right] $ be a power series in the variable $x$, with real coefficients, such that $h(0)=h'(0)=0$, and let $\mathcal{C}_{m}:\mathcal{B}_{m}^{+}\to \mathcal{B}_{m}^{+}$ be a $\frac{1}{2^{2+n_{m}}}$-contraction, with respect to the metric $d_{m}$. Then there exists a solution $S\in \mathcal{B}_{m}^{+}\subseteq \mathcal{L}_{k}$, $\mathrm{ord}(S)<\mathrm{ord}(L)$, of the equation:
						\begin{align}
							\mathcal{P}_{<r_{0}}\Big( (L+h(L))D_{m}(S) - D_{m}(L)\cdot (S+h(S))+VS\Big) & = \mathcal{P}_{<r_{0}}(V+\mathcal{C}_{m}(S)) . \label{PropEq4}
						\end{align}
						Moreover, the solution $S$ is unique if we impose condition that every term in $S$ has strictly smaller order than $r_{0}$.
					\end{prop}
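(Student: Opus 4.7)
The plan is to apply the fixed point theorem from Proposition~\ref{KorBanach} after decomposing equation \eqref{PropEq4} into a dominant linear part and a strictly smaller remainder. On the complete subspace $\mathcal{X}\subseteq \mathcal{B}_{m}^{+}\subseteq \mathcal{L}_{k}$ consisting of all $S$ with $\mathrm{ord}(S)<\mathrm{ord}(L)$ and every term of $S$ of order strictly less than $r_{0}$, define the operators
\begin{align*}
  \mathcal{T}(S) &:= \mathcal{P}_{<r_{0}}\big((L+h(L))D_{m}(S)-D_{m}(L)\cdot S\big), \\
  \mathcal{S}(S) &:= \mathcal{P}_{<r_{0}}\big(V+\mathcal{C}_{m}(S)+D_{m}(L)\cdot h(S)-VS\big).
\end{align*}
Equation \eqref{PropEq4} is then equivalent to the fixed point equation $\mathcal{T}(S)=\mathcal{S}(S)$. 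The strategy is (i) to prove $\mathcal{T}$ is a $\frac{1}{2^{n_{m}+1}}$-homothety on $\mathcal{X}$, (ii) to prove $\mathcal{S}$ is a $\frac{1}{2^{n_{m}+2}}$-Lipschitz map, (iii) to verify $\mathcal{S}(\mathcal{X})\subseteq \mathcal{T}(\mathcal{X})$, and then to invoke Proposition~\ref{KorBanach}.

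For (i), write $L=\boldsymbol{\ell}_{m}^{n_{m}}L_{m+1}+\mathrm{h.o.t.}$ with $L_{m+1}\in\mathcal{B}_{m+1}$ and $S=\boldsymbol{\ell}_{m}^{p}X+\mathrm{h.o.t.}$ with $X\in\mathcal{B}_{m+1}$. Using identity \eqref{EqDer1} and the Wronskian-like reduction $L_{m+1}D_{m+1}(X)-D_{m+1}(L_{m+1})X=L_{m+1}^{2}D_{m+1}(X/L_{m+1})$, the leading $\boldsymbol{\ell}_{m}$-term of $(L+h(L))D_{m}(S)-D_{m}(L)S$ equals $\boldsymbol{\ell}_{m}^{n_{m}+p+1}\cdot\big[(p-n_{m})L_{m+1}X+L_{m+1}^{2}D_{m+1}(X/L_{m+1})\big]$. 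When $p<n_{m}$ this bracket is nonzero; when $p=n_{m}$, the constraint $\mathrm{ord}(S)<\mathrm{ord}(L)$ forbids $X/L_{m+1}$ from being a nonzero constant, forcing $D_{m+1}(X/L_{m+1})\neq 0$. Recursing through the levels $m+1,m+2,\ldots$ (at each step detecting the first index at which the exponents of $S$ and $L$ diverge) the same computation shows the leading term of $\mathcal{T}(S)$ is nonzero with multi-index strictly below $r_{0}$, hence surviving $\mathcal{P}_{<r_{0}}$. For (ii), the three ingredients $\mathcal{C}_{m}$, $VS$, and $D_{m}(L)h(S)$ are each $\frac{1}{2^{n_{m}+2}}$-Lipschitz with respect to $d_{m}$: $\mathcal{C}_{m}$ by hypothesis; $VS$ because $\mathrm{ord}_{\boldsymbol{\ell}_{m}}(V)\geq n_{m}+2$; and $D_{m}(L)h(S)$ because $h(x)\in x^{2}\mathbb{R}[[x]]$ yields the factorization $h(S_{1})-h(S_{2})=(S_{1}-S_{2})R(S_{1},S_{2})$ with $\mathrm{ord}_{\boldsymbol{\ell}_{m}}(R)\geq 1$, combined with $\mathrm{ord}_{\boldsymbol{\ell}_{m}}(D_{m}(L))=n_{m}+1$. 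Consequently $\mu=\frac{1}{2^{n_{m}+2}}<\frac{1}{2^{n_{m}+1}}=\lambda$.

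For (iii), I solve the inhomogeneous linear equation $\mathcal{T}(S)=g$ for $g\in\mathcal{S}(\mathcal{X})$ inside $\mathcal{X}$ by the integrating-factor method used in the proof of Proposition~\ref{PropositionT}, combined with an inductive procedure on blocks of decreasing depth analogous to Lemma~\ref{Lem4}. Noting that $(L+h(L))D_{m}(S)-D_{m}(L)S$ agrees with $L^{2}D_{m}(S/L)$ to leading order, one recovers $S/L$ monomial by monomial via Lemma~\ref{Lemma1}, with the free integration constants — which would shift $S$ by multiples of $L$ — uniquely pinned down by the requirement $\mathrm{ord}(S)<\mathrm{ord}(L)$. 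Proposition~\ref{KorBanach} then delivers a unique $S\in\mathcal{X}$ solving $\mathcal{T}(S)=\mathcal{S}(S)$, furnishing both existence and the asserted uniqueness.

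The main obstacle will be (i): tracking the Wronskian-type cancellations recursively across the nested levels $m,m+1,\ldots,k$ and confirming that $\mathcal{T}$ is a genuine homothety (with a well-defined contraction ratio) and not merely a Lipschitz map. The hypothesis $\mathrm{ord}(S)<\mathrm{ord}(L)$ is precisely what is needed to guarantee that, at the first depth where the exponents of $S$ and $L$ separate, the Wronskian-like coefficient is nonzero and the resulting leading monomial of $\mathcal{T}(S)$ has multi-index strictly below $r_{0}$.
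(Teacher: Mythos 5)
Your overall architecture --- splitting \eqref{PropEq4} into a Wronskian--type linear operator $\mathcal{T}$ plus a strictly smaller remainder $\mathcal{S}$ and invoking Proposition~\ref{KorBanach} --- is the same as the paper's, and your Lipschitz estimates in (ii) are correct (whether $h(L)D_{m}(S)$ sits in $\mathcal{T}$ or in $\mathcal{S}$ is immaterial, since its $\boldsymbol{\ell}_{m}$-order exceeds that of the Wronskian part by $n_{m}\geq 1$). The gap is the choice of domain $\mathcal{X}$, and it is fatal to step (i). A homothety must satisfy $d_{m}(\mathcal{T}(S_{1}),\mathcal{T}(S_{2}))=\lambda\, d_{m}(S_{1},S_{2})$ for \emph{all pairs}, so the relevant object is the difference $S_{1}-S_{2}$, which for $S_{1},S_{2}\in\mathcal{X}$ need not have order smaller than $\mathrm{ord}(L)$. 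Concretely, put $P:=\mathcal{P}_{<r_{0}}(L)$ and $L':=L-P$ (every term of $L'$ has order $\geq r_{0}$), take any $S_{2}\in\mathcal{X}$ and set $S_{1}:=S_{2}+cP$ with $c\neq 0$. Then $S_{1}\in\mathcal{X}$ (its leading term is that of $S_{2}$, and every term of $P$ has order $<r_{0}$), yet $\mathcal{T}(S_{1})-\mathcal{T}(S_{2})=c\,\mathcal{P}_{<r_{0}}\big(L'D_{m}(L)-LD_{m}(L')+h(L)D_{m}(P)\big)=0$, because each of these three products has every term of order $\geq r_{0}$ (for the last one use $\mathrm{ord}\big(h(L)D_{m}(P)\big)\geq 3\,\mathrm{ord}(L)+(\mathbf{0}_{m},1,\mathbf{0}_{k-m})>r_{0}$ since $n_{m}\geq 1$). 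So $\mathcal{T}$ is not even injective on $\mathcal{X}$, and Proposition~\ref{KorBanach} does not apply there. Your leading-term analysis in (i) only treats differences with $\mathrm{ord}(S_{1}-S_{2})<\mathrm{ord}(L)$ (that is exactly where you use the hypothesis to exclude $X/L_{m+1}$ being a nonzero constant); it is silent on differences of order $\geq\mathrm{ord}(L)$, where moreover even a nonzero Wronskian leading term can be annihilated by $\mathcal{P}_{<r_{0}}$.

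The repair is the one the paper uses: restrict to the \emph{linear} subspace $\widetilde{\mathcal{B}}$ of all elements of $\mathcal{B}_{m}^{+}$ every term of which has order strictly smaller than $\mathrm{ord}(L)$ (not merely $<r_{0}$; the condition $<r_{0}$ in the uniqueness clause should be read through this lens). That space is closed under differences; on it, $LD_{m}(S)-SD_{m}(L)=N$ with $\mathrm{ord}(N)\geq r_{0}$ forces, via the integrating factor, $S=L\big(C+\int NL^{-2}\,d\boldsymbol{\ell}_{m}/\boldsymbol{\ell}_{m}^{2}\big)$ and hence $N=0$, $C=0$, $S=0$; and for $S\in\widetilde{\mathcal{B}}\setminus\{0\}$ the Wronskian's leading term has order below $2\,\mathrm{ord}(L)+(\mathbf{0}_{m},1,\mathbf{0}_{k-m})<r_{0}$, so it survives the projection and the exact homothety ratio $\frac{1}{2^{n_{m}+1}}$ is obtained. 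Finally, your step (iii) is only a sketch: because the equation is posed modulo $\mathcal{P}_{<r_{0}}$, the integrating-factor inversion cannot be done in one stroke but must be carried out level by level in $\boldsymbol{\ell}_{m},\boldsymbol{\ell}_{m+1},\ldots,\boldsymbol{\ell}_{k}$, correcting at each stage for the error terms of order $\geq r_{0}$ created at the previous stage (this is the role of Lemma~\ref{LemaInductiveStep} and the auxiliary transseries $G_{m+1},G_{m+2},\ldots$ in the paper's proof); once the domain is fixed as above, your constants-of-integration argument does pin down the solution.
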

					
					\begin{proof}
						Note that:
						\begin{align*}
							L & = \boldsymbol{\ell }_{m}^{n_{m}}P_{m+1} + P_{m} , \quad n_{m}\geq 1, 
						\end{align*}
						where $P_{m+1}\in \mathcal{B}_{m+1}\subseteq \mathcal{L}_{k}^{\infty }$ and $P_{m}\in \mathcal{B}_{m}^{+}\subseteq \mathcal{L}_{k}$, $\mathrm{ord}_{\boldsymbol{\ell }_{m}}(P_{m}) \geq n_{m}+1$.
						Put:
						\begin{align}
							\mathcal{S}_{m}(S) & := \mathcal{P}_{<r_{0}}\Big( V+\mathcal{C}_{m}(S)-h(L)\cdot D_{m}(S)+h(S)\cdot D_{m}(L)-SV\Big) , \nonumber \\
							\mathcal{T}_{m}(S) & := \mathcal{P}_{<r_{0}}\Big( LD_{m}(S) - SD_{m}(L) \Big) . \label{PropSTPart2}
						\end{align}
						Note that \eqref{PropEq4}, for $S\in \mathcal{B}_{m}^{+}\subseteq \mathcal{L}_{k}$, is equivalent to:
						\begin{align*}
							\mathcal{T}_{m}(S) &= \mathcal{S}_{m}(S) .
						\end{align*}
						Since
						\begin{align}
							& \mathrm{ord}_{\boldsymbol{\ell }_{m}}(\mathcal{C}_{m}(S)) \geq \mathrm{ord}_{\boldsymbol{\ell }_{m}}(S)+n_{m}+2, \nonumber \\
							& \mathrm{ord}_{\boldsymbol{\ell }_{m}}(SV) \geq \mathrm{ord}_{\boldsymbol{\ell }_{m}}(S)+n_{m}+2, \nonumber \\
							& \mathrm{ord}_{\boldsymbol{\ell }_{m}}(h(L)\cdot D_{m}(S)) \geq \mathrm{ord}_{\boldsymbol{\ell }_{m}}(S)+2n_{m}+1, \nonumber \\
							& \mathrm{ord}_{\boldsymbol{\ell }_{m}}(h(S)D_{m}(L)) \geq 2\cdot \mathrm{ord}_{\boldsymbol{\ell }_{m}}(S)+n_{m}+1, \label{PropCaseB2FixedEqTS2}
						\end{align}
						and $n_{m}\geq 1,\mathrm{ord}_{\boldsymbol{\ell }_{m}}(S)\geq 1$, it follows that $\mathcal{S}_{m}$ is a $\frac{1}{2^{n_{m}+2}}$-contraction on the space $(\mathcal{B}_{m}^{+},d_{m})$.
						
						Let $\widetilde{\mathcal{B}}\subseteq \mathcal{B}_{m}^{+}\subseteq \mathcal{L}_{k}$ be the space that contains $0$ and all logarithmic transseries in $\mathcal{B}_{m}^{+}$ that contain only terms of order strictly smaller than $(\mathbf{0}_{m},n_{m},\ldots ,n_{k})=\mathrm{ord}(L)$. Note that the restriction $\mathcal{T}_{m}|_{\widetilde{\mathcal{B}}}:\widetilde{\mathcal{B}}\to \mathcal{B}_{m}^{+}$ is a linear operator. For $S\in \widetilde{\mathcal{B}}\setminus \left\lbrace 0\right\rbrace $ we get:
						\begin{align}
							\mathrm{ord}_{\boldsymbol{\ell }_{m}}(\mathcal{T}_{m}(S)) & =\mathrm{ord}_{\boldsymbol{\ell }_{m}}(S) + n_{m}+1 , \label{PropCaseB2FixedEqTS3}
						\end{align}
						if $\mathcal{T}_{m}(S) \neq 0$. Suppose that $\mathcal{T}_{m}(S) = 0$. Then there exists $N\in \mathcal{B}_{m}^{+}\subseteq \mathcal{L}_{k}$, $\mathrm{ord} \, (N) \geq r_{0}$, such that $LD_{m}(S)-SD_{m}(L)=N$. Dividing by $L$ and solving the linear differential equation, we get
						\begin{align*}
							S &=L\cdot \Big( C+\int \frac{N}{L^{2}}\cdot \frac{d\boldsymbol{\ell }_{m}}{\boldsymbol{\ell }_{m}^{2}}\Big) ,
						\end{align*}
						for $C\in \mathbb{R}$. Since $S\in \widetilde{\mathcal{B}}$, we get that $N=0$ and $C=0$, which implies that $S=0$. Consequently, if $S\in \widetilde{\mathcal{B}}\setminus \left\lbrace 0\right\rbrace $, then $\mathcal{T}_{m}(S) \neq 0$, and therefore, \eqref{PropCaseB2FixedEqTS3} holds. Since $\mathcal{T}_{m}|_{\widetilde{\mathcal{B}}}$ is linear, it follows that $\mathcal{T}_{m}|_{\widetilde{\mathcal{B}}}$ is a $\frac{1}{2^{n_{m}+1}}$-homothety, with respect to the metric $d_{m}$.
						
						Let us prove that $\mathcal{S}_{m}(\widetilde{\mathcal{B}})\subseteq \mathcal{T}_{m}(\widetilde{\mathcal{B}})$. Let $K\in \mathcal{S}_{m}(\widetilde{\mathcal{B}})\subseteq \mathcal{B}_{m}^{+} \subseteq \mathcal{L}_{k}$ be arbitrary. By \eqref{PropSTPart2} and \eqref{PropCaseB2FixedEqTS2}, we get $\mathrm{ord}_{\boldsymbol{\ell }_{m}}(K)\geq n_{m}+2$ and $\mathrm{ord}(K)<r_{0}$. We find $S\in \widetilde{\mathcal{B}}$, such that $\mathcal{T}_{m}(S) = K$, i.e.
						\begin{align}
							\mathcal{P}_{<r_{0}}\left( LD_{m}(S) - SD_{m}(L) \right) & = K . \label{PropCaseB2FixedEqTS4}
						\end{align}
						We have the following decomposition:
						\begin{align*}
							K & =K_{m}+\boldsymbol{\ell }_{m}^{2n_{m}+1}K_{m+1}+\boldsymbol{\ell }_{m}^{2n_{m}+1}\boldsymbol{\ell }_{m+1}^{2n_{m+1}+1}K_{m+2}+\cdots + \boldsymbol{\ell }_{m}^{2n_{m}+1}\cdots \boldsymbol{\ell }_{k-1}^{2n_{k-1}+1}K_{k} ,
						\end{align*}
						where $K_{m}\in \mathcal{B}_{m}^{+}\subseteq \mathcal{L}_{k}$, $n_{m}+2\leq \mathrm{ord}_{\boldsymbol{\ell }_{m}}(K_{m}) < 2n_{m}+1$, and $K_{m+i}\in \mathcal{B}_{m+i}\subseteq \mathcal{L}_{k}^{\infty }$, $\mathrm{ord}_{\boldsymbol{\ell }_{m+i}}(K_{m+i}) < 2n_{m+i}+1$, $1\leq i\leq k-m$. \\
						
						Now we proceed to solving \eqref{PropCaseB2FixedEqTS4} inductively in $k-m+1$ steps: \\
						\noindent \emph{Step} $1$. We first solve the equation $\mathcal{P}_{<(\mathbf{0}_{m},2n_{m}+1)}(\mathcal{T}_{m}(S)) = K_{m}$, i.e.
						\begin{align*}
							\mathcal{P}_{<(\mathbf{0}_{m},2n_{m}+1)}(LD_{m}(S)-SD_{m}(L)) & = K_{m}.
						\end{align*}
						By Lemma~\ref{LemaInductiveStep}, there exists a solution $S_{m}\in \mathcal{B}_{m}\subseteq \mathcal{L}_{k}^{\infty }$ of the previous equation, such that order (in $\boldsymbol{\ell }_{m}$) of every term in $S_{m}$ is strictly smaller than $n_{m}$. Since $\mathrm{ord}_{\boldsymbol{\ell }_{m}}(K_{m}) \geq n_{m}+2$, it follows, by \eqref{LemaEquationLinearDiff}, that $S_{m}\in \mathcal{B}_{m}^{+}\subseteq \mathcal{L}_{k}$. Now, put:
						\begin{align}
							\boldsymbol{\ell }_{m}^{2n_{m}+1}G_{m+1} & := \mathcal{P}_{<r_{0}}(\mathcal{T}_{m}(S_{m}) - K_{m}) , \label{PropCaseB2FixedEqTS7}
						\end{align}
						where $G_{m+1}\in \mathcal{B}_{m+1}\subseteq \mathcal{L}_{k}^{\infty }$ is such that $\mathrm{ord}(\boldsymbol{\ell }_{m}^{2n_{m}+1}G_{m+1})<r_{0}$. \\
						
						\noindent \emph{Step} $2$. Now, with $S_{m}\in \mathcal{B}_{m}\subseteq \mathcal{L}_{k}^{\infty }$ from the previous step, we solve in the variable $S\in \mathcal{B}_{m+1}\subseteq \mathcal{L}_{k}^{\infty }$ the equation:
						\begin{align}
							& \mathcal{P}_{<(0,\mathbf{0}_{m-1},2n_{m}+1,2n_{m+1}+1)} \Big( LD_{m}(S_{m}+\boldsymbol{\ell }_{m}^{n_{m}}S) - (S_{m}+\boldsymbol{\ell }_{m}^{n_{m}}S)D_{m}(L) \Big) \nonumber \\
							& = K_{m} + \boldsymbol{\ell }_{m}^{2n_{m}+1}K_{m+1} . \label{PropCaseB2FixedEqTS6}
						\end{align}
						Let us decompose $L$ as:
						\begin{align*}
							L & =L_{m}+\boldsymbol{\ell }_{m}^{n_{m}}L_{m+1},
						\end{align*}
						where $L_{m}\in \mathcal{B}_{m}^{+}\subseteq \mathcal{L}_{k}$, $n_{m}+1\leq \mathrm{ord}_{\boldsymbol{\ell }_{m}}(L_{m})$, and $L_{m+1}\in \mathcal{B}_{m+1}\subseteq \mathcal{L}_{k}^{\infty }$. By \eqref{PropCaseB2FixedEqTS7} from the previous step, we get:
						\begin{align}
							& \mathcal{P}_{<(\mathbf{0}_{m},2n_{m}+1,2n_{m+1}+1)}(LD_{m}(S_{m})-S_{m}D_{m}(L)) \nonumber \\
							& = K_{m} + \mathcal{P}_{<(\mathbf{0}_{m},2n_{m}+1,2n_{m+1}+1)}(\boldsymbol{\ell }_{m}^{2n_{m}+1}G_{m+1}) \nonumber \\
							& = K_{m} + \boldsymbol{\ell }_{m}^{2n_{m}+1}\mathcal{P}_{<(\mathbf{0}_{m+1},2n_{m+1}+1)}(G_{m+1}) . \label{PropCaseB2FixedEqTS5}
						\end{align}
						By \eqref{PropCaseB2FixedEqTS5}, it follows that equation \eqref{PropCaseB2FixedEqTS6} is equivalent to the equation:
						\begin{align*}
							& \mathcal{P}_{<(\mathbf{0}_{m+1},2n_{m+1}+1)} \big( L_{m+1}D_{m+1}(S) -SD_{m+1}(L_{m+1}) \big) \\
							& = K_{m+1} - \mathcal{P}_{<(\mathbf{0}_{m+1},2n_{m+1}+1)}(G_{m+1}) .
						\end{align*}
						By Lemma~\ref{LemaInductiveStep}, there exists a solution $S:=S_{m+1}\in \mathcal{B}_{m+1}\subseteq \mathcal{L}_{k}^{\infty }$ of the previous equation, such that each term in $S_{m+1}$ is of order in $\boldsymbol{\ell }_{m+1}$ strictly smaller than $n_{m+1}$. Now, put:
						\begin{align*}
							\boldsymbol{\ell }_{m}^{2n_{m}+1}\boldsymbol{\ell }_{m+1}^{2n_{m+1}+1}G_{m+2} := \mathcal{P}_{<r_{0}}\big( \mathcal{T}(S_{m}+S_{m+1}) - K_{m}-\boldsymbol{\ell }_{m}^{2n_{m}+1}K_{m+1} \big) ,
						\end{align*}
						where $G_{m+2}\in \mathcal{B}_{m+2}\subseteq \mathcal{L}_{k}^{\infty }$ such that $\mathrm{ord}(\boldsymbol{\ell }_{m}^{2n_{m}+1}\boldsymbol{\ell }_{m+1}^{2n_{m+1}+1}G_{m+2})<r_{0}$. \\
						
						\noindent Inductively, in $k-m+1$ steps, we find $S_{m},\ldots ,S_{k}$. Now, put:
						\begin{align*}
							S & :=S_{m}+\boldsymbol{\ell }_{m}^{n_{m}}S_{m+1}\cdots +\boldsymbol{\ell }_{m}^{n_{m}}\cdots \boldsymbol{\ell }_{k-1}^{n_{k-1}}S_{k} .
						\end{align*}
						Note that $S\in \widetilde{\mathcal{B}}$, and, by the induction, $S$ is a solution of the equation $\mathcal{T}_{m}|_{\widetilde{\mathcal{B}}}(S)=K$. Therefore, $\mathcal{S}_{m}(\widetilde{\mathcal{B}})\subseteq \mathcal{T}_{m}(\widetilde{\mathcal{B}})$.
						
						Now we conclude by the fixed point theorem from Proposition~\ref{KorBanach}.
					\end{proof}
				
					\begin{proof}[Proof of step (b.2)]
						For simplicity of notation we denote again by $f$ logarithmic transseries $\varphi _{1}\circ f\circ \varphi _{1}^{-1}$, with $\varphi _{1}\in \mathcal{L}_{k}^{0}$ obtained in step $(b.1)$. Now, put
						\begin{align*}
							& f=\mathrm{id}+zR+\mathrm{h.o.b.}(z) , \quad R\in \mathcal{B}_{\geq 1}^{+}\subseteq \mathcal{L}_{k} .
						\end{align*}
						Let $\mathrm{ord} \, (f - \mathrm{id}):=(1,\mathbf{0}_{m-1},n_{m},\ldots ,n_{k})$, for $n_{m}\geq 1$, $1\leq m\leq k$, and let $L$ be as defined in the Main Theorem. Suppose additionally that $R\neq L$ and $\mathrm{ord}\, (z(R-L)) < \mathrm{ord}\, (\mathrm{Res}\, (f))$. Otherwise, we skip step $(b.2)$ and proceed to step $(b.3)$.
						
						Now, $f$ satisfies the assumptions of Lemma~\ref{CaseBLemma6}. Therefore, we transform the conjugacy equation $\mathcal{P}_{<r}(\varphi _{2}\circ f\circ \varphi _{2}^{-1})=\mathrm{id}+zL$ to equation \eqref{B2LemaEquation}.
						
						Now, put $V:=\boldsymbol{\ell }_{m}T_{m}\in \mathcal{B}_{m}^{+}\subseteq \mathcal{L}_{k}$, where $T_{m}$ is from decomposition \eqref{EquationLemaDecomposition} in Lemma~\ref{CaseBLemma6}. Since step (b.1) has been applied on $f$, note that $\mathcal{P}_{\leq (\mathbf{1}_{m},n_{m}+1)}(f)=\mathrm{id}+zL$, and therefore, $\mathrm{ord}_{\boldsymbol{\ell }_{m}}(V) \geq \mathrm{ord}_{\boldsymbol{\ell }_{m}}(L)+2$. Let $h:=(1+x)\log (1+x)-x\in x^{2}\mathbb{R}\left[ \left[ x\right] \right] $. If $m=1$, we take $\mathcal{C}_{1}:=\mathcal{C}:\mathcal{B}_{1}^{+}\to \mathcal{B}_{1}^{+}$, for $\frac{1}{2^{2+n_{1}}}$-contraction $\mathcal{C}$, with respect to the metric $d_{1}$, from equation \eqref{B2LemaEquation}. If $m>1$, take $\mathcal{C}_{m}:\mathcal{B}_{m}^{+}\to \mathcal{B}_{m}^{+}$ such that $\mathcal{C}_{m}(S):=0$, for each $S\in \mathcal{B}_{m}^{+}$. Evidently, for $m\geq 1$, $\mathcal{C}_{m}:\mathcal{B}_{m}^{+}\to \mathcal{B}_{m}^{+}$ is a $\frac{1}{2^{2+n_{m}}}$-contraction, with respect to the metric $d_{m}$. Therefore, we can apply Proposition~\ref{Prop4}. So there exists a unique solution $S\in \widetilde{\mathcal{B}}\subseteq \mathcal{B}_{m}^{+}\subseteq \mathcal{L}_{k}$ (for $\widetilde{\mathcal{B}}$ as defined in the proof of Proposition~\ref{Prop4}) of equation \eqref{B2LemaEquation}. Now, by Lemma~\ref{CaseBLemma6}, it follows that $\varphi _{2}:=\mathrm{id}+zS$ (here, we take the solution with the simplest choice $\varepsilon :=0$ in Lemma~\ref{CaseBLemma6}) is one solution of the equation
						\begin{align*}
							\mathcal{P}_{<r}(\varphi _{2}\circ f\circ \varphi _{2}^{-1}) & =\mathrm{id}+zL .
						\end{align*}
						Therefore, there exists $c\in \mathbb{R}$ such that:
						\begin{align*}
							\varphi _{2}\circ f\circ \varphi _{2}^{-1} & =\mathrm{id}+zL+c\mathrm{Res} \, (f) + \mathrm{h.o.t.}
						\end{align*}
						This completes step (b.2).
					\end{proof}
				
					\begin{remark}[Non-uniqueness of conjugacy $\varphi _{2}$ in step (b.2)]\label{RemCaseB2}
						Let $\varphi _{2}:=\mathrm{id}+zS+\varepsilon $, for $S\in \mathcal{B}_{\geq 1}^{+}\subseteq \mathcal{L}_{k}$ and $\varepsilon \in \mathcal{L}_{k}$, $\mathrm{ord}_{z} \, (\varepsilon )>1$, be a parabolic solution of conjugacy equation \eqref{SecondConj}. We do not claim the uniqueness of the solution $\varphi _{2}$. Let us decompose $S$ as $S=S_{m}+S_{m-1}+\cdots +S_{1}$, for $S_{i}\in \mathcal{B}_{i}^{+}\subseteq \mathcal{L}_{k}$, $1\leq i\leq m-1$, and $S_{m}\in \mathcal{B}_{\geq m}^{+}\subseteq \mathcal{L}_{k}$. By Lemma~\ref{CaseBLemma6} it follows that $\varphi _{2}$ is a solution of equation \eqref{SecondConj} if and only if $S_{m}\in \mathcal{B}_{m}^{+}\subseteq \mathcal{L}_{k}$ and if $S_{m}$ is a solution of differential equation given in \eqref{B2LemaEquation}. Therefore, we can choose arbitrary $S_{i}\in \mathcal{B}_{i}^{+}$, $1\leq i\leq m-1$, and $\varepsilon \in \mathcal{L}_{k}$, $\mathrm{ord}_{z}(\varepsilon ) >1$, such that $\varphi _{2}$ is still a solution of \eqref{SecondConj}. Although $\varphi _{2}$ is not unique, if we request the \emph{canonical form} of $\varphi _{2}$, that is, $\varphi _{2}:=\mathrm{id}+zS_{m}$, for $S_{m}\in \widetilde{\mathcal{B}}\subseteq \mathcal{B}_{m}^{+}\subseteq \mathcal{L}_{k}$ (for $\widetilde{\mathcal{B}}$ as defined in the proof of Proposition~\ref{Prop4}), then, by Lemma~\ref{CaseBLemma6}, $S_{m}$ satisfies differential equation \eqref{B2LemaEquation} and, by Proposition~\ref{Prop4}, such $S_{m}$ is unique in $\widetilde{\mathcal{B}}$. Therefore, if $\varphi _{2}$ is a \emph{canonical} solution of conjugacy equation \eqref{SecondConj}, then it is unique. 
					\end{remark}

					\subsubsection{Proof of step (b.3)}
					
					We first transform the conjugacy equation \eqref{ThirdConj} to the fixed point equation in Lemma~\ref{Lem1CaseB}. Then we apply Proposition~\ref{PropCaseBTS} and Proposition~\ref{KorBanach} to solve the mentioned fixed point equation. 
					
					\begin{lem}[Transforming conjugacy equation \eqref{ThirdConj} into a fixed point equation]\label{Lem1CaseB}
						Let $f\in \mathcal{L}_{k}^{0}$, $k\in \mathbb{N}_{\geq 1}$, such that $f:=\mathrm{id}+zR+\mu $, $R\in \mathcal{B}_{\geq 1}^{+}\subseteq \mathcal{L}_{k}$, $\mu \in \mathcal{L}_{k}$, $\mathrm{ord}_{z} \, (\mu ) > 1$, and $g:=\mathrm{id}+zT$, $T\in \mathcal{B}_{\geq 1}^{+}\subseteq \mathcal{L}_{k}$. The conjugacy equation
						\begin{align*}
							& \varphi \circ f\circ \varphi ^{-1}=g,
						\end{align*}
						where we write $\varphi :=\mathrm{id}+zS+\varepsilon $, for $S\in \mathcal{B}_{\geq 1}^{+}\subseteq \mathcal{L}_{k}$, $\varepsilon  \in \mathcal{L}_{k}$, $\mathrm{ord}_{z} \, (\varepsilon ) > 1$, is equivalent to the fixed point equation
						\begin{align*}
							\mathcal{T}_{f}(zS+\varepsilon) &=\mathcal{S}_{f}(zS+\varepsilon)
						\end{align*}
						on the space $\mathcal{L}_{k}^{1}$, where:
						{\small \begin{align}\label{OperST}
							\mathcal{S}_{f}(zS+\varepsilon) &:= \sum _{i\geq 1}\frac{(zS+\varepsilon )^{(i)}(\mathrm{id}+zR)}{i!}\mu ^{i}-\sum _{i\geq 2}\frac{(zT)^{(i)}(\mathrm{id}+zS)}{i!}\varepsilon^{i}+z(R-T)+\mu , \nonumber \\
							\mathcal{T}_{f}(zS+\varepsilon) &:=(zT)'\circ (\mathrm{id}+zS)\cdot \varepsilon - \sum _{i\geq 1}\frac{\varepsilon ^{(i)}}{i!}(zR)^{i}+\sum _{i\geq 1}\frac{(zT)^{(i)}}{i!}(zS)^{i}-\sum _{i\geq 1}\frac{(zS)^{(i)}}{i!}(zR)^{i}. 
						\end{align} }
					\end{lem}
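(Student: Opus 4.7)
The plan is to follow the pattern already established in Lemma~\ref{Lemma2}: rewrite the conjugacy equation as $\varphi \circ f - g \circ \varphi = 0$, expand both compositions by means of the Taylor theorem (see \cite[Proposition 3.3]{prrs21}), cancel the common terms, and rearrange the remainder into the form $\mathcal{T}_{f}(zS+\varepsilon)=\mathcal{S}_{f}(zS+\varepsilon)$. Convergence of the Taylor series in the weak topology is guaranteed by $\mathrm{ord}(zS+\varepsilon)>(1,\mathbf{0}_{k})$ together with $\mathrm{ord}_{z}(\mu)>1$. The novelty compared to Lemma~\ref{Lemma2} is that the perturbation of $\mathrm{id}$ in $\varphi$ now splits into two parts, the block $zS$ and the higher-order tail $\varepsilon$, so Taylor will have to be applied in two successive layers on each side.

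Concretely, on the left I will first expand $\varphi(\mathrm{id}+zR+\mu)$ around $\mathrm{id}+zR$ with $\mu$ as the perturbation, producing the $\mu^{i}$-series $\sum_{i\geq 1}\tfrac{(zS+\varepsilon)^{(i)}(\mathrm{id}+zR)}{i!}\mu^{i}$ together with a leftover $(zS+\varepsilon)(\mathrm{id}+zR)$; a second Taylor expansion of this leftover around $\mathrm{id}$ separates off the leading $zS+\varepsilon$ from the $(zR)^{i}$-sums $\sum_{i\geq 1}\tfrac{(zS)^{(i)}}{i!}(zR)^{i}$ and $\sum_{i\geq 1}\tfrac{\varepsilon^{(i)}}{i!}(zR)^{i}$. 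Symmetrically, on the right I will expand $(\mathrm{id}+zT)\circ(\mathrm{id}+zS+\varepsilon)$ first around $\mathrm{id}+zS$ to generate $\sum_{i\geq 1}\tfrac{(zT)^{(i)}(\mathrm{id}+zS)}{i!}\varepsilon^{i}$ plus a remainder $(zT)(\mathrm{id}+zS)$, and then expand this latter piece around $\mathrm{id}$ to obtain $zT+\sum_{i\geq 1}\tfrac{(zT)^{(i)}}{i!}(zS)^{i}$.

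After cancelling $\mathrm{id}+zS+\varepsilon$ on both sides and peeling the $i=1$ contribution $(zT)'\circ(\mathrm{id}+zS)\cdot\varepsilon$ off the $\varepsilon^{i}$-series, the remaining identity reads precisely $\mathcal{T}_{f}(zS+\varepsilon)=\mathcal{S}_{f}(zS+\varepsilon)$, with the two operators defined as in \eqref{OperST}: the $(zR)^{i}$-sums, the $(zS)^{i}$-sum, and the extracted linear $\varepsilon$-term end up on the $\mathcal{T}_{f}$ side, while $z(R-T)+\mu$, the $\mu^{i}$-series, and the nonlinear $\varepsilon$-tail $\sum_{i\geq 2}\tfrac{(zT)^{(i)}(\mathrm{id}+zS)}{i!}\varepsilon^{i}$ make up $\mathcal{S}_{f}$. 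I do not anticipate a real obstacle: each step is a straightforward transserial identity and the rearrangement is reversible, so the biconditional follows in both directions. The only care needed is clean bookkeeping of the two nested Taylor layers and the reassignment of the $i=1$ term into $\mathcal{T}_{f}$, so that the resulting $\mathcal{S}_{f}$-side carries the genuinely nonlinear $\varepsilon^{i}$-contributions only.
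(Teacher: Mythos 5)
Your proposal is correct and follows essentially the same route as the paper: write the equation as $\varphi\circ f=g\circ\varphi$, apply the Taylor theorem in two nested layers on each side (around $\mathrm{id}+zR$ with increment $\mu$ and then around $\mathrm{id}$ with increment $zR$ on the left; around $\mathrm{id}+zS$ with increment $\varepsilon$ and then around $\mathrm{id}$ with increment $zS$ on the right), cancel $\mathrm{id}+zS+\varepsilon$, and peel the $i=1$ term off the $\varepsilon^{i}$-series. The resulting assignment of terms to $\mathcal{T}_{f}$ and $\mathcal{S}_{f}$ matches \eqref{OperST} exactly, and the reversibility of each step gives the biconditional just as in the paper.
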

				
					\begin{proof}
						The conjugacy equation $\varphi \circ f\circ \varphi ^{-1}=g$ is equivalent to the equation:
						\begin{align}
							\varphi \circ f & =g\circ \varphi  , \nonumber
						\end{align}
						for $\varphi := \mathrm{id}+zS+\varepsilon $, $S\in \mathcal{B}_{\geq 1}^{+}\subseteq \mathcal{L}_{k}$, $\varepsilon \in \mathcal{L}_{k}$, $\mathrm{ord}_{z}(\varepsilon ) >1$. By the Taylor Theorem (see \cite[Proposition 3.3]{prrs21}) we get that:
						{\small \begin{align}
							& zS+\varepsilon+\sum _{i\geq 1}\frac{(zS+\varepsilon )^{(i)}(\mathrm{id}+zR)}{i!}\mu ^{i}+\mathrm{id}+zR+\mu  + \sum _{i\geq 1}\frac{(zS)^{(i)}}{i!}(zR)^{i} + \sum _{i\geq 1}\frac{\varepsilon ^{(i)}}{i!}(zR)^{i} \nonumber \\
							&= \mathrm{id}+zS+\varepsilon+zT+\sum _{i\geq 1}\frac{(zT)^{(i)}(\mathrm{id}+zS)}{i!}\varepsilon^{i}+\sum _{i\geq 1}\frac{(zT)^{(i)}}{i!}(zS)^{i}. \label{LemmaBEquationTaylor}
						\end{align}
						}Let us define the operators $\mathcal{S}_{f}, \mathcal{T}_{f} : \mathcal{L}_{k}^{1}\to \mathcal{L}_{k}^{1}$ as in \eqref{OperST}. By \eqref{LemmaBEquationTaylor}, it follows that the conjugacy equation is equivalent to the equation $\mathcal{S}_{f}(zS+\varepsilon)=\mathcal{T}_{f}(zS+\varepsilon)$, for $S\in \mathcal{B}_{\geq 1}^{+}\subseteq \mathcal{L}_{k}$, $\varepsilon\in \mathcal{L}_{k}$, $\mathrm{ord}_{z}(\varepsilon ) >1$.
					\end{proof}
				
					\begin{prop}[Properties of operators $\mathcal{S}_{f}$ and $\mathcal{T}_{f}$]\label{PropCaseBTS}
						Let $f\in \mathcal{L}_{k}^{0}$, $k\in \mathbb{N}_{\geq 1}$, such that $f:=\mathrm{id}+zR+\mu $, for $R\in \mathcal{B}_{\geq 1}^{+}\setminus \left\lbrace 0\right\rbrace \subseteq \mathcal{L}_{k}$, and let $\alpha := \mathrm{ord}_{z}(\mu )>1$. Let $g:=\mathrm{id}+zT$, $T\in \mathcal{B}_{\geq 1}^{+}\subseteq \mathcal{L}_{k}$ and let $\mathcal{S}_{f}, \mathcal{T}_{f}:z\mathcal{B}_{\geq 1}^{+}\oplus \mathcal{L}_{k}^{\alpha }\to z\mathcal{B}_{\geq 1}^{+}\oplus \mathcal{L}_{k}^{\alpha }$ be operators defined in \eqref{OperST}. If $\mathrm{ord}(z(R-T))>\mathrm{ord}(\mathrm{Res}(f))$, then:
						\begin{enumerate}[1., font=\textup, nolistsep, leftmargin=0.6cm]
							\item the operator $\mathcal{S}_{f}$ is a $\frac{1}{2^{\alpha -1}}$-contraction on the space $(z\mathcal{B}_{\geq 1}^{+}\oplus \mathcal{L}_{k}^{\alpha },d_{z})$,
							\item the restriction of the operator $\mathcal{T}_{f}$ on the space
							\begin{align*}
								\widetilde{\mathcal{L}} & := \left\lbrace h\in z\mathcal{B}_{\geq 1}^{+}\oplus \mathcal{L}_{k}^{\alpha } : \mathrm{ord} \, (h) > \mathrm{ord} \, (f-\mathrm{id}) \right\rbrace 
							\end{align*}
							is an isometry, with respect to the metric $d_{z}$,
							\item $\mathcal{S}_{f}(\widetilde{\mathcal{L}}) \subseteq \mathcal{T}_{f}(\widetilde{\mathcal{L}})$.
						\end{enumerate}
					\end{prop}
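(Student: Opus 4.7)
I will prove the three statements in turn, paralleling the strategy used in the proofs of Proposition~\ref{Lema3} and Proposition~\ref{PropositionT}. The common theme is that the operators $\mathcal{S}_{f}$ and $\mathcal{T}_{f}$ are built from sums whose order in $z$ can be controlled term by term using the identities of Subsection~\ref{ListIdentities}, particularly \eqref{Identitet1}, \eqref{FormulaDerivationFirstBlock}, and \eqref{Identitet2}, combined with Lemma~\ref{Lema2}. Throughout, for $h=zS+\varepsilon$ in the domain I will write differences as $\Delta h = h_{1}-h_{2} = z\Delta S + \Delta\varepsilon$, with $\Delta S \in \mathcal{B}_{\geq 1}^{+}$ and $\mathrm{ord}_{z}(\Delta\varepsilon)\geq\alpha$.

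For statement 1, I will split $\mathcal{S}_{f}$ into four pieces: the two sums, the constant piece $z(R-T)$, and $\mu$. The last two drop out in any difference. The first sum depends on $zS+\varepsilon$ only through its derivatives, each multiplied by $\mu^{i}$ with $\mathrm{ord}_{z}(\mu^{i})\geq i\alpha$; combining this with the $i$-fold differentiation loss of at most $i$ in $\mathrm{ord}_{z}$ yields a net gain of $i(\alpha-1)\geq \alpha-1$ in $\mathrm{ord}_{z}$ per term. For the second sum (indexed by $i\geq 2$), I will invoke Lemma~\ref{Lema2} with the factor $(zT)^{(i)}(\mathrm{id}+zS)$ playing the role of $\varphi^{(i)}$ there, obtaining directly the required $\frac{1}{2^{\alpha-1}}$-Lipschitz estimate. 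Assembling these bounds gives $\mathrm{ord}_{z}\bigl(\mathcal{S}_{f}(h_{1})-\mathcal{S}_{f}(h_{2})\bigr)\geq \mathrm{ord}_{z}(\Delta h)+(\alpha-1)$, i.e.\ the contraction with coefficient $\frac{1}{2^{\alpha-1}}$.

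For statement 2, the isometry property on $\widetilde{\mathcal{L}}$, I will rewrite the two Lie-bracket-type sums in $\mathcal{T}_{f}$ in closed form using \eqref{Identitet2}, and Taylor-expand the mixed term $(zT)'\circ(\mathrm{id}+zS)\cdot\varepsilon$ around $S=0$. After this rewriting, the $z^{1}$-block of $\mathcal{T}_{f}(h)$ depends only on $S$ and contains factors of $T$, $D_{1}(T)$, $R$, $D_{1}(R)$, while the blocks of $z$-order $\geq\alpha$ are governed by $\varepsilon$ with $S$-dependent corrections. The hypothesis $h_{i}\in\widetilde{\mathcal{L}}$, which forces $\mathrm{ord}(zS_{i})>(1,\mathbf{n})=\mathrm{ord}(zR)$, is exactly what prevents cancellation of the leading block: the coefficients $T$ and $R$ have $\boldsymbol{\ell}$-order $\mathbf{n}$, so they multiply $\Delta S$ of strictly greater $\boldsymbol{\ell}$-order, leaving a nonzero leading contribution of the same $\mathrm{ord}_{z}$ as $\Delta h$. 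The main obstacle here is the mixed term $(zT)'(\mathrm{id}+zS)\cdot\varepsilon$; I will handle it by expanding the composition as $(zT)'+ (\text{terms of }\mathrm{ord}_{z}\geq 1)$ via the Taylor Theorem (\cite[Proposition 3.3]{prrs21}) and checking that the correction falls strictly below the relevant leading order.

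For statement 3, I must exhibit a preimage in $\widetilde{\mathcal{L}}$ for every $g\in\mathcal{S}_{f}(\widetilde{\mathcal{L}})$. The form of $\mathcal{S}_{f}$ together with $\mathrm{ord}_{z}(\mu)\geq\alpha$ and the assumption $\mathrm{ord}(z(R-T))>\mathrm{ord}(\mathrm{Res}(f))$ implies that $g$ avoids the obstruction order $\mathrm{ord}(\mathrm{Res}(f))$, which is precisely the order where $\mathcal{T}_{f}$ fails to be surjective (compare Proposition~\ref{PropositionT}, statement 2). To invert, I decompose $g$ into its $z^{1}$-block and higher $z$-blocks: I first solve the block equation for $S\in\mathcal{B}_{\geq 1}^{+}$ matching the $z^{1}$-block, by the technique of Proposition~\ref{Prop4} applied to the linearization obtained from \eqref{Identitet2}; then, with $S$ fixed, I solve the remaining linear first-order ODE for $\varepsilon$ using the integration formula analogous to \eqref{BitnaJednadzba}, where the avoidance of the residual order ensures the integrand is integrable within $\mathfrak{L}^{\infty}$ via Lemma~\ref{Lemma1}. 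The principal difficulty I foresee is the coupling of $S$ and $\varepsilon$ through the term $(zT)'(\mathrm{id}+zS)\cdot\varepsilon$; I will resolve this by an iterative construction over increasing $z$-order, whose convergence follows from Proposition~\ref{KorBanach} applied to a suitably modified contraction on the appropriate complete subspace.
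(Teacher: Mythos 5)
Your overall architecture matches the paper's (order analysis for the contraction; closed forms via \eqref{Identitet2} and a block decomposition for the isometry and surjectivity), but there are two genuine gaps. First, in statement 2 your mechanism for non-cancellation of the leading block is not the right one: after rewriting via \eqref{Identitet2}, the leading part of the $z^{1}$-block of $\mathcal{T}_{f}(h_{1})-\mathcal{T}_{f}(h_{2})$ is the Lie-bracket-type expression $(S_{1}-S_{2})D_{1}(R)-RD_{1}(S_{1}-S_{2})$ (since $\mathrm{Lt}(T)=\mathrm{Lt}(R)$ and $V:=T-R$ has very high order), and this \emph{can} vanish — precisely when $S_{1}-S_{2}=C\cdot R$. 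It is not excluded by ``$T$ and $R$ multiply $\Delta S$ of strictly greater $\boldsymbol{\ell}$-order''; it is excluded by solving the linear ODE for the kernel and using $\mathrm{ord}(S_{i})>\mathrm{ord}(R)$ to force $C=0$. Likewise, when $S_{1}=S_{2}$ the leading coefficient of the $z^{\beta_{0}}$-block is $(1-\beta_{0})\mathrm{Lt}(R)\mathrm{Lt}(S_{\beta_{0},1}-S_{\beta_{0},2})$, and you must observe $\beta_{0}\geq\alpha>1$ to conclude it is nonzero. Without these two kernel checks the isometry claim is unproved.

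Second, in statement 3 the tools you cite do not fit. The $z^{1}$-block equation for $S$ (the analogue of \eqref{CaseBIdent5}) is a full, unprojected nonlinear differential equation whose solution must satisfy $\mathrm{ord}(S)>\mathrm{ord}(R)$; Proposition~\ref{Prop4} only solves a \emph{projected} equation $\mathcal{P}_{<r_{0}}(\cdots)$ and produces $S$ with $\mathrm{ord}(S)<\mathrm{ord}(L)$, so it cannot be used here — the paper instead needs Proposition~\ref{LemaCaseB2OperatorsTS}, resting on the ODE-solver Proposition~\ref{Lemma3}. Moreover, the higher-$z$-order part is not ``a remaining linear first-order ODE for $\varepsilon$'' solvable by the integration formula \eqref{BitnaJednadzba}: the equation couples all derivatives of $\varepsilon$ through $\sum_{i\geq1}\frac{\varepsilon^{(i)}}{i!}(zR)^{i}$ with $\mathrm{ord}_{z}(zR)=1$, so it decomposes into one algebraic-plus-contraction equation \eqref{CaseBIdent8} per $z$-block, each solved by the Banach fixed point theorem, not by quadrature. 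Finally, your placement of the hypothesis $\mathrm{ord}(z(R-T))>\mathrm{ord}(\mathrm{Res}(f))$ is off: here $\beta=1$, so the residual order lies inside the $z^{1}$-block and the hypothesis controls the $S$-equation (guaranteeing $\mathrm{Lt}(T)=\mathrm{Lt}(R)$ and that $V=T-R$, $M$ have high enough order for Proposition~\ref{LemaCaseB2OperatorsTS}), not the integrability of the $\varepsilon$-equation.
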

				
					\begin{proof}
						1. First, note that $\mathcal{S}_{f}(z\mathcal{B}_{\geq 1}^{+}\oplus \mathcal{L}_{k}^{\alpha }), \mathcal{T}_{f}(z\mathcal{B}_{\geq 1}^{+}\oplus \mathcal{L}_{k}^{\alpha }) \subseteq z\mathcal{B}_{\geq 1}^{+}\oplus \mathcal{L}_{k}^{\alpha }$. Indeed, let $S\in \mathcal{B}_{\geq 1}^{+}\subseteq \mathcal{L}_{k}$ and $\varepsilon \in \mathcal{L}_{k}^{\alpha }$. Since $(zT)^{(i)}(\mathrm{id}+zS)=\mathrm{Lt}\big( (zT)^{(i)}\big) +\mathrm{h.o.t.}$, $(zS+\varepsilon )^{(i)}(\mathrm{id}+zR)=\mathrm{Lt}\big( (zS)^{(i)}\big) +\mathrm{h.o.t.}$, it follows that:
						\begin{align*}
							& \mathrm{ord}_{z}\Big( \sum _{i\geq 1}\frac{(zS+\varepsilon )^{(i)}(\mathrm{id}+zR)}{i!}\mu ^{i} \Big) \geq \mathrm{ord}_{z}(\mu) = \alpha , \\
							& \mathrm{ord}_{z}\Big( \sum _{i\geq 2}\frac{(zT)^{(i)}(\mathrm{id}+zS)}{i!}\varepsilon^{i} \Big) \geq 2\cdot \mathrm{ord}_{z}(\varepsilon ) -1\geq \alpha .
						\end{align*}
						Therefore, $\mathcal{S}_{f}(zS+\varepsilon ) \in z\mathcal{B}_{\geq 1}^{+}\oplus \mathcal{L}_{k}^{\alpha }$, and, consequently, $\mathcal{S}_{f}(z\mathcal{B}_{\geq 1}^{+}\oplus \mathcal{L}_{k}^{\alpha }) \subseteq z\mathcal{B}_{\geq 1}^{+}\oplus \mathcal{L}_{k}^{\alpha }$. 
						
						Similarly, since $(zT)'\circ (\mathrm{id} +zS)\cdot \varepsilon = \mathrm{Lt}(T\cdot \varepsilon )+\mathrm{h.o.t.}$ and $\sum _{i\geq 1}\frac{\varepsilon ^{(i)}}{i!}(zR)^{i}=\mathrm{Lt}(\varepsilon '\cdot zR)+\mathrm{h.o.t.}$, it follows that:
						\begin{align*}
							& \mathrm{ord}_{z}\Big( (zT)'\circ (\mathrm{id} +zS)\cdot \varepsilon  \Big) = \mathrm{ord} _{z}(\varepsilon ) \geq \alpha , \\
							& \mathrm{ord}_{z}\Big( \sum _{i\geq 1}\frac{\varepsilon ^{(i)}}{i!}(zR)^{i} \Big) = \mathrm{ord}_{z}(\varepsilon ) \geq \alpha .
						\end{align*}
						Therefore, $\mathcal{T}_{f}(zS+\varepsilon ) \in z\mathcal{B}_{\geq 1}^{+}\oplus \mathcal{L}_{k}^{\alpha }$, and, consequently, $\mathcal{T}_{f}(z\mathcal{B}_{\geq 1}^{+}\oplus \mathcal{L}_{k}^{\alpha }) \subseteq z\mathcal{B}_{\geq 1}^{+}\oplus \mathcal{L}_{k}^{\alpha }$.
						
						Now we prove that $\mathcal{S}_{f}$ is a contraction. Let $zS_{1}+\varepsilon _{1}, zS_{2}+\varepsilon _{2} \in z\mathcal{B}_{\geq 1}^{+}\oplus \mathcal{L}_{k}^{\alpha }$, $S_{1},S_{2}\in \mathcal{B}_{\geq 1}^{+}$, $\varepsilon _{1}, \varepsilon _{2} \in \mathcal{L}_{k}^{\alpha }$, such that $zS_{1}+\varepsilon _{1} \neq zS_{2}+\varepsilon _{2}$. We distinguish two cases. \\
						(a) \emph{Case} $\varepsilon_{1} \neq \varepsilon _{2}$. Since $(zT)^{(i)}(\mathrm{id}+zS_{1})=\mathrm{Lt}\big( (zT)^{(i)}\big) +\mathrm{h.o.t.}$ and $\varepsilon_{1},\varepsilon_{2}\in \mathcal{L}_{k}^{\alpha }$, we get:
						\begin{align*}
							& \mathrm{ord}_{z}\Big( \sum _{i\geq 2}\frac{(zT)^{(i)}(\mathrm{id}+zS_{1})}{i!}\varepsilon_{1}^{i}-\sum _{i\geq 2}\frac{(zT)^{(i)}(\mathrm{id}+zS_{2})}{i!}\varepsilon_{2}^{i}\Big) \\
							& \geq \mathrm{ord}_{z}(\varepsilon _{1}-\varepsilon _{2})+(\alpha -1) \\
							& \geq \mathrm{ord}_{z}\big( (zS_{1}+\varepsilon _{1})-(zS_{2}+\varepsilon _{2})\big) +(\alpha -1) .
						\end{align*}
						(b) \emph{Case} $\varepsilon _{1}=\varepsilon _{2}$. Then $S_{1}\neq S_{2}$ and:
						\begin{align*}
							& \sum _{i\geq 2}\frac{(zT)^{(i)}(\mathrm{id}+zS_{1})}{i!}\varepsilon_{1}^{i}-\sum _{i\geq 2}\frac{(zT)^{(i)}(\mathrm{id}+zS_{2})}{i!}\varepsilon_{1}^{i} \\
							& = -\mathrm{Lt}\Big( \frac{1}{2}z^{-1}D_{1}(T)(S_{1}-S_{2})\varepsilon_{1}^{2} \Big) + \mathrm{h.o.t.}
						\end{align*}
						That is:
						\begin{align*}
							& \mathrm{ord}_{z} \Big( \sum _{i\geq 2}\frac{(zT)^{(i)}(\mathrm{id}+zS_{1})}{i!}\varepsilon_{1}^{i}-\sum _{i\geq 2}\frac{(zT)^{(i)}(\mathrm{id}+zS_{2})}{i!}\varepsilon_{1}^{i} \Big) \\
							& \geq 2\alpha -1 \\
							& \geq \mathrm{ord}_{z}\big( (zS_{1}+\varepsilon _{1})-(zS_{2}+\varepsilon _{1})\big) +(\alpha -1) .
						\end{align*}
						Therefore, in both cases, it follows that:
						\begin{align}
							& \mathrm{ord}_{z}\Big( \sum _{i\geq 2}\frac{(zT)^{(i)}(\mathrm{id}+zS_{1})}{i!}\varepsilon_{1}^{i}-\sum _{i\geq 2}\frac{(zT)^{(i)}(\mathrm{id}+zS_{2})}{i!}\varepsilon_{2}^{i} \Big) \nonumber \\
							& \geq \mathrm{ord}_{z}\big( (zS_{1}+\varepsilon _{1}\big) -(zS_{2}+\varepsilon _{2}))+(\alpha -1) . \label{InequalityOrder3}
						\end{align}
						Note that:
						\begin{align}
							& \mathrm{ord}_{z}\Big( \sum _{i\geq 1}\frac{(zS_{1}+\varepsilon _{1})^{(i)}(\mathrm{id}+zR)}{i!}\mu ^{i} - \sum _{i\geq 1}\frac{(zS_{2}+\varepsilon _{2})^{(i)}(\mathrm{id}+zR)}{i!}\mu ^{i} \Big) \nonumber \\
							& = \mathrm{ord}_{z}\Big( \sum _{i\geq 1}\frac{(z(S_{1}-S_{2})+(\varepsilon _{1}-\varepsilon _{2}))^{(i)}(\mathrm{id}+zR)}{i!}\mu ^{i} \Big) \nonumber \\
							& \geq \mathrm{ord}_{z}\big( z(S_{1}-S_{2})+(\varepsilon _{1}-\varepsilon _{2})\big) + \mathrm{ord}_{z}(\mu) -1 \nonumber \\
							& \geq \mathrm{ord}_{z}\big( (zS_{1}+\varepsilon _{1}\big) -(zS_{2}+\varepsilon _{2})) + \alpha -1 . \label{InequalityOrder4}
						\end{align}
						
						\noindent Finally, statement 1 follows from \eqref{InequalityOrder3} and \eqref{InequalityOrder4}. \\
						
						3. Let $h\in \mathcal{S}_{f}(\widetilde{\mathcal{L}})$, where $\widetilde{\mathcal{L}}\subseteq z\mathcal{B}_{\geq 1}^{+}\oplus \mathcal{L}_{k}^{\alpha }$ is as defined in Proposition~\ref{PropCaseBTS}. Since $\mathcal{S}_{f}(\widetilde{\mathcal{L}}) \subseteq z\mathcal{B}_{\geq 1}^{+}\oplus \mathcal{L}_{k}^{\alpha }$ and $\mathrm{ord}(z(R-T))>\mathrm{ord}(\mathrm{Res}(f))$, by formula \eqref{OperST} for $\mathcal{S}_{f}$, $h$ can be written in the \emph{form}:
						\begin{align*}
							h &=zM+\sum _{\beta \geq \alpha }z^{\beta }M_{\beta },
						\end{align*}
						where $M\in \mathcal{B}_{\geq 1}^{+}\subseteq \mathcal{L}_{k}$, $\mathrm{ord}(zM)>\mathrm{ord}(\mathrm{Res}(f))$, $M_{\beta }\in \mathcal{B}_{1}\subseteq \mathcal{L}_{k}^{\infty }$, $\beta \geq \alpha $. We prove that there exists $zS+\varepsilon \in \widetilde{\mathcal{L}}$ such that:
						\begin{align}
							& \mathcal{T}_{f}(zS+\varepsilon ) =h=zM+\sum _{\beta \geq \alpha }z^{\beta }M_{\beta } . \label{PropClaim3Eq}
						\end{align}
						Therefore, we search for solution of \eqref{PropClaim3Eq} in the \emph{form}:
						\begin{align}
							& zS+\sum _{\beta \geq \alpha }z^{\beta }S_{\beta }, \quad S\in \mathcal{B}_{\geq 1}^{+}\subseteq \mathcal{L}_{k}, \, \mathrm{ord} \, (S)>\mathrm{ord} \, (R) , \, S_{\beta }\in \mathcal{B}_{1}\subseteq \mathcal{L}_{k}^{\infty } . \label{PropCaseB3EqDecomp}
						\end{align}
						From \eqref{OperST} and by comparing the blocks with the same order in $z$, it follows that equation \eqref{PropClaim3Eq} is equivalent to the following system of equations:
						\begin{align}
							& (zT)'\circ (\mathrm{id}+zS)\cdot z^{\beta }S_{\beta }-\sum _{i\geq 1}\frac{(z^{\beta }S_{\beta })^{(i)}}{i!}(zR)^{i} = z^{\beta }M_{\beta } , \quad \beta \geq \alpha , \label{CaseBIdent8}
						\end{align}
						\begin{align}
							& \sum _{i\geq 1}\frac{(zT)^{(i)}}{i!}(zS)^{i}-\sum _{i\geq 1}\frac{(zS)^{(i)}}{i!}(zR)^{i} = zM . \label{CaseBIdent1}
						\end{align}
						By \eqref{Identitet1} and \eqref{Identitet2} it follows that:
						\begin{align}
							& \sum _{i\geq 1}\frac{(z^{\beta }S_{\beta })^{(i)}}{i!}(zR)^{i} = z^{\beta }\cdot \bigg( S_{\beta }\cdot \sum _{i\geq 1}{\beta \choose i}R^{i}+ \mathcal{C}_{\beta }(S_{\beta })\bigg) , \nonumber \\
							& \sum _{i\geq 1}\frac{(zT)^{(i)}}{i!}(zS)^{i} = z\left( T\cdot S+D_{1}(T)\cdot (1+S)\cdot \log (1+S) + \mathcal{C}_{1}(S) \right) , \nonumber \\
							& \sum _{i\geq 1}\frac{(zS)^{(i)}}{i!}(zR)^{i} =z\left( S\cdot R+D_{1}(S)\cdot (1+R)\cdot \log (1+R) + \mathcal{K}_{1}(S) \right) , \label{CaseBIdent2}
						\end{align}
						for linear $\frac{1}{2^{1+\mathrm{ord}_{\boldsymbol{\ell }_{1}}(R)}}$-contractions $\mathcal{C}_{\beta }:(\mathcal{B}_{1},d_{1})\to (\mathcal{B}_{1},d_{1})$, $\beta \geq \alpha $, and $\frac{1}{2^{2+\mathrm{ord}_{\boldsymbol{\ell }_{1}}(R)}}$-contractions $\mathcal{C}_{1},\mathcal{K}_{1}:(\mathcal{B}_{\geq 1}^{+},d_{1})\to (\mathcal{B}_{\geq 1}^{+},d_{1})$. By \eqref{CaseBIdent2}, after eliminating the variable $z$, we get that solving \eqref{CaseBIdent8} and \eqref{CaseBIdent1} is equivalent to solving:
						\begin{align}
							& \mathcal{S}_{\beta }(S_{\beta }) = S_{\beta } , \quad S_{\beta }\in \mathcal{B}_{1}\subseteq \mathcal{L}_{k}^{\infty } , \, \beta \geq \alpha , \label{PropCaseB3EquatFixed1}
						\end{align}
						and
						\begin{align}
							& \mathcal{T}_{1}(S) = \mathcal{S}_{1}(S) , \quad S\in \mathcal{B}_{\geq 1}^{+}\subseteq \mathcal{L}_{k}, \, \mathrm{ord} \, (S) > \mathrm{ord} \, (R) , \label{PropCaseB3EquatFixed2}
						\end{align}
						where:
						\begin{align}
							& \mathcal{S}_{\beta }(S_{\beta }):= \frac{M_{\beta }-S_{\beta }\cdot D_{1}(T)\circ (\mathrm{id}+zS)+\mathcal{C}_{\beta }(S_{\beta })}{T\circ (\mathrm{id}+zS)-\sum _{i\geq 1}{\beta \choose i}R^{i}}, \label{CaseBIdent6}
						\end{align}
						and
						\begin{align}
							\mathcal{S}_{1}(S) &:= M+\mathcal{K}_{1}(S)-\mathcal{C}_{1}(S) -(1+S)\cdot \log (1+S)\cdot D_{1}(T-R), \nonumber \\
							\mathcal{T}_{1}(S) &:=-(1+R)\cdot \log (1+R)\cdot D_{1}(S)+(1+S)\cdot \log(1+S)\cdot D_{1}(R)+S\cdot (T-R) . \label{CaseBIdent5}
						\end{align}
						
						Since fixed point equations $\mathcal{S}_{\beta }(S_{\beta }) = S_{\beta }$, $\beta \geq \alpha $, depend on solutions $S\in \mathcal{B}_{\geq 1}^{+}\subseteq \mathcal{L}_{k}$, $\mathrm{ord} \, (S) > \mathrm{ord} \, (R)$, of the fixed point equation $\mathcal{T}_{1}(S)=\mathcal{S}_{1}(S)$, we first determine the unique solution $S$ of equation \eqref{PropCaseB3EquatFixed2}. \\
						
						\noindent \emph{Solving equation} \eqref{PropCaseB3EquatFixed2}. Let
						\begin{align*}
							\widetilde{\mathcal{B}} & := \left\lbrace K\in \mathcal{B}_{\geq 1}^{+}\subseteq \mathcal{L}_{k} : \mathrm{ord} \, (K) > \mathrm{ord} \, (R) \right\rbrace . 
						\end{align*}
						Now, for the purpose of applying Proposition~\ref{LemaCaseB2OperatorsTS} on operators $\mathcal{T}_{1},\mathcal{S}_{1}:\widetilde{\mathcal{B}} \to \mathcal{B}_{\geq 1}^{+}$ defined by \eqref{CaseBIdent5}, put $V:=T-R$. Since $\mathrm{ord} \, \big( z(R-T) \big) , \mathrm{ord} \, (zM)>\mathrm{ord} \, (\mathrm{Res} \, (f))$, we get $\mathrm{ord} \, (V), \mathrm{ord} \, (M)> 2\cdot \mathrm{ord} \, (R)+(0,\mathbf{1}_{k})$.
						
						By Proposition~\ref{LemaCaseB2OperatorsTS} operators $\mathcal{T}_{1}$ and $\mathcal{S}_{1}$ satisfy the assumptions of Proposition~\ref{KorBanach} on the complete space $(\widetilde{\mathcal{B}},d_{1})$. Therefore, by Proposition~\ref{KorBanach}, there exists a unique $S\in \widetilde{\mathcal{B}}$ such that $\mathcal{T}_{1}(S)=\mathcal{S}_{1}(S)$. \\
						
						\noindent \emph{Solving equations} \eqref{PropCaseB3EquatFixed1}. For a unique solution $S\in \widetilde{\mathcal{B}}$ of the fixed point equation $\mathcal{T}_{1}(S)=\mathcal{S}_{1}(S)$ we prove that operators $\mathcal{S}_{\beta }:\mathcal{B}_{1}\to \mathcal{B}_{1}$, $\beta \geq \alpha $, defined by \eqref{CaseBIdent6}, are $\frac{1}{2}$-contractions on the space $(\mathcal{B}_{1},d_{1})$.
						
						Since $\mathrm{ord}\big( z(R-T) \big) >\mathrm{ord}(\mathrm{Res}(f))$, it follows that $\mathrm{Lt}(T)=\mathrm{Lt}(R)$. Using ${\beta \choose 1}=\beta \geq \alpha >1$ and $T\circ (\mathrm{id}+zS)=T+\mathrm{h.o.t.}$, we get:
						\begin{align*}
							\frac{S_{\beta }\cdot D_{1}(T)\circ (\mathrm{id}+zS)+\mathcal{C}_{\beta }(S_{\beta })}{T\circ (\mathrm{id}+zS)-\sum _{i\geq 1}{\beta \choose i}R^{i}} & = \frac{S_{\beta }\cdot D_{1}(T)\circ (\mathrm{id}+zS)+\mathcal{C}_{\beta }(S_{\beta })}{(1- \beta )\mathrm{Lt}(R)+ \mathrm{h.o.t.}} ,
						\end{align*}
						for $S_{\beta }\in \mathcal{B}_{1}\subseteq \mathcal{L}_{k}^{\infty }$ and each $\beta \geq \alpha $. From this, using the facts that $\mathcal{C}_{\beta }$ is a linear $\frac{1}{2^{1+\mathrm{ord}_{\boldsymbol{\ell }_{1}}(R)}}$-contraction on $(\mathcal{B}_{1},d_{1})$ and that $\mathrm{ord}_{\boldsymbol{\ell }_{1}} \big( D_{1}(T)\circ (\mathrm{id}+zS) \big) =\mathrm{ord}_{\boldsymbol{\ell }_{1}}(R)+1$, we get:
						\begin{align*}
							\mathrm{ord}_{\boldsymbol{\ell}_{1}} \bigg( \frac{S_{\beta }\cdot D_{1}(T)\circ (\mathrm{id}+zS)+\mathcal{C}_{\beta }(S_{\beta })}{T\circ (\mathrm{id}+zS)-\sum _{i\geq 1}{\beta \choose i}R^{i}}\bigg) & \geq \mathrm{ord}_{\boldsymbol{\ell }_{1}}(S_{\beta })+1,
						\end{align*}
						for $S_{\beta }\in \mathcal{B}_{1}\subseteq \mathcal{L}_{k}^{\infty }$ and each $\beta \geq \alpha$. Since $\mathcal{S}_{\beta }$, $\beta \geq \alpha $, are affine operators, it follows that $\mathcal{S}_{\beta }$, $\beta \geq \alpha $, are $\frac{1}{2}$-contractions on the space $(\mathcal{B}_{1},d_{1})$. \\
						By the Banach Fixed Point Theorem, there exists a unique solution $S_{\beta }\in \mathcal{B}_{1}\subseteq \mathcal{L}_{k}^{\infty }$ of equation \eqref{CaseBIdent8}, for each $\beta \geq \alpha $. \\
						
						Finally, by putting $\varepsilon:=\sum _{\beta \geq \alpha }z^{\beta }S_{\beta }$, where $S_{\beta }\in \mathcal{B}_{1}\subseteq \mathcal{L}_{k}^{\infty }$ are unique solutions of \eqref{CaseBIdent8}, for $\beta \geq \alpha $, and taking $S\in \widetilde{\mathcal{B}}$ the unique solution of \eqref{PropCaseB3EquatFixed2}, the system of equations \eqref{CaseBIdent8}, \eqref{CaseBIdent1} is satisfied, and therefore, we get $\mathcal{T}_{f}(zS+\varepsilon)=h$, for $h\in \mathcal{S}_{f}(\widetilde{\mathcal{L}})$ chosen in \eqref{PropClaim3Eq}. Since $\mathrm{ord} \, (S) > \mathrm{ord} \, (R)$, it follows that $zS+\varepsilon \in \widetilde{\mathcal{L}}$. Consequently, it follows that $\mathcal{S}_{f}(\widetilde{\mathcal{L}}) \subseteq \mathcal{T}_{f}(\widetilde{\mathcal{L}}) $. \\
						
						2. Let $zS_{i}+\varepsilon_{i} \in \widetilde{\mathcal{L}}$, for $i=1,2$, be distinct, and written in \emph{form} \eqref{PropCaseB3EqDecomp}, i.e.
						\begin{align*}
							zS_{i}+\varepsilon _{i} & = zS_{i} + \sum _{\beta \geq \alpha }z^{\beta }S_{\beta ,i} ,
						\end{align*}
						where $S_{i}\in \widetilde{\mathcal{B}}$, and $S_{\beta ,i}\in \mathcal{B}_{1}\subseteq \mathcal{L}_{k}^{\infty }$, $\beta \geq \alpha $, for $i=1,2$. By putting such decompositions in \eqref{OperST} and using \eqref{CaseBIdent2}, we get:
						{\small \begin{align}\label{CaseBIdent3}
								& \mathcal{T}_{f}(zS_{i}+\varepsilon _{i} ) \nonumber \\
								& = z\big( (T-R)S_{i}+D_{1}(T)(1+S_{i})\log (1+S_{i}) -D_{1}(S_{i})(1+R)\log (1+R) + \mathcal{C}_{1}(S_{i}) - \mathcal{K}_{1}(S_{i}) \big) \nonumber \\
								& + \sum _{\beta \geq \alpha }z^{\beta }\bigg( S_{\beta ,i} \cdot \Big( T\circ (\mathrm{id}+zS_{i})-\sum _{i\geq 1}{\beta \choose i}R^{i} + D_{1}(T)\circ (\mathrm{id}+zS_{i})\Big) -\mathcal{C}_{\beta }(S_{\beta ,i}) \bigg) , 
						\end{align}
						}for $i=1,2$. Now, we consider two cases: $S_{1}\neq S_{2}$ and $S_{1}=S_{2}$, and prove that the restriction $\mathcal{T}_{f}|_{\widetilde{\mathcal{L}}}$ is an isometry.
						
						If $S_{1}\neq S_{2}$, since $\mathrm{Lt}(T)=\mathrm{Lt}(R)$, we get that:
						\begin{align}
							\mathcal{T}_{f}(zS_{1}+\varepsilon _{1}) - \mathcal{T}_{f}(zS_{2}+\varepsilon _{2}) &=z\mathrm{Lt} \big( (S_{1}-S_{2})D_{1}(R)-RD_{1}(S_{1}-S_{2}) \big) + \mathrm{h.o.t.} ,
						\end{align}
						assuming that $(S_{1}-S_{2})D_{1}(R)-RD_{1}(S_{1}-S_{2}) \neq 0$. Now, suppose that $(S_{1}-S_{2})D_{1}(R)-RD_{1}(S_{1}-S_{2}) =0$. Solving the linear differential equation, we get $S_{1}-S_{2}=C\cdot R$, for $C\in \mathbb{R}$. Since $\mathrm{ord}(S_{i}) > \mathrm{ord}(R)$, $i=1,2$, we conclude that $C=0$, i.e., $S_{1}=S_{2}$, which is a contradiction. This implies that:
						\begin{align*}
							& \mathrm{ord}_{z}\big( \mathcal{T}_{f}(zS_{1}+\varepsilon _{1}) - \mathcal{T}_{f}(zS_{2}+\varepsilon _{2})\big) = 1 = \mathrm{ord}_{z}\big( (zS_{1}+\varepsilon _{1})-(zS_{2}+\varepsilon _{2})\big) .
						\end{align*}
					
						If $S_{1}=S_{2}$, then $\varepsilon _{1}\neq \varepsilon _{2}$. Put $\beta _{0}:=\mathrm{ord}_{z}(\varepsilon _{1}-\varepsilon _{2})$. By \eqref{CaseBIdent3}, and since $\mathcal{C}_{\beta }$ is a linear $\frac{1}{2^{1+\mathrm{ord}_{\boldsymbol{\ell }_{1}}(R)}}$-contraction (with respect to the metric $d_{1}$), we get that:
						\begin{align*}
							\mathcal{T}_{f}(zS_{1}+\varepsilon _{1})-\mathcal{T}_{f}(zS_{1}+\varepsilon _{2}) &= z^{\beta _{0}}(1-\beta _{0})\mathrm{Lt}(R)\mathrm{Lt}(S_{\beta _{0},1}-S_{\beta _{0},2}) + \mathrm{h.o.t.}
						\end{align*}
						Since $S_{\beta _{0},1} \neq S_{\beta _{0},2}$, $R\neq 0$, and $\beta _{0}\geq \alpha >1$, we have:
						\begin{align*}
							& \mathrm{ord}_{z}\big( \mathcal{T}_{f}(zS_{1}+\varepsilon _{1}) - \mathcal{T}_{f}(zS_{1}+\varepsilon _{2}) \big) =\beta _{0}=\mathrm{ord}_{z}\big( (zS_{1}+\varepsilon _{1}) -(zS_{1}+\varepsilon _{2})\big) .
						\end{align*}
						Therefore, the restriction $\mathcal{T}_{f}|_{\widetilde{\mathcal{L}}}$ is an isometry, with respect to the metric $d_{z}$.
					\end{proof}
				
					\begin{proof}[Proof of step (b.3)]
						In steps $(b.1)$ and $(b.2)$ we obtained logarithmic transseries $\varphi _{1},\varphi _{2} \in \mathcal{L}_{k}^{0}$, such that:
						\begin{align*}
							\varphi _{2}\circ \varphi _{1}\circ f\circ \varphi _{1}^{-1}\circ \varphi _{2}^{-1} = \mathrm{id}+zL +c\mathrm{Res}(f) + \mathrm{h.o.t.} ,
						\end{align*}
						where $L$ is as defined in the Main Theorem and the unique $c$ is given by \eqref{EqRes}. For a simpler notation denote by \emph{new} $f$ the whole composition $\varphi _{2}\circ \varphi _{1}\circ f\circ \varphi _{1}^{-1}\circ \varphi _{2}^{-1}$. Note that now $f=\mathrm{id}+zL +c\mathrm{Res}(f) + \mathrm{h.o.t.}$ Put $zR:=zL+c\mathrm{Res}(f)+\mathrm{h.o.t.}$ and $zT:=zL +c\mathrm{Res}(f)$. Let $g:=\mathrm{id}+zL +c\mathrm{Res}(f)$. By Lemma~\ref{Lem1CaseB}, we define the operators $\mathcal{T}_{f},\mathcal{S}_{f}:\mathcal{L}_{k}^{1} \to \mathcal{L}_{k}^{1}$ and transform the conjugacy equation
						\begin{align}
							\varphi _{3}\circ f \circ \varphi _{3}^{-1} &= g \label{ProofCaseB3Eq}
						\end{align}
						to the equivalent fixed point equation
						\begin{align}
							\mathcal{T}_{f}(zS+\varepsilon ) & =\mathcal{S}_{f}(zS+\varepsilon ) , \label{ProofCaseB3EqFixed}
						\end{align}
						where $\varphi _{3}:=\mathrm{id}+zS+\varepsilon \in \mathcal{L}_{k}^{0}$. Put $\alpha :=\mathrm{ord}_{z}(f-\mathrm{id}-zR)$ and consider the restrictions of operators $\mathcal{T}_{f}$ and $\mathcal{S}_{f}$ on the subspace $z\mathcal{B}_{\geq 1}^{+}\oplus \mathcal{L}_{k}^{\alpha }\subseteq \mathcal{L}_{k}^{1}$. Since $\mathrm{ord}\big( z(R-T) \big) > \mathrm{ord}\, (\mathrm{Res}\, (f))$, by Proposition~\ref{PropCaseBTS} and Proposition~\ref{KorBanach}, there exists a unique solution $\varphi _{3}\in z\mathcal{B}_{\geq 1}^{+}\oplus \mathcal{L}_{k}^{\alpha }$, $\varphi _{3}:=\mathrm{id}+zS+\varepsilon $ of \eqref{ProofCaseB3EqFixed} satisfying $\mathrm{ord} \, (S)> \mathrm{ord} \, (R)$. As a consequence, by Lemma~\ref{Lem1CaseB}, there exists a solution $\varphi _{3}\in \mathcal{L}_{k}^{0}$ of conjugacy equation \eqref{ProofCaseB3Eq}. This completes step $(b.3)$.
						
						Note that $\varphi _{3}$ is a unique solution of conjugacy equation \eqref{ProofCaseB3Eq}, if we additionally impose the condition that $\mathrm{ord} \, (\varphi _{3}-\mathrm{id}) > \mathrm{ord} \, (f-\mathrm{id})$ and $\mathrm{ord}_{z}(\varepsilon ) \geq \mathrm{ord}_{z}(f-\mathrm{id}-zR)$.
					\end{proof}

					\subsection{Proofs of statements 2 and 3 of the Main Theorem}\label{sec:ProofMinimality}
					
					\begin{proof}[Proof of statement 2 of the Main Theorem]
						The idea of the proof of statement 2 is adapted from \cite[Proposition 9.3]{mrrz19tubular}. Let $f\in \mathcal{L}_{k}$, $k\in \mathbb{N}$, be such that $f:=\mathrm{id}+z^{\beta }L+\mathrm{h.o.t.}$, for $\beta \geq 1$, and let $L:=a_{\mathbf{n}}\boldsymbol{\ell }_{1}^{n_{1}}\cdots \boldsymbol{\ell }_{k}^{n_{k}}+\mathrm{h.o.t.}$, $a_{\mathbf{n}} \neq 0$, $\mathbf{n}:=(n_{1},\ldots ,n_{k})$, be defined as in the Main Theorem.
						
						By statement 1 of the Main Theorem, it follows that there exists $c\in \mathbb{R}$ and $\varphi \in \mathcal{L}_{k}^{0}$, such that $\varphi \circ f\circ \varphi ^{-1}=f_{c}$, where $f_{c}=\mathrm{id}+z^{\beta }L+c\mathrm{Res}(f)$. \\
						
						Note that:
						\begin{align}
							\frac{1}{f_{c}-\mathrm{id}} & = \frac{1}{z^{\beta }L+c\mathrm{Res}(f)} \nonumber \\
							& = \frac{1}{z^{\beta }L}\cdot \frac{1}{1+\frac{c\mathrm{Res}(f)}{z^{\beta }L}} \nonumber \\
							& = \frac{1}{z^{\beta }L}\cdot \bigg( 1+\sum _{i\geq 1}\Big(\frac{c\mathrm{Res}(f)}{z^{\beta }L}\Big)^{i}\bigg) \nonumber \\
							& = \frac{1}{z^{\beta }L} + \big( \frac{c}{a_{\mathbf{n}}^{2}}z^{-1}\boldsymbol{\ell }_{1}\cdots \boldsymbol{\ell }_{k} + \mathrm{h.o.t.} \big) . \label{ResEq1} 
						\end{align}
						From \eqref{ResEq1} we get:
						\begin{align}
							c & = \left[ \frac{a_{\mathbf{n}}^{2}}{f_{c}-\mathrm{id}}\right] _{-1,\mathbf{1}_{k}}- \left[ \frac{a_{\mathbf{n}}^{2}}{z^{\beta }L}\right] _{-1,\mathbf{1}_{k}} . \label{EqResProofCeoffic}
						\end{align}
						For every $f\in \mathcal{L}_{k}^{0}$, by Lemma~\ref{Lemma1}, it follows that:
						\begin{align}
							\left[ \frac{1}{f-\mathrm{id}}\right] _{-1,\mathbf{1}_{k}} & = \left[ \int \frac{dz}{f-\mathrm{id}}\right] _{\mathbf{0}_{k+1},-1} \label{EqResProof1}
						\end{align}
						and, in particular, for its normal form $f_{c}$,
						\begin{align}
							\left[ \frac{1}{f_{c}-\mathrm{id}}\right] _{-1,\mathbf{1}_{k}} & = \left[ \int \frac{dz}{f_{c}-\mathrm{id}}\right] _{\mathbf{0}_{k+1},-1} . \label{EqResProof2}
						\end{align}
						We prove that, for every $f\in \mathcal{L}_{k}^{0}$ and its normal form $f_{c}$ given by \eqref{DefnOfFc},
						\begin{align}\label{ResEq2}
							\left[ \int \frac{dz}{f_{c}-\mathrm{id}}\right] _{\mathbf{0}_{k+1},-1} & = \left[ \int \frac{dz}{f-\mathrm{id}}\right] _{\mathbf{0}_{k+1},-1} .
						\end{align}
						Put $g:=f-\mathrm{id}$. Let us use the following notation
						\begin{align*}
							& \left. \int ^{\varphi ^{-1}}h(s)ds:=\int h(s)ds \; \right| _{s=\varphi ^{-1}}, \quad h\in \mathcal{L}_{k} , \, \varphi \in \mathcal{L}_{k}^{0} .
						\end{align*}
						From $\varphi \circ f\circ \varphi ^{-1}=f_{0}$, it follows that $\varphi \circ f=f_{c}\circ \varphi $. Then, by the change of variable of the integration $z=\varphi (s)$ and the Taylor Theorem (see \cite[Proposition 3.3]{prrs21}), it follows that:
						\begin{align*}
							\int \frac{dz}{f_{c}-\mathrm{id}} & = \int ^{\varphi ^{-1}} \frac{\varphi '(s)ds}{(f_{c}\circ \varphi )(s)-\varphi (s)} \\
							& = \int ^{\varphi ^{-1}} \frac{\varphi '(s)ds}{\varphi (f(s))-\varphi (s)} \\
							& = \int ^{\varphi ^{-1}} \frac{\varphi '(s)ds}{\sum _{i\geq 1}\frac{\varphi ^{(i)}(s)}{i!}(g(s))^{i}} \\
							& = \int ^{\varphi ^{-1}} \frac{\varphi '(s)ds}{\varphi '(s)g(s)\cdot \Big( 1+\sum _{i\geq 2}\frac{\varphi ^{(i)}(s)}{i!\varphi '(s)}(g(s))^{i-1}\Big) } \\
							& = \int ^{\varphi ^{-1}} \frac{ds}{g(s)}\cdot \Big( 1-\mathrm{Lt}\Big( \frac{\varphi ''(s)}{2\varphi '(s)}g(s)\Big) +\mathrm{h.o.t.}\Big) \\
							& = \int ^{\varphi ^{-1}} \frac{ds}{g(s)} + \int ^{\varphi ^{-1}}\Big( - \frac{1}{2}\mathrm{Lt}\Big( \frac{\varphi ''(s)}{\varphi '(s)}\Big) +\mathrm{h.o.t.}\Big) ds .
						\end{align*}
						Put $\varepsilon :=\varphi -\mathrm{id}$. Note that:
						\begin{align*}
							\frac{\varphi ''(s)}{\varphi '(s)} & = \frac{d}{ds}(\log (1+\varepsilon '(s))).
						\end{align*}
						Since $\mathrm{ord}(\varepsilon ) > (1,\mathbf{0}_{k})$, it follows that $\mathrm{ord}(\log (1+\varepsilon '(s))) > \mathbf{0}_{k+1}$. Therefore,
						\begin{align*}
							\mathrm{ord}\Big( \frac{d}{ds}(\log (1+\varepsilon '(s)))\Big) & > (-1,\mathbf{1}_{k}) .
						\end{align*}
						By Lemma~\ref{Lemma1}, it follows that
						\begin{align*}
							& \int ^{\varphi ^{-1}}\Big( - \frac{1}{2}\mathrm{Lt}\Big( \frac{\varphi ''(s)}{\varphi '(s)}\Big) +\mathrm{h.o.t.}\Big) ds 
						\end{align*}
						is an element of $\mathcal{L}_{k}$. Thus, we proved:
						\begin{align}\label{ResEq4}
							& \left[ \int \frac{dz}{f_{c}-\mathrm{id}}\right] _{\mathbf{0}_{k+1},-1} = \left[ \int ^{\varphi ^{-1}} \frac{ds}{g(s)}\right] _{\mathbf{0}_{k+1},-1} = \left[ \int ^{\varphi ^{-1}} \frac{ds}{(f-\mathrm{id})(s)}\right] _{\mathbf{0}_{k+1},-1} .
						\end{align}
						Put $h(s):= \int \frac{ds}{(f-\mathrm{id})(s)}$. Note that $h=h(z)=\int ^{z} \frac{ds}{(f-\mathrm{id})(s)}=\int \frac{dz}{f-\mathrm{id}}$. Now, we get:
						\begin{align}
							& \int ^{\varphi ^{-1}} \frac{ds}{(f-\mathrm{id})(s)} - \int ^{z} \frac{ds}{(f-\mathrm{id})(s)} = h\left( \varphi ^{-1} \right) - h = \sum _{i\geq 1}\frac{h^{(i)}}{i!}(\varphi ^{-1}-\mathrm{id})^{i} . \label{ResidualEqProof}
						\end{align}
						Note that $\left[ h^{(i)}\right] _{\mathbf{0}_{k+1},-1}=0$ and $(\varphi ^{-1}-\mathrm{id})^{i}\in \mathcal{L}_{k}$, for $i\geq 1$. Therefore, by \eqref{ResidualEqProof}, it follows that:
						\begin{align*}
							& \left[ \int ^{\varphi ^{-1}} \frac{ds}{(f-\mathrm{id})(s)}\right] _{\mathbf{0}_{k+1},-1} - \left[ \int ^{z}\frac{ds}{(f-\mathrm{id})(s)}\right] _{\mathbf{0}_{k+1},-1}=0, 
						\end{align*}
						which implies that:
						\begin{align}
							& \left[ \int ^{\varphi ^{-1}} \frac{ds}{(f-\mathrm{id})(s)}\right] _{\mathbf{0}_{k+1},-1} = \left[ \int ^{z} \frac{ds}{(f-\mathrm{id})(s)}\right] _{\mathbf{0}_{k+1},-1} = \left[ \int \frac{dz}{f-\mathrm{id}}\right] _{\mathbf{0}_{k+1},-1} . \label{ResEq3}
						\end{align}
						Now, \eqref{ResEq2} follows from \eqref{ResEq4} and \eqref{ResEq3}. By \eqref{EqResProof1}, \eqref{EqResProof2} and \eqref{ResEq2}, it follows that
						\begin{align*}
							\left[ \frac{1}{f-\mathrm{id}}\right] _{-1,\mathbf{1}_{k}} & = \left[ \frac{1}{f_{c}-\mathrm{id}}\right] _{-1,\mathbf{1}_{k}} .
						\end{align*}
						Therefore, \eqref{EqRes} follows from \eqref{EqResProofCeoffic}. If $\beta >1$, note that the second term in \eqref{EqResProofCeoffic} vanishes, which implies \eqref{EqRes2}.
						
						Since $c$ is explicitely given by formula \eqref{EqRes}, it is unique.
					\end{proof}
					
					\begin{proof}[Proof of statement 3 of the Main Theorem]
						The minimality of the normal form $f_{c}$ in $\mathcal{L}_{k}^{0}$ follows directly from the uniqueness of $c\in \mathbb{R} $ and Proposition~\ref{LeadingBlockLemma}.
					\end{proof}

				\section{Proofs of Remark~\ref{Remark2} and Corollary~\ref{Remark3}}\label{sec:proofRemark}
				
					\begin{proof}[Proof of Remark~\ref{Remark2}]
						By a simple calculation it can be shown that the time-one map of the vector field $X_{c}$ is an element of $\mathcal{L}_{k}^{0}$ that has the initial part equal to $f_{c}$. Therefore, by the Main Theorem, the time-one map of the vector field $X_{c}$ can be reduced to $f_{c}$ by a change of variables form $\mathcal{L}_{k}^{0}$. Since $\mathcal{L}_{k}^{0}$ is a group, it follows that $f$ can be reduced to the time-one map of the vector field $X_{c}$ by a change of variables from $\mathcal{L}_{k}^{0}$.
					\end{proof}
					
					\begin{proof}[Proof of Corollary~\ref{Remark3}]
						Let $k\in \mathbb{N}$ be minimal such that $f\in \mathcal{L}_{k}^{0}$. Let $f:=\mathrm{id}+z^{\beta }L+\mathrm{h.o.t.}$, for $\beta \geq 1$, where $L$ is as defined in the Main Theorem. By the Main Theorem, for $m\geq k+1$, there exists the unique $c\in \mathbb{R}$ such that $f$ can be reduced to $f_{c}=\mathrm{id}+zL+c\mathrm{Res}(f)$, where $\mathrm{Res}(f)$ is the residual monomial of $f$ in the differential algebra $\mathcal{L}_{m}$. By statement 2 of the Main Theorem, it follows that:
						\begin{align}
							c & = \left[ \frac{a_{\mathbf{n}}^{2}}{f-\mathrm{id}} \right] _{-1,\mathbf{1}_{m}} - \left[ \frac{a_{\mathbf{n}}^{2}}{zL} \right] _{-1,\mathbf{1}_{m}} , \label{ResidualCoeffEq1}
						\end{align}
						if $\beta =1$, and
						\begin{align}
							c & =\left[ \frac{a_{\mathbf{n}}^{2}}{f-\mathrm{id}}\right] _{-1,\mathbf{1}_{m}} , \label{ResidualCoeffEq2}
						\end{align}
						if $\beta >1$. Since $f\in \mathcal{L}_{k}^{0}\subseteq \mathcal{L}_{k}^{\infty }$ and $\mathcal{L}_{k}^{\infty }$ is an algebra and a field, it follows that $ \frac{a_{\mathbf{n}}^{2}}{f-\mathrm{id}}- \frac{a_{\mathbf{n}}^{2}}{zL}$ if $\beta =1$, and $ \frac{a_{\mathbf{n}}^{2}}{f-\mathrm{id}}$ if $\beta >1$, are elements of $\mathcal{L}_{k}^{\infty }$. Since $m\geq k+1$, by \eqref{ResidualCoeffEq1} and \eqref{ResidualCoeffEq2}, it follows that $c=0$ in both cases. Therefore, $f_{c}=\mathrm{id}+zL$, for every $m\geq k+1$. Now, we put $f_{0}:=\mathrm{id}+zL$. It therefore suffices to take a normalization in $\mathcal{L}_{k+1}^{0}$ to eliminate the residual term.
						
						The minimality of $f_{0}$ in $\mathfrak L$ now follows directly by Proposition~\ref{LeadingBlockLemma}.
					\end{proof}

	\appendix
	
		\section{Proofs of auxiliary results from Section~\ref{sec:proofA}}
		
			\begin{proof}[Proof of Lemma~\ref{Lema2}]
				Since $\varepsilon \in \mathcal{L}_{k}^{\delta }$ and $\varphi \in \mathcal{L}_{k}^{\alpha }$, it follows that $\mathrm{ord}(\varepsilon ), \mathrm{ord}(\varphi )>(1,\mathbf{0}_{k})$. By the Neumann Lemma (see \cite{Neumann49}) it is easy to see that the series on the right-hand side of \eqref{LemmaDefinitionS} converges in the \emph{weak topology} defined in \cite[Subsection 2.3]{prrs21}. Therefore, the operator $\mathcal{S}$ is well-defined.
				
				Let $\varepsilon _{1}, \varepsilon _{2}\in \mathcal{L}_{k}^{\delta }$ be arbitrary, such that $\varepsilon _{1} \neq \varepsilon _{2}$. Then:
				\begin{align*}
					\mathcal{S}(\varepsilon _{1}) - \mathcal{S}(\varepsilon _{2}) & = \sum _{i\geq 2}\Big( \varphi ^{(i)}\cdot (\varepsilon _{1}^{i}- \varepsilon _{2}^{i}) \Big) \\
					& = \sum _{i\geq 2} \Big( \varphi ^{(i)}\cdot (\varepsilon _{1}-\varepsilon _{2})\cdot \Big( \sum _{j=0}^{i-1}\varepsilon _{1}^{j}\varepsilon _{2}^{i-1-j}\Big) \Big) .
				\end{align*}
				Now, $\mathrm{ord}(\mathcal{S}(\varepsilon _{1}) - \mathcal{S}(\varepsilon _{2}) ) \geq \mathrm{ord}(\varepsilon _{1}-\varepsilon _{2})+\delta +\alpha -2$, which implies that $\mathcal{S}$ is a $\frac{1}{2^{\delta + \alpha -2}}$-Lipschitz operator.
				
				If $\varepsilon \in \mathcal{L}_{k}^{\delta }$ such that $\mathrm{ord}_{z}(\varepsilon )=\delta $, then $\mathrm{ord}(\mathcal{S}(\varepsilon ) - \mathcal{S}(0) ) = \mathrm{ord}(\varepsilon -0)+\delta +\alpha -2$. Thus, $\frac{1}{2^{\delta + \alpha -2}}$ is the minimal Lipschitz coefficient of $\mathcal{S}$.
			\end{proof}
		
			Lemma~\ref{Lema1} is needed in the proof of Lemma~\ref{Lem5}.
		
			\begin{lem}\label{Lema1}
				Let $k\in \mathbb{N}_{\geq 1}$, $1\leq m\leq k$ and $R_{i} \in \mathcal{B}_{m} \setminus \left\lbrace 0\right\rbrace \subseteq \mathcal{L}_{k}^{\infty }$, such that $\mathrm{ord}(R_{i}) \geq \mathbf{0}_{k+1}$, for every $i\geq 2$. Let $\mathcal{S}:\mathcal{B}_{m}^{+} \to \mathcal{B}_{m}^{+}$ be defined as:
				\begin{align*}
					\mathcal{S}(Q) & :=\sum _{i\geq 2}R_{i}\cdot Q^{i} , 
				\end{align*}
				for $Q \in \mathcal{B}_{m}^{+}\subseteq \mathcal{L}_{k}$. The map $\mathcal{S}$ is a $\frac{1}{2}$-contraction on the space $(\mathcal{B}_{m}^{+}, d_{m})$. 
			\end{lem}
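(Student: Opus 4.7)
The plan is to show the Lipschitz estimate directly at the level of orders in $\boldsymbol{\ell}_m$, which is where the metric $d_m$ lives. Concretely, since $d_m$ is defined by $d_m(K_1,K_2)=2^{-\mathrm{ord}_{\boldsymbol{\ell}_m}(K_1-K_2)}$, the desired bound $d_m(\mathcal{S}(Q_1),\mathcal{S}(Q_2))\leq \tfrac12 d_m(Q_1,Q_2)$ is equivalent to the ordinal inequality
\begin{align*}
\mathrm{ord}_{\boldsymbol{\ell}_m}\bigl(\mathcal{S}(Q_1)-\mathcal{S}(Q_2)\bigr) \geq \mathrm{ord}_{\boldsymbol{\ell}_m}(Q_1-Q_2)+1.
\end{align*}
I would prove this by exploiting the algebraic factorization $Q_1^i-Q_2^i=(Q_1-Q_2)\sum_{j=0}^{i-1}Q_1^{j}Q_2^{i-1-j}$ inside the sum defining $\mathcal{S}$.

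First I would briefly address well-definedness. Since each $R_i$ lies in $\mathcal{B}_m$ with $\mathrm{ord}(R_i)\geq \mathbf{0}_{k+1}$ and any $Q\in\mathcal{B}_m^+$ satisfies $\mathrm{ord}_{\boldsymbol{\ell}_m}(Q)\geq 1$, the monomial $R_i\cdot Q^i$ has $\mathrm{ord}_{\boldsymbol{\ell}_m}\geq i\geq 2$. The orders of the summands tend to $+\infty$ in $\boldsymbol{\ell}_m$, so the Neumann Lemma (see \cite{Neumann49}) guarantees convergence of $\sum_{i\geq 2}R_i Q^i$ in $\mathcal{B}_m$; moreover, every summand lies in $\mathcal{B}_m^+$, hence so does the limit.

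For the contraction estimate, given $Q_1,Q_2\in\mathcal{B}_m^+$ with $Q_1\neq Q_2$, I would write
\begin{align*}
\mathcal{S}(Q_1)-\mathcal{S}(Q_2) \;=\; (Q_1-Q_2)\cdot \sum_{i\geq 2} R_i \sum_{j=0}^{i-1} Q_1^{j}Q_2^{i-1-j}.
\end{align*}
For every $i\geq 2$ and $0\leq j\leq i-1$, using $\mathrm{ord}_{\boldsymbol{\ell}_m}(R_i)\geq 0$ and $\mathrm{ord}_{\boldsymbol{\ell}_m}(Q_1),\mathrm{ord}_{\boldsymbol{\ell}_m}(Q_2)\geq 1$, one obtains
\begin{align*}
\mathrm{ord}_{\boldsymbol{\ell}_m}\bigl(R_i\, Q_1^{j}Q_2^{i-1-j}\bigr)\geq j+(i-1-j)=i-1\geq 1.
\end{align*}
The big inner factor thus has $\mathrm{ord}_{\boldsymbol{\ell}_m}\geq 1$, and multiplying by $(Q_1-Q_2)$ yields the desired inequality.

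There is no real obstacle here; the argument is a direct computation whose only subtlety is checking that the inner double sum has $\mathrm{ord}_{\boldsymbol{\ell}_m}\geq 1$ uniformly in $i$ and $j$, which is essentially forced by $Q_1,Q_2\in\mathcal{B}_m^+$. The exponent $1$ in the contraction coefficient $2^{-1}$ is sharp in general (e.g., take $R_2$ with $\mathrm{ord}(R_2)=\mathbf{0}_{k+1}$ and $Q_1,Q_2$ both of order $\mathbf{0}_k\times\{1\}$ in $\boldsymbol{\ell}_m$), so no better constant is available without extra hypotheses on $\mathrm{ord}_{\boldsymbol{\ell}_m}(R_i)$ or $\mathrm{ord}_{\boldsymbol{\ell}_m}(Q_i)$.
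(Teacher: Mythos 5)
Your proof is correct and follows essentially the same route as the paper's: the factorization $Q_1^i-Q_2^i=(Q_1-Q_2)\sum_{j=0}^{i-1}Q_1^{j}Q_2^{i-1-j}$ together with the bounds $\mathrm{ord}_{\boldsymbol{\ell}_m}(R_i)\geq 0$ and $\mathrm{ord}_{\boldsymbol{\ell}_m}(Q_1),\mathrm{ord}_{\boldsymbol{\ell}_m}(Q_2)\geq 1$ yields $\mathrm{ord}_{\boldsymbol{\ell}_m}(\mathcal{S}(Q_1)-\mathcal{S}(Q_2))\geq \mathrm{ord}_{\boldsymbol{\ell}_m}(Q_1-Q_2)+1$, exactly as in the paper. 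Your additional remarks on well-definedness and sharpness of the constant are fine but not needed.
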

			\begin{proof}
				Let $Q_{1}$ and $Q_{2}$ be distinct elements of $\mathcal{B}_{m}^{+}\subseteq \mathcal{L}_{k}$. We have:
				\begin{align*}
					\mathcal{S}(Q_{1}) - \mathcal{S}(Q_{2}) &= \sum _{i\geq 2}R_{i}\cdot (Q_{1}^{i} - Q_{2}^{i}) \\
					&= \sum _{i\geq 2}\Big( R_{i} \cdot \left( Q_{1} -Q_{2} \right) \cdot \sum_{j=0}^{i-1}Q_{1}^{j}\cdot Q_{2}^{i-1-j} \Big) .
				\end{align*}
				Using the facts that
				\begin{align*}
					& \mathrm{ord}(R_{i})\geq \mathbf{0}_{k+1} , \quad \mathrm{ord}_{\boldsymbol{\ell}_{m}}(Q_{1}) , \, \mathrm{ord}_{\boldsymbol{\ell}_{m}}(Q_{2}) \geq 1,
				\end{align*}
				and $i\geq 2$, we conclude that
				\begin{align*}
					\mathrm{ord}_{\boldsymbol{\ell}_{m}}(\mathcal{S}(Q_{1})-\mathcal{S}(Q_{2})) & \geq \mathrm{ord}_{\boldsymbol{\ell}_{m}}(Q_{1} - Q_{2}) +1.
				\end{align*}
				This implies that
				\begin{align*}
					d_{m} \big( \mathcal{S}(Q_{1}), \mathcal{S}(Q_{2})\big) & \leq \frac{1}{2} \cdot d_{m}(Q_{1} , Q_{2}) .
				\end{align*}
				Thus, $\mathcal{S}$ is a $\frac{1}{2}$-contraction.
			\end{proof}
		
			\begin{proof}[Proof of Lemma~\ref{Lem5}]
				Let $\mathcal{T}, \mathcal{S}:\mathcal{B}_{m}^{+} \to \mathcal{B}_{m}^{+}$, such that
				\begin{align*}
					\mathcal{T}(Q) & :=R_{1} \cdot Q 
				\end{align*}
				and
				\begin{align*}
					\mathcal{S}(Q) & :=M - R_{2} \cdot h(Q) ,
				\end{align*}
				for every $Q \in \mathcal{B}_{m}^{+}\subseteq \mathcal{L}_{k}$. The equation \eqref{Eq10} is equivalent to the equation $\mathcal{T}(Q)= \mathcal{S}(Q)$. Let $\displaystyle h:=\sum _{i\geq 2}H_{i}\cdot x^{i}$, where $H_{i}\in \mathcal{B}_{\geq m}^{+}\subseteq \mathcal{L}_{k}$. Since $\mathrm{ord}(R_{1})=\mathbf{0}_{k+1}$, it follows that
				\begin{align*}
					& \mathrm{ord}_{\boldsymbol{\ell}_{m}} (\mathcal{T}(Q) )= \mathrm{ord}_{\boldsymbol{\ell}_{m}} (Q) + \mathrm{ord}_{\boldsymbol{\ell}_{m}} (R_{1})= \mathrm{ord}_{\boldsymbol{\ell}_{m}} (Q) .
				\end{align*}
				Therefore, $\mathcal{T}$ is an isometry on the space $\left( \mathcal{B}_{m}^{+},d_{m}\right) $. By Lemma~\ref{Lema1}, it follows that $\mathcal{S}$ is a $\frac{1}{2}$-contraction on the space $\left( \mathcal{B}_{m}^{+},d_{m}\right) $. It is obvious that $\mathcal{T}$ is a surjection, since $\frac{1}{R_{1}}\in \mathcal{B}_{\geq m}^{+}\subseteq \mathcal{L}_{k}$ (due to $\mathrm{ord}(R_{1}) = \mathbf{0}_{k+1}$). By the fixed point theorem from Proposition~\ref{KorBanach}, there exists a unique $Q \in \mathcal{B}_{m}^{+}\subseteq \mathcal{L}_{k}$, such that $\mathcal{T}(Q)= \mathcal{S}(Q)$.
			\end{proof}

		\section{The proofs of auxiliary results from Section~\ref{sec:proofAb}}
			
			\begin{lem}\label{LemaInductiveStep}
				Let $K,P\in \mathcal{B}_{i}\subseteq \mathcal{L}_{k}^{\infty }$, $P\neq 0$, $1\leq i\leq k$, $k\in \mathbb{N}_{\geq 1}$, and let $n_{i}:=\mathrm{ord}_{\boldsymbol{\ell }_{i}}(P)$. If the order in $\boldsymbol{\ell }_{i}$ of each term in $K$ is strictly smaller than $2n_{i}+1$, then there exists $S\in \mathcal{B}_{i}\subseteq \mathcal{L}_{k}^{\infty }$ such that:
				\begin{align}
					\mathcal{P}_{<(\mathbf{0}_{i},2n_{i}+1)}\big( PD_{i}(S)-SD_{i}(P)\big) &= K . \label{LemaInductiveEquation}
				\end{align}
				Moreover, if we impose the condition that order in $\boldsymbol{\ell }_{i}$ of each term in $S$ is strictly smaller than $n_{i}$, then such $S$ is unique.
			\end{lem}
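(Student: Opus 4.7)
The plan is to make the substitution $S = PU$, which reduces the equation to one governed by the derivation $D_{i+1}$ (a $\tfrac{1}{2}$-contraction on $(\mathcal{B}_{i+1},d_{i+1})$), and then solve for $U$ coefficient-by-coefficient in $\boldsymbol{\ell}_{i}$ by the Banach Fixed Point Theorem. Since $D_i$ is a derivation on $\mathcal{B}_{i}$, one has
\[
P D_{i}(S) - S D_{i}(P) \;=\; P D_{i}(PU) - PU\,D_{i}(P) \;=\; P^{2} D_{i}(U).
\]
Writing $P = \boldsymbol{\ell}_{i}^{n_{i}} P_{n_{i}} + (\text{higher order in } \boldsymbol{\ell}_{i})$ with $P_{n_{i}} \in \mathcal{B}_{i+1}\setminus\{0\}$, the leading $\boldsymbol{\ell}_{i}$-order of $P^{2}$ is exactly $2n_{i}$, so the desired equation $\mathcal{P}_{<(\mathbf{0}_{i},2n_{i}+1)}(P^{2}D_{i}(U)) = K$ is equivalent to requiring that the part of $D_{i}(U)$ of $\boldsymbol{\ell}_{i}$-order $\leq 0$ agrees with that of $K/P^{2}$ (here $K/P^{2}\in\mathcal{B}_{i}$, since $\mathcal{B}_{i}$ is a field).

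Next I would expand $U = \sum_{n}\boldsymbol{\ell}_{i}^{n}U_{n}$ with $U_{n}\in\mathcal{B}_{i+1}$. By the identity \eqref{EqDer1}, the $\boldsymbol{\ell}_{i}^{\ell}$-coefficient of $D_{i}(U)$ equals $(\ell-1)U_{\ell-1} + D_{i+1}(U_{\ell-1})$. Setting $U_{n} := 0$ for $n \geq 0$, the requirement for each $\ell \leq 0$ becomes
\[
(\ell-1)U_{\ell-1} + D_{i+1}(U_{\ell-1}) \;=\; W_{\ell},
\]
where $W_{\ell}\in\mathcal{B}_{i+1}$ is the $\boldsymbol{\ell}_{i}^{\ell}$-coefficient of $K/P^{2}$. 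Since $\ell-1 \leq -1 \neq 0$ and $D_{i+1}$ is a $\tfrac{1}{2}$-contraction on the complete space $(\mathcal{B}_{i+1},d_{i+1})$, the map $A\mapsto \tfrac{1}{\ell-1}(W_{\ell}-D_{i+1}(A))$ is a $\tfrac{1}{2|\ell-1|}$-contraction, and the Banach Fixed Point Theorem yields a unique $U_{\ell-1}\in\mathcal{B}_{i+1}$. The well-orderedness of $\mathrm{Supp}(K)$ ensures that the support of $U$ is bounded below, so $U\in\mathcal{B}_{i}$, and $S_{0}:=PU\in\mathcal{B}_{i}$ solves the projected equation. To enforce the canonical constraint that every term of $S$ has $\boldsymbol{\ell}_{i}$-order $< n_{i}$, I would simply truncate: $S := \mathcal{P}_{<(\mathbf{0}_{i},n_{i})}(S_{0})$. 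The tail $S_{0}-S$ has $\boldsymbol{\ell}_{i}$-order $\geq n_{i}$, so both $PD_{i}(S_{0}-S)$ and $(S_{0}-S)D_{i}(P)$ have $\boldsymbol{\ell}_{i}$-order $\geq 2n_{i}+1$ and are killed by $\mathcal{P}_{<(\mathbf{0}_{i},2n_{i}+1)}$.

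For uniqueness I would argue by contradiction: suppose $\tilde{S}\in\mathcal{B}_{i}\setminus\{0\}$ has all terms of $\boldsymbol{\ell}_{i}$-order $< n_{i}$ and satisfies the homogeneous equation $\mathcal{P}_{<(\mathbf{0}_{i},2n_{i}+1)}(PD_{i}(\tilde{S}) - \tilde{S}D_{i}(P))=0$. Let $m_{0} := \mathrm{ord}_{\boldsymbol{\ell}_{i}}(\tilde{S}) \leq n_{i}-1$ and let $\tilde{S}_{m_{0}}\in\mathcal{B}_{i+1}\setminus\{0\}$ be the associated leading coefficient. Extracting the $\boldsymbol{\ell}_{i}^{m_{0}+n_{i}+1}$-coefficient, which sits strictly below $2n_{i}+1$ since $m_{0}+n_{i}+1 \leq 2n_{i}$, gives
\[
(m_{0}-n_{i})P_{n_{i}}\tilde{S}_{m_{0}} + P_{n_{i}}D_{i+1}(\tilde{S}_{m_{0}}) - \tilde{S}_{m_{0}}D_{i+1}(P_{n_{i}}) \;=\; 0.
\]
After dividing by $P_{n_{i}}$ and isolating $\tilde{S}_{m_{0}}$, this is a fixed point equation whose right-hand side is a contraction on $(\mathcal{B}_{i+1},d_{i+1})$ (using that $m_{0}-n_{i}\neq 0$, that $D_{i+1}$ is a $\tfrac{1}{2}$-contraction, and that $D_{i+1}(P_{n_{i}})/P_{n_{i}}$ has positive $\boldsymbol{\ell}_{i+1}$-order); its unique fixed point is $0$, contradicting $\tilde{S}_{m_{0}}\neq 0$. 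The main point to check carefully will be that all intermediate transseries remain in $\mathcal{B}_{i}$ with well-ordered supports (in particular, that $K/P^{2}\in\mathcal{B}_{i}$ and that the inductively constructed $U$ has support bounded below), but this is a routine consequence of $\mathcal{B}_{i}$ being a field of well-based transseries.
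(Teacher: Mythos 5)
Your argument is correct, but it follows a genuinely different route from the paper's. The paper splits $P$ into its leading $\boldsymbol{\ell}_{i}$-block $\boldsymbol{\ell}_{i}^{n_{i}}P_{i+1}$ plus a remainder, treats the remainder (together with $K$) as an affine $\tfrac{1}{2^{n_{i}+2}}$-contraction $\mathcal S$, shows the leading part $\mathcal T(S)=\mathcal{P}_{<(\mathbf{0}_{i},2n_{i}+1)}\big(\boldsymbol{\ell}_{i}^{n_{i}}P_{i+1}D_{i}(S)-SD_{i}(\boldsymbol{\ell}_{i}^{n_{i}}P_{i+1})\big)$ is a $\tfrac{1}{2^{n_{i}+1}}$-homothety on the canonical subspace, establishes $\mathcal S(\widetilde{\mathcal B})\subseteq\mathcal T(\widetilde{\mathcal B})$ by solving the linear ODE explicitly via variation of constants and the integration lemma (Lemma~\ref{Lemma1}), and then invokes Proposition~\ref{KorBanach} once on the whole space. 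You instead use the Wronskian identity $PD_{i}(S)-SD_{i}(P)=P^{2}D_{i}(S/P)$ for the \emph{full} $P$ — which is exactly the integrating factor hiding inside the paper's variation-of-constants formula \eqref{LemaEquationLinearDiff} — so no leading-block/remainder splitting is needed; you then solve $D_{i}(U)=K/P^{2}$ modulo high order coefficient-by-coefficient in $\boldsymbol{\ell}_{i}$, each coefficient by a small Banach fixed point in $(\mathcal{B}_{i+1},d_{i+1})$, and your uniqueness argument extracts the leading coefficient of the homogeneous equation and kills it by another contraction. Your route is arguably more elementary and self-contained for this particular lemma (it bypasses Proposition~\ref{KorBanach} and the surjectivity discussion), at the cost of the coefficient bookkeeping; the paper's route reuses its standard homothety-versus-contraction machinery uniformly across all the lemmas of Section~\ref{sec:proofAb}. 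Two small points you should make explicit: in the boundary case $i=k$ the symbol $D_{i+1}$ must be read as the zero derivation (so the coefficient equations degenerate to $(\ell-1)U_{\ell-1}=W_{\ell}$, which is fine since $\ell-1\neq 0$), and the identity \eqref{EqDer1} you invoke is stated in the paper for coefficients in $\mathcal{B}_{\geq m+1}^{+}$, so you should note that it holds verbatim for arbitrary coefficients in $\mathcal{B}_{m+1}$, which is immediate from the same computation.
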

			
			\begin{proof}
				Let
				\begin{align*}
					P & = \boldsymbol{\ell }_{i}^{n_{i}}P_{i+1} + P_{i},
				\end{align*}
				where $P_{i+1}\in \mathcal{B}_{i+1}\subseteq \mathcal{L}_{k}^{\infty }$, and $P_{i}\in \mathcal{B}_{i}\subseteq \mathcal{L}_{k}^{\infty }$ such that $\mathrm{ord}_{\boldsymbol{\ell }_{i}}(P_{i}) \geq n_{i}+1$. Let $\mathcal{S},\mathcal{T}:\mathcal{B}_{i}\to \mathcal{B}_{i}$ be operators defined by:
				\begin{align*}
					\mathcal{S}(S) & := \mathcal{P}_{<(\mathbf{0}_{i},2n_{i}+1)}\big( K-P_{i}D_{i}(S)+SD_{i}(P_{i}) \big) , \\
					\mathcal{T}(S) & := \mathcal{P}_{<(\mathbf{0}_{i},2n_{i}+1)} \big( \boldsymbol{\ell }_{i}^{n_{i}}P_{i+1}D_{i}(S)-SD_{i}(\boldsymbol{\ell }_{i}^{n_{i}}P_{i+1}) \big) .
				\end{align*}
				Note that $\mathcal{S}$ is an affine $\frac{1}{2^{n_{i}+2}}$-contraction on the space $(\mathcal{B}_{i},d_{i})$ and $\mathcal{T}$ is linear. Furthermore,
				\begin{align}
					\mathrm{ord}_{\boldsymbol{\ell }_{i}} \big( \boldsymbol{\ell }_{i}^{n_{i}}P_{i+1}D_{i}(S)-SD_{i}(\boldsymbol{\ell }_{i}^{n_{i}}P_{i+1}) \big) & = \mathrm{ord}_{\boldsymbol{\ell }_{i}}(S)+n_{i}+1 , \label{LemaCaseB2Equati10}
				\end{align}
				if and only if $\boldsymbol{\ell }_{i}^{n_{i}}P_{i+1}D_{i}(S)-SD_{i}(\boldsymbol{\ell }_{i}^{n_{i}}P_{i+1}) \neq 0$. Now, by solving the linear differential equation, we get that $\boldsymbol{\ell }_{i}^{n_{i}}P_{i+1}D_{i}(S)-SD_{i}(\boldsymbol{\ell }_{i}^{n_{i}}P_{i+1}) =0$ if and only if $S=C\cdot \boldsymbol{\ell }_{i}^{n_{i}}P_{i+1}$, $C\in \mathbb{R}$. Therefore, \eqref{LemaCaseB2Equati10} holds if and only if $S\neq C\cdot \boldsymbol{\ell }_{i}^{n_{i}}P_{i+1}$, for each $C\in \mathbb{R}$.
				
				Let $\widetilde{\mathcal{B}}$ be the space of all $S\in \mathcal{B}_{i}\subseteq \mathcal{L}_{k}^{\infty }$ such that $S=0$, or every term in $S$ has order in $\boldsymbol{\ell }_{i}$ strictly smaller than $n_{i}$. By \eqref{LemaCaseB2Equati10}, the order in $\boldsymbol{\ell }_{i}$ of each term in $ \boldsymbol{\ell }_{i}^{n_{i}}P_{i+1}D_{i}(S)-SD_{i}(\boldsymbol{\ell }_{i}^{n_{i}}P_{i+1}) $, $S\in \widetilde{\mathcal{B}}$, is strictly smaller than $2n_{i}+1$. Consequently, it follows that:
				\begin{align}
					\mathcal{T}|_{\widetilde{\mathcal{B}}}(S) & = \boldsymbol{\ell }_{i}^{n_{i}}P_{i+1}D_{i}(S)-SD_{i}(\boldsymbol{\ell }_{i}^{n_{i}}P_{i+1}) , \quad S\in \widetilde{\mathcal{B}} . \label{LemaCaseB2Equati11}
				\end{align}
				Let $S_{1},S_{2}\in \widetilde{\mathcal{B}}$ such that $S_{1}\neq S_{2}$. Suppose that $\mathcal{T}|_{\widetilde{\mathcal{B}}}(S_{1}-S_{2}) =0$. Solving the linear differential equation as above, we get that $S_{1}-S_{2}=C\cdot \boldsymbol{\ell }_{i}^{n_{i}}P_{i+1}$, for some $C\in \mathbb{R}$. Since, $S_{1},S_{2}\in \widetilde{\mathcal{B}}$, we get $S_{1}=S_{2}$, which is a contradiction. Therefore, $\mathcal{T}|_{\widetilde{\mathcal{B}}}(S_{1}-S_{2}) \neq 0$. By \eqref{LemaCaseB2Equati10} and \eqref{LemaCaseB2Equati11}, it follows that:
				\begin{align*}
					\mathrm{ord}_{\boldsymbol{\ell }_{i}}(\mathcal{T}|_{\widetilde{\mathcal{B}}}(S_{1}-S_{2})) &= \mathrm{ord}_{\boldsymbol{\ell }_{i}}(S_{1}-S_{2})+n_{i}+1 .
				\end{align*}
				Now we get that $\mathcal{T}|_{\widetilde{\mathcal{B}}}:\widetilde{\mathcal{B}}\to \mathcal{B}_{i}$ is a linear $\frac{1}{2^{n_{i}+1}}$-homothety, with respect to the metric $d_{i}$. Now, equation \eqref{LemaInductiveEquation} is equivalent to the fixed point equation
				\begin{align*}
					& \mathcal{T}|_{\widetilde{\mathcal{B}}}(S)=\mathcal{S}|_{\widetilde{\mathcal{B}}}(S) , \quad S\in \widetilde{\mathcal{B}} .
				\end{align*}
				We now prove that $\mathcal{S}(\widetilde{\mathcal{B}})\subseteq \mathcal{T}(\widetilde{\mathcal{B}})$. Then we conclude by the fixed point theorem from Proposition~\ref{KorBanach}.
				
				Suppose that $M\in \mathcal{S}(\widetilde{\mathcal{B}})$. Then, by definition of $\mathcal{S}$, each term in $M$ is of order in $\boldsymbol{\ell }_{i}$ strictly smaller than $2n_{i}+1$. We solve the equation $\mathcal{T}|_{\widetilde{\mathcal{B}}}(S)=M$, i.e.
				\begin{align*}
					& \boldsymbol{\ell }_{i}^{n_{i}}P_{i+1}D_{i}(S)-SD_{i}(\boldsymbol{\ell }_{i}^{n_{i}}P_{i+1}) = M , \quad S\in \widetilde{\mathcal{B}} .
				\end{align*}
				This is a linear ordinary differential equation whose solutions are given by:
				\begin{align}
					S & = \exp \Big( \int \frac{D_{i}(\boldsymbol{\ell }_{i}^{n_{i}}P_{i+1})}{\boldsymbol{\ell }_{i}^{n_{i}}P_{i+1}} \frac{d\boldsymbol{\ell }_{i}}{\boldsymbol{\ell }_{i}^{2}}\Big) \bigg( C+\int \Big( \frac{M}{\boldsymbol{\ell }_{i}^{n_{i}}P_{i+1}}\cdot \exp \Big( -\int \frac{D_{i}(\boldsymbol{\ell }_{i}^{n_{i}}P_{i+1})}{\boldsymbol{\ell }_{i}^{n_{i}}P_{i+1}}\frac{d\boldsymbol{\ell }_{i}}{\boldsymbol{\ell }_{i}^{2}} \Big) \frac{d\boldsymbol{\ell }_{i}}{\boldsymbol{\ell }_{i}^{2}}\Big) \bigg) \nonumber \\
					& = \exp \big( \log (\boldsymbol{\ell }_{i}^{n_{i}}P_{i+1}) \big) \Big( C+\int \frac{M}{\boldsymbol{\ell }_{i}^{n_{i}}P_{i+1}}\cdot \big( \exp (\log (\boldsymbol{\ell }_{i}^{n_{i}}P_{i+1})) \big) ^{-1} \frac{d\boldsymbol{\ell }_{i}}{\boldsymbol{\ell }_{i}^{2}} \Big) \nonumber \\
					& = \boldsymbol{\ell }_{i}^{n_{i}}P_{i+1}\Big( C+\int \frac{M}{(\boldsymbol{\ell }_{i}^{n_{i}}P_{i+1})^{2}} \frac{d\boldsymbol{\ell }_{i}}{\boldsymbol{\ell }_{i}^{2}}\Big) , \quad C\in \mathbb{R} . \label{LemaEquationLinearDiff}
				\end{align}
				By Lemma~\ref{Lemma1}, it follows that $S\in \mathcal{B}_{i}\subseteq \mathcal{L}_{k}$, for each $C\in \mathbb{R}$. Moreover, taking $C=0$, we get $S\in \widetilde{\mathcal{B}}\subseteq \mathcal{B}_{i}$.
			\end{proof}
		
			The following Proposition~\ref{LemaCaseB2OperatorsTS} is needed in the proof of Proposition~\ref{PropCaseBTS}.
			
			\begin{prop}\label{LemaCaseB2OperatorsTS}
				Let $M,R,V \in \mathcal{B}_{\geq 1}^{+}\subseteq \mathcal{L}_{k}$, $k\in \mathbb{N}_{\geq 1}$, such that $R\neq 0$ and $\mathrm{ord}\, (V),\mathrm{ord}\, (M)>2\cdot \mathrm{ord} \, (R)+(0,\mathrm{1}_{k})$. Let
				\begin{align*}
					\widetilde{\mathcal{B}} & := \left\lbrace K\in \mathcal{B}_{\geq 1}^{+}\subseteq \mathcal{L}_{k} : \mathrm{ord} \, (K) > \mathrm{ord} \, (R)\right\rbrace ,
				\end{align*}
				and let $\mathcal{T},\mathcal{S}:\widetilde{\mathcal{B}}\to \mathcal{B}_{\geq 1}^{+}$ be the operators defined by:
				\begin{align}
					\mathcal{T}(S) &:=-(1+R)\cdot \log (1+R)\cdot D_{1}(S)+(1+S)\cdot \log(1+S)\cdot D_{1}(R)+S\cdot V , \nonumber \\
					\mathcal{S}(S) &:= M+\mathcal{K}_{1}(S)-\mathcal{C}_{1}(S) -(1+S)\cdot \log (1+S)\cdot D_{1}(V) , \quad S\in \widetilde{\mathcal{B}} , \label{CaseBIdent4}
				\end{align}
				where $\mathcal{C}_{1},\mathcal{K}_{1}:\mathcal{B}_{\geq 1}^{+}\to \mathcal{B}_{\geq 1}^{+}$ are $\frac{1}{2^{2+\mathrm{ord}_{\boldsymbol{\ell }_{1}}(R)}}$-contractions on the space $(\mathcal{B}_{\geq 1}^{+},d_{1})$. Then:
				\begin{enumerate}[1., font=\textup, nolistsep, leftmargin=0.6cm]
					\item operator $\mathcal{S}$ is a $\frac{1}{2^{2+\mathrm{ord}_{\boldsymbol{\ell }_{1}}(R)}}$-contraction, with respect to the metric $d_{1}$,
					\item operator $\mathcal{T}$ is a $\frac{1}{2^{1+\mathrm{ord}_{\boldsymbol{\ell }_{1}}(R)}}$-homothety, with respect to the metric $d_{1}$,
					\item $\mathcal{S}(\widetilde{\mathcal{B}})\subseteq \mathcal{T}(\widetilde{\mathcal{B}})$.
				\end{enumerate}
			\end{prop}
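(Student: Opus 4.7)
The plan addresses the three claims separately: statements~1 and~2 reduce to order-counting computations, while statement~3 requires a linear-ODE argument. For statement~1, I would decompose $\mathcal{S}(S_1)-\mathcal{S}(S_2)$ summand by summand. The constant $M$ drops out, and $\mathcal{K}_1, \mathcal{C}_1$ satisfy the claimed Lipschitz bound by hypothesis. For the remaining summand $(1+S)\log(1+S)\cdot D_1(V)$, the vanishing $(1+S)\log(1+S)-S = O(S^2)$ gives
\[
\mathrm{ord}_{\boldsymbol{\ell}_1}\bigl[(1+S_1)\log(1+S_1)-(1+S_2)\log(1+S_2)\bigr]\geq \mathrm{ord}_{\boldsymbol{\ell}_1}(S_1-S_2),
\]
while the hypothesis $\mathrm{ord}(V) > 2\,\mathrm{ord}(R)+(0,\mathbf{1}_k)$ yields $\mathrm{ord}_{\boldsymbol{\ell}_1}\bigl(D_1(V)\bigr) \geq 2\,\mathrm{ord}_{\boldsymbol{\ell}_1}(R)+2$, which together supply the required $\tfrac{1}{2^{2+\mathrm{ord}_{\boldsymbol{\ell}_1}(R)}}$-contraction bound.

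For statement~2, setting $W := S_1-S_2$, Taylor expansion produces $\mathcal{T}(S_1)-\mathcal{T}(S_2) = \bigl(W\,D_1(R)-R\,D_1(W)\bigr)+\mathrm{h.o.t.}$, where every remaining contribution (from $\tfrac{R^2}{2}D_1(W)$, from $D_1(R)$ against the nonlinear tail of $(1+S)\log(1+S)$, and from $VW$) is of strictly higher $\boldsymbol{\ell}_1$-order, thanks once more to $\mathrm{ord}(V) > 2\,\mathrm{ord}(R)+(0,\mathbf{1}_k)$. The crucial observation I would use is that the Wronskian-like map $W \mapsto W\,D_1(R)-R\,D_1(W)$ has kernel $\mathbb{R}\cdot R$ (solve the linear ODE $D_1(W)/W = D_1(R)/R$), so on $\widetilde{\mathcal{B}}$ it is injective because $\mathrm{ord}(W) > \mathrm{ord}(R)$ excludes $W \in \mathbb{R}\cdot R\setminus\{0\}$. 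A coefficient-by-coefficient check in powers of $\boldsymbol{\ell}_1$ then pins down $\mathrm{ord}_{\boldsymbol{\ell}_1}\bigl(W\,D_1(R)-R\,D_1(W)\bigr) = \mathrm{ord}_{\boldsymbol{\ell}_1}(W)+\mathrm{ord}_{\boldsymbol{\ell}_1}(R)+1$, delivering the homothety constant $\tfrac{1}{2^{1+\mathrm{ord}_{\boldsymbol{\ell}_1}(R)}}$.

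For statement~3, given arbitrary $h \in \mathcal{S}(\widetilde{\mathcal{B}})$, standard order bookkeeping (using the hypotheses on $V,M$) yields $\mathrm{ord}(h) > 2\,\mathrm{ord}(R)+(0,\mathbf{1}_k)$, matching $\mathrm{ord}\bigl(\mathcal{T}(S)\bigr)$ for $S \in \widetilde{\mathcal{B}}$. To solve $\mathcal{T}(S) = h$, I would divide by $-(1+R)\log(1+R)$ and split $\Phi(S):=(1+S)\log(1+S) = S + h_0(S)$ with $h_0$ vanishing quadratically; the equation becomes a quasilinear first-order ODE in $\boldsymbol{\ell}_1$. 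I would solve the linear part block-by-block by an integrating factor together with Lemma~\ref{Lemma1} (mirroring the surjectivity arguments in Lemma~\ref{LemaInductiveStep} and Proposition~\ref{Prop4}), and close the nonlinear correction $-A\cdot h_0(S)$ by a contraction-mapping iteration (Proposition~\ref{KorBanach}). The freedom in the Wronskian kernel $\mathbb{R}\cdot R$ lets me choose the integration constant so that $\mathrm{ord}(S) > \mathrm{ord}(R)$, placing $S$ in $\widetilde{\mathcal{B}}$.

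The hard part is the exact order analysis in statement~2: verifying that the Wronskian term dominates $\mathcal{T}(S_1)-\mathcal{T}(S_2)$ requires a careful case split depending on whether $\mathrm{ord}_{\boldsymbol{\ell}_1}(W) > \mathrm{ord}_{\boldsymbol{\ell}_1}(R)$ or equality holds, and in the latter case one must track the next nonzero $\boldsymbol{\ell}_1$-power through the lexicographic condition $\mathrm{ord}(W) > \mathrm{ord}(R)$ on the $\mathcal{B}_2$-coefficients; the bound $\mathrm{ord}(V) > 2\,\mathrm{ord}(R)+(0,\mathbf{1}_k)$ is essential to keep the $VW$-remainder subleading throughout.
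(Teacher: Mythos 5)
Your proposal is correct and follows essentially the same route as the paper: the same order-counting for statement 1, the same Wronskian-plus-kernel argument (solving $W D_{1}(R)-R D_{1}(W)=0$ to exclude cancellation on $\widetilde{\mathcal{B}}$) for statement 2, and the same integrating-factor/contraction scheme for statement 3. The only cosmetic difference is that the paper packages statement 3 as an application of Proposition~\ref{Lemma3} via the substitution $N:=\log(1+R)$ (so that $-\tfrac{D_{1}(R)}{(1+R)\log(1+R)}=-\tfrac{D_{1}(N)}{N}$), whereas you re-derive that machinery inline.
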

		
			Proposition~\ref{Lemma3} below gives a solution of a particular differential equation which we use in the proof of Proposition~\ref{LemaCaseB2OperatorsTS}.
			
			\begin{prop}[Solution of a differential equation in $\mathcal{B}_{\geq 1}^{+}$]\label{Lemma3}
				Let $K,M,N,T\in \mathcal{B}_{\geq 1}^{+}\subseteq \mathcal{L}_{k}$, $N\neq 0$, such that $\mathrm{ord}(T)\geq \mathrm{ord}(\frac{D_{1}(N)}{N})$, $\mathrm{ord}(K)>(0,\mathbf{1}_{k})$, and $\mathrm{ord}(M)-\mathrm{ord}(N) > (0,\mathbf{1}_{k})$. Let $h\in x^{2}\mathbb{R}\left[ \left[ x\right] \right] $ be a power series in the variable $x$, with real coefficients, such that $h(0)=h'(0)=0$. Then there exists a unique solution $S\in \mathcal{B}_{\geq 1}^{+}\subseteq \mathcal{L}_{k}$ satisfying $\mathrm{ord} \, (S) > \mathrm{ord} \, (N)$ of the differential equation:
				\begin{align}
					D_{1}(S) - \Big( \frac{D_{1}(N)}{N}+K\Big) S + T\cdot h(S) & = M . \label{PropEq8}
				\end{align}
			\end{prop}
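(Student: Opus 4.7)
The plan is to convert equation \eqref{PropEq8} into a fixed point equation on a complete subspace of $\mathcal{B}_{\geq 1}^{+}$, and then invoke the fixed point theorem of Proposition~\ref{KorBanach}, mirroring the strategy used for the analogous Propositions~\ref{Prop3} and~\ref{Prop4}. A useful preliminary step is the substitution $S = NU$, which trivialises the linear part $D_{1}(S)-\frac{D_{1}(N)}{N}S$, since $D_{1}(NU) = D_{1}(N)U + ND_{1}(U)$; dividing through by $N$ reduces \eqref{PropEq8} to the equivalent equation
\begin{align*}
D_{1}(U) - K\cdot U + \frac{T}{N}\cdot h(NU) & = \frac{M}{N},
\end{align*}
for $U\in \mathcal{B}_{\geq 1}^{+}$, and a solution $S$ of \eqref{PropEq8} with $\mathrm{ord}(S)>\mathrm{ord}(N)$ corresponds exactly to such a $U$.

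I would then define operators $\mathcal{T},\mathcal{S}:\mathcal{B}_{\geq 1}^{+}\to \mathcal{B}_{\geq 1}^{+}$ by $\mathcal{T}(U) := -K\cdot U$ and $\mathcal{S}(U) := \frac{M}{N} - D_{1}(U) - \frac{T}{N}\cdot h(NU)$, so that the transformed equation reads $\mathcal{T}(U)=\mathcal{S}(U)$. The hypothesis $\mathrm{ord}(K)>(0,\mathbf{1}_{k})$ makes $\mathcal{T}$ a linear $\frac{1}{2^{\mathrm{ord}_{\boldsymbol{\ell}_{1}}(K)}}$-homothety on $(\mathcal{B}_{\geq 1}^{+},d_{1})$, and its image is characterised by a sharp $\boldsymbol{\ell}_{1}$-order condition. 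The operator $\mathcal{S}$ splits into three pieces: the constant $M/N$, which lies in $\mathcal{B}_{\geq 1}^{+}$ thanks to $\mathrm{ord}(M)-\mathrm{ord}(N)>(0,\mathbf{1}_{k})$; the derivation $D_{1}$, which is a $\frac{1}{2}$-contraction on $(\mathcal{B}_{\geq 1}^{+},d_{1})$ as recorded in Subsection~\ref{SubsectionSpaces}; and the nonlinear part $\frac{T}{N}h(NU)$, which is a much stronger contraction because $h(0)=h'(0)=0$ forces each difference $h(NU_{1})-h(NU_{2})$ to contain two factors of $NU$, while the hypothesis $\mathrm{ord}(T)\geq \mathrm{ord}(D_{1}(N)/N)$ prevents the prefactor $T/N$ from eroding this Lipschitz gain. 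After verifying the image inclusion $\mathcal{S}(\mathcal{B}_{\geq 1}^{+})\subseteq \mathcal{T}(\mathcal{B}_{\geq 1}^{+})$ from the order conditions above, Proposition~\ref{KorBanach} produces a unique fixed point $U\in \mathcal{B}_{\geq 1}^{+}$, and $S:=NU$ is then the required unique solution of \eqref{PropEq8} satisfying $\mathrm{ord}(S)>\mathrm{ord}(N)$.

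The hard part will be the sharp comparison of Lipschitz coefficients in the borderline case $\mathrm{ord}_{\boldsymbol{\ell}_{1}}(K)=1$, in which both the homothety coefficient of $\mathcal{T}$ and the $D_{1}$-contribution to $\mathcal{S}$ equal $\frac{1}{2}$, so the metric $d_{1}$ alone is too coarse to give the strict contraction required by Proposition~\ref{KorBanach}. To overcome this I would decompose $\mathcal{B}_{\geq 1}^{+}=\mathcal{B}_{1}^{+}\oplus \mathcal{B}_{\geq 2}^{+}$ and solve the fixed point equation block-wise: on $\mathcal{B}_{\geq 2}^{+}$ the derivation $D_{1}$ vanishes identically, so the obstruction disappears; on $\mathcal{B}_{1}^{+}$ the more restrictive lexicographic constraint $\mathrm{ord}(K)>(0,\mathbf{1}_{k})$ supplies additional room in $\boldsymbol{\ell}_{2},\ldots,\boldsymbol{\ell}_{k}$, and an inductive layer-by-layer argument in the spirit of Lemma~\ref{LemaInductiveStep} or Lemma~\ref{Lem4} closes the gap.
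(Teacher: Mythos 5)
There is a genuine gap: your operator decomposition assigns the homothety and contraction roles the wrong way around, so Proposition~\ref{KorBanach} can never be applied to it. After the (legitimate and rather elegant) substitution $S=NU$, your $\mathcal{T}(U)=-K\cdot U$ is, when $K\neq 0$, a $\frac{1}{2^{j}}$-homothety for $d_{1}$ with $j=\mathrm{ord}_{\boldsymbol{\ell}_{1}}(K)\geq 1$ (the hypothesis $\mathrm{ord}(K)>(0,\mathbf{1}_{k})$ forces $j\geq 1$), while your $\mathcal{S}$ contains the summand $-D_{1}(U)$, whose minimal Lipschitz coefficient for $d_{1}$ is exactly $\frac{1}{2}$. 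Hence $\mu\geq\lambda$ in every case, not only in the ``borderline'' case $j=1$ you flag; for $j\geq 2$ the inequality is strictly reversed, and for $K=0$ (which the hypotheses allow) $\mathcal{T}$ is the zero map and not a homothety at all. The image condition fails as well: $\mathcal{T}(\mathcal{B}_{\geq 1}^{+})=K\cdot\mathcal{B}_{\geq 1}^{+}$ consists of series of order greater than $\mathrm{ord}(K)$, whereas $\mathcal{S}(U)$ contains $M/N$, whose order is only known to exceed $(0,\mathbf{1}_{k})$ and need not exceed $\mathrm{ord}(K)$. Finally, the repair you propose rests on a false identity: $D_{1}$ does not vanish on $\mathcal{B}_{\geq 2}^{+}$; by \eqref{EqDer2} one has $D_{1}(Q)=\boldsymbol{\ell}_{1}D_{2}(Q)$ for $Q\in\mathcal{B}_{2}$, e.g.\ $D_{1}(\boldsymbol{\ell}_{2})=\boldsymbol{\ell}_{1}\boldsymbol{\ell}_{2}^{2}\neq 0$.

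The way out is to keep the full linear differential operator as the homothety: the dominant part of \eqref{PropEq8} is $S\mapsto D_{1}(S)-\frac{D_{1}(N)}{N}S$ (equivalently $U\mapsto D_{1}(U)$ after your substitution), which raises the order by exactly one at the appropriate level, while $K\cdot S$, $T\cdot h(S)$ and $M$ are genuinely of higher order and belong on the contraction side. Two complications then arise and both are real: this operator has the one-dimensional kernel $\mathbb{R}N$, which is why the solution space must be cut down to $\mathrm{ord}(S)>\mathrm{ord}(N)$; and its homothety property is visible only in the metric $d_{m}$ of a single level $m$ at a time, which forces a decomposition $N=N_{m}+\cdots+N_{1}$ and an induction over levels. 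This is what Lemmas~\ref{Lemma4} and~\ref{Lemma4Second} accomplish, with surjectivity of the homothety obtained by explicitly integrating the first-order linear equation and uniqueness proved afterwards by a separate order comparison. Your substitution $S=NU$ does cleanly encode the normalization $\mathrm{ord}(S)>\mathrm{ord}(N)$ as $U\in\mathcal{B}_{\geq 1}^{+}$, but it removes neither of these two difficulties.
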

			
			We first prove Lemmas~\ref{Lemma4} and~\ref{Lemma4Second} which are auxiliary lemmas in the proof of Proposition~\ref{Lemma3}.
			
			\begin{lem}[Solution of a differential equation in $\mathcal{B}_{\geq m}^{+}$]\label{Lemma4}
				Let $M,N,P,T\in \mathcal{B}_{m}^{+}\subseteq \mathcal{L}_{k}$, $N\neq 0$, such that $\mathrm{ord}(P) > (\mathbf{0}_{m},\mathbf{1}_{k-m+1})$, for $1\leq m\leq k$, $k\in \mathbb{N}_{\geq 1}$, and let $h\in x^{2}\mathcal{B}_{\geq m}^{+}\left[ \left[ x\right] \right] $ be a power series in the variable $x$, with coefficients in $\mathcal{B}_{\geq m}^{+}\subseteq \mathcal{L}_{k}$, such that $h(0)=h'(0)=0$. Suppose that
				\begin{align*}
					\mathrm{ord}(M)-\mathrm{ord}(N) & > (\mathbf{0}_{m},\mathbf{1}_{k-m+1}) .
				\end{align*}
				Then there exists a unique solution $S\in \mathcal{B}_{m}^{+}\subseteq \mathcal{L}_{k}$ satisfying $\mathrm{ord} \, (S)> \mathrm{ord} \, (N)$ of the differential equation:
				\begin{align}
					D_{m}(S) - \Big( \frac{D_{m}(N)}{N}+P\Big) \cdot S + T\cdot h(S) & = M . \label{LemaCaseB2Equation1}
				\end{align}
			\end{lem}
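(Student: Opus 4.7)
The plan is to reduce \eqref{LemaCaseB2Equation1} to a fixed-point equation in the complete metric space $(\mathcal{B}_{\geq m}^{+}, d_{m})$ and invoke Proposition~\ref{KorBanach}, mirroring the strategy of Proposition~\ref{Prop3}. First, I perform the substitution $S = N\cdot U$ with $U\in\mathcal{B}_{\geq m}^{+}$ (which encodes the constraint $\mathrm{ord}(S)>\mathrm{ord}(N)$). Using the Leibniz rule $D_{m}(NU)=D_{m}(N)\cdot U+N\cdot D_{m}(U)$, equation \eqref{LemaCaseB2Equation1} reduces, after division by $N$, to the equivalent equation
\begin{align*}
D_{m}(U)\;-\;P\cdot U\;=\;\frac{M}{N}\;-\;\frac{T}{N}\cdot h(NU).
\end{align*}
The hypothesis $\mathrm{ord}(M)-\mathrm{ord}(N)>(\mathbf{0}_{m},\mathbf{1}_{k-m+1})$ ensures $M/N\in\mathcal{B}_{m}^{+}$, and $h\in x^{2}\mathcal{B}_{\geq m}^{+}[[x]]$ together with $\mathrm{ord}_{\boldsymbol{\ell}_{m}}(N),\mathrm{ord}_{\boldsymbol{\ell}_{m}}(U)\geq 1$ makes $(T/N)\cdot h(NU)$ a well-defined element of $\mathcal{B}_{m}^{+}$.

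Next I introduce operators $\mathcal{T}_{m},\mathcal{S}_{m}:\mathcal{B}_{\geq m}^{+}\to\mathcal{B}_{m}^{+}$ by
\begin{align*}
\mathcal{T}_{m}(U) &:= P\cdot U - D_{m}(U), \\
\mathcal{S}_{m}(U) &:= \frac{T}{N}\cdot h(NU) - \frac{M}{N},
\end{align*}
so the equation becomes $\mathcal{T}_{m}(U)=\mathcal{S}_{m}(U)$. I would verify that $\mathcal{T}_{m}$ is a $\tfrac{1}{2}$-homothety with respect to $d_{m}$: for $V:=U_{1}-U_{2}\neq 0$ with $n_{0}:=\mathrm{ord}_{\boldsymbol{\ell}_{m}}(V)$ and leading $\boldsymbol{\ell}_{m}$-block $V_{n_{0}}\in\mathcal{B}_{m+1}$, the leading $\boldsymbol{\ell}_{m}$-block of $\mathcal{T}_{m}(V)$ equals $\boldsymbol{\ell}_{m}^{n_{0}+1}(P_{1}-n_{0})V_{n_{0}}$, where $P_{1}\in\mathcal{B}_{m+1}$ is the $\boldsymbol{\ell}_{m}^{1}$-coefficient of $P$. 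The hypothesis $\mathrm{ord}(P)>(\mathbf{0}_{m},\mathbf{1}_{k-m+1})$ forces $P_{1}$ to have strictly positive order in $\mathcal{B}_{m+1}$, so $P_{1}-n_{0}$ is invertible in $\mathcal{B}_{m+1}$ and $\mathrm{ord}_{\boldsymbol{\ell}_{m}}(\mathcal{T}_{m}(V))=n_{0}+1$. For the contraction property, using $h(x)-h(y)=(x-y)\cdot\sum_{i\geq 2}H_{i}(x^{i-1}+\cdots+y^{i-1})$ with $H_{i}\in\mathcal{B}_{\geq m}^{+}$ and $\mathrm{ord}_{\boldsymbol{\ell}_{m}}(NU_{j})\geq 2$, a direct order count gives $\mathrm{ord}_{\boldsymbol{\ell}_{m}}(\mathcal{S}_{m}(U_{1})-\mathcal{S}_{m}(U_{2}))\geq \mathrm{ord}_{\boldsymbol{\ell}_{m}}(U_{1}-U_{2})+2$, so $\mathcal{S}_{m}$ is at worst a $\tfrac{1}{4}$-contraction in $d_{m}$, strictly below the homothety constant of $\mathcal{T}_{m}$.

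The main obstacle is establishing the surjectivity-type inclusion $\mathcal{S}_{m}(\mathcal{B}_{\geq m}^{+})\subseteq\mathcal{T}_{m}(\mathcal{B}_{\geq m}^{+})$, which amounts to solving the linear equation $D_{m}(U)-P\cdot U = K$ for $U\in\mathcal{B}_{\geq m}^{+}$ whenever $K$ has appropriate order. I would handle this by an inductive argument along the decomposition $\mathcal{B}_{\geq m}^{+}=\mathcal{B}_{m}^{+}\oplus\mathcal{B}_{\geq m+1}^{+}$ analogous to the one in Lemma~\ref{Lem4}: expanding $U$ and $K$ block by block in $\boldsymbol{\ell}_{m}$, the equation becomes a recursion that determines each $\boldsymbol{\ell}_{m}$-component of $U$ inductively by a formal integration in $\boldsymbol{\ell}_{m}$ (Lemma~\ref{Lemma1}), with the strict order bound on $P$ ensuring that no residual $\boldsymbol{\ell}_{m}^{-1}$ obstruction arises. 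Once this inclusion is in place, Proposition~\ref{KorBanach} applied to $(\mathcal{B}_{\geq m}^{+},d_{m})$ yields a unique fixed point $U\in\mathcal{B}_{\geq m}^{+}$, and $S:=N\cdot U$ is the desired unique solution of \eqref{LemaCaseB2Equation1} in $\mathcal{B}_{m}^{+}$ with $\mathrm{ord}(S)>\mathrm{ord}(N)$.
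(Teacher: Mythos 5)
Your strategy coincides with the paper's: transform \eqref{LemaCaseB2Equation1} into a fixed--point equation and apply Proposition~\ref{KorBanach}. The genuine difference is the substitution $S=N\cdot U$, which absorbs the term $\frac{D_{m}(N)}{N}$ and converts the constraint $\mathrm{ord}\,(S)>\mathrm{ord}\,(N)$ into the membership $U\in\mathcal{B}_{\geq m}^{+}$; the paper instead keeps $S$ as the unknown, computes $\ker(\mathcal{T}_{m})=\bigl\{ C\, N\exp \bigl( \int P\,\frac{d\boldsymbol{\ell}_{m}}{\boldsymbol{\ell}_{m}^{2}}\bigr) : C\in\mathbb{R}\bigr\}$ (whose nonzero elements have order exactly $\mathrm{ord}\,(N)$), and restricts to $\widetilde{\mathcal{B}}=\{V:\mathrm{ord}\,(V)>\mathrm{ord}\,(N)\}$. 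The two set-ups are equivalent, and your contraction estimate for $\mathcal{S}_{m}$ is correct.

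Two steps need tightening. First, the homothety argument: the leading $\boldsymbol{\ell}_{m}$-block of $P\cdot V-D_{m}(V)$ is $\boldsymbol{\ell}_{m}^{n_{0}+1}\bigl((P_{1}-n_{0})V_{n_{0}}-D_{m+1}(V_{n_{0}})\bigr)$, not $\boldsymbol{\ell}_{m}^{n_{0}+1}(P_{1}-n_{0})V_{n_{0}}$. For $n_{0}\geq 1$ this is harmless (the leading term is $-n_{0}\,\mathrm{Lt}(V_{n_{0}})\neq 0$), but $U\in\mathcal{B}_{\geq m}^{+}$ allows $n_{0}=0$, and there you must exclude $D_{m+1}(V_{0})=P_{1}V_{0}$ with $V_{0}\neq 0$: solving this linear equation gives $V_{0}=C(1+\mathrm{h.o.t.})$, which contradicts $\mathrm{ord}\,(V)>\mathbf{0}_{k+1}$ — so the kernel computation the paper performs cannot be skipped. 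Second, the surjectivity inclusion is only sketched. The $\boldsymbol{\ell}_{m}$-by-$\boldsymbol{\ell}_{m}$ recursion you propose does close up for the coefficients of $\boldsymbol{\ell}_{m}^{n}$ with $n\geq 1$ (the operator $(n-P_{1})\cdot{}+D_{m+1}$ is invertible on $\mathcal{B}_{m+1}$), but its $\boldsymbol{\ell}_{m}^{0}$-component is again a linear equation $D_{m+1}(U_{0})-P_{1}U_{0}=(\text{known})$ of the same shape one level down, so the recursion is really a nested induction over the levels $m,\ldots,k$. The paper avoids this by writing the solution in closed form by variation of constants and invoking Lemma~\ref{Lemma1} (with $z=\boldsymbol{\ell}_{m}$); the hypothesis $\mathrm{ord}\,(M)-\mathrm{ord}\,(N)>(\mathbf{0}_{m},\mathbf{1}_{k-m+1})$ is precisely what excludes the non-integrable monomial $\boldsymbol{\ell}_{m}\boldsymbol{\ell}_{m+1}\cdots\boldsymbol{\ell}_{k}$ from the integrand, and the choice $C=0$ of the integration constant places the solution in the required subspace. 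The closed-form route is shorter and makes the role of the order hypotheses transparent; with it (or with the nested induction fully carried out), your proof is complete.
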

			
			\begin{proof}
				Let $\mathcal{T}_{m},\mathcal{S}_{m}:\mathcal{B}_{m}^{+}\to \mathcal{B}_{m}^{+}$ be operators defined by:
				\begin{align}
					\mathcal{T}_{m}(S) &:= D_{m}(S)-\Big( \frac{D_{m}(N)}{N}+P\Big) S , \nonumber \\
					\mathcal{S}_{m}(S) &:= M-T\cdot h(S) , \quad S\in \mathcal{B}_{m}^{+}\subseteq \mathcal{L}_{k} . \label{DefOfTmSm}
				\end{align}
				Now, the equation $\mathcal{T}_{m}(S)=\mathcal{S}_{m}(S)$ becomes equation \eqref{LemaCaseB2Equation1}, for $S\in \mathcal{B}_{m}^{+}\subseteq \mathcal{L}_{k}$. \\
				
				We prove that operators $\mathcal{T}_{m}$ and $\mathcal{S}_{m}$ satisfy the assumptions of Proposition~\ref{KorBanach}. \\
				For arbitrary $S_{1},S_{2}\in \mathcal{B}_{m}^{+}$, note that:
				\begin{align*}
					& \mathrm{ord}_{\boldsymbol{\ell }_{m}}(\mathcal{S}_{m}(S_{1})-\mathcal{S}_{m}(S_{2})) \\
					& \geq \mathrm{ord}_{\boldsymbol{\ell }_{m}}(S_{1}-S_{2}) + \mathrm{ord}_{\boldsymbol{\ell }_{m}}(T) + \min \left\lbrace \mathrm{ord}_{\boldsymbol{\ell }_{m}}(S_{1}),\mathrm{ord}_{\boldsymbol{\ell }_{m}}(S_{2})\right\rbrace \\
					& \geq \mathrm{ord}_{\boldsymbol{\ell }_{m}}(S_{1}-S_{2}) + 2.
				\end{align*}
				Therefore, $\mathcal{S}_{m}$ is a $\frac{1}{4}$-contraction on space $(\mathcal{B}_{m}^{+},d_{m})$. \\
				Suppose that $\mathcal{T}_{m}(S)=0$, i.e., $D_{m}(S)-\Big( \frac{D_{m}(N)}{N}+P\Big) \cdot S = 0$, for some $S\in \mathcal{B}_{m}^{+}\subseteq \mathcal{L}_{k}$. Solving the linear differential equation, we get:
				\begin{align}
					S & = C\exp \big( \log N+\int P\frac{d\boldsymbol{\ell }_{m}}{\boldsymbol{\ell }_{m}^{2}}\big) \nonumber \\
					& =C(N\cdot \exp (\int P\frac{d\boldsymbol{\ell }_{m}}{\boldsymbol{\ell }_{m}^{2}})) , \quad C\in \mathbb{R} . \nonumber 
				\end{align}
				Since $\mathrm{ord}(P) > (\mathbf{0}_{m},\mathbf{1}_{k-m+1})$, using supstitution $z:=\boldsymbol{\ell }_{m}$, by Lemma~\ref{Lemma1}, it follows that
				\begin{align}
					& \int P\frac{d\boldsymbol{\ell }_{m}}{\boldsymbol{\ell }_{m}^{2}} \in \mathcal{B}_{\geq m}^{+} . \nonumber
				\end{align}
				This implies that
				\begin{align}\label{LemmaEq2}
					\exp (\int P\frac{d\boldsymbol{\ell }_{m}}{\boldsymbol{\ell }_{m}^{2}}) & = 1+\mathrm{h.o.t.} 
				\end{align}
				We conclude that $S\in \ker (\mathcal{T}_{m})$ implies that $\mathrm{ord}(S)=\mathrm{ord}(N)$. To avoid terms in the kernel of $\mathcal{T}_{m}$, we restrict ourselves on the space
				\begin{align*}
					\widetilde{\mathcal{B}} & := \left\lbrace V\in \mathcal{B}_{m}^{+}\subseteq \mathcal{L}_{k} : \mathrm{ord} \, (V) > \mathrm{ord} \, (N)\right\rbrace .
				\end{align*}
				Then, for every $S_{1},S_{2}\in \widetilde{\mathcal{B}}$, $S_{1} \neq S_{2}$, it follows that $S_{1}-S_{2}\notin \ker (\mathcal{T}_{m})$. Now, by \eqref{DefOfTmSm}, it follows that
				\begin{align*}
					\mathrm{ord}_{\boldsymbol{\ell }_{m}}(\mathcal{T}_{m}(S_{1}-S_{2})) & =\mathrm{ord}_{\boldsymbol{\ell }_{m}}(S_{1}-S_{2}) +1 .
				\end{align*}
				By the linearity of operator $\mathcal{T}_{m}$, it follows that the restriction $\mathcal{T}_{m}|_{\widetilde{\mathcal{B}}}$ is a $\frac{1}{2}$-homothety on the space $\widetilde{\mathcal{B}}$, with respect to the metric $d_{m}$.
				
				It is left to prove that $\mathcal{S}_{m}(\widetilde{\mathcal{B}})\subseteq \mathcal{T}_{m}(\widetilde{\mathcal{B}})$. By definition \eqref{DefOfTmSm} of $\mathcal{S}_{m}$, note that $\mathcal{S}_{m}(\widetilde{\mathcal{B}})$ is contained in the space
				\begin{align*}
					\overline{\mathcal{B}} & := \left\lbrace K\in \mathcal{B}_{m}^{+} \subseteq \mathcal{L}_{k} : \mathrm{ord} \, (K)>\mathrm{ord} \, (N)+(\mathbf{0}_{m},\mathbf{1}_{k-m+1}) \right\rbrace .
				\end{align*}
				
				We prove that $\overline{\mathcal{B}}\subseteq \mathcal{T}_{m}(\widetilde{\mathcal{B}})$. Let $K\in \overline{\mathcal{B}}$ be arbitrary. We solve in $\widetilde{\mathcal{B}}$ the equation $\mathcal{T}_{m}(S)=K$, i.e.
				\begin{align}
					D_{m}(S)-\Big( \frac{D_{m}(N)}{N}+P\Big) \cdot S & = K . \nonumber 
				\end{align}
				Solving the linear differential equation above, we get:
				\begin{align}\label{LemmaEq1}
					S & = \exp \Big( \log N + \int P\frac{d\boldsymbol{\ell }_{m}}{\boldsymbol{\ell }_{m}^{2}}\Big) \cdot \Big( C+\int \Big( \frac{K}{\exp \big( \log N + \int P\frac{d\boldsymbol{\ell }_{m}}{\boldsymbol{\ell }_{m}^{2}}\big) }\Big) \frac{d\boldsymbol{\ell }_{m}}{\boldsymbol{\ell }_{m}^{2}} \Big) \nonumber \\
					& = \Big( N\cdot \exp \int P\frac{d\boldsymbol{\ell }_{m}}{\boldsymbol{\ell }_{m}^{2}}\Big) \cdot \Big( C+\int \Big( \frac{K}{N\cdot \exp \int P\frac{d\boldsymbol{\ell }_{m}}{\boldsymbol{\ell }_{m}^{2}}}\Big) \frac{d\boldsymbol{\ell }_{m}}{\boldsymbol{\ell }_{m}^{2}} \Big) , \quad C\in \mathbb{R} .
				\end{align}
				Since $S\in \widetilde{\mathcal{B}}$ we choose the solution with $C=0$. Now, from \eqref{LemmaEq2} and \eqref{LemmaEq1} we get:
				\begin{align}
					S & = (\mathrm{Lt}(N)+\mathrm{h.o.t.})\int \Big( \frac{K}{\mathrm{Lt}(N)+\mathrm{h.o.t.}}\Big) \frac{d\boldsymbol{\ell }_{m}}{\boldsymbol{\ell }_{m}^{2}} . \label{LemaEquationSolution}
				\end{align}
				Since $K\in \mathcal{B}_{m}^{+}\subseteq \mathcal{L}_{k}$, $\mathrm{ord}(K) > \mathrm{ord}(N)+(\mathbf{0}_{m},\mathbf{1}_{k-m+1})$, it follows that
				\begin{align}
					\mathrm{ord}\Big( \frac{K}{\mathrm{Lt}(N)+\mathrm{h.o.t.}}\Big) & > (\mathbf{0}_{m},\mathbf{1}_{k-m+1}) , \nonumber 
				\end{align}
				and, by Lemma~\ref{Lemma1},
				\begin{align*}
					& \int \Big( \frac{K}{\mathrm{Lt}(N)+\mathrm{h.o.t.}}\Big) \frac{d\boldsymbol{\ell }_{m}}{\boldsymbol{\ell }_{m}^{2}} \in \mathcal{B}_{\geq m}^{+}\subseteq \mathcal{L}_{k} . 
				\end{align*}
				By \eqref{LemaEquationSolution}, $\mathrm{ord}(S) > \mathrm{ord}(N)$, i.e., the solution $S$ given by \eqref{LemaEquationSolution} belongs to $\widetilde{\mathcal{B}}$.
				
				Thus, we proved that $\mathcal{S}_{m}(\widetilde{\mathcal{B}}) \subseteq \overline{\mathcal{B}}\subseteq \mathcal{T}_{m}(\widetilde{\mathcal{B}})$. \\
				Finally, by Proposition~\ref{KorBanach} applied to the operators $\mathcal{T}_{m}|_{\widetilde{\mathcal{B}}}:\widetilde{\mathcal{B}} \to \mathcal{B}_{\geq m}^{+}$ and $\mathcal{S}_{m}|_{\widetilde{\mathcal{B}}}:\widetilde{\mathcal{B}} \to \mathcal{B}_{\geq m}^{+}$, there exists a unique $S\in \widetilde{\mathcal{B}}$ such that $\mathcal{T}_{m}(S)=\mathcal{S}_{m}(S)$.
			\end{proof}
			
			\begin{lem}[Solution of a differential equation in $\mathcal{B}_{m}^{+}$]\label{Lemma4Second}
				Let $N\in \mathcal{B}_{\geq m+1}^{+}\subseteq \mathcal{L}_{k}$, $N\neq 0$, $M,P,T\in \mathcal{B}_{m}^{+}\subseteq \mathcal{L}_{k}$ such that $\mathrm{ord}(P) > (\mathbf{0}_{m},\mathbf{1}_{k-m+1})$ and $\mathrm{ord}_{\boldsymbol{\ell }_{m}}(M)\geq 2$, for $1\leq m\leq k$, $k\in \mathbb{N}_{\geq 1}$. Let $h\in x^{2}\mathcal{B}_{\geq m}^{+}\left[ \left[ x\right] \right] $ be a power series in the variable $x$, with coefficients in $\mathcal{B}_{\geq m}^{+}\subseteq \mathcal{L}_{k}$, such that $h(0)=h'(0)=0$. There exists a unique solution $S\in \mathcal{B}_{m}^{+}\subseteq \mathcal{L}_{k}$ of the differential equation:
				\begin{align*}
					D_{m}(S) - \Big( \frac{D_{m}(N)}{N}+P\Big) \cdot S + T\cdot h(S) & = M .
				\end{align*}
			\end{lem}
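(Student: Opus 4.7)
The key simplification here is that $N\in \mathcal{B}_{\geq m+1}^{+}$ does not involve $\boldsymbol{\ell}_m$, so $D_m(N)=0$ and the equation reduces to
\begin{align*}
D_m(S)-P\cdot S+T\cdot h(S)&=M.
\end{align*}
Following the pattern of Lemma~\ref{Lemma4}, I would introduce the operators $\mathcal{T}_m,\mathcal{S}_m:\mathcal{B}_m^{+}\to \mathcal{B}_m^{+}$ defined by $\mathcal{T}_m(S):=D_m(S)-PS$ and $\mathcal{S}_m(S):=M-T\cdot h(S)$, so the equation becomes the fixed point equation $\mathcal{T}_m(S)=\mathcal{S}_m(S)$, and then verify the hypotheses of Proposition~\ref{KorBanach}. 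Note that $M\in \mathcal{B}_m^{+}$ with $\mathrm{ord}_{\boldsymbol{\ell}_m}(M)\geq 2$ guarantees that $\mathcal{S}_m$ lands in $\mathcal{B}_m^{+}$, and since $T\in \mathcal{B}_m^{+}$ and $h$ starts at degree two, a direct estimate shows that $\mathcal{S}_m$ is a $\frac{1}{4}$-contraction on $(\mathcal{B}_m^{+},d_m)$, mirroring the corresponding step in the proof of Lemma~\ref{Lemma4}.

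The more delicate part is proving that $\mathcal{T}_m$ is a $\frac{1}{2}$-homothety with respect to $d_m$, without needing to pass to the restricted subspace $\widetilde{\mathcal{B}}$ used in Lemma~\ref{Lemma4}. For $S=\boldsymbol{\ell}_m^{n_0}S_{n_0}+\mathrm{h.o.b.}(\boldsymbol{\ell}_m)$ with $n_0\geq 1$ and $S_{n_0}\in \mathcal{B}_{\geq m+1}\setminus\{0\}$, one computes that the coefficient of $\boldsymbol{\ell}_m^{n_0+1}$ in $\mathcal{T}_m(S)$ equals $(n_0-P_1)S_{n_0}$, where $P_1\in \mathcal{B}_{m+1}$ is the coefficient of $\boldsymbol{\ell}_m^1$ in $P$. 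Either $P_1=0$ (if $\mathrm{ord}_{\boldsymbol{\ell}_m}(P)\geq 2$), or $P_1\neq 0$ with $\mathrm{ord}(P_1)>(1,\mathbf{1}_{k-m})$, which by the order condition forces $P_1$ to have no constant term and hence $P_1\neq n_0$ in $\mathcal{B}_{m+1}$. In both cases $(n_0-P_1)S_{n_0}\neq 0$, giving $\mathrm{ord}_{\boldsymbol{\ell}_m}(\mathcal{T}_m(S))=\mathrm{ord}_{\boldsymbol{\ell}_m}(S)+1$ and the homothety property by linearity.

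To finish, I must verify $\mathcal{S}_m(\mathcal{B}_m^{+})\subseteq \mathcal{T}_m(\mathcal{B}_m^{+})$. Since $\mathrm{ord}_{\boldsymbol{\ell}_m}(M)\geq 2$ and $\mathrm{ord}_{\boldsymbol{\ell}_m}(T\cdot h(S))\geq 3$ for $S\in \mathcal{B}_m^{+}$, every image $K:=\mathcal{S}_m(S)$ satisfies $\mathrm{ord}_{\boldsymbol{\ell}_m}(K)\geq 2$. For such $K$, I solve the linear ODE $D_m(S)-PS=K$ by variation of parameters to obtain
\begin{align*}
S&=\exp(A)\cdot \int K\exp(-A)\frac{d\boldsymbol{\ell}_m}{\boldsymbol{\ell}_m^2},\qquad A:=\int P\frac{d\boldsymbol{\ell}_m}{\boldsymbol{\ell}_m^2},
\end{align*}
taking the integration constant equal to zero. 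The order condition $\mathrm{ord}(P)>(\mathbf{0}_m,\mathbf{1}_{k-m+1})$ excludes the exceptional monomial $\boldsymbol{\ell}_m\boldsymbol{\ell}_{m+1}\cdots \boldsymbol{\ell}_k$ of Lemma~\ref{Lemma1}, so $A$ is well-defined in $\mathcal{B}_{\geq m}^{+}$; consequently $\exp(\pm A)=1+\mathrm{h.o.t.}$ A term-by-term integration argument using Lemma~\ref{Lemma1} shows that the inner integral lies in $\mathcal{B}_m$ with $\mathrm{ord}_{\boldsymbol{\ell}_m}\geq 1$, and multiplying by $\exp(A)$ preserves this bound, giving $S\in \mathcal{B}_m^{+}$.

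The existence and uniqueness of a solution $S\in \mathcal{B}_m^{+}$ then follow from Proposition~\ref{KorBanach} applied to $\mathcal{T}_m$ and $\mathcal{S}_m$. The main technical obstacle is the homothety analysis in the second paragraph: handling the potential cancellation at the leading $\boldsymbol{\ell}_m$-order between $D_m(S)$ and $PS$, which is precisely where the hypothesis $\mathrm{ord}(P)>(\mathbf{0}_m,\mathbf{1}_{k-m+1})$ is used in an essential way to rule out $P_1$ being an integer.
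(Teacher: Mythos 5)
Your opening claim --- that $N\in\mathcal{B}_{\geq m+1}^{+}$ ``does not involve $\boldsymbol{\ell}_{m}$, so $D_{m}(N)=0$'' --- is false, and the rest of your argument is built on it. The monomials $\boldsymbol{\ell}_{m+1},\ldots ,\boldsymbol{\ell}_{k}$ are iterated logarithms of $\boldsymbol{\ell}_{m}$, so $D_{m}=\boldsymbol{\ell}_{m}^{2}\frac{d}{d\boldsymbol{\ell}_{m}}$ acts nontrivially on them; the paper records this in identity \eqref{EqDer2}, which for $N\in\mathcal{B}_{\geq m+1}^{+}$, $N\neq 0$, gives $D_{m}(N)=\boldsymbol{\ell}_{m}D_{m+1}(N)\neq 0$ (for instance $D_{m}(\boldsymbol{\ell}_{m+1})=\boldsymbol{\ell}_{m}\boldsymbol{\ell}_{m+1}^{2}$). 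Consequently $\frac{D_{m}(N)}{N}\cdot S$ has $\boldsymbol{\ell}_{m}$-order exactly $\mathrm{ord}_{\boldsymbol{\ell}_{m}}(S)+1$, the same as $D_{m}(S)$; this term can neither be discarded nor moved to the Lipschitz side (that would leave a $\frac{1}{2}$-Lipschitz $\mathcal{S}_{m}$ against a $\frac{1}{2}$-homothety $\mathcal{T}_{m}$, so Proposition~\ref{KorBanach} would not apply). Your homothety computation and your variation-of-parameters formula therefore solve a different equation from the one stated in the lemma.

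The repair keeps the full operator $\mathcal{T}_{m}(S):=D_{m}(S)-\big( \frac{D_{m}(N)}{N}+P\big) S$, exactly as in Lemma~\ref{Lemma4}. The genuine simplification afforded by $N\in\mathcal{B}_{\geq m+1}^{+}$ is not that $D_{m}(N)$ vanishes, but that $\ker \mathcal{T}_{m}=\big\lbrace CN\exp \big( \int P\,\frac{d\boldsymbol{\ell}_{m}}{\boldsymbol{\ell}_{m}^{2}}\big) : C\in\mathbb{R}\big\rbrace $ consists of elements of $\boldsymbol{\ell}_{m}$-order $0$, which lie outside $\mathcal{B}_{m}^{+}$; hence, unlike in Lemma~\ref{Lemma4}, no restriction to a subspace $\widetilde{\mathcal{B}}$ is needed and $\mathcal{T}_{m}$ is a $\frac{1}{2}$-homothety on all of $\mathcal{B}_{m}^{+}$. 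Concretely, by \eqref{EqDer1} the coefficient of $\boldsymbol{\ell}_{m}^{n_{0}+1}$ in $\mathcal{T}_{m}(\boldsymbol{\ell}_{m}^{n_{0}}S_{n_{0}}+\cdots )$ is $n_{0}S_{n_{0}}+D_{m+1}(S_{n_{0}})-\frac{D_{m+1}(N)}{N}S_{n_{0}}-P_{1}S_{n_{0}}=n_{0}\mathrm{Lt}(S_{n_{0}})+\mathrm{h.o.t.}\neq 0$ (note that your own formula also drops the $D_{m+1}(S_{n_{0}})$ contribution). The surjectivity step must likewise carry the factor $N\exp (A)$ in the variation-of-parameters solution. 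Your overall strategy --- fixed point via Proposition~\ref{KorBanach}, explicit solution of the linear differential equation for surjectivity --- is the intended one; only the treatment of $D_{m}(N)$ needs to be corrected throughout.
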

			
			The proof of Lemma~\ref{Lemma4Second} is similar as the proof of Lemma~\ref{Lemma4} and we omit it.
			
			\begin{proof}[Proof of Proposition~\ref{Lemma3}]
				We first transform equation \eqref{PropEq8} to an equivalent \emph{simpler} differential equation. Let
				\begin{align*}
					N & = N_{k}+\cdots + N_{1} ,
				\end{align*}
				where $N_{i}\in \mathcal{B}_{i}^{+}\subseteq \mathcal{L}_{k}$, $1\leq i\leq k$. Let $1\leq m\leq k$ be the biggest $m$ such that $N_{m}\neq 0$. Note that $N=N_{m}+\cdots +N_{1}$ and
				\begin{align}
					\frac{D_{1}(N)}{N} & = \frac{D_{1}(N)}{N_{m}\Big( 1+\frac{N_{m-1}}{N_{m}}+\cdots + \frac{N_{1}}{N_{m}}\Big) } \nonumber \\
					& = \left( \sum _{i=1}^{m}\frac{D_{1}(N_{i})}{N_{m}} \right) \cdot \Bigg( \sum _{i\geq 0}(-1)^{i}\Big( \frac{N_{m-1}}{N_{m}}+\cdots + \frac{N_{1}}{N_{m}}\Big) ^{i} \Bigg) \nonumber \\
					& = \Bigg( \frac{D_{1}(N_{m})}{N_{m}}+\sum _{i=1}^{m-1}\frac{D_{1}(N_{i})}{N_{m}} \Bigg) \cdot \Bigg( 1+\sum _{i\geq 1}(-1)^{i}\Big( \frac{N_{m-1}}{N_{m}}+\cdots + \frac{N_{1}}{N_{m}}\Big) ^{i} \, \Bigg) . \label{PropCaseB2Identit}
				\end{align}
				Note that:
				\begin{align}
					& \mathrm{ord}\Big( \frac{D_{1}(N_{i})}{N_{m}}\Big) > (0,\mathbf{1}_{k}) , \nonumber \\
					& \mathrm{ord} \left( \frac{N_{i}}{N_{m}}\right) \geq (\mathbf{0}_{i},1,v_{i+1},\ldots ,v_{k}) , \label{PropCaseB2Eq1}
				\end{align}
				where $(v_{i+1},\ldots ,v_{k}) \in \mathbb{Z}^{k-i}$, for $1\leq i\leq m-1$. Since, in addition, $\mathrm{ord}\Big( \frac{D_{1}(N_{m})}{N_{m}} \Big) =(0,\mathbf{1}_{m},\mathbf{0}_{k-m})$, we get that:
				\begin{align}
					\mathrm{ord} \left( \frac{D_{1}(N_{m})}{N_{m}}\cdot \sum _{i\geq 1}(-1)^{i}\Big( \frac{N_{m-1}}{N_{m}}+\cdots + \frac{N_{1}}{N_{m}}\Big) ^{i}\right) & > (0,\mathbf{1}_{k}) . \label{PropCaseB2Eq2}
				\end{align}
				By \eqref{PropCaseB2Identit}, \eqref{PropCaseB2Eq1} and \eqref{PropCaseB2Eq2}, we get that:
				\begin{align}
					\mathrm{ord}\Big( \frac{D_{1}(N)}{N} - \frac{D_{1}(N_{m})}{N_{m}}\Big) & > (0, \mathbf{1}_{k}) . \label{PropCaseB2OrderInequal}
				\end{align}
				Put $P:=K+\frac{D_{1}(N)}{N} - \frac{D_{1}(N_{m})}{N_{m}}$. By \eqref{PropCaseB2OrderInequal} and since $\mathrm{ord}\, (K) > (0,\mathbf{1}_{k})$ (by assumption), it follows that $P\in \mathcal{B}_{\geq 1}^{+}\subseteq \mathcal{L}_{k}$ and $\mathrm{ord}\, (P) > (0,\mathbf{1}_{k})$. Note that it is also possible that $P=0$. In this notation, equation \eqref{PropEq8} is equivalent to the equation:
				\begin{align}
					D_{1}(S) - \Big( \frac{D_{1}(N_{m})}{N_{m}}+P\Big) \cdot S + T\cdot h(S) & = M . \label{PropCaseB2EqMain}
				\end{align}
				
				\noindent \emph{Proof of existence of a solution}: Now we prove existence of a solution $S\in \mathcal{B}_{\geq 1}^{+}\subseteq \mathcal{L}_{k}$ of equation \eqref{PropCaseB2EqMain}. Moreover, we prove that $S$ satisfies $\mathrm{ord} \, (S) > \mathrm{ord} \, (N)$.
				
				Note that $S:=0$ is a solution of \eqref{PropCaseB2EqMain} if $M=0$. Moreover, $S$ is the unique solution that satisfies $\mathrm{ord} \, (S) > \mathrm{ord} \, (N)$. 
				
				Suppose that $M\neq 0$. Since $M\neq 0$, $\mathrm{ord}(M)-\mathrm{ord}(N)>(0,\mathbf{1}_{k})$ and $N\in \mathcal{B}_{\geq 1}^{+}\subseteq \mathcal{L}_{k}$ (so $\mathrm{ord}\, (N) > \mathbf{0}_{k+1}$), we have the following decomposition of $M$:
				\begin{align}
					M &= \boldsymbol{\ell }_{1}\cdots \boldsymbol{\ell }_{m}M_{m}+\boldsymbol{\ell }_{1}\cdots \boldsymbol{\ell }_{m-1}M_{m-1}+\cdots + \boldsymbol{\ell }_{1}M_{1}, \label{CaseB3DecompM}
				\end{align}
				where $M_{i}\in \mathcal{B}_{i}^{+}\subseteq \mathcal{L}_{k}$, $1\leq i\leq m-1$, and $M_{m}\in \mathcal{B}_{\geq m}^{+}\subseteq \mathcal{L}_{k}$, $\mathrm{ord}(\boldsymbol{\ell }_{m}M_{m}) > \mathrm{ord}(N_{m})+(\mathbf{0}_{m},\mathbf{1}_{k-m+1})$.
				
				By \eqref{PropCaseB2OrderInequal}, and since $\mathrm{ord}(K)>(0,\mathbf{1}_{k})$ by assumption, it follows that $P\in \mathcal{B}_{\geq 1}^{+}$ and $\mathrm{ord}(P)>(0,\mathbf{1}_{k})$. Now, decompose:
				\begin{align}
					P &= \boldsymbol{\ell }_{1}\cdots \boldsymbol{\ell }_{m}P_{m} + \boldsymbol{\ell }_{1}\cdots \boldsymbol{\ell }_{m-1}P_{m-1} + \cdots + \boldsymbol{\ell }_{1}P_{1} , \label{CaseB3DecompP}
				\end{align}
				where $P_{i}\in \mathcal{B}_{i}^{+}\subseteq \mathcal{L}_{k}$, $1\leq i\leq m-1$, and $P_{m}\in \mathcal{B}_{\geq m}^{+}\subseteq \mathcal{L}_{k}$, $\mathrm{ord}(P_{m})>(\mathbf{0}_{m+1},\mathbf{1}_{k-m})$. If $P=0$, we simply put $P_{i}:=0$, for each $1\leq i\leq m$.
				
				By \eqref{PropCaseB2Identit}, it follows that $\mathrm{ord}(\frac{D_{1}(N)}{N})=\mathrm{ord}(\frac{D_{1}(N_{m})}{N_{m}})$. Since
				\begin{align*}
					& \mathrm{ord}(T)\geq \mathrm{ord}\Big( \frac{D_{1}(N)}{N}\Big) =\mathrm{ord}\Big( \frac{D_{1}(N_{m})}{N_{m}} \Big) =(0,\mathbf{1}_{m},\mathbf{0}_{k-m}) ,
				\end{align*}
				we have the following decomposition of $T$:
				\begin{align}
					T &= \boldsymbol{\ell }_{1}\cdots \boldsymbol{\ell }_{m}T_{m}+\boldsymbol{\ell }_{1}\cdots \boldsymbol{\ell }_{m-1}T_{m-1}+\cdots + \boldsymbol{\ell }_{1}T_{1}, \label{CaseB3DecompT}
				\end{align}
				where $T_{i}\in \mathcal{B}_{i}^{+}\subseteq \mathcal{L}_{k}$, $1\leq i\leq m-1$, and $T_{m}\in \mathcal{B}_{m}\subseteq \mathcal{L}_{k}^{\infty }$, $\mathrm{ord}(T_{m})\geq \mathbf{0}_{k+1}$. If $T=0$, we simply put $T_{i}:=0$, for each $1\leq i\leq m$. \\
				
				The proof of existence of a solution of equation \eqref{PropCaseB2EqMain} is inductive. In the $i$-th step ($1\leq i\leq m$), instead of equation \eqref{PropCaseB2EqMain}, we consider the equation:
				{\small \begin{align*}
						& D_{1}(S_{m}+\cdots +S_{m-i+1}) \\
						& - \Big( \frac{D_{1}(N_{m})}{N_{m}}+\boldsymbol{\ell }_{1}\cdots \boldsymbol{\ell }_{m}P_{m}+\cdots +\boldsymbol{\ell }_{1}\cdots \boldsymbol{\ell }_{m-i+1}P_{m-i+1}\Big) \cdot (S_{m}+\cdots +S_{m-i+1}) \\
						& + (\boldsymbol{\ell }_{1}\cdots \boldsymbol{\ell }_{m}T_{m}+\cdots \boldsymbol{\ell }_{1}\cdots \boldsymbol{\ell }_{m-i+1}T_{m-i+1})\cdot h(S_{m}+\cdots +S_{m-i+1}) \\
						& = \boldsymbol{\ell }_{1}\cdots \boldsymbol{\ell }_{m}M_{m}+\cdots +\boldsymbol{\ell }_{1}\cdots \boldsymbol{\ell }_{m-i+1}M_{m-i+1} ,  
					\end{align*}
				}with appropriate partial sums from decompositions \eqref{CaseB3DecompM}, \eqref{CaseB3DecompP}, and \eqref{CaseB3DecompT} instead of whole $M,P,T$, where $S_{i}$ is an unknown \emph{variable} and $S_{m},\ldots ,S_{i-1}$ are obtained in the previous steps. For the proof of existence a solution $S_{m}\in \mathcal{B}_{\geq m}^{+}\subseteq \mathcal{L}_{k}$ in the first step, we use Lemma~\ref{Lemma4}, and for the proof of existence of a solution $S_{m-i+1}\in \mathcal{B}_{m-i+1}^{+}\subseteq \mathcal{L}_{k}$ in steps $2\leq i\leq m$, we use Lemma~\ref{Lemma4Second}. Finally, we put $S:=S_{m}+\cdots +S_{1}$, $S\in \mathcal{B}_{\geq 1}^{+}\subseteq \mathcal{L}_{k}$. \\
				
				\noindent \emph{Step} $1.$ Consider the equation:
				\begin{align}
					D_{1}(S) - \Big( \frac{D_{1}(N_{m})}{N_{m}}+\boldsymbol{\ell }_{1}\cdots \boldsymbol{\ell }_{m}P_{m}\Big) \cdot S + \boldsymbol{\ell }_{1}\cdots \boldsymbol{\ell }_{m}T_{m}\cdot h(S) & = \boldsymbol{\ell }_{1}\cdots \boldsymbol{\ell }_{m}M_{m} . \label{PropCaseBEq10}
				\end{align}
				where $S\in \mathcal{B}_{\geq m}^{+}\subseteq \mathcal{L}_{k}$. By \eqref{EqDer2}, it follows that $\frac{D_{1}(N_{m})}{N_{m}}=\boldsymbol{\ell }_{1}\cdots \boldsymbol{\ell }_{m-1}\frac{D_{m}(N_{m})}{N_{m}}$ and $D_{1}(S)=\boldsymbol{\ell }_{1}\cdots \boldsymbol{\ell }_{m-1}D_{m}(S)$. Dividing by $\boldsymbol{\ell }_{1}\cdots \boldsymbol{\ell }_{m-1}$, we get the equivalent equation:
				\begin{align}\label{PropCaseBEq9}
					D_{m}(S) - \Big( \frac{D_{m}(N_{m})}{N_{m}}+\boldsymbol{\ell }_{m}P_{m}\Big) \cdot S + \boldsymbol{\ell }_{m}T_{m}\cdot h(S) & = \boldsymbol{\ell }_{m}M_{m} .
				\end{align}
				Now, just for the purpose of applying Lemma~\ref{Lemma4}, put $N:=N_{m}$, $P:=\boldsymbol{\ell }_{m}P_{m}$, $T:=\boldsymbol{\ell }_{m}T_{m}$, and $M:=\boldsymbol{\ell }_{m}M_{m}$. Since $N\in \mathcal{B}_{m}^{+}\subseteq \mathcal{L}_{k}$, $\mathrm{ord} \, (P) > (\mathbf{0}_{m},\mathbf{1}_{k-m+1})$, $T\in \mathcal{B}_{m}^{+}\subseteq \mathcal{L}_{k}$, and $\mathrm{ord} \, (M) > \mathrm{ord} \, (N) + (\mathbf{0}_{m},\mathbf{1}_{k-m+1})$, by Lemma~\ref{Lemma4} it follows that there exists a unique solution $S:=S_{m}\in \mathcal{B}_{\geq m}^{+}\subseteq \mathcal{L}_{k}$ of equation \eqref{PropCaseBEq9}, i.e., equation \eqref{PropCaseBEq10}, satisfying $\mathrm{ord}(S_{m})>\mathrm{ord}(N_{m})$. \\
				
				\noindent \emph{Step} $2.$ Let $S_{m}\in \mathcal{B}_{\geq m}^{+}\subseteq \mathcal{L}_{k}$ be the solution of equation \eqref{PropCaseBEq10} obtained in \emph{Step} $1$. Now, consider the equation (in the variable $S$):
				\begin{align}
					& D_{1}(S_{m}+S) - \Big( \frac{D_{1}(N_{m})}{N_{m}}+\boldsymbol{\ell }_{1}\cdots \boldsymbol{\ell }_{m}P_{m}+\boldsymbol{\ell }_{1}\cdots \boldsymbol{\ell }_{m-1}P_{m-1}\Big) \cdot (S_{m}+S) \nonumber \\
					& + (\boldsymbol{\ell }_{1}\cdots \boldsymbol{\ell }_{m}T_{m}+\boldsymbol{\ell }_{1}\cdots \boldsymbol{\ell }_{m-1}T_{m-1})\cdot h(S_{m}+S) = \boldsymbol{\ell }_{1}\cdots \boldsymbol{\ell }_{m}M_{m}+\boldsymbol{\ell }_{1}\cdots \boldsymbol{\ell }_{m-1}M_{m-1} , \label{PropositCaseB3Equat1}
				\end{align}
				where $S\in \mathcal{B}_{m-1}^{+}\subseteq \mathcal{L}_{k}$. By the Taylor Theorem, it follows that:
				\begin{align*}
					h(S_{m}+S) & = h(S_{m})+h'(S_{m})\cdot S+\sum _{i\geq 2}\frac{h^{(i)}(S_{m})}{i!}S^{i} .
				\end{align*}
				Since $S_{m}$ is the solution of equation \eqref{PropCaseBEq10}, and by \eqref{EqDer2}, after dividing by $\boldsymbol{\ell }_{1}\cdots \boldsymbol{\ell }_{m-2}$, equation \eqref{PropositCaseB3Equat1} becomes:
				{\small \begin{align}
						& D_{m-1}(S) - \Big( \frac{D_{m-1}(N_{m})}{N_{m}}+\boldsymbol{\ell }_{m-1}\boldsymbol{\ell }_{m}P_{m}+ \boldsymbol{\ell }_{m-1}P_{m-1}-(\boldsymbol{\ell }_{m-1}\boldsymbol{\ell }_{m}T_{m}+\boldsymbol{\ell }_{m-1}T_{m-1})\cdot h'(S_{m})\Big) \cdot S \nonumber \\
						& + (\boldsymbol{\ell }_{m-1}\boldsymbol{\ell }_{m}T_{m}+\boldsymbol{\ell }_{m-1}T_{m-1})\cdot \sum _{i\geq 2}\frac{h^{(i)}(S_{m})}{i!}S^{i}  = \boldsymbol{\ell }_{m-1}\big( M_{m-1} + P_{m-1}\cdot S_{m}-T_{m-1}\cdot h(S_{m}) \big) . \label{PropCaseBEq12}
					\end{align}
				}Now, for the purpose of applying Lemma~\ref{Lemma4Second}, put:
				\begin{align}
					& N :=N_{m} , \nonumber \\
					& P :=\boldsymbol{\ell }_{m-1}\boldsymbol{\ell }_{m}P_{m}+ \boldsymbol{\ell }_{m-1}P_{m-1}-(\boldsymbol{\ell }_{m-1}\boldsymbol{\ell }_{m}T_{m}+\boldsymbol{\ell }_{m-1}T_{m-1})\cdot h'(S_{m}) , \nonumber \\
					& T :=\boldsymbol{\ell }_{m-1}\boldsymbol{\ell }_{m}T_{m}+\boldsymbol{\ell }_{m-1}T_{m-1} , \nonumber \\
					& M :=\boldsymbol{\ell }_{m-1}\big( M_{m-1} + P_{m-1}\cdot S_{m}-T_{m-1}\cdot h(S_{m}) \big) , \nonumber \\
					& h :=\sum _{i\geq 2}\frac{h^{(i)}(S_{m})}{i!}x^{i} . \label{PropositCaseB3Notat}
				\end{align}
				Note that $h\in x^{2}\mathcal{B}_{\geq m}^{+}\left[ \left[ x\right] \right] $. Since $N\in \mathcal{B}_{m}^{+}\subseteq \mathcal{B}_{\geq m}^{+}\subseteq \mathcal{L}_{k}$ and $\mathrm{ord}(h'(S_{m}))\geq \mathrm{ord}(S_{m})>\mathrm{ord}(N_{m})$, it follows that $P\in \mathcal{B}_{m-1}^{+}\subseteq \mathcal{L}_{k}$ with
				\begin{align*}
					\mathrm{ord}(P) &> (\mathbf{0}_{m-1},\mathbf{1}_{k-m+2}) .
				\end{align*}
				Note, from \eqref{PropositCaseB3Notat}, that $T\in \mathcal{B}_{m-1}^{+}\subseteq \mathcal{L}_{k}$ and $M\in \mathcal{B}_{m-1}^{+}\subseteq \mathcal{L}_{k}$, $\mathrm{ord}_{\boldsymbol{\ell }_{m-1}}(M) \geq 2$. By Lemma~\ref{Lemma4Second} (for $m-1$), it follows that there exists a unique solution $S:=S_{m-1}\in \mathcal{B}_{m-1}^{+}\subseteq \mathcal{L}_{k}$ of equation \eqref{PropCaseBEq12}. \\
				
				Inductively, we find $S_{m-2}\in \mathcal{B}_{m-2}^{+},\ldots ,S_{1}\in \mathcal{B}_{1}^{+}$. Put $S:=S_{m}+\cdots +S_{1}$. By construction, it follows that $S\in \mathcal{B}_{\geq 1}^{+}\subseteq \mathcal{L}_{k}$ is a solution of the equation \eqref{PropCaseB2EqMain}, and therefore, \eqref{PropEq8}. It satisfies the property $\mathrm{ord}(S)=\mathrm{ord}(S_{m}) > \mathrm{ord}(N_{m})=\mathrm{ord}(N)$. \\
				
				\noindent \emph{Proof of uniqueness of the solution}: Suppose that $S_{1},S_{2}\in \mathcal{B}_{\geq 1}^{+}\subseteq \mathcal{L}_{k}$ are distinct solutions of equation \eqref{PropEq8} satisfying $\mathrm{ord}\, (S_{1}),\mathrm{ord}\, (S_{2}) > \mathrm{ord}\, (N)$. Therefore, $S_{1}$ and $S_{2}$ are solutions of equation \eqref{PropCaseB2EqMain} satisfying $\mathrm{ord}\, (S_{i}) > \mathrm{ord}\, (N_{m})$, for $i=1,2$. Subtracting \eqref{PropCaseB2EqMain} for $S_{1}$ and $S_{2}$ and then multiplying by $N_{m}$, we get:
				\begin{align}
					& N_{m}\cdot D_{1}(S_{1}-S_{2})-(S_{1}-S_{2})\cdot D_{1}(N_{m}) \nonumber \\
					& = N_{m}\cdot P\cdot (S_{1}-S_{2}) - N_{m}\cdot T\cdot (h(S_{1})-h(S_{2})) . \label{DiffEqUniq2}
				\end{align}
				Since $\mathrm{ord}\, (S_{1}-S_{2}) \geq \min \left\lbrace \mathrm{ord}\, (S_{1}), \mathrm{ord}\, (S_{2}) \right\rbrace > \mathrm{ord}\, (N_{m})$, by solving the differential equation, it is easy to see that $\mathrm{Lt}(N_{m})\cdot D_{1}(\mathrm{Lt}(S_{1}-S_{2}))-\mathrm{Lt}(S_{1}-S_{2})\cdot D_{1}(\mathrm{Lt}(N_{m})) \neq 0$. Using this fact and by analyzing orders of terms in $N_{m}\cdot D_{1}(S_{1}-S_{2})-(S_{1}-S_{2})\cdot D_{1}(N_{m}) $, it can be proven that:
				\begin{align}
					& \mathrm{Lt}\big( N_{m}\cdot D_{1}(S_{1}-S_{2})-(S_{1}-S_{2})\cdot D_{1}(N_{m})\big) \nonumber \\
					& = \mathrm{Lt}\Big( \mathrm{Lt}(N_{m})\cdot D_{1}(\mathrm{Lt}(S_{1}-S_{2}))-\mathrm{Lt}(S_{1}-S_{2})\cdot D_{1}(\mathrm{Lt}(N_{m}))\Big) . \label{DiffEqUniq1}
				\end{align}
				By \eqref{DiffEqUniq1}, it follows that
				\begin{align}
					\mathrm{ord} \, \big( N_{m}\cdot D_{1}(S_{1}-S_{2})-(S_{1}-S_{2})\cdot D_{1}(N_{m}) \big) & \leq \mathrm{ord} \, (S_{1}-S_{2}) + \mathrm{ord} \, (N_{m}) + (0,\mathbf{1}_{k}) . \label{DiffEqUniq4}
				\end{align}
				Since $\mathrm{ord} \, (P) > (0,\mathbf{1}_{k})$, it follows that
				\begin{align}
					\mathrm{ord} \, \big( N_{m}\cdot P\cdot (S_{1}-S_{2}) \big) & > \mathrm{ord} \, (S_{1}-S_{2}) + \mathrm{ord} \, (N_{m}) + (0,\mathbf{1}_{k}) . \label{DiffEqUniq5}
				\end{align}
				By \eqref{DiffEqUniq4} and \eqref{DiffEqUniq5}, it follows that
				\begin{align}
					\mathrm{ord} \, \big( N_{m}\cdot D_{1}(S_{1}-S_{2})-(S_{1}-S_{2})\cdot D_{1}(N_{m}) \big) & < \mathrm{ord} \, \big( N_{m}\cdot P\cdot (S_{1}-S_{2}) \big) . \label{DiffEqUniq6}
				\end{align}
				On the other hand, since $N_{m}\in \mathcal{B}_{m}^{+}\subseteq \mathcal{L}_{k}$, $\mathrm{ord}\, (T)\geq \mathrm{ord}\, (\frac{D_{1}(N)}{N})=\mathrm{ord}\, (\frac{D_{1}(N_{m})}{N_{m}})$, it follows that $\mathrm{ord} \, (N_{m}\cdot T) \geq \mathrm{ord}\, (D_{1}(N_{m}))$. Since $\mathrm{ord}\, (S_{i})>\mathrm{ord}\, (N_{m})$, $i=1,2$, it follows that:
				\begin{align*}
					\mathrm{ord}\, (h(S_{1})-h(S_{2})) & \geq \mathrm{ord}\, (S_{1}-S_{2})+\min \left\lbrace \mathrm{ord}\, (S_{1}) , \mathrm{ord}\, (S_{2}) \right\rbrace \\
					& > \mathrm{ord}\, (S_{1}-S_{2})+\mathrm{ord} \, (N_{m}) .
				\end{align*}
				Now, we get:
				\begin{align}
					\mathrm{ord} \, \big( N_{m}\cdot T\cdot (h(S_{1})-h(S_{2})) \big) & > \mathrm{ord} \, (S_{1}-S_{2}) + \mathrm{ord} \, (N_{m}) + \mathrm{ord} \, (D_{1}(N_{m})) \nonumber \\
					& > \mathrm{ord} \, (S_{1}-S_{2}) + \mathrm{ord} \, (N_{m}) + (0,\mathbf{1}_{k}) . \label{DiffEqUniq7}
				\end{align}
				Now, by \eqref{DiffEqUniq4} and \eqref{DiffEqUniq7}, we get:
				\begin{align}
					\mathrm{ord} \, \big( N_{m}\cdot D_{1}(S_{1}-S_{2})-(S_{1}-S_{2})\cdot D_{1}(N_{m}) \big) & < \mathrm{ord} \, \big( N_{m}\cdot T\cdot (h(S_{1})-h(S_{2})) \big) . \label{DiffEqUniq8}
				\end{align}
				By \eqref{DiffEqUniq6} and \eqref{DiffEqUniq8}, we proved that the order of the left-hand side of equation \eqref{DiffEqUniq2} is strictly smaller than the order of the right-hand side of equation \eqref{DiffEqUniq2}, which is a contradiction. Therefore, the solution $S$ of equation \eqref{PropEq8}, satisfying $\mathrm{ord} \, (S)> \mathrm{ord} \, (N)$, obtained in the first part of the proof, is unique. 
			\end{proof}
		
			\begin{proof}[Proof of Proposition~\ref{LemaCaseB2OperatorsTS}]
				1. Let $S_{1},S_{2}\in \widetilde{\mathcal{B}}$, $S_{1}\neq S_{2}$, be arbitrary. Since $\mathrm{ord}\, (V) > 2\cdot \mathrm{ord}\, (R)+(0,\mathbf{1}_{k})$ and $(1+x)\log (1+x)=x+\sum _{i\geq 2}\frac{(-1)^{i}}{i(i-1)}x^{i}$, it follows that:
				\begin{align*}
					& \mathrm{ord}_{\boldsymbol{\ell }_{1}}\Big( (1+S_{1})\cdot \log (1+S_{1})\cdot D_{1}(V) - (1+S_{2})\cdot \log (1+S_{2})\cdot D_{1}(V)\Big) \\
					& \geq \mathrm{ord}_{\boldsymbol{\ell }_{1}}(S_{1}-S_{2})+2\cdot \mathrm{ord}_{\boldsymbol{\ell }_{1}}(R)+2 .
				\end{align*}
				Since $\mathcal{C}_{1}$ and $\mathcal{K}_{1}$ are $\frac{1}{2^{2+\mathrm{ord}_{\boldsymbol{\ell }_{1}}(R)}}$-contractions, by \eqref{CaseBIdent4}, it follows that $\mathcal{S}$ is a $\frac{1}{2^{2+\mathrm{ord}_{\boldsymbol{\ell }_{1}}(R)}}$-contraction on the space $(\widetilde{\mathcal{B}},d_{1})$. \\
				
				2. Let $S_{1},S_{2}\in \widetilde{\mathcal{B}}$, $S_{1}\neq S_{2}$, be arbitrary. Since $\mathrm{ord}(V) > 2\cdot \mathrm{ord}\, (R)+(0,\mathbf{1}_{k})$ and $(1+x)\log (1+x)=x+\sum _{i\geq 2}\frac{(-1)^{i}}{i(i-1)}x^{i}$ by \eqref{CaseBIdent4}, it follows that:
				\begin{align}
					\mathcal{T}(S_{1})-\mathcal{T}(S_{2}) & =-(1+R)\cdot \log (1+R)\cdot D_{1}(S_{1}-S_{2})+(S_{1}-S_{2})\cdot V \nonumber \\
					& +\big( (1+S_{1})\cdot \log(1+S_{1}) - (1+S_{2})\cdot \log(1+S_{2}) \big) \cdot D_{1}(R) \nonumber \\
					& = \mathrm{Lt}\big( (S_{1}-S_{2})\cdot D_{1}(R)-R\cdot D_{1}(S_{1}-S_{2})\big) + \mathrm{h.o.t.}, \label{PropCaseB3Equat5}
				\end{align}
				if $(S_{1}-S_{2})\cdot D_{1}(R)-R\cdot D_{1}(S_{1}-S_{2}) \neq 0$. Suppose that $(S_{1}-S_{2})\cdot D_{1}(R)-R\cdot D_{1}(S_{1}-S_{2}) =0$. By solving the linear differential equation, we get:
				\begin{align*}
					S_{1}-S_{2} & = C\cdot R, \quad C\in \mathbb{R} .
				\end{align*}
				Since $S_{1}-S_{2}\in \widetilde{\mathcal{B}}$, we get that $C=0$, i.e., $S_{1}=S_{2}$, which is a contradiction. Therefore, $(S_{1}-S_{2})D_{1}(R)-RD_{1}(S_{1}-S_{2}) \neq 0$, and \eqref{PropCaseB3Equat5} holds. By \eqref{PropCaseB3Equat5}, we get that:
				\begin{align*}
					\mathrm{ord}_{\boldsymbol{\ell }_{1}} \big( \mathcal{T}(S_{1})-\mathcal{T}(S_{2}) \big) & =\mathrm{ord}_{\boldsymbol{\ell }_{1}}(S_{1}-S_{2})+\mathrm{ord}_{\boldsymbol{\ell }_{1}}(R)+1 .
				\end{align*}
				Therefore, $\mathcal{T}$ is a $\frac{1}{2^{1+\mathrm{ord}_{\boldsymbol{\ell }_{1}}(R)}}$-homothety on the space $(\widetilde{\mathcal{B}},d_{1})$. \\
				
				3. Let $W\in \mathcal{S}(\widetilde{\mathcal{B}})$. Since $\mathrm{ord}\, (M), \mathrm{ord}\, (V) > 2\cdot \mathrm{ord} \, (R)+(0,\mathbf{1}_{k})$ and $\mathcal{C}_{1},\mathcal{K}_{1}:\mathcal{B}_{\geq 1}^{+}\to \mathcal{B}_{\geq 1}^{+}$ are $\frac{1}{2^{2+\mathrm{ord}_{\boldsymbol{\ell }_{1}}(R)}}$-contractions on $(\mathcal{B}_{\geq 1}^{+},d_{1})$, it follows that $W$ belongs to $\mathcal{B}_{\geq 1}^{+}\subseteq \mathcal{L}_{k}$ and satisfies $\mathrm{ord}(W)>2\cdot \mathrm{ord}(R)+(0,\mathbf{1}_{k})$. We solve in the space $\widetilde{\mathcal{B}}$ the equation $\mathcal{T}(S)=W$. Using \eqref{CaseBIdent4}, this equation becomes:
				\begin{align*}
					-(1+R)\cdot \log (1+R)\cdot D_{1}(S)+(1+S)\cdot \log(1+S)\cdot D_{1}(R)+S\cdot V & = W .
				\end{align*}
				Since $R\neq 0$, after dividing by $-(1+R)\log (1+R)$, we get the equivalent equation:
				{\small \begin{align}
						D_{1}(S)-S\cdot \frac{(D_{1}(R)+V)}{(1+R)\cdot \log (1+R)} - \frac{((1+S)\cdot \log (1+S)-S)\cdot D_{1}(R)}{(1+R)\cdot \log (1+R)} & = -\frac{W}{(1+R)\cdot \log (1+R)} . \label{PropCaseB3EquatApplicatProp}
					\end{align}
				}For the purpose of applying Proposition~\ref{Lemma3}, put:
				\begin{align*}
					& h :=(1+x)\log (1+x)-x , \\
					& N :=\log (1+R) , \\
					& K :=\frac{V}{(1+R)\cdot \log (1+R)} , \\
					& T :=-\frac{D_{1}(R)}{(1+R)\cdot \log (1+R)} = - \frac{D_{1}(N)}{N} , \\
					& M :=-\frac{W}{(1+R)\cdot \log (1+R)} .
				\end{align*}
				Since $\mathrm{ord}\, (V) > 2\cdot \mathrm{ord} \, (R)+(0,\mathbf{1}_{k})$, it follows that $\mathrm{ord} \, (K) > (0,\mathbf{1}_{k})$. Since $\mathrm{ord}(W)>2\cdot \mathrm{ord} \, (R)+(0,\mathbf{1}_{k})$ and $\mathrm{ord}(N)=\mathrm{ord}(R)$, it follows that $\mathrm{ord}(M)-\mathrm{ord}(N)>(0,\mathbf{1}_{k})$. By Proposition~\ref{Lemma3}, there exists a unique solution $S\in \widetilde{\mathcal{B}}$, of differential equation \eqref{PropCaseB3EquatApplicatProp}. Therefore, $S\in \widetilde{\mathcal{B}}$ is a unique solution of $\mathcal{T}(S)=W$ on the space $\widetilde{\mathcal{B}}$. This implies that $\mathcal{S}(\widetilde{\mathcal{B}})\subseteq \mathcal{T}(\widetilde{\mathcal{B}})$.
			\end{proof}

\end{document}